\documentclass{article}

\newcommand{\negeer}[1]{}
\def\notintp{\mathrel{/\kern-0.6em\rhd}} 
%
%
%
%
%			PACKAGES
%
%
%\usepackage{../artikelen/charact/proof}
\usepackage{amssymb}
\usepackage{amsfonts}
\usepackage{latexsym}
\usepackage{epsfig}
\usepackage{psfig}
\usepackage{xspace}
\usepackage{color}
\usepackage{amstext}
\usepackage{rotating}
\usepackage{amsmath}
\usepackage{amsthm}
\usepackage{import}
\usepackage{proof}	
\usepackage[english,dutch]{babel}
\usepackage{makeidx}
%\usepackage{thesymbols}
%\usepackage{graphicx}

%\newcommand{\qedsymbol}{$\dashv$}

%
%
%
%
%			LENGTHY WORDS
%
%
\newcommand{\inty}{interpretability\xspace}

\newcommand{\Inty}{Interpretability\xspace}
\newcommand{\rat}{reasonable arithmetical theories\xspace}

%
%
%			THEORIES
%
%
%

%
%
% In Parsons
%\newcommand{\prar}{\text{I}\Sigma_1^R}

%
%
%  In charact

%
%			LOGICS
%
%
%
\newcommand{\il}{{\ensuremath{\textup{\textbf{IL}}}}\xspace}
\newcommand{\extil}[1]{\ensuremath{\textup{\textbf{IL}}{\sf\ensuremath{#1}}}\xspace}
\newcommand{\gl}{{\ensuremath{\textup{\textbf{GL}}}}\xspace}

\newcommand{\intl}[1]{{\ensuremath {\textup{\textbf{IL}}}({\rm #1})}}
\newcommand{\provl}[1]{{\ensuremath {\textup{\textbf{PL}}}({\rm #1})}}
\newcommand{\ilm}{\extil{M}}
\newcommand{\ilp}{\extil{P}}
\newcommand{\ilw}{\extil{W}}

\newcommand{\ilmo}{\extil{M_0}}
\newcommand{\ilwstar}{\extil{W^*}}

\newcommand{\ilal}{\intl{All}\xspace}
\newcommand{\principle}[1]{\formal{#1}}
%
%
%
%
%
%                         formalized versions of (meta)mathematical entities 
%
\newcommand{\formal}[1]{\ensuremath{{\sf {#1}}\xspace}}

%
%from pils

%
%  from Modal Matters
\newcommand{\prop}{\formal{Prop}\xspace}
\newcommand{\mcs}{\ensuremath{\textup{MCS}}\xspace}
%
%In cahract

%
%
%
%				CONVENTIONS
%
%
%
%
%
\renewcommand{\principle}[1]{\formal{#1}}
\newcommand{\principel}[1]{\formal{#1}}

\newcommand{\trans}[2]{{\ensuremath{{#1}^{#2}}\xspace}}
%dit is degene die in charact wordt gebruikt, in modalmatters is er een andere

\newcommand{\pair}[1]{{\ensuremath{\langle #1 \rangle}}\xspace}

%
%
%			AD HOC COMMANDS
%
%
%
%
%for charact dit was de oude versie
%\newcommand{\restrict}[2]{{\ensuremath{{#1}{\upharpoonright}{#2}\xspace}}}
%en dit is de nieuwe:

\newcommand{\eqbydef}{:=}
\newcommand{\equivbydef}{:\Leftrightarrow}
%\newcommand{\initialsegment}{\subseteq_e}

%
%for pil

% in ESSLLI
\newcommand{\formil}{\ensuremath{{\sf Form}_{\il}} \xspace}

%
% in pra/modalinvestigations

\newcommand{\depth}[1]{{\ensuremath{{\sf depth}(#1)}}\xspace}

%
%
%   in Parsons

%\newcommand{\concat}{\sqcap}

%
%
% In Modal Matters
\newcommand{\sucs}{\prec}
\newcommand{\crit}[1]{\sucs_{#1}}
\newcommand{\boxin}{\subseteq_{\Box}}
\newcommand{\crone}[2]{{\ensuremath{\mathcal{C}^{#1}_{#2}}}\xspace}
\newcommand{\geone}[2]{{\ensuremath{\mathcal{G}^{#1}_{#2}}}\xspace}
\newcommand{\adset}[1]{{\ensuremath{\mathcal{#1}}\xspace}}
\newcommand{\ext}{\subseteq}
\newcommand{\mo}{\principle{M_0}\xspace}
%\newcommand{\trans}[1]{{#1}^{\textrm tr}}
%Ik heb deze ff tijdelijk uitgezet
\newcommand{\comp}{;}
\newcommand{\spure}[2]{\mathcal{#2}_{#1}\xspace}
%
%
%  In charact

%met gorrisbilkova

%\newcommand{\fsassuring}[1]{\prec_{#1}^\Phi}

%
%
%			CONSTANT ENTITIES
%
%
%
%in pils

%
%in charact

%
%			THEOREM STYLES
%
%
%
%
%
%

 \theoremstyle{plain}
 \newtheorem{theorem}{Theorem}[section]
 \newtheorem{lemma}[theorem]{Lemma}

 \newtheorem{corollary}[theorem]{Corollary}

 \theoremstyle{definition}
 \newtheorem{definition}[theorem]{Definition}
 
 \newtheorem{claim}{Claim}

 \newtheorem{remark}[theorem]{Remark}

% \theoremstyle{remark}
% \newtheorem{intermezzo}[theorem]{Intermezzo}

%
%
%
%			NOTATIONS
%
%
%
%
%

%%
%			Macloontjes van Albertino
%
%
%
%
%

\hyphenation{con-ser-va-ti-vi-ty well-foun-ded-ness a-xio-ma-ti-zed}

\renewcommand{\trans}[1]{{#1}^{\textrm tr}}

\title{Modal Matters in Interpretability Logics}
\author{E. Goris\\ and\\ J. J. Joosten}
\date{2008}

\begin{document}

\maketitle

\begin{abstract}
This paper from 2008 is the first in a series of three related papers on modal methods in interpretability logics and applications. In this first paper the fundaments are laid for later results. These fundaments consist of a thorough treatment of a construction method to obtain modal models. This construction method is used to reprove some known results in the area of interpretability like the modal completeness of the logic \il.
Next, the method is applied to obtain new results: the modal completeness of the logic \ilmo, and modal completeness of \ilwstar.\end{abstract}

%\newpage
%\tableofcontents
%\newpage
%
\section{Introduction}
\Inty logics are primarily used to describe structural behavior of 
\inty between formal mathematical theories. We shall see that the logics come with
a good modal semantics that naturally extends the regular modal semantics giving it
a dynamical flavor. In this introduction we shall informally describe the project of
this paper. Formal definitions are postponed to later sections.

The notion of \inty that we are primarily interested in, is the notion of relativized 
\inty as studied e.g.\ by Tarski et al in \cite{tars:unde53}. Roughly, a theory $U$ interprets a
theory $V$ --we write $U\rhd V$-- if $U$ proves all theorems of $V$ under some 
structure preserving translation.
We allow for relativization of quantifiers. It is defendable to say that $U$ is as least as
strong as $V$ if $U\rhd V$. We think that it is clear that interpretations are 
worth to be studied, as they are omnipresent in both mathematics and meta-mathematics (Langlands 
Program, relative consistency proofs, undecidability results, Hilberts Programme and so forth).

One approach to the study of \inty is to study general structural behavior of 
\inty. An example of such a structural rule is the transitivity of 
\inty. That is, for any $U$, $V$ and $W$ we have that
if $U\rhd V$ and $V\rhd W$, then also $U\rhd W$. As we shall see, modal \inty logics 
provide an informative way to support this structural study. 
\Inty logics, in a sense, generate all structural rules.
Many important 
questions on \inty logics have been settled. One of the most prominent open 
questions at this time is the question of the \inty logic of all \rat. In this
paper we make a significant contribution to a solution of this problem. However, a modal
characterization still remains an open question.

The main aim of this paper is to establish some modal techniques/toolkit for 
\inty logics. Most techniques are aimed at establishing modal completeness results.
As we shall see, in the field of \inty logics, modal completeness can be a sticky 
business compared to unary modal logics. In this paper we make a first attempt
at pulling some (more) thorns out. Significant progress with this respect has also been made 
by de Jong and Veltman \cite{JoVe90}. 

We have a feeling that the general modal theory of \inty logics
is getting more and more mature. For example, fixed point phenomena and interpolation
are quite well understood (\cite{dejo:expl91}, \cite{arec:inte98}, \cite{vis97}).

Experience tells us that our modal semantics is quite informative and perspicuous. 
It is even the case that new arithmetical principles can be obtained from modal semantical
considerations. An example is our new principle \principel{R}. We found this principle 
primarily by modal investigation. Thus, indeed, there is a close match between 
the modal part and the arithmetical part. It is even possible to embed our modal 
semantics into some category of models of
arithmetic.

Although this paper is mainly a modal investigation, the main questions are still 
inspired by the arithmetical meaning of our logics. Thus, our investigations will lead
to applications concerning arithmetically informative notions like, essentially 
$\Sigma_1$-sentences, self provers and the \inty logic of all \rat.

\section{Interpretability logics}
In this section we will define the basic notions that are needed throughout the paper. We advise the reader to just skim through this section and use it to look up definitions whenever they are used in the rest of the paper.

\subsection{Syntax and conventions}
In this paper we shall be mainly interested in interpretability logics, the 
formulas of which, we write \formil, are defined as follows.
\[
\formil \eqbydef \bot \mid \prop \mid (\formil \rightarrow \formil) \mid (\Box \formil)
\mid (\formil \rhd \formil) 
\]

Here \prop is a countable set of propositional 
variables $p,q,r,s,t,p_0,p_1,\ldots$.
We employ the usual definitions of the logical operators
$\neg , \vee , \wedge$ and $\leftrightarrow$. Also shall we write 
$\Diamond \varphi$ for $\neg \Box \neg \varphi$. 
Formulas that start with a $\Box$ are called box-formulas or
$\Box$-formulas. Likewise we talk of $\Diamond$-formulas.

From now on we will
stay in the realm of interpretability logics. Unless 
mentioned otherwise, formulas or sentences are formulas of \formil. 
We will write $p\in \varphi$ to indicate that the proposition variable 
$p$ does occur in $\varphi$.
A literal is either a propositional variable or the negation of
a propositional variable.

In writing formulas we
shall omit brackets that are superfluous according to the following reading
conventions. We say that the operators $\Diamond$, $\Box$ and $\neg$ bind
equally strong. They bind stronger than the equally strong binding 
$\wedge$ and $\vee$ which in turn bind stronger than $\rhd$. The weakest 
(weaker than $\rhd$) binding
connectives are $\rightarrow$ and $\leftrightarrow$. We shall also omit outer
brackets. Thus, we shall write 
$A \rhd B \rightarrow A \wedge \Box C \rhd B \wedge \Box C$ instead of
$((A \rhd B) \rightarrow ((A \wedge (\Box C)) \rhd( B \wedge (\Box C))))$.

A schema of interpretability logic is syntactically like a formula. They are
used to generate formulae that have a specific form. We will not be specific
about the syntax of schemata as this is similar to that of formulas. Below,
one can think of $A$, $B$ and $C$ as place holders.

The rule of Modus Ponens
allows one to conclude $B$ from premises $A \rightarrow B$ and $A$. The 
rule of Necessitation allows one to conclude $\Box A$ from the premise
$A$.

\begin{definition}\label{defi:il}
The logic \il is the smallest set of formulas being closed under
the rules of Necessitation and of Modus Ponens, that contains
all tautological formulas and all instantiations of the following
axiom schemata.

\begin{enumerate}
\item[${\sf L1}$]\label{ilax:l1}
        $\Box(A\rightarrow B)\rightarrow(\Box A\rightarrow\Box B)$
\item[${\sf L2}$]\label{ilax:l2}
        $\Box A\rightarrow \Box\Box A$
\item[${\sf L3}$]\label{ilax:l3}
        $\Box(\Box A\rightarrow A)\rightarrow\Box A$
\item[${\sf J1}$]\label{ilax:j1}
        $\Box(A\rightarrow B)\rightarrow A\rhd B$
\item[${\sf J2}$]\label{ilax:j2}
        $(A\rhd B)\wedge (B\rhd C)\rightarrow A\rhd C$
\item[${\sf J3}$]\label{ilax:j3}
        $(A\rhd C)\wedge (B\rhd C)\rightarrow A\vee B\rhd C$
\item[${\sf J4}$]\label{ilax:j4}
        $A\rhd B\rightarrow(\Diamond A\rightarrow \Diamond B)$
\item[${\sf J5}$]\label{ilax:j5}
        $\Diamond A\rhd A$
\end{enumerate}
\end{definition}
We will write $\il \vdash \varphi$ for $\varphi \in \il$. An \il-derivation
or \il-proof of $\varphi$ is a finite sequence of formulae ending on 
$\varphi$, each being a logical tautology, an instantiation of one
of the axiom schemata of \il, or the result of applying either Modus Ponens or
Necessitation to formulas earlier in the sequence.
Clearly, $\il \vdash \varphi$ iff there is an \il-proof of $\varphi$.

Sometimes we will write $\il \vdash \varphi \rightarrow \psi \rightarrow \chi$
as short for 
$\il \vdash \varphi \rightarrow \psi \ \& \ \il \vdash \psi \rightarrow \chi$.
Similarly for $\rhd$. We adhere to a similar convention when we employ 
binary relations. Thus, $xRyS_xz\Vdash B$ is short for
$xRy \ \& \ yS_xz \ \& \ z \Vdash B$, and so on.

Sometimes we will consider the part of \il that does not contain the 
$\rhd$-modality. This is the well-known  provability logic \gl, whose
axiom schemata are ${\sf L1}$-${\sf L3}$. The axiom schema ${\sf L3}$ is
often referred to as L\"ob's axiom.

\begin{lemma}\label{lemm:basicil} 
\begin{enumerate}
\item \label{lemm:basicil:box}
$\il \vdash \Box A \leftrightarrow \neg A \rhd \bot$
\item
$\il \vdash A \rhd A \wedge \Box \neg A$
\item
$\il \vdash A \vee \Diamond A \rhd A$
\end{enumerate}
\end{lemma}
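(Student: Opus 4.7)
The plan is to prove each of the three parts by a short derivation inside \il, the only non-trivial \gl-ingredient being the formalized second incompleteness theorem, $\Diamond A \to \Diamond(A \wedge \Box \neg A)$, which is obtained by contraposing L\"ob's axiom ${\sf L3}$ applied to $\neg A$.

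For (1), both implications are direct. For the forward direction, the tautology $A \leftrightarrow (\neg A \to \bot)$ yields $\Box A \leftrightarrow \Box(\neg A \to \bot)$ by Necessitation and ${\sf L1}$, and then ${\sf J1}$ delivers $\neg A \rhd \bot$. For the converse, apply ${\sf J4}$ to obtain $\neg A \rhd \bot \to (\Diamond \neg A \to \Diamond \bot)$, and use $\neg \Diamond \bot$ (which follows from $\Box \top$, itself obtained from Necessitation on the tautology $\top$) to conclude $\neg \Diamond \neg A$, i.e., $\Box A$.

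For (2), I would proceed in three steps. First, derive $\Diamond A \to \Diamond(A \wedge \Box \neg A)$ in \gl\ from ${\sf L3}$ with $\neg A$ in place of $A$ by taking the contrapositive. Second, Necessitation together with ${\sf J1}$ turns this into $\Diamond A \rhd \Diamond(A \wedge \Box \neg A)$; combined via ${\sf J2}$ with the instance $\Diamond(A \wedge \Box \neg A) \rhd A \wedge \Box \neg A$ of ${\sf J5}$, this gives $\Diamond A \rhd A \wedge \Box \neg A$. Third, since $A \wedge \Box \neg A \rhd A \wedge \Box \neg A$ is immediate (from $A \wedge \Box \neg A \to A \wedge \Box \neg A$ by Necessitation and ${\sf J1}$), ${\sf J3}$ yields $\Diamond A \vee (A \wedge \Box \neg A) \rhd A \wedge \Box \neg A$. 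A final application of Necessitation, ${\sf J1}$ and ${\sf J2}$, together with the propositional tautology $A \to \Diamond A \vee (A \wedge \Box \neg A)$ (which holds because $\neg \Diamond A$ is $\Box \neg A$), closes the derivation.

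For (3) it is enough to note that $\Diamond A \rhd A$ is exactly ${\sf J5}$ and that $A \rhd A$ follows from the tautology $A \to A$ by Necessitation and ${\sf J1}$; combining them via ${\sf J3}$ gives $A \vee \Diamond A \rhd A$. The only mildly subtle step is the choice of the intermediate disjunction in (2), but the excluded-middle observation $\neg \Diamond A \leftrightarrow \Box \neg A$ makes the right shape immediately evident; everything else is a routine chain of ${\sf J1}$, ${\sf J2}$, ${\sf J3}$, ${\sf J5}$.
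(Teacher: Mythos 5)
Your proof is correct and follows essentially the same route as the paper: for part (2) --- the only part the paper proves in detail --- both arguments contrapose L\"ob's axiom ${\sf L3}$ to obtain $\Diamond A \rightarrow \Diamond(A \wedge \Box\neg A)$, then combine ${\sf J1}$, ${\sf J5}$, ${\sf J3}$ and transitivity via a case split on $\Box\neg A$ versus $\Diamond A$; your intermediate disjunction $\Diamond A \vee (A \wedge \Box\neg A)$ versus the paper's $(A\wedge\Box\neg A)\vee(A\wedge\Diamond A)$ is an immaterial difference. Parts (1) and (3), which the paper dismisses as having very easy proofs, you handle with correct routine derivations.
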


\begin{proof}
All of these statements have very easy proofs. We give an informal proof of the 
second statement. Reason in \il. It is easy to
see $A \rhd (A\wedge \Box \neg A) \vee (A\wedge \Diamond A)$. By ${\sf L3}$
we get $\Diamond A \rightarrow \Diamond (A \wedge \Box \neg A)$. Thus,
$A\wedge \Diamond A \rhd \Diamond (A \wedge \Box \neg A)$ and by 
${\sf J5}$ we get $\Diamond (A \wedge \Box \neg A) \rhd A \wedge \Box \neg A$.
As certainly $A\wedge \Box \neg A\rhd A\wedge \Box \neg A$ we have that
$(A\wedge \Box \neg A) \vee (A \wedge \Diamond A)\rhd  A\wedge \Box \neg A$ and
the result follows from transitivity of $\rhd$.
\end{proof}

Apart from the axiom schemata exposed in Definition \ref{defi:il} we will
on occassion consider other axiom schemata too.

\begin{enumerate}
\item[${\sf M}$] 
$A \rhd B \rightarrow A \wedge \Box C \rhd B \wedge \Box C$

\item[${\sf P}$]
$A \rhd B \rightarrow \Box (A \rhd B)$

\item[${\sf M_0}$]
$A \rhd B \rightarrow \Diamond A \wedge \Box C \rhd B \wedge \Box C$

\item[${\sf W}$]
$A \rhd B \rightarrow A \rhd B \wedge \Box \neg A$

\item[${\sf W^*}$]
$A \rhd B \rightarrow B \wedge \Box C \rhd B \wedge \Box C \wedge \Box \neg A$

\item[${\sf P_0}$]
$A \rhd \Diamond B \rightarrow \Box (A \rhd B)$

\item[${\sf R}$]
$A\rhd B \rightarrow \neg (A \rhd \neg C) \rhd B \wedge \Box C$

\end{enumerate}

If $\sf X$ is a set of axiom schemata we will denote by \extil{X} the
logic that arises by adding the axiom schemata in $\sf X$ to \il.
Thus, \extil{X} is the smallest set of formulas being closed under the
rules of Modus Ponens and Necessitation and containing all tautologies
and all instantiations of the axiom schemata 
of \il (${\sf L1}$-${\sf J5}$) and of the axiom schemata of $\sf X$.
Instead of writing \extil{\{ M_0,W\}} we will write \extil{M_0W} and so on.

We write $\extil{X} \vdash \varphi$ for $\varphi \in \extil{X}$. An 
\extil{X}-derivation or \extil{X}-proof of $\varphi$ is a finite sequence 
of formulae ending on 
$\varphi$, each being a logical tautology, an instantiation of one
of the axiom schemata of \extil{X}, 
or the result of applying either Modus Ponens or
Necessitation to formulas earlier in the sequence.
Again, $\extil{X} \vdash \varphi$ iff there is an \extil{X}-proof
of $\varphi$. For a schema $\sf Y$, we write $\extil{X} \vdash {\sf Y}$
if \extil{X} proves every instantiation of $\sf Y$.

\begin{definition}
Let $\Gamma$ be a set of formulas. We say that $\varphi$ is
provable from $\Gamma$ in \extil{X} and write $\Gamma \vdash_{\extil{X}} \varphi$,
iff there is a finite sequence 
of formulae ending on 
$\varphi$, each being a theorem of \extil{X}, a formula from 
$\Gamma$, or the result of applying
Modus Ponens to formulas earlier in the sequence.
\end{definition}
Clearly we have 
$\varnothing \vdash_{\extil{X}} \varphi \Leftrightarrow \extil{X}\vdash \varphi$.
In the sequel we will often write just $\Gamma \vdash \varphi$
instead of $\Gamma \vdash_{\extil{X}} \varphi$ if the context allows us so.
It is well known that we have a deduction theorem for this notion of
derivability.

\begin{lemma}[Deduction theorem]
$\Gamma , A \vdash_{\extil{X}} B \Leftrightarrow \Gamma \vdash_{\extil{X}} A \rightarrow B$
\end{lemma}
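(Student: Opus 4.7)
The plan is to prove the equivalence by handling the two directions separately, with the hard direction being a standard induction on the length of derivations.

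For the easy direction ($\Leftarrow$), I would observe that any derivation of $A\rightarrow B$ from $\Gamma$ in $\extil{X}$ is also a derivation from $\Gamma, A$. Extending this derivation with the axiom $A$ (which is in $\Gamma \cup \{A\}$) and then applying Modus Ponens yields $B$, so $\Gamma, A \vdash_{\extil{X}} B$.

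For the harder direction ($\Rightarrow$), the plan is induction on the length of the derivation of $B$ from $\Gamma, A$. There are four cases for the last formula $B$: (i) $B$ is a theorem of $\extil{X}$, (ii) $B \in \Gamma$, (iii) $B = A$, or (iv) $B$ is obtained by Modus Ponens from earlier formulas $C$ and $C \rightarrow B$. In cases (i) and (ii), $B$ is already derivable from $\Gamma$ in $\extil{X}$, and since $B \rightarrow (A \rightarrow B)$ is a tautology, one application of Modus Ponens gives $\Gamma \vdash_{\extil{X}} A \rightarrow B$. Case (iii) is handled by noting that $A \rightarrow A$ is a tautology, hence a theorem of $\extil{X}$. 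In case (iv), the induction hypothesis yields $\Gamma \vdash_{\extil{X}} A \rightarrow C$ and $\Gamma \vdash_{\extil{X}} A \rightarrow (C \rightarrow B)$, and combining these via the tautology $(A \rightarrow (C \rightarrow B)) \rightarrow ((A \rightarrow C) \rightarrow (A \rightarrow B))$ and two applications of Modus Ponens produces the desired $\Gamma \vdash_{\extil{X}} A \rightarrow B$.

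The potential obstacle in modal settings is usually the Necessitation rule, which typically breaks the deduction theorem (one cannot in general pass from $A \vdash \Box B$ to $\vdash A \rightarrow \Box B$). However, the definition of $\Gamma \vdash_{\extil{X}} \varphi$ given just above the lemma restricts the inference rules applied to the assumptions and sequence to \emph{only} Modus Ponens; Necessitation is already absorbed into the notion of ``theorem of $\extil{X}$.'' Thus the only non-trivial inductive step is the Modus Ponens case, which goes through exactly as in classical propositional logic. So there is no real obstacle here — the lemma is essentially immediate once one notes this feature of the definition.
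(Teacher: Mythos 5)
Your proof is correct and follows essentially the same route as the paper: induction on the length of the derivation, with the Modus Ponens case resolved via the tautology $(A \rightarrow (C \rightarrow B)) \rightarrow ((A \rightarrow C)\rightarrow(A \rightarrow B))$, exactly as in the paper (which only spells out this case and leaves the base cases and the $\Leftarrow$ direction as obvious). Your closing remark about Necessitation being absorbed into the notion of ``theorem of $\extil{X}$'' is precisely the feature of the paper's definition of $\vdash_{\extil{X}}$ that makes the classical argument go through.
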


\begin{proof}
``$\Leftarrow$'' is obvious
and
%. For, let 
%$\sigma, A \rightarrow B$ be an \extil{X}-proof of $A\rightarrow B$ from
%$\Gamma$. Then $\sigma, A \rightarrow B, A , B$ is an \extil{X}-proof
%of $B$ from $\Gamma , A$.
%
%
%
%
``$\Rightarrow$'' goes by induction on the length $n$ of the \extil{X}-proof
$\sigma$ of $B$ from $\Gamma , A$. 
%If $n{=}1$, then 
%$\sigma = B$ and $B \in \Gamma \cup \{ A\}$. If 
%$B=A$, clearly $\Gamma \vdash_{\extil{X}} A \rightarrow A$. If $B \in \Gamma$, also
% $\Gamma \vdash_{\extil{X}} B$.

If $n{>}1$, then $\sigma = \tau, B$, where $B$ is obtained from some
$C$ and $C\rightarrow B$ occurring earlier in $\tau$. Thus we can find subsequences 
$\tau'$ and $\tau''$ of $\tau$ such that 
$\tau', C$ and $\tau'', C\rightarrow B$ are \extil{X}-proofs from 
$\Gamma , A$. By the induction hypothesis we find \extil{X}-proofs from $\Gamma$
of the form $\sigma' , A \rightarrow C$ and 
$\sigma'' , A \rightarrow (C \rightarrow B)$. We now use the
tautology 
$(A \rightarrow (C \rightarrow B)) \rightarrow 
((A \rightarrow C)\rightarrow(A \rightarrow B))$
to get an \extil{X}-proof of $A\rightarrow B$ from $\Gamma$. 
%Namely
%$\sigma' , A \rightarrow C , \sigma'' , A \rightarrow (C \rightarrow B),
%(A \rightarrow (C \rightarrow B)) \rightarrow 
%((A \rightarrow C)\rightarrow(A \rightarrow B)), 
%(A \rightarrow C)\rightarrow(A \rightarrow B),A \rightarrow B$.
\end{proof}

\begin{definition}
A set $\Gamma$ is \extil{X}-consistent iff $\Gamma \not \vdash_{\extil{X}} \bot$.
An \extil{X}-consistent set is maximal \extil{X}-consistent if for 
any $\varphi$, either $\varphi \in \Gamma$ or $\neg \varphi \in \Gamma$.
\end{definition}

\begin{lemma}\label{lemm:lindenbaum}
Every \extil{X}-consistent set can be extended to a maximal \extil{X}-consistent one.
\end{lemma}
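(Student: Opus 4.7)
The plan is to follow the standard Lindenbaum-style construction, using the deduction theorem just established as the key tool. Since \prop is countable, so is the collection \formil of all formulas; enumerate them as $\varphi_0, \varphi_1, \varphi_2, \ldots$.

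First I would set $\Gamma_0 \eqbydef \Gamma$ and define inductively
\[
\Gamma_{n+1} \eqbydef
\begin{cases}
\Gamma_n \cup \{\varphi_n\} & \text{if this set is \extil{X}-consistent},\\
\Gamma_n \cup \{\neg \varphi_n\} & \text{otherwise}.
\end{cases}
\]
The main verification is that each $\Gamma_n$ is \extil{X}-consistent. This goes by induction on $n$. The base case is the hypothesis on $\Gamma$. For the step, suppose toward contradiction that $\Gamma_n$ is consistent but neither $\Gamma_n \cup \{\varphi_n\}$ nor $\Gamma_n \cup \{\neg \varphi_n\}$ is. By the deduction theorem we then have $\Gamma_n \vdash_{\extil{X}} \neg \varphi_n$ and $\Gamma_n \vdash_{\extil{X}} \neg \neg \varphi_n$, hence $\Gamma_n \vdash_{\extil{X}} \bot$, contradicting the induction hypothesis.

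Next, set $\Gamma^* \eqbydef \bigcup_{n} \Gamma_n$. To see that $\Gamma^*$ is \extil{X}-consistent, observe that any hypothetical \extil{X}-proof of $\bot$ from $\Gamma^*$ uses only finitely many formulas from $\Gamma^*$, all of which lie in some $\Gamma_n$; this would contradict the consistency of that $\Gamma_n$. Finally, $\Gamma^*$ is maximal because every formula $\varphi$ is $\varphi_n$ for some $n$, so by construction either $\varphi_n \in \Gamma_{n+1} \subseteq \Gamma^*$ or $\neg \varphi_n \in \Gamma_{n+1} \subseteq \Gamma^*$.

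I do not expect any real obstacles: the proof is essentially the classical Lindenbaum argument, and the only ingredient specific to \extil{X} is the deduction theorem, which has just been verified. The construction requires nothing about the particular axiom schemata in $\sf X$, and hence works uniformly for every extension of \il considered in the paper.
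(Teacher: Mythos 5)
Your proof is correct and is exactly the argument the paper intends: its own proof of Lemma~\ref{lemm:lindenbaum} simply says to run ``the regular argument'' using the deduction theorem and the countability of \formil, which is precisely the enumeration-and-chain construction you carried out. No gaps; the use of the deduction theorem to show one of $\Gamma_n \cup \{\varphi_n\}$, $\Gamma_n \cup \{\neg\varphi_n\}$ stays consistent, and the finiteness of derivations for the union step, are the right ingredients.
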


\begin{proof}
This is Lindebaums lemma for \extil{X}. We can just do the regular argument
as we have the deduction theorem. Note that there are countably many different
formulas.
\end{proof}

We will often abbreviate ``maximal consistent set'' by \mcs and 
refrain from explicitly mentioning the logic \extil{X} when the
context allows us to do so.
We define three useful relations on \mcs's, the \emph{successor} relation $\sucs$, 
the \emph{$C$-critical successor} relation $\crit{C}$ and the 
\emph{Box-inclusion} relation $\boxin$.

\begin{definition}\label{defi:mcsrel}
Let $\Gamma$ and $\Delta$ denote maximal \extil{X}-consistent sets.
\begin{itemize}
\item
$\Gamma \sucs \Delta \eqbydef \Box A \in \Gamma \Rightarrow A, \Box A \in \Delta$
\item
$\Gamma \crit{C} \Delta \eqbydef A \rhd C \in \Gamma 
\Rightarrow \neg A , \Box \neg A \in \Delta$
\item
$\Gamma  \boxin \Delta \eqbydef \Box A \in \Gamma \Rightarrow \Box A \in \Delta$
\end{itemize}
\end{definition}

It is clear that $\Gamma \crit{C} \Delta \Rightarrow \Gamma \sucs \Delta$.
For, if $\Box A \in \Gamma$ then $\neg A \rhd \bot \in \Gamma$. Also 
$\bot \rhd C \in \Gamma$, whence $\neg A \rhd C \in \Gamma$. If now 
$\Gamma \crit{C} \Delta$ then $A , \Box A \in \Delta$, whence 
$\Gamma \sucs \Delta$. It is also clear that 
$\Gamma \crit{C} \Delta \sucs \Delta' \Rightarrow \Gamma \crit{C} \Delta'$.

\begin{lemma}\label{lemm:botcrit}
Let $\Gamma$ and $\Delta$ denote maximal \extil{X}-consistent sets. We have
$\Gamma \sucs \Delta$ iff $\Gamma \crit{\bot} \Delta$.
\end{lemma}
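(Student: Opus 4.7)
The plan is to observe that this biconditional is essentially the dual formulation of Lemma \ref{lemm:basicil}.\ref{lemm:basicil:box} (which says $\il \vdash \Box A \leftrightarrow \neg A \rhd \bot$) transported to the level of maximal consistent sets. The direction $\Gamma \crit{\bot} \Delta \Rightarrow \Gamma \sucs \Delta$ is already handled by the remark made in the paragraph immediately preceding the lemma statement, applied to the specific choice $C = \bot$; so no further work is needed for that implication.

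For the converse direction, I would fix maximal consistent sets $\Gamma$ and $\Delta$ with $\Gamma \sucs \Delta$, and take an arbitrary formula $A$ such that $A \rhd \bot \in \Gamma$. The key move is to apply Lemma \ref{lemm:basicil}.\ref{lemm:basicil:box} with $\neg A$ in place of $A$: this yields $\il \vdash \Box \neg A \leftrightarrow A \rhd \bot$, and hence also $\extil{X} \vdash \Box \neg A \leftrightarrow A \rhd \bot$. Since $\Gamma$ is a maximal consistent set, it is closed under provable equivalence, so $\Box \neg A \in \Gamma$. Now invoking $\Gamma \sucs \Delta$ with the formula $\neg A$ gives $\neg A, \Box \neg A \in \Delta$, which is exactly what the definition of $\Gamma \crit{\bot} \Delta$ demands.

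I do not expect any real obstacle here: the whole argument is a one-line unfolding of definitions plus the quoted equivalence from Lemma \ref{lemm:basicil}. The only thing that requires a moment of care is remembering to feed $\neg A$ (rather than $A$) into the equivalence $\Box A \leftrightarrow \neg A \rhd \bot$ so that the resulting box-formula matches the shape demanded by the $\sucs$-clause, and to note explicitly that maximal consistent sets are closed under \extil{X}-provable equivalence, which is a standard consequence of Lemma \ref{lemm:lindenbaum} and the deduction theorem.
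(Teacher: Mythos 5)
Your proposal is correct and follows essentially the same route as the paper's own proof: the forward implication is discharged by the remark preceding the lemma (with $C=\bot$), and the converse is obtained by applying Lemma \ref{lemm:basicil}.\ref{lemm:basicil:box} to get $\Box\neg A \in \Gamma$ from $A \rhd \bot \in \Gamma$ and then invoking $\Gamma \sucs \Delta$. Your explicit remark about closure of maximal consistent sets under provable equivalence (and the care with feeding $\neg A$ into the schema, modulo the harmless $\neg\neg A$ versus $A$ adjustment under $\rhd$) is exactly what the paper leaves implicit.
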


\begin{proof}
Above we have seen that $\Gamma \crit{A} \Delta \Rightarrow \Gamma \sucs \Delta$.
For the other direction suppose now that $\Gamma \sucs \Delta$. 
If $A \rhd \bot \in \Gamma$ then, by Lemma \ref{lemm:basicil}.\ref{lemm:basicil:box}, 
$\Box \neg A \in \Gamma$ whence $\neg A, \Box \neg A \in \Delta$.
\end{proof}
\subsection{Semantics}

Interpretability logics come with a Kripke-like semantics.
As the signature of our language is countable, we shall only consider 
countable models.

\begin{definition}\label{defi:frames}
An \il-frame is a triple $\langle W,R,S \rangle$. Here $W$ is a non-empty countable
universe,
$R$ is a binary relation on $W$ and $S$ is a set of binary relations on $W$, indexed 
by elements of $W$.
The $R$ and $S$ satisfy the following requirements.
\begin{enumerate}
\item
$R$ is conversely well-founded\footnote{A relation $R$ on $W$ is called 
conversely well-founded if every non-empty subset of $W$ has an $R$-maximal element.}

\item
$xRy \ \& \ yRz \rightarrow xRz$

\item
$yS_xz \rightarrow xRy \ \& \ xRz$

\item
$xRy \rightarrow yS_x y$

\item
$xRyRz \rightarrow yS_x z$ \label{defi:point:inclusion}

\item
$uS_x v S_x w \rightarrow u S_x w$

\end{enumerate}
\end{definition}
\il-frames are sometimes also called Veltman frames.
We will on occasion speak of $R$ or $S_x$ transitions instead of relations.
If we write $ySz$, we shall mean that $yS_xz$ for some $x$. 
$W$ is sometimes called the universe, or domain, of the frame and its elements
are referred to as worlds or nodes. With $x{\upharpoonright}$ we shall denote
the set $\{ y \in W \mid x Ry \}$.
We will often represent $S$ by a ternary relation in the canonical way, writing
$\langle x,y,z\rangle$ for $yS_xz$.

\begin{definition}
An \il-model is a quadruple 
$\langle W, R , S, \Vdash \rangle$. Here $\langle W, R , S, \rangle$ is 
an \il-frame and $\Vdash$ is a subset of $W \times \prop$. We write
$w \Vdash p$ for $\langle w,p\rangle \in \ \Vdash$.
As usual, $\Vdash$ is extended to a subset $\widetilde{\Vdash}$ of 
$W \times \formil$ by demanding the following.
\begin{itemize}

\item
$w \widetilde{\Vdash} p$ iff $w \Vdash p$ for $p \in \prop$

\item
$w \not \widetilde{\Vdash} \bot$

\item
$w \widetilde{\Vdash} A \rightarrow B$ iff $w \not \widetilde{\Vdash} A$ or 
$w \widetilde{\Vdash} B$

\item
$w \widetilde{\Vdash} \Box A$ iff 
$\forall v \ (wRv \Rightarrow v \widetilde{\Vdash} A)$

\item
$w \widetilde{\Vdash} A \rhd B$ iff 
$\forall u \ (w R u \wedge u\widetilde{\Vdash} A
\Rightarrow \exists v \ (u S_w v  \widetilde{\Vdash} B))$

\end{itemize}
\end{definition}

Note that $\widetilde{\Vdash}$ is completely determined by $\Vdash$.
Thus we will denote $\widetilde{\Vdash}$ also by $\Vdash$. We call
$\Vdash$ a forcing relation. The $\Vdash$-relation depends on the model
$M$. If 
%there is chance of confusion, 
necessary, we will write 
$M,w \Vdash \varphi$, if not, we will just write 
$w \Vdash \varphi$. In this case we say that $\varphi$ holds at
$w$, or that $\varphi$ is forced at $w$.
We say that \emph{ $p$ is in the range of $\Vdash$} if $w\Vdash p$ for 
some $w$.

If $F= \langle W,R,S\rangle$ is an \il-frame, we will write 
$x \in F$ to denote $x\in W$ and similarly for \il-models.
Attributes on $F$ will be inherited by its constituent parts.
For example $F_i = \langle W_i,R_i,S_i\rangle$. 
Often however we will write $F_i \models xRy$ instead
of $F_i\models x R_i y$ and likewise for the $S$-relation.
This notation is consistent with notation in first order logic
where the symbol $R$ is interpreted in the structure $F_i$ as
$R_i$.

If 
$M=\langle W,R,S, \Vdash \rangle$, we say that $M$ is based on
the frame $\langle W,R,S\rangle$ and we call $\langle W,R,S\rangle$
its underlying frame.

If $\Gamma$ is a set of formulas, we will write $M,x\Vdash \Gamma$ as 
short for $\forall \, \gamma {\in }\Gamma \ M,x \Vdash \gamma$. We have
similar reading conventions for frames and for validity.

\begin{definition}[Generated Submodel]
Let $M = \langle W, R, S , \Vdash \rangle$ be an \il-model
and let $m\in M$. We define $m{\upharpoonright}*$ to be the
set $\{ x\in W \mid x{=}m \vee mRx \}$. 
By $M{\upharpoonright}m$ we denote the 
submodel generated by $m$ defined as follows.
\[
M{\upharpoonright}m \eqbydef \langle m{\upharpoonright}*, 
R\cap (m{\upharpoonright}*)^2 , \bigcup_{x\in m{\upharpoonright}*}
S_x \cap (m{\upharpoonright}*)^2, \Vdash \cap 
(m{\upharpoonright}* \times {\sf Prop})  \rangle
\]
\end{definition}

\begin{lemma}[Generated Submodel Lemma]\label{lemm:gesulem}
Let $M$ be an \il-model and let $m\in M$. For all 
formulas $\varphi$ and all $x \in m{\upharpoonright}*$ we have that
\[
M{\upharpoonright}m,x\Vdash \varphi \ \ \mbox{ iff }\ \ 
M,x\Vdash \varphi .
\]
\end{lemma}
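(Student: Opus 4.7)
The plan is standard: induct on the complexity of $\varphi$, exploiting that the definition of $M{\upharpoonright}m$ simply restricts $R$, each $S_x$ and $\Vdash$ to $m{\upharpoonright}* \times m{\upharpoonright}*$. Before doing the induction, I would verify a closure fact that makes every case transparent: if $x \in m{\upharpoonright}*$ and $xRy$ in $M$, then $y \in m{\upharpoonright}*$. Indeed, either $x=m$, in which case $mRy$ directly puts $y$ into $m{\upharpoonright}*$, or $mRx$, and then transitivity of $R$ (frame condition 2) yields $mRy$. In particular, the $R$-successors of $x$ in $M{\upharpoonright}m$ coincide with those in $M$.

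The base cases $\varphi = \bot$ and $\varphi \in \prop$ are immediate from the definition of the restricted forcing relation, and the Boolean cases $\varphi = A \rightarrow B$ follow by applying the induction hypothesis to the subformulas at the same point $x$. For $\varphi = \Box A$, the closure fact above shows that the set of $y$ with $xRy$ is the same in $M$ and $M{\upharpoonright}m$, so $\forall y\,(xRy \Rightarrow y \Vdash A)$ transfers directly in both directions via the induction hypothesis on $A$.

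The only case requiring a little care, and the one I expect to be the mild obstacle, is $\varphi = A \rhd B$. Here I need not only that $R$-successors of $x$ are preserved, but also that for any such $u$, the set $\{ v \mid uS_x v\}$ used in the semantic clause is the same in both models. This is where frame condition 3 ($yS_x z \to xRy \wedge xRz$) enters: if $uS_x v$ in $M$, then in particular $xRv$, and by the closure fact $v \in m{\upharpoonright}*$; conversely, the definition of $M{\upharpoonright}m$ takes $\bigcup_{x\in m{\upharpoonright}*} S_x \cap (m{\upharpoonright}*)^2$, so no $S_x$-edges between worlds of $m{\upharpoonright}*$ are lost. Combining this with the induction hypothesis applied to $A$ at $u$ and to $B$ at $v$ gives the equivalence.

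Putting these observations together completes the induction, and hence the lemma. The only subtlety worth flagging in the write-up is the justification that $S_x$-successors of an accessible world stay inside $m{\upharpoonright}*$; everything else is bookkeeping from the definition of the generated submodel.
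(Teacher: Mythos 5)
Your proof is correct and follows exactly the route the paper takes: the paper's proof is simply ``by an easy induction on the complexity of $\varphi$'', and your write-up supplies the details of that induction, including the two facts that make it go through (closure of $m{\upharpoonright}*$ under $R$-successors via transitivity, and preservation of $S_x$-edges via frame condition 3).
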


\begin{proof}
By an easy induction on the complexity of $\varphi$.
%We will only comment on one direction of the case 
%$\varphi = A\rhd B$. So, we suppose that for some 
%$x \in m{\upharpoonright}*$ we have
%$M,x \Vdash A \rhd B$, and will show that
%$M{\upharpoonright}m,x \Vdash A \rhd B$. If now
%$M{\upharpoonright}m,y \Vdash A$ with 
%$M{\upharpoonright}m \models xRy$, then also
%$M\models xRy$, whence, by the induction hypothesis,
%$M,y\models A$. We can thus find a $z$ with 
%$M\models yS_xz$ and $M,z\Vdash B$. As
%$x\in m{\upharpoonright}*$, we see that
%$M{\upharpoonright}m\models yS_xz$. By the induction hypothesis
%$M{\upharpoonright}m,z\Vdash B$.
\end{proof}

\medskip

We say that an \il-model makes a formula $\varphi$ true, and write
$M\models \varphi$, if $\varphi$ is forced in all the nodes
of $M$. In a formula we write
\[
M\models \varphi \equivbydef \forall \,  w {\in} M \ w\Vdash \varphi. 
\]
If  $F= \langle W,R,S\rangle$ is an \il-frame and $\Vdash$ a
subset of $W \times \prop$, we denote by $\langle W, \Vdash \rangle$
the \il-model that is based on $F$ and has forcing relation
$\Vdash$. We say that a frame $F$ makes a formula $\varphi$ true, and
write $F\models \varphi$, if any model based on $F$ makes $\varphi$ true.
In a second-order formula:
\[
F \models \varphi \equivbydef \forall \Vdash \ 
\langle F , \Vdash \rangle \models \varphi
\]

We say that an \il-model or frame makes a scheme true if it makes all 
its instantiations true. If we want to express this by a formula we should have
a means to quantify over all instantiations. For example, we could 
regard an instantiation of a scheme $\sf X$ as a substitution $\sigma$
carried out on $\sf X$ resulting in ${\sf X}^{\sigma}$. We do not 
wish to be very precise here, as it is clear what is meant. Our definitions thus
read
\[
F \models {\sf X} \mbox{ iff } \forall \sigma \ F \models {\sf X}^{\sigma}
\]
for frames $F$, and
\[
M \models {\sf X} \mbox{ iff } \forall \sigma \ M \models {\sf X}^{\sigma}
\]
for models $M$. 
Sometimes we will also write $F\models \extil{X}$ for $F\models {\sf X}$.

It turns out that checking the validity of a scheme on a frame is fairly easy.
If $\sf X$ is some scheme\footnote{Or a set of schemata. All of 
our reasoning generalizes without problems to sets of schemata. We will therefore no
longer mention the distinction.}, let $\tau$ be some base substitution that sends
different placeholders to different propositional variables.

\begin{lemma}
Let $\sf X$ be a scheme, and $\tau$ be a corresponding base substitution as
described above. Let $F$ be an \il-frame. We have
\[
F \models {\sf X}^{\tau} \Leftrightarrow \forall \sigma \ 
F \models {\sf X}^{\sigma}.
\]
\end{lemma}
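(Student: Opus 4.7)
The plan is to note that the direction ``$\forall \sigma \ F \models {\sf X}^{\sigma} \Rightarrow F \models {\sf X}^{\tau}$'' is immediate, since $\tau$ is itself a substitution, so everything reduces to the converse. The strategy for the converse is a standard substitution argument: given any instantiation $\sigma$ of ${\sf X}$, I will exhibit a forcing relation on $F$ that makes the $\tau$-instantiation behave exactly like the $\sigma$-instantiation under some other forcing.

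More concretely, fix a substitution $\sigma$ and a forcing relation $\Vdash$ on $F$; I need to show $\langle F,\Vdash\rangle \models {\sf X}^{\sigma}$. For each placeholder $A$ occurring in $\sf X$, the base substitution $\tau$ produces a propositional variable $p_A := \tau(A)$, and by hypothesis on $\tau$ the map $A \mapsto p_A$ is injective. Define a new forcing $\Vdash'$ on $F$ by setting, for each such $p_A$,
\[
w \Vdash' p_A \ :\Leftrightarrow \ w \Vdash \sigma(A),
\]
and letting $\Vdash'$ agree with $\Vdash$ (or be arbitrary) on the remaining propositional variables. Injectivity of $A \mapsto p_A$ is what makes this definition unambiguous, and this is the precise place the hypothesis on $\tau$ is used.

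The key step is then a substitution lemma: by induction on the structure of a scheme-formula $\psi$ built from the placeholders of $\sf X$ using $\bot,\to,\Box,\rhd$, one shows that for every $w \in F$,
\[
\langle F, \Vdash'\rangle, w \Vdash \psi^{\tau} \ \Leftrightarrow \ \langle F, \Vdash\rangle, w \Vdash \psi^{\sigma}.
\]
The base case $\psi = A$ is immediate from the definition of $\Vdash'$; the cases for $\bot$ and $\to$ are purely propositional; and the cases for $\Box$ and $\rhd$ go through because both models sit on the same underlying frame $F$, so the $R$- and $S$-relations used to evaluate $\Box$ and $\rhd$ are identical, and one can plug in the induction hypothesis on the immediate subformulas. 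I expect this inductive clause for $\rhd$ to be the only place requiring any care, but it is entirely routine: the quantifiers over $u$ and $v$ range over the same worlds and the satisfaction of the subschemata $A$ and $B$ at those worlds is preserved by induction.

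Applying the substitution lemma to $\psi = {\sf X}$ at every $w \in F$, the assumption $F \models {\sf X}^{\tau}$ yields $\langle F, \Vdash'\rangle \models {\sf X}^{\tau}$, hence $\langle F, \Vdash\rangle \models {\sf X}^{\sigma}$. Since $\Vdash$ was arbitrary, $F \models {\sf X}^{\sigma}$, and since $\sigma$ was arbitrary, $F \models {\sf X}^{\sigma}$ for all $\sigma$, finishing the proof. The only genuine content is the substitution lemma; the rest is bookkeeping, and the injectivity requirement on $\tau$ is precisely what makes the bookkeeping coherent.
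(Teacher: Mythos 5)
Your proof is correct and follows essentially the same route as the paper's: the trivial direction is dispatched first, then for fixed $\sigma$ and $\Vdash$ a new forcing $\Vdash'$ is defined by $w \Vdash' \tau(A) \Leftrightarrow w \Vdash \sigma(A)$, and an induction on subschemes transfers truth of ${\sf X}^{\tau}$ under $\Vdash'$ to truth of ${\sf X}^{\sigma}$ under $\Vdash$ on the common frame. Your explicit remark that injectivity of $A \mapsto \tau(A)$ is what makes $\Vdash'$ well defined is a welcome clarification that the paper leaves implicit.
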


%\begin{absorb}
%Nog even nalopen dit bewijs!
%\end{absorb}

\begin{proof}
If $\forall \sigma \ 
F \models {\sf X}^{\sigma}$, then certainly 
$F \models {\sf X}^{\tau}$, thus we should concentrate on the other direction.
Thus, assuming $F\models {\sf X}^{\tau}$ we fix some
$\sigma$ and $\Vdash$ and set out to prove 
$\langle F , \Vdash \rangle \models {\sf X}^{\sigma}$.
We define another forcing relation $\Vdash'$ on $F$ by saying that for 
any place holder $A$ in $\sf X$ we have
\[
w \Vdash' \tau (A) \equivbydef \langle F , \Vdash \rangle \models \sigma (A)
\]
By induction on the complexity of a subscheme\footnote{It is clear what
this notion should be.} $\sf Y$ of $\sf X$ we can now prove
\[
\langle F, \Vdash' \rangle , w \Vdash' {\sf Y}^{\tau} \Leftrightarrow
\langle F, \Vdash \rangle , w \Vdash {\sf Y}^{\sigma} .
\]
By our assumption we get that 
$\langle F, \Vdash \rangle , w \Vdash {\sf X}^{\sigma}$.
\end{proof}

If $\chi$ is some formula in first, or higher, order predicate logic, 
we will evaluate 
$F \models \chi$ in the standard way. In this case $F$ is considered as a
structure of first or higher order predicate logic. We will not be too formal 
about these matters as the context will always dict us which reading to choose.

\begin{definition}\label{defi:framecondition}
Let ${\sf X}$ be a scheme of interpretability logic. We say that a 
formula $\mathcal{C}$ in first or higher order predicate logic is
a frame condition of $\sf X$ if
\[
F\models \mathcal{C} \ \ \mbox{ iff } F \models {\sf X}.
\]
\end{definition} 

The $\mathcal{C}$ in Definition \ref{defi:framecondition} is also called 
the frame condition of the logic \extil{X}.
A frame satisfying the \extil{X} frame condition is often called
an \extil{X}-frame. In case no such frame condition exists, an 
\extil{X}-frame resp.\ model is just a frame resp.\ model, 
validating $\sf X$.

The semantics for interpretability logics is good in the 
sense that we have the necessary soundness results.

\begin{lemma}[Soundness]\label{lemm:ilsound}
$\il \vdash \varphi \Rightarrow \forall F \  F \models \varphi$
\end{lemma}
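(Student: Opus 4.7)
The plan is a standard induction on the length of an \il-proof of $\varphi$. For the base case I must show that each tautology and each axiom schema is forced at every world of every model based on any \il-frame $F$; for the inductive step I must show the two rules preserve frame validity.

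For the tautologies there is nothing to do, since the propositional connectives are interpreted classically at each world. For the modal axioms I would treat them in the order given. The axioms ${\sf L1}$--${\sf L3}$ are the familiar \gl-axioms and their validity rests on the transitivity and converse well-foundedness of $R$ recorded in Definition~\ref{defi:frames} (for ${\sf L3}$ I would argue contrapositively: given $w \not\Vdash \Box A$, use converse well-foundedness to pick an $R$-maximal $v$ in $w{\upharpoonright}$ with $v \not\Vdash A$; any $u$ with $vRu$ forces $A$ by maximality, so $v \Vdash \Box A \to A$ is false at the required place). For ${\sf J1}$, given $w \Vdash \Box(A \to B)$ and $wRu \Vdash A$, the clause $xRy \to y S_x y$ lets me pick $v = u$, and $u \Vdash B$ since $wRu$. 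For ${\sf J2}$ I use transitivity of $S_w$; for ${\sf J3}$ the case split is immediate from the definition of $\Vdash A \vee B$; for ${\sf J4}$ I use that $yS_x z$ implies $xRz$. For ${\sf J5}$, given $w R u \Vdash \Diamond A$, pick $u' $ with $uRu' \Vdash A$; then by the inclusion condition $xRyRz \to yS_x z$ I have $uS_w u'$, and $u' \Vdash A$ gives the witness.

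For the induction step, Modus Ponens preservation is immediate since at each world the classical evaluation of $\to$ propagates truth. For Necessitation, if $F \models A$, then for every $\Vdash$ and every $w \in F$, all $R$-successors of $w$ force $A$ (since every world does), hence $w \Vdash \Box A$; so $F \models \Box A$.

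I expect no serious obstacle: the whole argument is a routine verification clause by clause, and the only mildly subtle step is the validity of L\"ob's axiom ${\sf L3}$, which is where converse well-foundedness of $R$ enters essentially. The treatment of $\rhd$-axioms is straightforward once one lines up each axiom with the corresponding clause of Definition~\ref{defi:frames}.
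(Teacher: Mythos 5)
Your proposal is correct and follows exactly the paper's route: an induction on the length of the \il-proof in which each axiom schema is checked against the corresponding clause of Definition~\ref{defi:frames}, with converse well-foundedness entering only for L\"ob's axiom ${\sf L3}$ and the rules handled pointwise. You have simply written out in full the clause-by-clause verification that the paper compresses into one sentence, including the observation (implicit in the paper's remark about ${\sf J3}$) that this axiom needs no frame condition at all, only the case split in the truth definition.
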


\begin{proof}
By induction on the length of an \il-proof of $\varphi$.
The requirements on $R$ and $S$ in Definition \ref{defi:frames} are precisely 
such that the axiom schemata hold. Note that all axiom schemata have their 
semantical counterpart except for the schema 
$(A \rhd C)\wedge(B \rhd C) \rightarrow A \vee B \rhd C$.
\end{proof}

\begin{lemma}[Soundness]\label{lemm:ilxsound}
Let $\mathcal{C}$ be the frame condition of the logic \extil{X}. We have
that
\[
\extil{X} \vdash \varphi \Rightarrow \forall F \ 
(F \models \mathcal{C} \Rightarrow F \models \varphi).
\]
\end{lemma}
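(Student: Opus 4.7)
The plan is to argue by induction on the length of an \extil{X}-proof of $\varphi$, exactly paralleling the proof of Lemma \ref{lemm:ilsound}, with one extra case to accommodate the new axiom schemata. Fix an \il-frame $F$ with $F \models \mathcal{C}$; I want to show that each formula appearing in the proof is forced at every world of every model based on $F$.

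For the base cases I would split according to the justification of the line. Propositional tautologies are valid in any Kripke-style model by the usual truth conditions. The \il-axioms ${\sf L1}$--${\sf J5}$ are already handled by Lemma \ref{lemm:ilsound}, since the clauses of Definition \ref{defi:frames} were built precisely so that every \il-frame validates them, and in particular $F$ does. The only genuinely new case is an instantiation ${\sf X}^\sigma$ of some schema in $\sf X$: here I invoke the hypothesis $F \models \mathcal{C}$, which by Definition \ref{defi:framecondition} is equivalent to $F \models {\sf X}$, and by the definition of $F \models {\sf X}$ this means $F \models {\sf X}^\sigma$ for every substitution $\sigma$. That is precisely what is needed.

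For the inductive step there are two rules to check. Modus Ponens is immediate from the truth clause for $\rightarrow$: if $w \Vdash A$ and $w \Vdash A \to B$, then $w \Vdash B$, so preservation of ``forced at every world of every model over $F$'' is trivial. For Necessitation, if by induction $F \models A$, then for every model $M$ based on $F$ and every $w \in M$, every $R$-successor $v$ of $w$ satisfies $v \Vdash A$, whence $w \Vdash \Box A$; so $F \models \Box A$.

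I do not anticipate a real obstacle here: once the link between the frame condition $\mathcal{C}$ and validity of the schema $\sf X$ is pinned down via Definition \ref{defi:framecondition}, the argument reduces to the previous soundness lemma plus one extra base case. The only point requiring a moment of care is cosmetic, namely keeping the quantifier structure straight --- the claim is validity in every model over $F$, not just in one chosen model --- and recalling that Necessitation is sound for frame validity (rather than for pointwise forcing in a single model), which is exactly why the statement of the lemma quantifies over all frames rather than all models.
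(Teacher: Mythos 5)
Your proposal is correct and follows essentially the same route as the paper: the paper's proof is exactly the induction of Lemma~\ref{lemm:ilsound} with the frame condition ``plugged in'' to handle instantiations of the schemata in $\sf X$, which is what you do. The paper additionally remarks that of the biconditional in Definition~\ref{defi:framecondition} only the direction $F\models \mathcal{C} \Rightarrow F \models {\sf X}$ is used --- a point implicit in your appeal to the equivalence.
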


\begin{proof}
As that of Lemma \ref{lemm:ilsound}, plugging in the definition of the 
frame condition at the right places. Note that we only need the
direction $F\models \mathcal{C} \Rightarrow F \models X$ in the proof.
\end{proof}

\begin{corollary}\label{coro:maximalsets}
Let $M$ be a model satisfying the \extil{X} frame condition, and let 
$m\in M$. We have that 
$\Gamma \eqbydef \{  \varphi \mid M,m\Vdash \varphi \}$
is a maximal \extil{X}-consistent set.
\end{corollary}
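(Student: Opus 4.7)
The plan is to verify the two defining properties of a maximal \extil{X}-consistent set: consistency, and that for every formula either it or its negation is in $\Gamma$.

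For the "maximality" clause, I would simply unfold the definition of the forcing relation. Since $M,m \not\Vdash \bot$, and since $w \Vdash A \rightarrow B$ is defined as $w \not\Vdash A$ or $w \Vdash B$, one obtains by a one-line semantic argument that $M,m \Vdash \varphi$ iff $M,m \not\Vdash \neg\varphi$. Hence for every formula $\varphi$, exactly one of $\varphi, \neg\varphi$ belongs to $\Gamma$, which in particular gives the required disjunction.

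For consistency, I would argue by contradiction. Suppose $\Gamma \vdash_{\extil{X}} \bot$. Since \extil{X}-derivations are finite sequences, there exist $\varphi_1,\ldots,\varphi_n \in \Gamma$ with $\{\varphi_1,\ldots,\varphi_n\} \vdash_{\extil{X}} \bot$. Applying the Deduction Theorem $n$ times gives $\extil{X} \vdash \varphi_1 \wedge \cdots \wedge \varphi_n \to \bot$. By the soundness lemma for \extil{X} (Lemma \ref{lemm:ilxsound}) and the assumption that $M$ satisfies the \extil{X} frame condition, this implication is forced everywhere in $M$, in particular at $m$. But each $\varphi_i$ is forced at $m$ by definition of $\Gamma$, so $M,m \Vdash \bot$, contradicting the semantic clause for $\bot$.

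There is no real obstacle here; the only subtle point is remembering that derivability is a finitary notion and then invoking the frame-condition-based soundness lemma rather than the plain \il-soundness lemma. The argument is the standard "truth set is maximal consistent" observation, transferred to the interpretability setting via Lemma \ref{lemm:ilxsound}.
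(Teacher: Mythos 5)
Your proof is correct and takes essentially the same route as the paper's: maximality comes directly from the semantic clauses for $\bot$ and negation, and consistency comes from the soundness lemma for the frame condition (Lemma \ref{lemm:ilxsound}) together with the fact that every member of $\Gamma$ is forced at $m$. The only cosmetic difference is that you route the finitely many premises through the deduction theorem, whereas the paper simply observes that $\Gamma$ is closed under \extil{X}-consequences (which amounts to the same induction on derivations).
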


\begin{proof}
Clearly $\bot \notin \Gamma$. Also $A\in \Gamma$ or $\neg A \in \Gamma$.
By the soundness lemma, Lemma \ref{lemm:ilxsound}, we see that
$\Gamma$ is closed under \extil{X} consequences.
\end{proof}

%\begin{absorb}
\begin{lemma}\label{lemm:modelsound}
Let $M$ be a model such that $\forall\,  w{\in}M \ $ 
$w \Vdash \extil{X}$ then $\extil{X} \vdash \varphi \Rightarrow M\models \varphi$.
\end{lemma}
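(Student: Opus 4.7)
The plan is to mimic the proof of Lemma \ref{lemm:ilsound}, but working world-by-world inside the single model $M$ rather than invoking a frame condition. I proceed by induction on the length $n$ of an \extil{X}-proof of $\varphi$, showing at each step that the formula currently being derived is forced at every world of $M$.

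For the base case, $\varphi$ is either a propositional tautology, an instance of one of the \il-axiom schemata $\sf L1$--$\sf J5$, or an instance of a schema in $\sf X$. Tautologies are forced at every world of any \il-model by the usual induction on propositional structure. For an instance of an \il-axiom schema, Lemma \ref{lemm:ilsound} gives $F \models \varphi$ for every \il-frame $F$; applying this to the underlying frame of $M$ yields $M \models \varphi$. For an instance of a schema in $\sf X$, I appeal directly to the hypothesis that $w \Vdash \extil{X}$ for every $w \in M$, which supplies exactly that every instantiation of every axiom of $\sf X$ is forced at every world.

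For the inductive step I check closure under the two rules. For Modus Ponens, if $\psi$ and $\psi \rightarrow \varphi$ occur earlier, the induction hypothesis gives $M \models \psi$ and $M \models \psi \rightarrow \varphi$; then at any $w \in M$ we have $w \Vdash \psi$ and $w \Vdash \psi \rightarrow \varphi$, so the semantic clause for $\rightarrow$ yields $w \Vdash \varphi$. For Necessitation, suppose $\varphi = \Box \psi$ is derived from $\psi$, with $M \models \psi$ by the induction hypothesis. Fix any $w \in M$ and any $v$ with $wRv$; since $v \in M$, we have $v \Vdash \psi$ by assumption, and the semantic clause for $\Box$ gives $w \Vdash \Box \psi$, as required.

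There is no real obstacle here, since the argument is routine once the hypothesis is read correctly. The only point worth flagging is that $w \Vdash \extil{X}$ must be understood at the level of schema instantiations: we need \emph{every} instance of every schema in $\sf X$ to be forced at every world, which is exactly what the assumption supplies, and it is this (rather than any frame condition) that drives the base case for the $\sf X$-axioms.
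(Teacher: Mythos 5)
Your proposal is correct and takes essentially the same route as the paper: the paper's own proof is just the one line ``By induction on the derivation of $\varphi$,'' and your write-up is precisely that induction, with the axiom cases (tautologies, \il-schemata via frame soundness, $\sf X$-schemata via the hypothesis) and closure under Modus Ponens and Necessitation spelled out. Nothing to correct.
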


\begin{proof}
By induction on the derivation of $\varphi$.
\end{proof}
%Relate a completeness result for $\vdash \varphi$ iff $\models \varphi$
%to $\Gamma \vdash \varphi$ iff $\Gamma \Vdash \varphi$ by means of the 
%deduction theorem.
%\end{absorb}

A modal logic \extil{X} with frame condition $\mathcal{C}$ is called 
complete if we have the implication the other way round too. That is,
\[
\forall F \ 
(F \models \mathcal{C} \Rightarrow F \models \varphi) \Rightarrow \extil{X} 
\vdash \varphi .
\]

A major concern of this paper is the question whether a given modal 
logic \extil{X} is complete.

%\begin{absorb}
%Hier een tabel met een overzicht van (on)volledigheidsresultaten?
%\end{absorb}

\begin{definition}
$\Gamma \Vdash_{\extil{X}} \varphi$ iff $\forall M \ M\models \extil{X} \Rightarrow 
(\forall \, m{\in}M \ 
[M,m\Vdash \Gamma \Rightarrow M,m \Vdash \varphi])$
\end{definition}

\begin{lemma}
Let $\Gamma$ be a finite set of formulas and let \extil{X} be a complete
logic. We have that 
$\Gamma \vdash_{\extil{X}} \varphi$ iff $\Gamma \Vdash_{\extil{X}} \varphi$.
\end{lemma}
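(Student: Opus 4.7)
The strategy is the standard two-step argument: use the deduction theorem to collapse $\Gamma$ to a single implication, then apply soundness in one direction and completeness in the other.

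For the $(\Rightarrow)$ direction, suppose $\Gamma \vdash_{\extil{X}} \varphi$. Since $\Gamma$ is finite, iterated applications of the deduction theorem yield $\extil{X} \vdash \bigwedge \Gamma \to \varphi$. Fix any model $M$ with $M \models \extil{X}$; by Lemma \ref{lemm:modelsound} we have $M \models \bigwedge \Gamma \to \varphi$, so any node $m$ with $M,m \Vdash \Gamma$ also satisfies $M,m \Vdash \varphi$. This yields $\Gamma \Vdash_{\extil{X}} \varphi$.

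For $(\Leftarrow)$ I proceed by contraposition. Suppose $\Gamma \not\vdash_{\extil{X}} \varphi$; the deduction theorem gives $\extil{X} \not\vdash \bigwedge \Gamma \to \varphi$. Let $\mathcal{C}$ be the frame condition witnessing completeness of \extil{X}. By completeness there is a frame $F \models \mathcal{C}$, a forcing $\Vdash$ on $F$, and a world $w$ such that $\langle F,\Vdash\rangle, w \not\Vdash \bigwedge \Gamma \to \varphi$; equivalently $w \Vdash \Gamma$ while $w \not\Vdash \varphi$. The model $M \eqbydef \langle F,\Vdash\rangle$ is then a candidate counter-model, but we must verify that $M \models \extil{X}$ in the sense demanded by the definition of $\Vdash_{\extil{X}}$. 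This is exactly the content of Lemma \ref{lemm:ilxsound}: because $F \models \mathcal{C}$, every theorem of \extil{X} (in particular every instance of every axiom scheme) holds throughout $M$. Hence $M$ witnesses $\Gamma \not\Vdash_{\extil{X}} \varphi$.

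The only point that needs any attention is the gap between the two semantic notions in play: $\Vdash_{\extil{X}}$ quantifies over all models on which \extil{X} is valid at every world, whereas completeness is phrased in terms of frames satisfying $\mathcal{C}$. Lemma \ref{lemm:ilxsound} is precisely what bridges this gap. Finiteness of $\Gamma$ is used in exactly one place, namely to apply the deduction theorem and reduce the whole argument to the single formula $\bigwedge \Gamma \to \varphi$; no compactness or further semantic machinery is required.
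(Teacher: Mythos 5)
Your proof is correct and takes essentially the same route as the paper's: the paper's own (one-line) proof likewise uses the deduction theorem to reduce everything to the single formula $\bigwedge \Gamma \rightarrow \varphi$ and then appeals to soundness and the completeness assumption. You merely spell out the details the paper labels ``trivial,'' in particular that Lemma \ref{lemm:modelsound} and Lemma \ref{lemm:ilxsound} bridge the gap between frame-level completeness and the model quantification in the definition of $\Vdash_{\extil{X}}$.
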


\begin{proof}
Trivial. By the deduction theorem
$\Gamma \vdash_{\extil{X}}\varphi \Leftrightarrow \vdash_{\extil{X}} 
\bigwedge \Gamma \rightarrow 
\varphi$. By our assumption on completeness we get the result. Note that the 
requirement that $\Gamma$ be finite is necessary, as our modal logics are in
general not compact (see also Section \ref{subs:mainingredients}).
\end{proof}

Often we shall need to compare different frames or models.
If $F = \langle W,R,S \rangle$ and $F' = \langle W',R',S' \rangle$ are
frames, we say that $F$ is a subframe of $F'$ and write $F\subseteq F'$,
if $W \subseteq W'$, $R \subseteq R'$ and $S \subseteq S'$. Here 
$S \subseteq S'$ is short for $\forall \,  w{\in} W \ (S_w \subseteq S_w')$.

\subsection{Arithmetic}
%
%\begin{absorb}
%\begin{itemize}
%\item
%Arithmetical realizations
%
%\item
%$\Sigma_1 !$-sentences%
%
%\item
%$\Sigma_1$-sentences, arithmetic hierarchy, $\Sigma_n(T)$.%
%
%\item
%Reflexive theory%
%
%\item
%Essentially reflexive theory, \ilm%
%
%\item
%Finitely axiomatizable theories, \ilp%
%
%\item
%T%he logic of all reasonable arithmetical theories
%
%\item
As with (almost) all interesting occurrences of modal logic, 
interpretability logics are used to study a hard mathematical notion.
Interpretability logics, as their name slightly suggests, are 
used to study the notion of formal interpretability. In this subsection
we shall very briefly say what this notion is and how modal logic is 
used to study it.

We are interested in first order theories in the language of arithmetic. All 
theories we will consider will thus be arithmetical theories. 
Moreover, we want our theories to have a certain minimal strength.
That is, they should contain a certain core theory, say 
${\sf I}\Delta_{0} + \Omega_1$ from \cite{HP}. This will allow
us to do reasonable coding of syntax. We call these theories reasonable
arithmetical theories.

Once we can code syntax, we can write down a decidable predicate 
${\sf Proof}_T(p,\varphi)$ that holds on the standard model
precisely when $p$ is a $T$-proof of $\varphi$.\footnote{We take
the liberty to not make a distinction between a syntactical object
and its code.} We get a provability predicate by quantifying
existentially, that is, 
${\sf Prov}_T (\varphi ) \eqbydef \exists p \ {\sf Proof}_T (p, \varphi )$.

We can use these coding techniques to code the notion of
formal interpretability too.
Roughly, a theory $U$ interprets a theory $V$ if there is some
sort of translation so that every theorem of $V$ is under that
translation also a theorem of $U$.

\begin{definition}
Let $U$ and $V$ be reasonable arithmetical theories.
An interpretation $j$ from $V$ in $U$ is a pair
$\langle \delta , F \rangle$. Here, $\delta$ is called a 
domain specifier. It is a formula with one free variable.
The $F$ is a map that sends an $n$-ary relation symbol of
$V$ to a formula of $U$ with $n$ free variables. 
(We treat functions and constants as relations with additional properties.)
The interpretation $j$ induces a translation from formulas 
$\varphi$ of $V$ to
formulas $\varphi^j$ of $U$ by replacing relation symbols by their
corresponding formulas and by relativizing quantifiers to $\delta$.
We have the following requirements.
\begin{itemize}
\item
$(R (\vec{x}))^j = F(R) (\vec{x})$

\item
The translation induced by $j$ commutes with the boolean 
connectives. Thus, for example,
$(\varphi \vee \psi)^j = \varphi^j \vee \psi^j$.
In particular
$(\bot)^j=(\vee_{\varnothing})^j = \vee_{\varnothing}=\bot$

\item
$(\forall x \ \varphi)^j= \forall x \ (\delta (x) \rightarrow \varphi^j)$

\item
$V\vdash \varphi \Rightarrow U\vdash \varphi^j$

\end{itemize}
We say that $V$ is interpretable in $U$ if there exists an interpretation
$j$ of $V$ in $U$.
\end{definition}

Using the ${\sf Prov}_T (\varphi)$ predicate, it is possible
to code the notion of formal interpretability in arithmetical
theories. This gives rise to a formula ${\sf Int}_T (\varphi , \psi )$,
to hold on the standard model precisely when $T+ \psi$
is interpretable in $T+\varphi$. This formula is related to the
modal part by means of arithmetical realizations.

\begin{definition}
An arithmetical realization $*$ is a mapping that assigns
to each propositional variable an arithmetical sentence. This mapping
is extended to all modal formulas in the following way.
\begin{itemize}
\item[-]
$(\varphi \vee \psi)^* = \varphi^* \vee \psi^*$ and likewise for 
other boolean connectives. In particular
$\bot^* = (\vee_{\varnothing})^*=\vee_{\varnothing}=\bot$.

\item[-]
$(\Box \varphi)^*= {\sf Prov}_T (\varphi^*)$

\item[-]
$(\varphi \rhd \psi)^*= {Int}_T (\varphi^*,\psi^*)$
\end{itemize}
\end{definition}
\noindent
From now on, the $*$ will always range over realizations.
Often we will write $\Box_T \varphi$ instead of ${\sf Prov}_T (\varphi)$
or just even $\Box \varphi$. The $\Box$ can thus denote both a 
modal symbol and an arithmetical formula. For the $\rhd$-modality we
adopt a similar convention. We are confident that no
confusion will arise from this.

\begin{definition}
An interpretability principle of a theory $T$ is a modal formula
$\varphi$ that is provable in $T$ under any realization. That is,
$\forall *\ T\vdash \varphi^*$. The interpretability logic of a
theory $T$, we write \intl{T}, is the set of
all interpretability principles.
\end{definition}

Likewise, we can talk of the set of all provability principles of
a theory $T$, denoted by
\provl{T}. Since the famous result by Solovay, \provl{T} is known for a large 
class of theories $T$.

\begin{theorem}[Solovay \cite{Sol76}]
$\provl{T} = \gl$ for any reasonable arithmetical theory $T$. 
\end{theorem}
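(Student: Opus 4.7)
The plan is to prove Solovay's arithmetical completeness theorem in two directions. For soundness, one shows by induction on \gl-proofs that for every realization $*$ and every $\gl$-theorem $\varphi$, $T \vdash \varphi^*$. The base cases are the tautologies (trivial), axiom \textsf{L1} (which reduces to the fact that $T$ proves provability distributes over implication), \textsf{L2} (formalized $\Sigma_1$-completeness applied to the $\Sigma_1$-sentence ${\sf Prov}_T(\varphi^*)$), and \textsf{L3} (Löb's theorem, formalized inside $T$). Necessitation is immediate from the recursive definition of ${\sf Proof}_T$, and Modus Ponens is preserved by $T$-provability. These are the standard Hilbert--Bernays--Löb derivability conditions, which any reasonable arithmetical theory satisfies.

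For completeness, I would argue by contraposition. Suppose $\gl \not\vdash \varphi$. By the modal completeness of \gl{} for finite, transitive, converse well-founded Kripke frames, there is a finite such model $M = \langle W, R, \Vdash \rangle$ with a node $r \in W$ where $r \not\Vdash \varphi$. Rename the worlds so that $W = \{1, \ldots, n\}$ with $r = 1$, and add a new root $0$ with $0 R i$ for all $i \in W$, extending $\Vdash$ arbitrarily on $0$. The heart of the argument is now Solovay's construction: using the arithmetized fixed-point theorem, one simultaneously defines sentences $S_0, S_1, \ldots, S_n$ together with a primitive-recursive function $h : \mathbb{N} \to \{0, 1, \ldots, n\}$ that "walks" through the extended frame, where $h$ is eventually constant and $S_i$ asserts $\lim_s h(s) = i$. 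The definition of $h$ is arranged so that transitions $h(s) \to h(s+1)$ respect $R$, and so that $h$ moves away from $0$ precisely when $T$ proves $\neg S_0$.

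The key properties to establish inside $T$ are: (i) exactly one $S_i$ is true; (ii) for $i \geq 1$, $T \vdash S_i \to \Box_T(S_{j_1} \vee \cdots \vee S_{j_k})$ where $j_1, \ldots, j_k$ are precisely the $R$-successors of $i$; and (iii) for $i \geq 1$ and each $R$-successor $j$ of $i$, $T \vdash S_i \to \Diamond_T S_j$. Crucially, in the standard model $h$ never leaves $0$, so $S_0$ is true and each $S_i$ ($i \geq 1$) is consistent with $T$. Defining the realization by $p^* := \bigvee\{ S_i \mid i \Vdash p, \, i \geq 1\}$, one proves by induction on the modal formula $\psi$ that $T \vdash S_i \to \psi^*$ whenever $i \Vdash \psi$, and $T \vdash S_i \to \neg \psi^*$ whenever $i \not\Vdash \psi$ (for $i \geq 1$). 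Applied to $i = 1$ and $\psi = \varphi$, this yields $T \vdash S_1 \to \neg \varphi^*$; since $S_1$ is consistent with $T$, we conclude $T \not\vdash \varphi^*$.

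The main obstacle is the Solovay construction itself: defining $h$ and $S_0, \ldots, S_n$ by a simultaneous fixed point and verifying items (i)--(iii) above, especially the delicate (iii), whose proof uses that if $T \vdash \neg S_j$ then $T$ proves $h$ never reaches $j$, which by the construction of $h$ forces $h$ to stay at $0$ from some point on, contradicting consistency of $S_i$. All this requires that $T$ contain enough arithmetic to formalize these finite-state computations and to carry out $\Sigma_1$-completeness reasoning, which is exactly the "reasonable arithmetical theory" hypothesis.
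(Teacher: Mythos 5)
The paper itself contains no proof of this theorem: it is imported as a black box from Solovay \cite{Sol76}, so there is no in-paper argument to compare yours against. Your sketch is the classical Solovay proof --- soundness from the Hilbert--Bernays--L\"ob derivability conditions plus formalized L\"ob's theorem for ${\sf L3}$, completeness by appending a root $0$ to a finite \gl-countermodel, defining the limit function $h$ and the sentences $S_0,\dots,S_n$ by the arithmetized fixed point theorem, establishing your properties (i)--(iii), and running the truth-lemma induction to get $T \vdash S_1 \to \neg\varphi^*$ --- and structurally it is the correct and standard route.

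There is, however, one step you assert rather than prove, and it is exactly the step where the hypothesis on $T$ does work that mere coding capacity cannot do: \emph{``in the standard model $h$ never leaves $0$, so $S_0$ is true and each $S_i$ is consistent with $T$.''} To prove $\lim h = 0$ one argues: if $\lim h = \ell \geq 1$, then $h$ entered $\ell$ on account of a genuine $T$-proof of $\neg S_\ell$; since ``$h$ reaches $\ell$'' is a true $\Sigma_1$-sentence, $T$ proves it, so $T$ proves the $\Sigma_1$-sentence ``$h$ reaches some $j$ with $\ell R j$'' --- and only if $T$ proves no false $\Sigma_1$-sentences does this contradict the $R$-increasing trajectory of $h$. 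So the completeness direction needs $T$ to be $\Sigma_1$-sound (equivalently here, $1$-consistent), not just to contain ${\sf I}\Delta_0+\Omega_1$ as in the paper's official definition of a reasonable arithmetical theory. This is not pedantry: for $T = \pa + \neg \cons{\pa}$, which contains ${\sf I}\Delta_0+\Omega_1$, one has $T \vdash \neg\cons{T}$, hence $\Box\bot \in \provl{T}\setminus\gl$, and the theorem as literally stated fails. You should make explicit where this soundness assumption enters your argument. A second, smaller gloss: for theories as weak as ${\sf I}\Delta_0+\Omega_1$, where the totality of exponentiation is not available, the original Solovay construction and the verification of (i)--(iii) need nontrivial modification (de Jongh--Jumelet--Montagna); your appeal to ``formalizing finite-state computations'' is unproblematic only from \isig{1} upwards.
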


For two classes of theories, \intl{T} is known.

\begin{definition}
A theory $T$ is reflexive if it proves the consistency of any of 
its finite subtheories. It is essentially reflexive if any finite
extension of it is reflexive.
\end{definition}

\begin{theorem}[Berarducci \cite{bera:inte90}, Shavrukov \cite{shav:logi88}]\label{theo:shav}
If $T$ is an essentially reflexive theory, then $\intl{T}=\ilm$.
\end{theorem}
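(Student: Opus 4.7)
The plan is to prove the theorem in the usual two directions: arithmetical soundness of \ilm in $T$, and arithmetical completeness (every non-theorem of \ilm is falsified by some realization in $T$).

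For the soundness direction, I would verify that each axiom schema of \ilm is an interpretability principle of $T$. The ${\sf L1}$--${\sf L3}$ axioms reduce to Solovay's theorem, and the ${\sf J1}$--${\sf J5}$ axioms are verified by the standard arithmetical constructions for interpretations (identity, composition, disjunction of interpretations via a case split on a $\Sigma_1$-sentence witnessing the case, the Orey--H\'ajek characterization for ${\sf J4}$ and ${\sf J5}$). The heart of the matter is the schema ${\sf M}$: given a realization $*$ and assuming $T\vdash A^*\rhd B^*$ and $T\vdash \Box C^*$, I would construct an interpretation of $T+B^*\wedge\Box C^*$ in $T+A^*\wedge\Box C^*$. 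The idea is to take the given interpretation $j\colon T+A^*\rhd T+B^*$ and pass to a definable cut $I$ inside the domain of $j$ on which $\Box C^*$ is still provable; essential reflexivity lets us verify, inside $T+A^*\wedge \Box C^*$, that $T+B^*$ restricted to $I$ is consistent with $\Box C^*$, and then Orey--H\'ajek turns that consistency statement into an honest interpretation. The role of essential reflexivity is exactly to allow the ``import'' of $\Box C^*$ through the interpretation via a cut.

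For the completeness direction, I would use the two standard ingredients in sequence. First, modal completeness of \ilm with respect to finite Veltman frames satisfying the ${\sf M}$ frame condition (the frame condition being roughly $uS_x v Rw \Rightarrow u R w \wedge w S_x w$ style monotonicity), which is the de Jongh--Veltman theorem and is developed by the machinery the present paper is setting up. Second, an arithmetical realization theorem of Solovay type, adapted to interpretability: given a finite \ilm-model $M=\langle W,R,S,\Vdash\rangle$ with root $w_0$ at which some formula $\varphi$ fails, I would embed $M$ into arithmetic by constructing, via the fixed-point lemma, sentences $\lambda_w$ (for $w\in W$) and an auxiliary ``limit'' sentence, arranged so that $T$-provability of $\lambda_w$ and $T$-interpretability between Boolean combinations of the $\lambda_w$ mirror the $R$- and $S_x$-relations of $M$. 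One then sets $p^* := \bigvee\{\lambda_w : w\Vdash p\}$ and proves by induction on modal complexity that $M,w\Vdash \psi \iff T+\lambda_w\vdash \psi^*$, which in particular gives $T\not\vdash \varphi^*$.

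The main obstacle is in the second step, the Solovay-style construction itself; this is where essential reflexivity is used in its strongest form. One has to simulate not only the $R$-transitions (which is Solovay's original problem) but also the ternary $S_x$-transitions on nested consistency statements, and the interpretation between the $\lambda_w$ must be produced uniformly using the Orey--H\'ajek characterization together with the fact that $T$ proves the consistency of each of its finite subtheories. The principal technical device is a recursive function that ``follows'' the Solovay chain through the model and whose limit behaviour is pinned down by self-reference; verifying the $\rhd$-clause of the truth lemma then reduces to showing that whenever $w R u$ and $u S_w v$ in $M$, one has $T + \lambda_u \rhd \lambda_v$ provably in $T$, and this step is precisely where the ${\sf M}$-condition on $M$ combined with essential reflexivity of $T$ is invoked.
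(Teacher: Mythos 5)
The paper offers no proof of this statement at all: Theorem \ref{theo:shav} is imported as a black-box citation to Berarducci and Shavrukov, and none of the paper's construction machinery is used (or intended) to establish it. So your proposal can only be measured against the literature proof, and as such it is a faithful reconstruction of the standard two-part argument: arithmetical soundness of \ilm, with ${\sf M}$ as the only schema that genuinely needs essential reflexivity, followed by arithmetical completeness obtained by composing the de Jongh--Veltman modal completeness of \ilm for finite Veltman frames with a Solovay-style embedding of a finite countermodel into arithmetic. Two caveats on your sketch. First, soundness of ${\sf M}$ is the schema proved \emph{inside} $T$, i.e.\ $T \vdash (A^*\rhd B^*) \to (A^*\wedge\Box C^* \rhd B^*\wedge\Box C^*)$, not the meta-level implication ``if $T\vdash A^*\rhd B^*$ and $T\vdash\Box C^*$ then \ldots'' that your wording suggests; consequently the Orey--H\'ajek characterization must be available provably in $T$, and that formalization is where the definable cuts do real work. (There is also a cleaner route you could have taken: over an essentially reflexive theory, formalized interpretability coincides with formalized $\Pi_1$-conservativity, and then ${\sf M}$ is almost immediate because $\Box C^*$ is $\Sigma_1$, so any $\Pi_1$ consequence $\pi$ of $B^*\wedge\Box C^*$ gives the $\Pi_1$ sentence $\Box C^*\to\pi$ as a consequence of $B^*$ and hence of $A^*$; your cut argument is this fact in disguise.) Second, in the completeness half your truth lemma discusses only the positive $\rhd$-clause (building interpretations along the $S_w$-transitions); the negative clause --- establishing $T \vdash \lambda_w \to \neg(A^*\rhd B^*)$ when $w\not\Vdash A\rhd B$, which requires refuting interpretability arithmetically --- is where most of the labour in Berarducci's and Shavrukov's proofs lies, and your sketch leaves it entirely implicit. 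These are gaps of detail in an otherwise correct reconstruction of a known proof, not errors of approach.
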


\begin{theorem}[Visser \cite{viss:inte90}]
If $T$ is finitely axiomatizable, then $\intl{T}=\ilp$.
\end{theorem}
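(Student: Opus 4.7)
The plan is to establish the two standard directions: arithmetical soundness of \ilp over any finitely axiomatizable reasonable arithmetical theory $T$, and arithmetical completeness, namely that every modal non-theorem of \ilp can be refuted under some realization in $T$.

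For soundness, the only axioms that require attention beyond those of \il are instances of ${\sf P}$, since \il itself is sound for arithmetical interpretations in all reasonable arithmetical theories. The key arithmetical fact I would use is that when $T$ is finitely axiomatizable, the statement ``$T+A$ interprets $T+B$'' can be expressed by a $\Sigma_1$-formula (equivalently, provably so in $T$). The reason is that $T+B$ has only finitely many axioms, so an interpretation $j$ witnessing $T+A \rhd T+B$ is a finite object, and the condition that $j$ interprets $T+B$ in $T+A$ amounts to finitely many instances of ${\sf Prov}_{T+A}(\psi^j)$ for the axioms $\psi$ of $T+B$, which is $\Sigma_1$. Hence, for every realization $*$, the arithmetical sentence $(A \rhd B)^*$ is (provably in $T$) $\Sigma_1$, and so by provable $\Sigma_1$-completeness $T \vdash (A \rhd B)^* \rightarrow \Box_T (A \rhd B)^*$, which gives soundness of ${\sf P}^*$. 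The remaining rules (Modus Ponens, Necessitation) transfer soundness from premises to conclusions in the usual way, by induction on \ilp-derivations.

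For the completeness direction, I would proceed in two steps. First, I would appeal to (or establish, via the modal toolkit the paper is developing) modal completeness of \ilp with respect to its class of frames, i.e.\ those \il-frames whose $S_x$-relations are suitably ``upward inherited'' along $R$ so as to validate ${\sf P}$. Thus, if $\ilp \not\vdash \varphi$, there exists a finite \ilp-frame $F$ with a world $w_0$ such that $F,w_0 \not\Vdash \varphi$. Second, I would perform a Solovay-style arithmetical embedding of $F$ into $T$, adapted to interpretability as in the Berarducci--Shavrukov style used for Theorem~\ref{theo:shav}, but exploiting finite axiomatizability instead of essential reflexivity. Concretely, one defines by the fixed point lemma a primitive recursive sequence of ``limit'' sentences $\lambda_i$ for each node $i$ of $F$, together with the sentence $\ell_i$ saying that the Solovay process eventually settles at $i$; the realization sends each propositional variable $p$ to the disjunction $\bigvee \{ \ell_i \mid F,i\Vdash p\}$. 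The standard Solovay-style lemmas then translate $R$-transitions to provability and $S_x$-transitions to interpretability inside $T$, so that $(\psi)^*$ holds at world $i$ (in the sense that $T \vdash \ell_i \rightarrow \psi^*$ or its negation, as appropriate) exactly when $F,i \Vdash \psi$. Applied to $w_0$, this yields $T \not\vdash \varphi^*$, completing the argument.

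The main obstacle, and indeed the technical heart of the theorem, is the second step: constructing the Solovay realization so that $S_x$-transitions are faithfully witnessed by interpretations inside $T$. The transition from provability (the Solovay construction for \gl) to interpretability requires using the finite axiomatizability of $T$ in an essential way, so that a ``local'' interpretation of $T + \psi^*$ in $T + \varphi^*$ can be built from a relative consistency statement together with a uniform Henkin-style cut construction that is definable inside $T$. Verifying that the frame condition corresponding to ${\sf P}$ is precisely what makes this embedding work, and checking the Solovay fixed-point lemmas for the $\rhd$-clauses in the presence of this condition, is the delicate part; everything else is bookkeeping over the cases in the inductive truth lemma.
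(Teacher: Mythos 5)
The paper does not prove this theorem at all: it is quoted as background, with the proof attributed to Visser \cite{viss:inte90}, so there is no in-paper argument to compare yours against. Judged on its own, your outline is the standard one and matches the structure of Visser's proof. The soundness half as you give it is essentially complete and correct: for finitely axiomatized $T$ the sentence $(A \rhd B)^*$ asserts the existence of a single finite translation under which the finitely many axioms of $T+B^*$ become $T+A^*$-provable, hence it is (provably) $\Sigma_1$, and provable $\Sigma_1$-completeness then yields every instance of ${\sf P}^*$; the \il-part is sound for all \rat, so induction on derivations finishes this direction.

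The completeness half, however, is where the entire content of the theorem lives, and there your text is a plan rather than a proof. You correctly identify the two ingredients — modal completeness of \ilp with respect to finite \ilp-frames (due to de Jongh and Veltman \cite{JoVe90}, not something the toolkit of this paper establishes; the paper only treats \il, \ilmo and \ilwstar) and a Solovay-style embedding in which $R$-transitions become provability and $S_x$-transitions become interpretability. But the delicate verifications are only named, not performed: one must show not merely that actual $S_x$-transitions are witnessed by interpretations (via consistency-on-a-cut and the arithmetized completeness theorem, which is where finite axiomatizability enters), but also the negative direction of the truth lemma for $\rhd$ — that when no suitable $S_x$-successor exists in the frame, $T$ plus the corresponding limit sentence refutes the interpretability statement. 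That refutation step is the genuinely hard part of Visser's argument and is not reconstructible from what you wrote. So: right strategy, sound soundness proof, but the arithmetical completeness direction remains an honest gap at the level of detail presented.
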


\begin{definition}\label{defi:ilall}
The interpretability logic of all reasonable arithmetical theories, 
we write
\intl{All}, is the set of formulas $\varphi$ such that
$\forall T  \, \forall *\ T\vdash \varphi^*$. Here the $T$ ranges over
all the reasonable arithmetical theories.
\end{definition}

For sure \intl{All} should be in the intersection of \ilm and \ilp.
Up to now, \intl{All} is unknown. In \cite{joo:prol00} it is conjectured
to be \extil{P_0W^*}. It is one of the major open problems in the 
field of interpretability logics, to characterize \intl{All} in a modal
way.

We conclude this subsection with a definition of the arithmetical 
hierarchy. This definition is needed in Section \ref{sect:essigma}.
\begin{definition}Inductively the following classes of arithmetical formulae are
defined.
\begin{itemize}

\item
Arithmetical formulas with only bounded quantifiers in it
are called $\Delta_0$, $\Sigma_0$ or $\Pi_0$-formulas.
\item
If $\varphi$ is a $\Pi_n$ or $\Sigma_{n+1}$-formula, then 
$\exists x \ \varphi$ is a $\Sigma_{n+1}$-formula.
\item
If $\varphi$ is a $\Sigma_n$ or $\Pi_{n+1}$-formula, then 
$\forall x \ \varphi$ is a $\Pi_{n+1}$-formula.
\end{itemize}
\end{definition}

\begin{definition}
Let $\varphi$ be an arithmetical formula.
\begin{itemize}
\item[-]
$\varphi \in \Pi_n(T) \ \mbox{ iff } \ \exists \, \pi {\in} \Pi_n \ 
T\vdash \varphi \leftrightarrow \pi$
\item[-]
$\varphi \in \Sigma_n(T) \ \mbox{ iff } \ \exists \, \sigma {\in} \Sigma_n \ 
T\vdash \varphi \leftrightarrow \sigma$
\item[-]
$\varphi \in \Delta_n(T) \ \mbox{ iff } \ \exists \, \pi {\in} \Pi_n \ \& \ 
\exists \, \sigma {\in} \Sigma_n \  T\vdash (\varphi \leftrightarrow \pi)
\wedge (\varphi \leftrightarrow \sigma)$
\end{itemize}

\end{definition}

Sometimes, if no confusion can arise,  we will write $\Sigma_n !$-formulas instead of
$\Sigma_n$-formulas and $\Sigma_n$-formulas instead of $\Sigma_n (T)$-formulas.

\section{General exposition of the construction method}\label{sect:overview}
\newcommand{\newforce}{{\ensuremath{\| \! \! \! \sim}}\xspace}
\newcommand{\notnewforce}{{\ensuremath{\| \! \! \! \not \sim}}\xspace}

A central result in this paper is given by a construction method that shall be worked out in the next section.
Most of the applications of this construction method
deal with modal completeness of a certain logic \extil{X}. 
More precisely, showing that a logic \extil{X} is modally 
complete amounts to constructing, or finding,
whenever $\extil{X} \not \vdash \varphi$, a model $M$ of \extil{X} and an $x\in M$
such that
$M,x\Vdash \neg \varphi$.
We will employ our construction method for this particular 
model construction.

In this section, we shall lay out the basic ideas which are involved in the construction method. In particular, we will not always give precise definitions of the notions we work with.
All the definitions can be found in Section \ref{sect:motor}.
 
\subsection{The main ingredients of the construction method}\label{subs:mainingredients}
%\subsection{The main concepts and ideas in the construction method}
%\begin{absorb}
%This subsection gives a motivation for the definitions in Subsection \ref{defins}
%and can be skipped by readers already familiar with completeness proofs for
%interpretability logics.
As we mentioned above, a modal completeness proof of a logic 
\extil{X} amounts to a uniform model construction to obtain
$M,x \Vdash \neg \varphi$ for $\extil{X} \not \vdash \varphi$.
If $\extil{X} \not \vdash \varphi$, then $\{ \neg \varphi \}$
is an \extil{X}-consistent set and thus, by a version of
Lindenbaum's Lemma (Lemma \ref{lemm:lindenbaum}), it is
extendible to a maximal \extil{X}-consistent set. On the
other hand, once we have an \extil{X}-model 
$M,x\Vdash \neg \varphi$, we can find, by Corollary \ref{coro:maximalsets}
a maximal \extil{X}-consistent set $\Gamma$ with $\neg \varphi \in \Gamma$.
This $\Gamma$ can simply be defined as the set of all formulas that hold 
at $x$.

To go from a maximal \extil{X}-consistent set to a model is always 
the hard part. This part is carried out in our construction method.
In this method, the maximal consistent set is somehow partly unfolded 
to a model.

Often in these sort of model constructions, the worlds in the model are
\mcs's. For propositional variables one then defines
$x\Vdash p \ \mbox{ iff.} \ p\in x$. In the setting of interpretability logics it is
%often necessary 
sometimes inevitable to use the same \mcs in different places in the 
model.\footnote{As the truth definition of $A\rhd B$ has a $\forall \exists$ 
character, the corresponding notion of bisimulation is rather involved.
As a consequence there is in general no obvious notion of a minimal bisimular model, 
contrary to the case of provability logics. This causes the necessity of several 
occurrences of \mcs's.} Therefore we find it convenient not to identify a world $x$
with a \mcs, but rather label it with a \mcs $\nu (x)$.
However, we will still write sometimes 
$\varphi \in x$ instead of $\varphi \in \nu (x)$.
\medskip

One complication in unfolding a \mcs to a model lies in the 
incompactness of the modal logics we consider. This, in turn, is
due to the fact that some frame conditions are not expressible in first 
order logic. As an example we can consider the following 
set.\footnote{This example comes from Fine and Rautenberg and is 
treated in Chapter 7 of \cite{Boo93}. }
\[
\Gamma \eqbydef \{ \Diamond p_0\} \cup 
\{  \Box (p_i \rightarrow \Diamond p_{i+1})\mid i\in \omega\}
\]
Clearly, $\Gamma$ is a \gl-consistent set, and any finite part of it
is satisfiable in some world in some model. However, it is 
not hard to see that in no \il-model all of $\Gamma$ can hold 
simultaneously in some world in it.

If $M$ is an \extil{X}-model and $x\in M$, then 
$\{ \varphi \mid M,x \Vdash \varphi\}$ is a \mcs.
By definition (and abuse of notation) we see that
\[
\forall x \; [x \Vdash \varphi \ \ \mbox{ iff. } \ \ \varphi \in x ] .
\]
We call this equivalence a truth lemma. 
(See for example Definition \ref{defi:truthlemma} for a more
precise formulation.) In all completeness proofs a model is
defined or constructed in which some form of a truth lemma 
holds. Now, by the observed incompactness phenomenon, we can
not expect that for every \mcs, say $\Gamma$, we can find a model
``containing'' $\Gamma$ for which a truth lemma holds in full 
generality. There are various ways to circumvent this 
complication. Often one considers
truncated parts of maximal consistent sets which are finite.
In choosing how to truncate, one is driven by two opposite forces.

On the one hand this truncated part should be small. It should be at least 
finite so that the incompactness phenomenon is blocked. The finiteness is
also a desideratum if one is interested in the decidability of a logic.

On the other hand, the truncated part should be large. It should be 
large enough to admit inductive reasoning to prove a 
truth lemma. For this, often closure under
subformulas and single negation suffices. Also, the truncated part should be
large enough so that \mcs's contain enough information to do the 
required calculation. For this, being closed under subformulas and 
single negations does not, in general, suffice. 
Examples of these sort of calculation are Lemma 
\ref{lemm:ilmdeficiencies} and Lemma \ref{lemm:ilmoexistence}.
\medskip

In our approach we take the best of both opposites. That is, we do not 
truncate at all. Like this, calculation becomes uniform, smooth and relatively
easy. However, we demand a truth lemma to hold only for finitely many formulas.

The question is now, how to unfold the \mcs containing $\neg \varphi$
to a model where $\neg \varphi$ holds in some world. We would have
such a model if a truth lemma holds w.r.t.\ a finite set \adset{D} 
containing $\neg \varphi$.

Proving that a truth lemma holds is usually done by induction on the 
complexity
of formulas. As such, this is a typical ``bottom up'' or ``inside out''
activity. On the other hand, unfolding, or reading off, the truth value of a formula
is a typical ``top down'' or ``outside in'' activity.

Yet, we do want to gradually build up a model  so that we get closer and closer to a truth
lemma. But, how could we possibly measure 
%this? 
that we come closer to a truth lemma?
Either everything is in place and 
a truth lemma holds, or 
%it does not 
%this is not so
%and then we are completely lost.
a truth lemma does not hold, in which case it seems unclear how to 
%repair 
%this.
measure to what extend it does not hold.

The gradually building up a model will take place by consecutively adding 
bits and pieces to the \mcs we started out with.
Thus somehow, we do want to measure that we come closer to a truth lemma 
by doing so. Therefore, we switch to an alternative forcing relation \newforce
that follows the ``outside in'' direction that is so characteristic to the
evaluation of $x\Vdash \varphi$, but at the same time incorporates the 
necessary elements of a truth lemma.

\[
\begin{array}{lll}
x \newforce p & \mbox{iff.} & p\in x \mbox{ \ \ \ \ \ \ \ \ \ \ \ for propositional variables $p$}\\
x \newforce \varphi \wedge \psi &\mbox{iff.} & x \newforce 
\varphi \ \& \ x\newforce \psi \mbox{ and likewise for}\\
\ &\ &\mbox{\ \ \ \ \ \ \ \ \ \ \ \ \ \ \ \ \ \ \  other boolean connectives}\\
x \newforce \varphi \rhd \psi & \mbox{iff.} & \forall y\ [ xRy \wedge \varphi \in x
\rightarrow \exists z\ (yS_xz \wedge \psi \in z)] 
\end{array}
\]
If \adset{D} is a set of sentences that is closed under subformulas and single negations,
then it is not hard to see that (see Lemma \ref{lemm:truth})
\[
\forall x \forall \, \varphi {\in} 
\adset{D} \ [ x\newforce \varphi \mbox{ iff. } \varphi \in x] \ \ \ (*)
\]
is equivalent to
\[
\forall x \forall \, \varphi {\in} 
\adset{D} \ [ x\Vdash \varphi \mbox{ iff. } \varphi \in x]. \ \ \ (**)
\]
Thus, if we want to obtain a truth lemma for a finite set \adset{D} that is closed under
single negations and subformulas, we are done if we can obtain $(*)$. But now it is clear
how we can at each step measure that we come closer to a truth lemma.
This brings us to the definition of problems and deficiencies.

A problem is some formula $\neg (\varphi \rhd \psi)\in x\cap \adset{D}$ such that
$x \notnewforce \neg (\varphi \rhd \psi)$. We define a deficiency to be a configuration
such that $\varphi \rhd \psi\in x\cap \adset{D}$ but 
$x \notnewforce \varphi \rhd \psi$. It now becomes clear how we can successively
eliminate problems and deficiencies.

A deficiency $\varphi \rhd \psi\in x\cap \adset{D}$ is a deficiency because there
is some $y$ (or maybe more of them) with $xRy$, and $\varphi \in y$, but for no $z$
with $yS_xz$, we have $\psi \in z$. This can simply be eliminated by adding a $z$
with $yS_xz$ and $\psi \in z$.

A problem $\neg (\varphi \rhd \psi)\in x\cap \adset{D}$ can be eliminated by 
adding a completely isolated $y$ to the model with $xRy$ and 
$\varphi, \neg \psi \in y$. As $y$ is completely isolated, $yS_xz \Rightarrow z=y$
and thus indeed, it is not possible to reach a world where $\psi$ holds. Now here is 
one complication.

We want that a problem or a deficiency, once eliminated, can never
re-occur. For deficiencies this complication is not so severe, as the quantifier 
complexity is $\forall \exists$. Thus, any time ``a deficiency becomes active'',
we can immediately deal with it.

With the elimination of a problem, things are 
more subtle. When we introduced $y\ni \varphi, \neg \psi$ to eliminate
a problem $\neg (\varphi \rhd \psi)\in x\cap \adset{D}$, we did indeed 
eliminate it, as for no $z$ with $yS_xz$ we have $\psi \in z$.
However, this should hold for any future expansion of the model too. Thus, any time
we eliminate a problem $\neg (\varphi \rhd \psi)\in x\cap \adset{D}$, we
introduce a world $y$ with a promise that in no future time we will be 
able to go to a world $z$ containing $\psi$ via an $S_x$-transition.
Somehow we should keep track of all these promises throughout the 
construction and make sure that all the promises are indeed kept. \
This is taken care of by our so called 
$\psi$-critical cones (see for example also \cite{DJ}). As $\psi$
is certainly not allowed to hold in $R$-successors of $y$, it 
is reasonable to demand that $\Box \neg \psi \in y$. (Where $y$ was 
introduced to eliminate the problem $\neg (\varphi \rhd \psi)\in x\cap \adset{D}$.)

Note that problems have quantifier complexity $\exists \forall$. We 
have chosen to call them problems due to their prominent existential nature.

\subsection{Some methods to obtain completeness}

For modal logics in general, quite an arsenal of methods to obtain completeness
is available. For instance the standard operations on canonical models
like path--coding (unraveling), filtrations and bulldozing (see \cite{Bla01}).
Or one can mention uniform methods like the use of Shalqvist formulas
or the David Lewis theorem \cite{Boo93}. A very secure method is
to construct counter models piece by piece. A nice example can be found
in \cite{Boo93}, Chapter 10. In \cite{HoMiVe01} and in 
\cite{hiho02} a step-by-step method is 
exposed in the setting of universal algebras. New approximations
of the model are given by moves in an (infinite) game.

For interpretability logics the available methods are rather limited in number.
In the case of the basic logic \il a relatively simple unraveling
works. Although \ilm does allow a same treatment, the proof is already
much less clear. (For both proofs, see \cite{DJ}).
However, for logics that contain \ilmo but not \ilm it is completely unclear
how to obtain completeness via an unraveling and we are forced into
more secure methods like the above mentioned building of models piece by piece.
And this is precisely what we do in this paper.

Decidability and the finite model property are two related issues that more or
less seem to divide the landscape of interpretability logics into the
same classes. That is, the proof that \il has the finite model property
is relatively easy. The same can be said about \ilm.
For logics like \ilmo the issue seems much more involved and a proper
proof of the finite model property, if one exists at all, has not been 
given yet.
Alternatively, one could resort to other methods for showing
decidability like the Mosaic method \cite{Bla01}.

\section{The construction method}\label{sect:motor}
In this section we describe our construction method in full detail. Sections \ref{sect:il}-\ref{sect:wstar} are applications of the construction method.

\subsection{Preparing the construction} \label{defins}

An $\extil{X}$-labeled frame is just a Veltman frame in which every node is labeled by a
maximal $\extil{X}$-consistent set and some $R$-transitions are labeled by a formula.
$R$-transitions labeled by a formula $C$ indicate that some $C$-criticallity is
essentially present at this place.

\begin{definition}
An \emph{$\extil{X}$-labeled frame} is a quadruple 
$\langle W, R, S, \nu \rangle$. Here $\langle W,R,S \rangle$ is an \il-frame and $\nu$
is a labeling function. The function $\nu$ assigns to each $x \in W$ a maximal 
\extil{X}-consistent set of sentences $\nu (x)$. To some pairs $ \langle x,y \rangle$
with $xRy$, $\nu$ assigns a formula $\nu (\langle x,y \rangle )$.
\end{definition}

If there is no chance of confusion we will just speak of labeled frames or even
just of frames rather than
\extil{X}-labeled frames. Labeled frames inherit all the terminology and notation
from normal frames. 
Note that an \extil{X}-labeled frame need not be, and shall in general not be,
an \extil{X}-frame. If we speak about a labeled \extil{X}-frame we 
always mean an \extil{X}-labeled \extil{X}-frame. To indicate that 
$\nu( \langle x,y \rangle ) = A$ we will sometimes write $xR^Ay$
or $\nu (x,y) = A$.

%preprint<
%\begin{absorb}
%Choose:
%The labeling $\nu$ will in general not be defined on all possible pairs. So, we do not
%deal with a function. If you want to go really formal, you can see $\nu$ as a subset 
%of $(W \cup (W \times W))\times ({\sf Form} \cup \wp ({\sf Form}))$ with some additional properties, 
%where ${\sf Form}$ is the set of all formulas.
%Alternatively one can see $\nu$ as a partial function on $(W \cup (W \times W)$ such that
%$\nu$ restricted to $W$ is total.
%\end{absorb}
%preprint>

Formally, given $F =\langle W,R,S,\nu\rangle$, one can see $\nu$
as a subset of 
$(W \cup (W \times W))\times (\formil \cup \{ \Gamma \mid \Gamma 
\mbox{ is a maximal \extil{X} consistent set} \})$ such that the following properties 
hold.
\begin{itemize}

\item[-]
$\forall \, x{\in} W \ (\langle x,y \rangle \in \nu \Rightarrow y
\mbox{ is a \mcs} )$

\item[-]
$\forall \, \langle x, y\rangle {\in} W\times W \ (\langle \langle x,y \rangle, z\rangle \in 
\nu \Rightarrow z
\mbox{ is a formula} )$

\item[-]
$\forall \, x{\in} W \exists y\ \langle x,y\rangle \in \nu$

\item[-]
$\forall x,y,y' (
%\exists y \ \langle x,y \rangle \in \nu \rightarrow 
%\forall y,y'\ (
\langle x,y\rangle \in \nu  \wedge \langle x,y'\rangle \in \nu 
\rightarrow y=y')
%)
$

\end{itemize}
We will often regard $\nu$ as a partial function on $W\cup (W\times W)$
which is total on $W$ and which has its values in
$\formil \cup \{ \Gamma \mid \Gamma \mbox{ is a maximal 
\extil{X} consistent set} \}$

\begin{remark}\label{rema:frameduality}
Every \extil{X}-labeled frame $F=\langle W, R, S, \nu \rangle$ can 
be transformed to an \il-model $\overline{F}$ in a 
uniform way by defining for propositional 
variables $p$ the valuation as 
$\overline{F},x\Vdash p$ iff.\ $p\in \nu (x)$. 
By Corollary \ref{coro:maximalsets} we can also regard any 
model 
$M$
satisfying the \extil{X} 
frame condition\footnote{We could even say, any \extil{X}-model.}
as an \extil{X}-labeled frame $\overline M$ by defining
$\nu (m) \eqbydef \{ \varphi \mid M,m \Vdash \varphi \}$.
\end{remark}

We sometimes refer to 
$\overline{F}$ as the model induced by the frame $F$. 
Alternatively we will speak about the model 
corresponding to $F$.
Note that for \extil{X}-models M, we have $\overline{\overline M}=M$,
but in general $\overline{\overline F}\neq F$ for \extil{X}-labeled frames $F$.

%
%\begin{absorb}
%Later?
%\end{absorb}
%

\begin{definition}\label{defi:critcone}
Let $x$ be a world in some \extil{X}-labeled frame $\langle W, R, S, \nu \rangle$.
The \emph{$C$-critical cone above $x$}, we write $\crone{C}{x}$, is defined inductively as
\begin{itemize}
\item
$\nu (\langle x,y \rangle)=C \Rightarrow y \in \crone{C}{x}$
\item
$x' \in \crone{C}{x} \ \& \  x'S_x y \Rightarrow y\in \crone{C}{x}$
\item
$x' \in \crone{C}{x} \ \& \  x'R y \Rightarrow y\in \crone{C}{x}$
\end{itemize}
\end{definition}

\begin{definition}\label{defi:gencone}
Let $x$ be a world in some \extil{X}-labeled frame $\langle W, R, S, \nu \rangle$.
The \emph{generalized $C$-cone above $x$}, we write $\geone{C}{x}$, is defined inductively as
\begin{itemize}
\item
$y \in \crone{C}{x} \Rightarrow  y \in \geone{C}{x}$
\item
$x' \in \geone{C}{x}  \ \& \ x'S_wz \Rightarrow z \in \geone{C}{x}$ for arbitrary $w$
\item
$x' \in \geone{C}{x}  \ \& \ x'Ry \Rightarrow y \in \geone{C}{x}$
\end{itemize}
\end{definition}

%preprint<
It follows directly from the definition that the $C$-critical cone above $x$
is part of the generalized $C$-cone above $x$. So, if $\geone{B}{x} \cap \geone{C}{x} = \varnothing$,
then certainly  $\crone{B}{x} \cap \crone{C}{x} = \varnothing$.

We also note that there is some redundancy in Definitions \ref{defi:critcone} and 
\ref{defi:gencone}. The last clause in the inductive definitions demands closure of 
the cone under $R$-successors. But from Definition 
\ref{defi:frames}.\ref{defi:point:inclusion} closure
of the cone under $R$ follows from closure of the cone under $S_x$.
We have chosen to 
explicitly  adopt the closure under the $R$. In doing so, we obtain a notion that
serves us also in the environment of so-called quasi frames (see Definition 
\ref{defi:quasiframes}) in which not necessarily 
$(x{\upharpoonright})^2\cap R \subseteq S_x$.

\begin{definition}\label{defi:truthlemma}
Let $F = \langle W, R, S, \nu \rangle$ be a labeled frame and let $\overline{F}$
be the induced \il-model. Furthermore, let \adset{D} be some set of sentences.
We say that \emph{a truth lemma holds in $F$ with respect to \adset{D}}
if $\forall \,  A {\in} \adset{D}\; \forall \, x{\in} \overline{F}$
\[
\overline{F}, x \Vdash A \Leftrightarrow A \in \nu (x).
\]
\end{definition}

If there is no chance of confusion we will omit some parameters and just say 
``a truth lemma holds at $F$'' or even ``a truth lemma holds''.
The following definitions give us a means to measure how far we are away from a
truth lemma.

\begin{definition}[Temporary definition]\footnote{We will eventually work with
Definition \ref{defi:problems}.}
Let \adset{D} be some set of sentences and let 
$F = \langle W, R, S, \nu \rangle$ be an \extil{X}-labeled frame. A \emph{\adset{D}-problem}
is a pair $\langle x, \neg (A \rhd B ) \rangle$ such that 
$\neg (A \rhd B) \in \nu (x) \cap \adset{D}$
and for every $y$ with $xRy$ we have 
$[A \in \nu (y) \Rightarrow \exists z \; (yS_xz \wedge B \in \nu (z))]$.
\end{definition}

\begin{definition}[Deficiencies]
Let \adset{D} be some set of sentences and let 
$F = \langle W, R, S, \nu \rangle$ be an \extil{X}-labeled frame. A \emph{\adset{D}-deficiency}
is a triple $\langle x,y, C \rhd D \rangle$ with $xRy$, 
$C\rhd D \in \nu (x) \cap \adset{D}$, and $C\in \nu (y)$,
but for no $z$ with $yS_xz$ we have $D\in \nu (z)$.
\end{definition}

If the set $\adset{D}$ is clear or fixed, we will just speak about problems and deficiencies.

\begin{definition}
Let $A$ be a formula. We define the \emph{single negation} of $A$, we write ${\sim} A$,
as follows. If $A$ is of the form $\neg B$ we define ${\sim} A$ to be $B$. If $A$ is
not a negated formula we set ${\sim} A := \neg A$.
\end{definition}

The next lemma shows that a truth lemma w.r.t.\ \adset{D} can be 
reformulated in the combinatoric terms of deficiencies and problems. 
(See also the equivalence of $(*)$ and $(**)$ in Section \ref{sect:overview}.)

\begin{lemma}\label{lemm:truth}
Let $F = \langle W, R, S, \nu \rangle$ be a labeled frame, and let $\adset{D}$ be 
a set of sentences closed under single negation and subformulas. A truth lemma 
holds in $F$ w.r.t.\ \adset{D} iff. there are no \adset{D}-problems nor
\adset{D}-deficiencies.
\end{lemma}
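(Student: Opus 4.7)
The proof splits along the biconditional. For $(\Rightarrow)$, I would argue by direct contradiction: assuming the truth lemma holds in $F$ with respect to $\adset{D}$, a hypothetical $\adset{D}$-deficiency $\langle x, y, C \rhd D \rangle$ would yield $\overline{F}, x \Vdash C \rhd D$ via the truth lemma applied to $C \rhd D \in \adset{D}$, together with $\overline{F}, y \Vdash C$ via the truth lemma applied to $C$ (which lies in $\adset{D}$ by subformula closure). These force some $z$ with $y S_x z$ satisfying $\overline{F}, z \Vdash D$, hence $D \in \nu(z)$ by the truth lemma --- contradicting the deficiency. A symmetric unfolding of $\overline{F}, x \Vdash \neg (A \rhd B)$ disposes of any problem.

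For $(\Leftarrow)$, I would assume there are no problems and no deficiencies and prove the truth lemma by induction on the complexity of $A \in \adset{D}$. The atomic cases follow immediately from the definition of $\overline{F}$ and the consistency of the MCS $\nu(x)$; the Boolean cases are routine, using maximal consistency of $\nu(x)$ together with the closure of $\adset{D}$ under subformulas and single negation to apply the induction hypothesis to the immediate (possibly negated) subformulas.

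The heart of the induction is the case $A = B \rhd C$, treated by a case split on whether $B \rhd C \in \nu(x)$ or $\neg (B \rhd C) \in \nu(x)$ (one holds by maximality). In the first subcase, any hypothetical failure $\overline{F}, x \not\Vdash B \rhd C$ supplies a witness $y$ with $xRy$, $y \Vdash B$, and no $yS_x z$ forcing $C$; the induction hypothesis on $B, C \in \adset{D}$ then promotes this to a $\adset{D}$-deficiency, a contradiction. In the second subcase --- where $\neg (B \rhd C) \in \adset{D}$ by single-negation closure --- any hypothetical $\overline{F}, x \Vdash B \rhd C$ transfers through the induction hypothesis to the problem condition at $\langle x, \neg(B \rhd C) \rangle$, again contradicting the assumption. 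Finally, the case $A = \Box B$ is reduced via the $\il$-tautology $\Box B \leftrightarrow \neg B \rhd \bot$ of Lemma \ref{lemm:basicil}, which is valid in $\nu(x)$ by closure of the MCS under $\il$ and in $\overline{F}$ by soundness (Lemma \ref{lemm:ilsound}); this converts the $\Box$ question into a $\rhd$ question. I expect this last case to be the main subtlety, since $\neg B \rhd \bot$ need not itself lie in $\adset{D}$ and so the $\rhd$-clause of the induction cannot be invoked verbatim; the tailored argument uses the consistency of each $\nu(z)$ (rendering the potential side condition $\bot \in \nu(z)$ vacuous) together with the induction hypothesis on $B \in \adset{D}$ to close the reduction.
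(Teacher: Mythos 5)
Your left-to-right direction and the atomic, Boolean, and $\rhd$ cases of the induction are all correct, and they constitute the ``very simple'' interplay that the paper alludes to (the paper prints no actual proof of this lemma, so yours goes well beyond it). The genuine gap is the $\Box$ case, and it is not a detail that can be deferred: the ``tailored argument'' you appeal to does not exist under the reading you adopt. You correctly note that $\neg B \rhd \bot$ need not lie in \adset{D}; but then the hypothesis ``no \adset{D}-problems and no \adset{D}-deficiencies'' carries no information whatsoever about $\Box$-formulas, because problems and deficiencies are defined only for $\rhd$-formulas lying in $\nu(x)\cap\adset{D}$. Consistency of the sets $\nu(z)$ plus the induction hypothesis on $B$ cannot manufacture the missing link between the $\Box$-formulas in $\nu(x)$ and the labels of the $R$-successors of $x$. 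Indeed, with $\Box$ taken as a primitive connective the right-to-left direction of the lemma is simply false: take $W=\{x,y\}$, $R=\{\langle x,y\rangle\}$, $S_x=\{\langle y,y\rangle\}$, let $\nu(x)$ be an \mcs containing $\Box p$ and $\nu(y)$ an \mcs containing $\neg p$, and let $\adset{D}=\{p,\neg p,\Box p,\neg\Box p\}$. This \adset{D} is closed under subformulas and single negation, and there are vacuously no \adset{D}-problems and no \adset{D}-deficiencies (\adset{D} contains no $\rhd$-formulas), yet $\overline F,x\not\Vdash\Box p$ while $\Box p\in\nu(x)$. A one-point frame whose label contains $\neg\Box p$ refutes the converse inclusion, even for adequate frames. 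So no argument from the stated hypotheses can close your $\Box$ case.

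The conclusion to draw is that the lemma, and your proof of it, only work if $\Box B$ is read as an abbreviation of the $\rhd$-formula $\neg B\rhd\bot$ (equivalently, if the closure conditions on \adset{D} are strengthened so that $\Box B\in\adset{D}$ forces $\neg B\rhd\bot$ and its single negation into \adset{D}). That is the standard convention in this literature and surely the authors' intent, given Lemma \ref{lemm:basicil}.\ref{lemm:basicil:box}. Under that reading your closing computation is exactly right and needs no tailoring at all, since the $\rhd$ case of your induction already covers it: ``no deficiencies'' for $\langle x,y,\neg B\rhd\bot\rangle$ means every $R$-successor $y$ with $\neg B\in\nu(y)$ has an $S_x$-successor whose label contains $\bot$, which is impossible, so $B\in\nu(y)$ for all $y$ with $xRy$; and ``no problems'' for $\langle x,\neg(\neg B\rhd\bot)\rangle$ produces, when $\Box B\notin\nu(x)$, a successor $y$ with $\neg B\in\nu(y)$. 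Combined with the induction hypothesis on $B$ this is the entire $\Box$ case. So you must either adopt the abbreviation convention explicitly or add $\Box$-problems and $\Box$-deficiencies to the statement; as written, your $\Box$ case is a hole that cannot be patched from the stated hypotheses.
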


\begin{proof}
The proof is really very simple and precisely shows they interplay between all the 
ingredients.
\end{proof}

The labeled frames we will construct are always supposed to satisfy some minimal reasonable
requirements. We summarize these in the notion of adequacy.

\begin{definition}[Adequate frames]
A frame is called \emph{adequate} if the following conditions are satisfied.
\begin{enumerate}
\item
$xRy \Rightarrow \nu (x) \sucs \nu (y)$

\item
$A \neq B \Rightarrow \geone{A}{x} \cap \geone{B}{x} = \varnothing$

\item
$y \in \crone{A}{x} \Rightarrow \nu (x) \crit{A} \nu (y)$
\end{enumerate}
\end{definition}

If no confusion is possible we will just speak of frames instead of adequate
labeled frames. As a matter of fact, all the labeled frames we will see from
now on will be adequate.
In the light of adequacy it seems reasonable to work with a slightly more elegant 
definition of a \adset{D}-problem.

\begin{definition}[Problems]\label{defi:problems}
Let \adset{D} be some set of sentences. 
A \emph{\adset{D}-problem} is a pair $\langle x , \neg (A \rhd B)\rangle$ such that 
$\neg (A \rhd B) \in \nu (x) \cap \adset{D}$ 
and for no $y \in \crone{B}{x}$ we have $A \in \nu (y)$.
\end{definition}

From now on, this will be our working definition. Clearly, on adequate labeled frames,
if $\langle x , \neg (A \rhd B)\rangle$
is not a problem in the new sense, it is not a problem in the old sense.

\begin{remark}\label{rema:lemma}
It is also easy to see that the we still have the interesting half of Lemma \ref{lemm:truth}.
Thus, we still have, that a truth lemma holds if there are no deficiencies nor problems.
\end{remark}

To get a truth lemma we have to somehow get rid of problems and deficiencies.
This will be done by adding bits and pieces to the original labeled frame. Thus the 
notion of an extension comes into play.

\begin{definition}[Extension]
Let $F = \langle W,R,S, \nu \rangle$ be a labeled frame. We say that 
$F' = \langle W',R',S', \nu' \rangle$ is an \emph{extension} of $F$, we
write $F \ext F'$, if $W \subseteq W'$ and the relations in $F'$ restricted to
$F$ yield the corresponding relations in $F$. 
%Moreover we demand $W'\setminus W$ to be finite.
\end{definition}

More formally, the requirements on the restrictions in the above definition amount to  
saying that for $x,y,z \in F$ we have the 
following. 
\begin{itemize}
\item[-] $xR'y$ iff. $xRy$ 
\item[-]$yS'_xz$ iff. $yS_xz$ 
\item[-] $\nu' (x) = \nu (x)$
\item[-] $\nu' (\langle x, y \rangle)$ is defined iff.  $\nu (\langle x, y \rangle)$ 
is defined, and in this case  $\nu' (\langle x, y \rangle) = \nu (\langle x, y \rangle)$. 
\end{itemize}

A problem in $F$ is said to be \emph{eliminated} by the 
extension $F'$ if it is no longer a problem 
in $F'$. Likewise we can speak about elimination of deficiencies.

\begin{definition}[Depth]
The \emph{depth} of a finite frame $F$, we will write $\depth{F}$ is the maximal length of 
sequences of the form $x_0R \ldots R x_n$. (For convenience we define 
$\max (\varnothing)=0$.)
\end{definition}

\begin{definition}[Union of Bounded Chains]
An indexed set $\{ F_i\}_{i \in \omega}$ of labeled frames is called a \emph{chain} if
for all $i$, $F_i \ext F_{i+1}$. It is called a \emph{bounded chain} if for some 
number $n$, $\depth{F_i}\leq n$ for all $i\in \omega$. The \emph{union} of a bounded 
chain $\{ F_i\}_{i \in \omega}$ of labeled frames $F_i$ is defined as follows.
\[
\cup_{i\in \omega} F_i 
\eqbydef \langle \cup_{i\in \omega} W_i,  \cup_{i\in \omega}R_i,  \cup_{i\in \omega}S_i, 
\cup_{i\in \omega}\nu_i \rangle
\]
%F^{\infty}
\end{definition}

It is clear why we really need the boundedness condition. We want 
the union to be an \il-frame. So, certainly $R$ should be conversely 
well-founded. This can only be the case if our chain is bounded.

\subsection{The main lemma}

We now come to the main motor behind many results. It is formulated in rather general terms
so that it has a wide range of applicability. As a draw-back, we get that any application still
requires quite some work.

\begin{lemma}[Main Lemma] \label{lemm:main}

Let $\extil{X}$ be an interpretability logic and let 
%closed under taking subformulas. 
$\mathcal{C}$ be a (first or higher order) frame condition such that
for any \il-frame $F$ we have 
\[
F \models \mathcal{C} \Rightarrow F \models {\sf X}.
\] 
%Suppose that for any finite set of sentences $\mathcal{D}$ 
Let $\adset{D}$ be a finite set of sentences.
%there is a set 
Let $\mathcal{I}$ be a set of so-called \emph{invariants} 
of labeled frames so that we have 
the following properties.
\begin{itemize}
\item
$F \models \mathcal{I}^{\mathcal{U}} \Rightarrow
F \models \mathcal{C}$,
where $\mathcal{I}^{\mathcal{U}}$ is that part of $\mathcal{I}$ that is closed
under bounded unions of labeled frames.

\item
$\mathcal{I}$ contains the following invariant: 
$xRy \rightarrow \exists \, A {\in} (\nu (y) \setminus \nu (x))\cap 
\{ \Box \neg D \mid D$  a subformula of some  $B \in \adset{D} \}$.
%
%\item
%Any invariant in $\mathcal{I}$ holds on
%any one-point labeled frame.
%
\item
For any  adequate labeled frame  $F$, satisfying all the invariants,
we have the following.

\begin{itemize}

\item
Any \adset{D}-problem of $F$ can be eliminated by extending 
%the labeled frame 
$F$
in a way that conserves all invariants.

\item
Any \adset{D}-deficiency of $F$
%$\langle x,y, C\rhd D \rangle$ of $F$ such that $x\prec_B y$ 
can be eliminated by extending 
$F$
%the labeled frame 
in a way that conserves all invariants.
% and in a way that
%nd that moreover 
%no new deficiencies w.r.t. $x$ occur and 
%all additions lie $B$-critical above $x$.
\end{itemize}
\end{itemize}

In case such a set of invariants $\mathcal{I}$ exists, we have that
any \extil{X}-labeled adequate frame $F$ satisfying all the invariants 
%on which a 
%truth-lemma with respect to $\mathcal{D}$  is not violated 
can be extended to 
some labeled adequate \extil{X}-frame $\hat F$ on which a truth-lemma with 
respect to 
$\adset{D}$ holds.

Moreover, if for any finite \adset{D} that is closed under
subformulas and single negations, a corresponding set of  
invariants $\mathcal{I}$ can be found as above and such that moreover 
$\mathcal{I}$
holds on any one-point 
labeled frame, we have that $\extil{X}$ is a complete logic.
\end{lemma}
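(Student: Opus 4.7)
The plan is to build, by stages, a chain $F = F_0 \ext F_1 \ext F_2 \ext \ldots$ of adequate labeled frames, each satisfying all invariants in $\mathcal{I}$. I fix a fair enumeration (say by a diagonal scheme) of all candidate pairs $\langle x, \psi \rangle$ with $x$ a world appearing at some finite stage and $\psi$ a formula in $\adset{D}$. At stage $n$ I check whether the currently picked pair witnesses an active $\adset{D}$-problem or $\adset{D}$-deficiency in $F_n$, and if so I invoke the corresponding elimination procedure guaranteed by the hypothesis to produce $F_{n+1}$. Because the supplied procedures conserve every invariant (and, implicit in any reasonable such procedure, preserve adequacy), the sequence lives inside the class of adequate labeled frames satisfying $\mathcal{I}$.

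The key structural fact that makes the union legitimate is that the whole chain is bounded in depth, thanks to the designated $\Box$-invariant combined with adequacy. Along any $R$-chain $x_0 R x_1 R \cdots R x_k$, the invariant supplies, for each step, some fresh $\Box \neg D_i \in \nu(x_{i+1}) \setminus \nu(x_i)$ with $D_i$ a subformula of a sentence in $\adset{D}$, while adequacy's condition $\nu(x_i) \sucs \nu(x_{i+1})$ forces all $\Box$-formulas of $\nu(x_i)$ to be inherited by $\nu(x_{i+1})$. Hence the traces $\nu(x_i) \cap \{\Box \neg D \mid D \text{ a subformula of some } B \in \adset{D}\}$ form a strictly ascending chain of subsets of a finite set, bounding $k$ by that set's cardinality. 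So $\{F_n\}$ is a bounded chain, and $\hat F := \bigcup_n F_n$ is a well-defined labeled frame.

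I then verify that $\hat F$ achieves what we want. Elimination is permanent: once a problem $\langle x, \neg(A \rhd B)\rangle$ is disposed of by installing a witness $y \in \crone{B}{x}$ with $A \in \nu(y)$, that witness persists in every later $F_m$, since cones and $\nu$-values only grow; deficiencies behave the same way. By fairness of the enumeration $\hat F$ contains no $\adset{D}$-problems or $\adset{D}$-deficiencies, and Lemma~\ref{lemm:truth} with Remark~\ref{rema:lemma} yields a truth lemma at $\hat F$ with respect to $\adset{D}$. Adequacy lifts to $\hat F$ because each of its three clauses is preserved under the monotone growth of $R$, $S$, and the cones. Finally, each $F_n$ satisfies $\mathcal{I}$, hence in particular $\mathcal{I}^{\mathcal{U}}$; since $\{F_n\}$ is a bounded chain and $\mathcal{I}^{\mathcal{U}}$ is by definition closed under bounded unions, $\hat F \models \mathcal{I}^{\mathcal{U}}$, and then by hypothesis $\hat F \models \mathcal{C}$, so $\hat F$ is a genuine labeled \extil{X}-frame.

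For the completeness clause, assume $\extil{X} \not\vdash \varphi$ together with the hypothesis that suitable invariants exist for every finite, subformula- and single-negation-closed $\adset{D}$ and are valid on any one-point labeled frame. I take $\adset{D}$ to be the closure of $\{\varphi\}$ under subformulas and single negation, use Lemma~\ref{lemm:lindenbaum} to extend the consistent $\{\neg\varphi\}$ to an \extil{X}-\mcs $\Gamma$, and start from the one-point labeled frame $F_0$ with $\nu(x_0) = \Gamma$ (trivially adequate, and satisfying $\mathcal{I}$ by assumption). Applying the first half of the lemma produces an \extil{X}-frame $\hat F \supseteq F_0$ on which a truth lemma for $\adset{D}$ holds, so $\overline{\hat F}, x_0 \Vdash \neg\varphi$ and $\varphi$ is semantically refuted. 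The main obstacle I expect is the bookkeeping needed for fairness in the presence of ever-growing sets of worlds and of newly-generated problems and deficiencies; a subtler concern is checking that the given elimination procedures do not accidentally break adequacy (notably, the pairwise disjointness of generalized cones) while they preserve $\mathcal{I}$, which is why adequacy must be threaded through the setup as part of the induction hypothesis at every stage.
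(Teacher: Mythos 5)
Your proposal is correct and takes essentially the same approach as the paper's own (very terse) proof: successive elimination of problems and deficiencies along a chain of extensions, with the invariants guaranteeing that eliminations are permanent and that the depth-bounded union is an adequate labeled \extil{X}-frame (via $\mathcal{I}^{\mathcal{U}}$ and the frame condition $\mathcal{C}$), and with the one-point frame labeled by a maximal consistent set containing the refuted formula seeding the completeness clause. You in fact supply details the paper leaves implicit, notably the fairness bookkeeping, the boundedness-of-depth argument from the designated $\Box$-invariant together with $\nu(x)\sucs\nu(y)$, and the preservation of adequacy in the limit.
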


\begin{proof}
By subsequently eliminating problems and 
deficiencies by means of extensions. These elimination processes have to be robust in
the sense that 
%everything 
every problem or deficiency
that has been dealt with, should not possibly re-emerge. But, the requirements of the lemma 
almost immediately imply this.

For the second part of the Main Lemma,
we suppose that 
for any finite set \adset{D} closed under subformulas and single negations,
we can find a corresponding set of invariants $\mathcal{I}$.
If now, for any such \adset{D},
all the corresponding invariants $\mathcal{I}$ hold on any one-point
labeled frame, we are to see that \extil{X} is a complete logic, that is,
$\extil{X} \nvdash A \Rightarrow \exists M\; 
(M \models X \ \& \ M \models \neg A)$.

But this just follows from the above. 
If $\extil{X} \nvdash A$, we can find a maximal \extil{X}-consistent set 
$\Gamma$ with $\neg A \in \Gamma$. Let \adset{D} be the smallest set
that contains $\neg A$ and is closed under subformulas and single negations
and consider the invariants corresponding to \adset{D}.
The labeled frame $F\eqbydef \langle \{ x \} , \varnothing , \varnothing , 
\langle x, \Gamma \rangle \rangle$ can thus be extended to a labeled
adequate \extil{X}-frame $\hat F$ on which a truth lemma with respect to 
\adset{D} holds.
Thus certainly $\overline{\hat F} , x \Vdash \neg A$, that is, $A$ is not 
valid on the model induced by ${\hat F}$.
\end{proof}

The construction method can also be used to obtain decidability via the 
finite model property. In such a case, one should re-use worlds that were 
introduced earlier in the construction.

The following two lemmata indicate how good labels can be found for the
elimination of problems and deficiencies.

\begin{lemma}\label{lemm:problems}
Let $\Gamma$ be a maximal \extil{X}-consistent set such that $\neg (A\rhd B) \in \Gamma$.
Then there exists a maximal \extil{X}-consistent set $\Delta$ such that 
$\Gamma \crit{B} \Delta \ni A, \Box \neg A$.
\end{lemma}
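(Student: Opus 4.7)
The plan is a standard Lindenbaum-style construction. Let
\[
\Delta_0 \eqbydef \{A, \Box \neg A\} \cup \{\neg C, \Box \neg C \mid C \rhd B \in \Gamma\}.
\]
If I can show $\Delta_0$ is \extil{X}-consistent, then by Lemma \ref{lemm:lindenbaum} it extends to a maximal \extil{X}-consistent $\Delta$, and the three desired conclusions ($A \in \Delta$, $\Box\neg A \in \Delta$, and $\Gamma \crit{B} \Delta$) are immediate from the definition of $\crit{B}$.

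So the whole job is the consistency of $\Delta_0$. I will argue by contradiction. If $\Delta_0$ is inconsistent, then by the deduction theorem and the fact that a derivation uses only finitely many premisses, there exist formulas $C_1,\ldots,C_n$ with each $C_i \rhd B \in \Gamma$ such that
\[
\extil{X} \vdash A \wedge \Box \neg A \to \bigvee_{i\leq n} \bigl(C_i \vee \Diamond C_i\bigr).
\]
Applying Necessitation and axiom ${\sf J1}$, this gives
\[
\extil{X} \vdash A \wedge \Box \neg A \rhd \bigvee_{i\leq n} \bigl(C_i \vee \Diamond C_i\bigr).
\]
Now I want to collapse the right-hand side down to $B$. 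For each $i$, Lemma \ref{lemm:basicil}(3) gives $C_i \vee \Diamond C_i \rhd C_i$, and $C_i \rhd B$ lies in $\Gamma$, so by ${\sf J2}$ we obtain $C_i \vee \Diamond C_i \rhd B$ (within $\Gamma$). Applying ${\sf J3}$ repeatedly yields $\bigvee_{i\leq n}(C_i \vee \Diamond C_i) \rhd B$, and then another application of ${\sf J2}$ gives $A \wedge \Box \neg A \rhd B \in \Gamma$. Composing with Lemma \ref{lemm:basicil}(2), which says $A \rhd A \wedge \Box \neg A$, one final application of ${\sf J2}$ yields $A \rhd B \in \Gamma$, contradicting $\neg(A\rhd B)\in\Gamma$.

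The only delicate point is the very first step: turning the propositional inconsistency of $\Delta_0$ into something of the shape $A \wedge \Box \neg A \to \bigvee(C_i \vee \Diamond C_i)$. The asymmetry between $\neg C_i$ and $\Box \neg C_i$ (equivalently $\neg \Diamond C_i$) in $\Delta_0$ is exactly what produces the $C_i \vee \Diamond C_i$ on the right, which is what lets Lemma \ref{lemm:basicil}(3) do its job. Apart from checking that bookkeeping, everything else is a mechanical combination of ${\sf J1}$--${\sf J3}$ and Lemma \ref{lemm:basicil}; no induction or auxiliary construction is needed.
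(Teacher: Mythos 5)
Your proof is correct and takes essentially the same route as the paper's: both argue by contradiction from the inconsistency of the same candidate set, extract finitely many $C_1,\ldots,C_n$ with $C_i \rhd B \in \Gamma$ via finiteness of derivations, and then combine Lemma \ref{lemm:basicil} with ${\sf J1}$--${\sf J3}$ to conclude $A \rhd B \in \Gamma$, contradicting $\neg(A\rhd B)\in\Gamma$. The only cosmetic difference is that the paper first collapses the $C_i$ into a single formula $C \eqbydef C_1 \vee \ldots \vee C_n$ (applying ${\sf J3}$ to the facts $C_i \rhd B \in \Gamma$ before building the $\rhd$-chain), whereas you keep the disjunction $\bigvee_{i}(C_i \vee \Diamond C_i)$ and apply ${\sf J3}$ at the end.
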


\begin{proof}
So, consider $\neg (A\rhd B) \in \Gamma$, and suppose that no required $\Delta$ exists.
We can then find a\footnote{Writing out the definition and by compactness, we 
get a finite number of formulas 
$C_1, \ldots , C_n$ with $C_i \rhd B \in \Gamma$, 
such that $\neg C_1 ,\ldots ,\neg C_n, 
\Box \neg C_1, \ldots ,\Box \neg C_n , 
A , \Box \neg A \vdash_{\extil{X}} \bot$. We can now take $C \eqbydef C_1 \vee \ldots \vee 
C_n$. Clearly, as all the $C_i \rhd B \in \Gamma$, also $C\rhd B \in \Gamma$.}
formula $C$ for which $C\rhd B \in \Gamma$ such that
\[
\neg C , \Box \neg C , A , \Box \neg A \vdash_{\extil{X}} \bot .
\]
Consequently
\[
\vdash_{\extil{X}} A \wedge \Box \neg  A \rightarrow C \vee \Diamond C
\]
and thus, by Lemma \ref{lemm:basicil},
also $\vdash_{\extil{X}} A \rhd C$. But as $C\rhd B \in \Gamma$, also 
$A\rhd B \in \Gamma$. This clearly contradicts the consistency of $\Gamma$. 
\end{proof}

For deficiencies there is a similar lemma.

\begin{lemma}\label{lemm:deficiencies}
Consider $C \rhd D \in \Gamma \crit{B} \Delta \ni C$. There exists $\Delta'$
with $\Gamma \crit{B} \Delta'\ni D, \Box \neg D$. 
\end{lemma}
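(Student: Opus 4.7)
The plan is to mimic the strategy of Lemma \ref{lemm:problems}. I would try to build $\Delta'$ via Lindenbaum's Lemma by showing that the set
\[
\Sigma \;\eqbydef\; \{D, \Box \neg D\} \;\cup\; \{\neg E, \Box \neg E \mid E \rhd B \in \Gamma\}
\]
is \extil{X}-consistent; any maximal extension of $\Sigma$ will, by definition, be a $B$-critical successor of $\Gamma$ containing both $D$ and $\Box \neg D$, which is exactly what the lemma asks for.

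For the consistency of $\Sigma$, I would argue by contradiction and push the inconsistency onto $\Gamma$. If $\Sigma \vdash_{\extil{X}} \bot$, compactness supplies finitely many $E_1, \ldots, E_n$ with each $E_i \rhd B \in \Gamma$ such that
\[
\neg E_1, \ldots, \neg E_n, \Box \neg E_1, \ldots, \Box \neg E_n, D, \Box \neg D \;\vdash_{\extil{X}}\; \bot.
\]
Setting $E \eqbydef E_1 \vee \cdots \vee E_n$, repeated application of ${\sf J3}$ gives $E \rhd B \in \Gamma$, and the displayed derivation collapses to
\[
\vdash_{\extil{X}} D \wedge \Box \neg D \;\rightarrow\; E \vee \Diamond E.
\]

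The crucial step — and the one I expect to carry the real content of the proof — is upgrading this propositional/modal implication to a $\rhd$-statement, namely $\vdash_{\extil{X}} D \rhd E$. Here I would invoke Lemma \ref{lemm:basicil}: part~2 supplies $\vdash D \rhd D \wedge \Box \neg D$, and part~3 supplies $\vdash E \vee \Diamond E \rhd E$. Combining these with the above implication via axiom ${\sf J1}$ and transitivity ${\sf J2}$ yields $\vdash D \rhd E$, hence in particular $D \rhd E \in \Gamma$.

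To close, I would chain the three interpretability statements already in $\Gamma$: from $C \rhd D$ (by hypothesis), $D \rhd E$ (just derived), and $E \rhd B$, transitivity ${\sf J2}$ gives $C \rhd B \in \Gamma$. Since $\Gamma \crit{B} \Delta$, this forces $\neg C \in \Delta$, contradicting $C \in \Delta$ and the consistency of $\Delta$. Thus $\Sigma$ must have been consistent, and Lemma \ref{lemm:lindenbaum} produces the desired $\Delta'$. The only subtlety is the upgrade from a Boolean implication to an interpretability assertion; once Lemma \ref{lemm:basicil} is in hand, everything else is routine use of the ${\sf J}$-axioms.
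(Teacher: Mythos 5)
Your proof is correct, but it takes a more laborious route than the paper. The paper's entire proof is a contraposition of Lemma \ref{lemm:problems}: if no $\Delta'$ with $\Gamma \crit{B} \Delta' \ni D, \Box\neg D$ exists, then (taking $A := D$ in that lemma) we cannot have $\neg(D \rhd B) \in \Gamma$, so $D \rhd B \in \Gamma$ by maximality; combined with $C \rhd D \in \Gamma$ and ${\sf J2}$ this yields $C \rhd B \in \Gamma$, which contradicts $\Gamma \crit{B} \Delta \ni C$ exactly as in your closing step. What you have done instead is to re-derive the content of Lemma \ref{lemm:problems} inline: your consistency argument for $\Sigma$ --- compactness, the ${\sf J3}$-disjunction $E$, the collapse to $\vdash D \wedge \Box\neg D \rightarrow E \vee \Diamond E$, and the upgrade to $\vdash D \rhd E$ via parts 2 and 3 of Lemma \ref{lemm:basicil} with ${\sf J1}$ and ${\sf J2}$ --- is precisely the engine inside the paper's proof of Lemma \ref{lemm:problems}, here run with $D$ in place of $A$ and with the contradiction routed through $\Delta$ rather than through $\neg(A \rhd B) \in \Gamma$. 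Both arguments are sound; yours buys self-containedness and makes visible exactly where criticality and the ${\sf J}$-axioms enter, while the paper's buys brevity and makes the structural point that the elimination of deficiencies reduces to the elimination of problems. Had you noticed that ``$\Sigma$ is inconsistent'' is (via Lindenbaum) equivalent to ``no such $\Delta'$ exists'', which is exactly the hypothesis contraposed in Lemma \ref{lemm:problems}, you could have collapsed your argument to the paper's three lines.
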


\begin{proof}
Suppose for a contradiction that $C \rhd D \in \Gamma \crit{B} \Delta \ni C$ and there 
does not exist a $\Delta'$
with $\Gamma \crit{B} \Delta'\ni D, \Box \neg D$. Taking the contraposition of 
Lemma \ref{lemm:problems} we get that $\neg (D \rhd B) \notin \Gamma$, whence
$D\rhd B \in \Gamma$ and also $C\rhd B \in \Gamma$. This clearly contradicts
the consistency of $\Delta$ as $\Gamma \crit{B} \Delta \ni C$. 
\end{proof}

\subsection{Completeness and the main lemma}

The main lemma provides a powerful method for proving modal completeness. In 
several cases it is actually the only known method available.

\begin{remark}\label{rema:application}
A modal completeness proof for an interpretability  logic \extil{X} is 
by the main lemma reduced to the following four 
ingredients.

\begin{itemize}
\item {\bf Frame Condition}
Providing a frame condition $\mathcal{C}$ and a proof that \[
F\models \mathcal{C} \Rightarrow F \models \extil{X}.
\]

%Providing a frame condition and a proof that this a frame condition of $\extil{X}$.
%Actually what we only need is one direction of the frame condition. Thus, we need
%$F\models \mathcal{C} \Rightarrow F\models \extil{X}$. Of course it is nicer
%if we can find $\mathcal{C}$ such that 
%$F\models \mathcal{C} \Leftrightarrow F\models \extil{X}$.

\item {\bf Invariants} Given a finite set of sentences \adset{D} (closed under 
subformulas and single negations),
providing invariants $\mathcal{I}$ that hold for any one-point labeled frame.
Certainly $\mathcal{I}$ should contain
$xRy \rightarrow \exists \, A {\in} (\nu (y) \setminus \nu (x))\cap 
\{ \Box D \mid D \in \adset{D} \}$.

\item {\bf elimination}
\begin{itemize}
\item {\bf Problems} Providing a procedure of elimination by extension for problems in 
labeled frames that satisfy all the invariants. This procedure should come with a
proof that it preserves all the  invariants.

\item {\bf Deficiencies} Providing a procedure of elimination by extension for deficiencies in 
labeled frames that satisfy all the invariants. Also this procedure should come with a
proof that it preserves all the  invariants.
\end{itemize}

\item {\bf Rounding up}
A proof that for any bounded chain of labeled frames that satisfy the invariants, automatically,
the union satisfies the frame condition $\mathcal{C}$ of the logic. 
\end{itemize}

\end{remark}

The completeness proofs that we will present will all have the same
structure, also in their preparations. As we will see, eliminating problems
is more elementary than eliminating deficiencies.

As we already pointed out,
we eliminate a problem by adding some new world plus an adequate label 
to the model we had. Like this, we get a structure that need not even be
an \il-model. For example, in general, the $R$ relation is not transitive.
To come back to at least an \il-model, we should close off the new structure
under transitivity of $R$ and $S$ et cetera. 
This closing off is in its self 
an easy and elementary process but we do want that the invariants are 
preserved under this process. Therefore we should have started already 
with a structure that admitted a closure.
%, fit to our purposes. 
Actually in this paper we will always want to obtain a model that
satisfies the frame condition of the logic.

The preparations to a completeness proof in this paper thus have the following 
structure.

%\begin{absorb} 
%Hier moeten emnul en wester nog aan worden toegevoegd: Evan!
%\end{absorb}

\begin{itemize}
\item
Determining a frame condition for \extil{X} and a corresponding notion of
an  \extil{X}-frame. 
%(Cf. Definition \ref{defi:frames} and
%Lemma \ref{lemm:ilsound} in the case of \il,  
%Definition \ref{defi:ilmframe} and Lemma \ref{lemm:frameilm} in the case
%of \ilm, Theorem \ref{theo:ilmoframe} in the case of \ilmo and Theorem \ref{homo}
%in the case of \extil{\sf{W}^*}.)

\item
Defining a notion of a quasi \extil{X}-frame. 
%(Cf. Definition \ref{defi:quasiframes}
%in the case of \il, Definition \ref{defi:ilmquasiframe} in the case of \ilm,
%Definition \ref{defi:ilmoquasi} in the case of \ilmo and
%Definition \ref{defi:ilwstarquasi} in the case of \extil{\sf{W}^*}.)

\item
Defining some notions that remain constant throughout the 
closing of quasi \extil{X}-frames, but somehow capture the
dynamic features of this process. 
%(Cf. critical and generalized cones; Definitions
%\ref{defi:critcone} and \ref{defi:gencone}, in the case of \il; 
%critical $\mathcal{M}$-cone, Definition
%\ref{defi:critmcone}, and $R\circ S$ in the case of \ilm and
%Definition \ref{defi:k} and Definition \ref{defi:ncone} both in the case of \ilmo and
%\extil{\sf{W}^*}.)

\item
Proving that a quasi \extil{X}-frame can be closed off to an
adequate labeled \extil{X}-frame. 
%(Cf. Lemma \ref{lemm:extension} and corollary
%\ref{coro:invariantil} for \il, Lemma \ref{lemm:ilmextension} and 
%Corollary \ref{coro:invariantilm} for \ilm, Lemma \ref{lemm:ilmoclosure} in the case
%of \ilmo and Lemma \ref{lemm:ilwstarclosure} in the case of \extil{\sf{W}^*}.)

\item
Preparing the elimination of deficiencies. 
%(Cf. Lemma \ref{lemm:deficiencies} in the case of \il,
%\ref{lemm:ilmdeficiencies} in the case of \ilm,
%Lemma \ref{lemm:ilmoexistence} in the case of \ilmo and
%Lemma \ref{lemm:ilwstarexistence} in the case of \extil{\sf{W}^*}.)
\end{itemize}

The most difficult job in a the completeness proofs we present in this paper, was
in finding correct invariants and in preparing the elimination of deficiencies. Once
this is fixed, the rest follows in a rather mechanical way. Especially the 
closure of quasi \extil{X}-frames to adequate \extil{X}-frames is a very laborious
enterprise.

\section{The logic \il}\label{sect:il}
The modal logic \il has been proved to be modally complete in 
\cite{JoVe90}. We shall reprove the completeness here using the main lemma.

The completeness proof of \il can be seen as the mother of all our completeness proofs
in interpretability logics.
Not only does it reflect the general structure of applications of the Main Lemma clearly, 
also it so that we
can use large parts of the preparations to the completeness proof of \il in other proofs too.
Especially closability proofs are cumulative. Thus, we can use the 
lemma that any quasi-frame is closable to an adequate frame, in any other completeness proof.

\subsection{Preparations}

\begin{definition}\label{defi:quasiframes}
A \emph{quasi-frame} $G$ is a quadruple $\langle W,R,S,\nu \rangle$.
Here $W$ is a non-empty set of worlds, and $R$ a binary relation on $W$.
$S$ is a set of binary relations on $W$ indexed by elements of $W$.
The $\nu$ is a labeling as defined on labeled frames. Critical cones
and generalized cones are defined just in the same way as in the
case of labeled frames.
$G$ should posess the following properties.
\begin{enumerate}
\item
$R$ is conversely well-founded

\item
$yS_xz \rightarrow xRy \ \& \ xRz$

\item
$xRy \rightarrow \nu (x) \sucs \nu (y)$

\item
$A \neq B \rightarrow \geone{A}{x} \cap \geone{B}{x} = \varnothing$

\item
$y{\in} \crone{A}{x} \rightarrow \nu (x) \crit{A} \nu (y)$

\end{enumerate}
 
\end{definition}
Clearly, adequate labeled frames are special cases of quasi frames.
Quasi-frames inherit all the notations from labeled frames.
In particular we can thus speak of chains and the like.

\begin{lemma}[\il-closure]\label{lemm:extension}
Let $G=\langle W,R,S,\nu \rangle$ be a quasi-frame. There is an adequate 
\il-frame $F$ extending $G$. That is, $F=\langle W,R',S',\nu \rangle$
with $R\subseteq R'$ and $S\subseteq S'$.
\end{lemma}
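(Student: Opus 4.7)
The plan is to take $R'$ to be the transitive closure of $R$, and to define $S'_x$ as the smallest transitive relation on $W$ that contains $S_x$, the diagonal $\{(y,y) : xR'y\}$, and all pairs $(y,z)$ with $xR'yR'z$. Setting $F := \langle W, R', S', \nu \rangle$, the \il-frame conditions should hold essentially by construction. Transitivity of $R'$ and $S'_x$ is built in; converse well-foundedness of $R'$ follows from that of $R$, since any infinite ascending $R'$-chain would unfold to an infinite ascending $R$-chain; the containment $yS'_xz \to xR'y \wedge xR'z$ is verified by induction over the generators of $S'_x$ (the $S_x$ generators satisfy it by quasi-frame item 2, and the other two families trivially); and the clauses $xR'y \to yS'_xy$ and $xR'yR'z \to yS'_xz$ hold directly because the relevant pairs are among the generators of $S'_x$.

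For adequacy the condition $xR'y \to \nu(x) \sucs \nu(y)$ is immediate from the transitivity of $\sucs$ applied to the corresponding clause in $G$. The remaining two adequacy conditions---cone disjointness and the criticality condition on cones---reduce to the claim that the critical and generalized cones computed in $F$ coincide with those computed in $G$. The direction ``$G$-cone $\subseteq$ $F$-cone'' is trivial since $F$ extends $G$. For the reverse, I would induct on the generation of the $F$-cone. The seeds $\{y : \nu(\langle x,y\rangle) = A\}$ are literally the same in both frames. For the inductive step, any $R'$-closure step unfolds into a sequence of $R$-steps, and $G$-cones are already closed under $R$-successors by the third clause of Definition \ref{defi:critcone}; any $S'_x$-closure step decomposes, by transitivity of $S'_x$, into a chain of generator steps, and each generator is either an $S_x$-step (under which $G$-cones are closed), a loop (trivially harmless), or a pair $(y',z')$ with $xR'y'R'z'$, in which case $y'R'z'$ and closure of $G$-cones under $R$-successors again does the work.

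The hard part will be this cone-equality: it is the only point at which enlarging $R$ and $S$ could in principle spoil the existing adequacy data, and every other item on the checklist is routine bookkeeping. Once it is in hand, cone disjointness and the criticality condition transfer straight from $G$ to $F$, since they are statements about cones which have not changed. Because the construction and its verification invoke no frame condition beyond those already packaged into the notion of a quasi-frame, the same closure procedure will be reusable in the later completeness proofs for the stronger logics, as the surrounding text explicitly anticipates.
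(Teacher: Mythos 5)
Your proof is correct, and it takes a genuinely different route from the paper's. The paper constructs the closure as the union of an $\omega$-chain $G=F_0\subseteq F_1\subseteq\cdots$ in which each step repairs the $<_0$-least remaining ``imperfection'' (a single failure of transitivity of $R$, of $xRy\rightarrow yS_xy$, of transitivity of $S_x$, or of $xRyRz\rightarrow yS_xz$) relative to a fixed enumeration, proves by induction that every $F_n$ is again a quasi-frame in which the critical and generalized cones are \emph{unchanged} (its strengthened properties $4'$ and $5'$), and finally checks that the union is an adequate \il-frame. You instead build the closure in one shot --- $R'$ the transitive closure of $R$, and $S'_x$ generated by $S_x$, the diagonal, and the $R'$-composition pairs --- and prove cone invariance by a single induction on the generation of the $F$-cones, unfolding each $R'$- or $S'_x$-step into generator steps. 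Both arguments pivot on exactly the same key fact, namely that $\crone{A}{x}$ and $\geone{A}{x}$ do not grow under the closure, so adequacy transfers from $G$ to $F$; your version is shorter and more algebraic, and is close in spirit to the intersection shortcut that the paper itself mentions right after its proof but deliberately declines. What the stepwise formulation buys is precisely what the paper says it buys: reusability with fine control over which properties stay invariant at each atomic step. The \ilmo-closure (Lemma \ref{lemm:ilmoclosure}) is literally the same chain construction with a fifth imperfection ($wRaRbS_wb'Rc$ without $aRc$) added to the list; since that condition creates new $R$-edges from existing $S$-edges, while imperfections of types (ii)--(iv) create $S$-edges from $R$-edges, a one-shot closure there would have to be recast as a simultaneous least fixed point of $R'$ and $S'$ and the cone-invariance induction redone for that mutual recursion. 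So your closing claim that your procedure carries over unchanged to the stronger logics is too optimistic, but for the present lemma your proof is complete and sound.
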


%\begin{absorb}
%Invarianten mee laten liften?
%\end{absorb}

\begin{proof}
 We define an
\emph{imperfection} on a quasi-frame $F_n$ to be a tuple $\gamma$ having one of
the following forms.
\begin{itemize}
\item[$(\romannumeral 1)$]
$\gamma = \langle 0,a,b,c \rangle$ with 
$F_n \models aRbRc$ but $F_n \not \models aRc$

\item[$(\romannumeral 2)$]
$\gamma = \langle 1,a,b\rangle$ with $F_n\models aRb$ but  $F_n \not \models bS_ab$

\item[$(\romannumeral 3)$]
$\gamma = \langle 2,a,b,c,d \rangle$ 
with $F_n \models bS_acS_ad$ but not $F_n \models bS_ad$

\item[$(\romannumeral 4)$]
$\gamma = \langle 3,a,b,c\rangle$
with $F_n\models aRbRc$ but $F_n \not \models bS_ac$

\end{itemize}
Now let us start with a quasi-frame $G=\langle W,R,S,\nu \rangle$.
We will define a chain of quasi-frames. Every new element in the chain will
have at least one imperfection less than its predecessor. The union will have no
imperfections at all. It will be our required adequate \il-frame.
\medskip

Let $<_0$ be the well-ordering on
\[
C \eqbydef 
(\{ 0\} \times W^3) \cup
(\{ 1\} \times W^2) \cup
(\{ 2\} \times W^4) \cup
(\{ 3\} \times W^3)
\]
induced by the occurrence order in some fixed enumeration of $C$. 
%(Enumerations
%are always of type $\omega$.) 
We define our chain to start with

$F_0\eqbydef G$.
To go from $F_n$ to $F_{n+1}$ we proceed as follows. Let $\gamma$ be the 
$<_0$-minimal imperfection on $F_n$. In case no such $\gamma$ exists we set 
$F_{n+1}\eqbydef F_n$. If such a $\gamma$ does exist, $F_{n+1}$ is as
dicted by the case distinctions.
\begin{itemize}
\item[$(\romannumeral 1)$]
$F_{n+1}\eqbydef \langle W_n , R_n \cup \{  \langle a,c \rangle \}, S_n , \nu_n \rangle$
\item[$(\romannumeral 2)$]
$F_{n+1}\eqbydef \langle W_n , R_n , S_n \cup \{  \langle a,b,b \rangle \}, \nu_n \rangle$
\item[$(\romannumeral 3)$]
$F_{n+1}\eqbydef \langle W_n , R_n , S_n \cup \{  \langle a,b,d \rangle \} , \nu_n \rangle$
\item[$(\romannumeral 4)$]
$F_{n+1}\eqbydef \langle W_n , R_n \cup \{  \langle a,c \rangle \}, 
S_n \cup \{  \langle a,b,c \rangle \} , \nu_n \rangle$
\end{itemize} 
By an easy but elaborate induction, we can see that each $F_n$ is a
quasi-frame. The induction boils down to checking for each case
$(\romannumeral 1)$-$(\romannumeral 4)$ that all the properties
$(1)$-$(5)$ from Definition \ref{defi:quasiframes} remain valid.

Instead of proving $(4)$ and $(5)$, it is better to prove something stronger, 
that is, that the critical and generalized cones remain unchanged.

\begin{itemize}
\item[4'.]
$\forall n \ 
[F_{n+1} \models y {\in} \geone{A}{x} 
\Leftrightarrow F_n \models y {\in} \geone{A}{x}]$

\item[5'.]
$\forall n \ 
[F_{n+1} \models y {\in} \crone{A}{x} 
\Leftrightarrow F_n \models y {\in} \crone{A}{x}]$
\end{itemize}
Next, it is 
not hard to prove that $F\eqbydef \cup_{i\in \omega}F_i$ is the 
required adequate \il-frame. To 
this extent, the following 
properties have to be checked. All properties have easy proofs.
\[
\begin{array}{ll}
(a.)\  W \mbox{ is the domain of  }F   & 
(g.)\  F\models xRy \rightarrow yS_x y\\
(b.)\  R_0 \subseteq \cup_{i\in \omega} R_i   & 
(h.)\  F\models xRyRz \rightarrow yS_x z\\
 (c.)\  S_0 \subseteq \cup_{i\in \omega} S_i  & 
(i.) \ F\models uS_x v S_x w \rightarrow u S_x w\\
 (d.) \ R \mbox{ is conv. wellfounded on }F  & 
(j.)  \ F\models xRy \Rightarrow \nu (x) \sucs \nu (y)\\
 (e.)\ F\models xRyRz \rightarrow xR z   & 
(k.) \ A \neq B \Rightarrow F \models \geone{A}{x} \cap \geone{B}{x} = \varnothing\\
 (f.)\ F\models yS_xz \rightarrow xRy \ \& \ xRz   & 
(l.) \ F\models y{\in} \crone{A}{x} \Rightarrow \nu (x) \crit{A} \nu (y)\\
\end{array}
\]
\end{proof}

We note that the \il-frame $F\supseteq G$ from above is actually the 
minimal one extending $G$. If in the sequel, if we refer to the closure
given by the lemma, we shall mean this minimal one. Also do we note 
that the proof is independent on the enumeration of 
$C$ and hence the order $<_0$ on $C$. 
The lemma can also be applied to non-labeled structures. If we drop all the
requirements on the labels in Definition \ref{defi:quasiframes} and in 
Lemma \ref{lemm:extension} we end up with a true statement 
about just the old \il-frames.

Lemma \ref{lemm:extension} also 
allows a very short proof running as follows. Any intersection 
of adequate \il-frames with the same domain is again an adequate \il-frame.
There is an adequate \il-frame extending $G$. Thus by taking intersections
we find a minimal one. We have chosen to present our explicit proof 
as they allow us, now and in the sequel, to see which properties remain invariant.
\medskip

\begin{corollary}\label{coro:invariantil}
Let \adset{D} be a finite set of sentences, closed under subformulas and single negations.
Let $G=\langle W,R,S,\nu \rangle$ be a quasi-frame on which
\[
xRy \rightarrow \exists \,  A {\in}((\nu (y)\setminus \nu{x})\cap 
\{  \Box D \mid D \in \adset{D}\} ) \ \ \ (*)
\]
holds. Property $(*)$ does also hold on the \il-closure $F$ of $G$.
\end{corollary}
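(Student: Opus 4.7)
The plan is to show that the invariant $(*)$ is preserved at every step of the inductive construction used in Lemma \ref{lemm:extension}, and then to note that property $(*)$ trivially transfers from the chain $\{F_n\}_{n\in\omega}$ to its union $F$. Since the universe and the labeling $\nu$ never change during the construction, what really needs to be checked is that every newly added $R$-edge satisfies $(*)$.

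First I would observe that only the imperfections of types $(\romannumeral 1)$ and $(\romannumeral 4)$ introduce new $R$-pairs; types $(\romannumeral 2)$ and $(\romannumeral 3)$ only add $S$-triples, so $R_{n+1} = R_n$ and the invariant is inherited verbatim. In both the remaining cases the new pair has the form $aRc$, created on the basis of a configuration $aR_n b R_n c$ already present in $F_n$. By the induction hypothesis, $(*)$ holds in $F_n$, so it provides witnesses $\Box D_1 \in \nu(b)\setminus\nu(a)$ and $\Box D_2 \in \nu(c)\setminus\nu(b)$ with $D_1, D_2 \in \adset{D}$.

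The key step is to use $\Box D_1$ as the witness for the new edge $aRc$. Since each $F_n$ is a quasi-frame (as established in the proof of Lemma \ref{lemm:extension}), clause (3) of Definition \ref{defi:quasiframes} gives $\nu(b)\sucs \nu(c)$ from $bR_n c$. By the definition of $\sucs$ (Definition \ref{defi:mcsrel}), $\Box D_1 \in \nu(b)$ then forces $\Box D_1 \in \nu(c)$. Combined with $\Box D_1 \notin \nu(a)$, we obtain $\Box D_1 \in (\nu(c)\setminus\nu(a))\cap\{\Box D \mid D\in\adset{D}\}$, which is exactly what $(*)$ demands for the pair $\langle a,c\rangle$.

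Finally, since $R = \bigcup_{n\in\omega} R_n$, any edge $xRy$ in $F$ occurs already in some $F_n$, where the witness produced above continues to apply (the labels $\nu(x), \nu(y)$ being the same in $F$ and $F_n$). There is no real obstacle here; the only thing one must verify is that the $\sucs$-clause of a quasi-frame is indeed maintained throughout the closure process, but this is already part of the induction in the proof of Lemma \ref{lemm:extension}.
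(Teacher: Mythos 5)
Your proof is correct and takes essentially the same route as the paper: induct through the closure construction of Lemma \ref{lemm:extension}, observe that only imperfections of types (i) and (iv) create new $R$-pairs $\langle a,c\rangle$ from a configuration $aR_nbR_nc$, transfer an existing witness to the new edge using persistence of boxed formulas along $R$ (clause (3) of Definition \ref{defi:quasiframes}), and pass to the union. The only, immaterial, difference is which witness you transfer: you push the witness $\Box D_1\in\nu(b)\setminus\nu(a)$ for $aRb$ forward into $\nu(c)$ via $\nu(b)\sucs\nu(c)$, whereas the paper takes the witness for $bRc$ in $\nu(c)\setminus\nu(b)$ and argues it cannot lie in $\nu(a)$ because $\nu(a)\boxin\nu(b)$ --- both hinge on exactly the same persistence fact.
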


\begin{proof}
We can just take the property along in the proof of Lemma \ref{lemm:extension}.
In Case $(\romannumeral 1)$ and $(\romannumeral 4)$ we note that
$aRbRc \rightarrow \nu (a) \boxin \nu (c)$. Thus, if
$A {\in}((\nu (c)\setminus \nu (b) )\cap 
\{  \Box D \mid D \in \adset{D}\} )$, then certainly $A \not \in \nu (a)$.
\end{proof}

%All completeness proofs we will expose will have the same general structure.
%As an example we will now proof the modal completeness of \il.
We have now done all the preparations for the completeness proof. Normally,
also a lemma is needed to deal with deficiencies. But in the case of \il,
Lemma \ref{lemm:deficiencies} suffices.

\subsection{Modal completeness}

\begin{theorem}
\il is a complete logic
\end{theorem}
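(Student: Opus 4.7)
The plan is a direct application of the Main Lemma (Lemma~\ref{lemm:main}), taking for $\mathcal{C}$ the conjunction of the $\il$-frame axioms of Definition~\ref{defi:frames}; the required implication $F\models\mathcal{C}\Rightarrow F\models \il$ is just Lemma~\ref{lemm:ilsound}. For a finite set $\adset{D}$ closed under subformulas and single negations, I would take as invariants $\mathcal{I}$ the clauses ``$F$ is an adequate labeled $\il$-frame'' together with the condition $(*)$ of Corollary~\ref{coro:invariantil}, namely that for every $R$-edge $xRy$ some $\Box\neg D$ with $D$ a subformula of a formula in $\adset{D}$ lies in $\nu(y)\setminus\nu(x)$. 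These invariants hold vacuously on any one-point labeled frame. For $\mathcal{I}^{\mathcal{U}}\Rightarrow\mathcal{C}$: the $\il$-frame axioms are universal and pass to unions, while $(*)$ bounds every $R$-chain by the finite number of relevant box-formulas, so converse well-foundedness survives the limit.

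For problem elimination, given $\langle x,\neg(A\rhd B)\rangle$ I would apply Lemma~\ref{lemm:problems} to produce an \mcs $\Delta$ with $\nu(x)\crit{B}\Delta\ni A,\Box\neg A$, and adjoin a fresh world $y$ labeled $\Delta$ with $xR^By$. The result is a quasi-frame, so Lemma~\ref{lemm:extension} closes it off to an adequate $\il$-frame while Corollary~\ref{coro:invariantil} preserves $(*)$. The new edge itself satisfies $(*)$ since $\Box\neg A\in\Delta$ while $\Box\neg A\notin\nu(x)$ (otherwise $\neg A\in\Delta$ via $\nu(x)\sucs\Delta$, contradicting $A\in\Delta$); and after closure $y\in\crone{B}{x}$ with $A\in\nu(y)$, removing the problem. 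Deficiency elimination for $\langle x,y,C\rhd D\rangle$ is parallel: pick the $B$ with $y\in\crone{B}{x}$ (using $B=\bot$ via Lemma~\ref{lemm:botcrit} if $y$ lies in no labeled critical cone), invoke Lemma~\ref{lemm:deficiencies} for a $\Delta'$ with $\nu(x)\crit{B}\Delta'\ni D,\Box\neg D$, and adjoin a fresh $z$ labeled $\Delta'$ with $yS_xz$; the cone-closure clauses automatically place $z\in\crone{B}{x}$, and the $\il$-closure plus Corollary~\ref{coro:invariantil} complete the extension.

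The main organizational point, and the only real obstacle at this basic level, is to interleave the two elimination steps in a fair enumeration so that every problem and deficiency that ever arises is eventually treated, and to observe that solved obstacles are robust: an installed witness $y$ or $z$ cannot be moved out of $\crone{B}{x}$ nor stripped of its defining formula by any further extension, since extensions only add relations and the cones only grow under closure. Combined with the depth bound from $(*)$, this gives a union of the extension chain which is an adequate $\il$-frame satisfying $\mathcal{I}^{\mathcal{U}}$, hence $\mathcal{C}$; by Lemma~\ref{lemm:truth} (cf.\ Remark~\ref{rema:lemma}) the truth lemma for $\adset{D}$ then holds, and the Main Lemma concludes completeness of $\il$.
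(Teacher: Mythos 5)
Your proof follows the paper's own route essentially step for step: the same instantiation of the Main Lemma (Lemma \ref{lemm:main}) with the single invariant $(*)$, Lemma \ref{lemm:problems} plus a fresh $B$-labeled $R$-successor to eliminate problems, Lemma \ref{lemm:deficiencies} to eliminate deficiencies, and Lemma \ref{lemm:extension} together with Corollary \ref{coro:invariantil} to close off while preserving the invariant; your check that $\Box\neg A\notin\nu(x)$ in the problem step is exactly right.

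There is, however, one genuine gap, in the deficiency step. You adjoin the fresh world $z$ with only the relation $yS_xz$ and then expect the $\il$-closure to ``complete the extension''. But at that point your structure is not a quasi-frame: clause 2 of Definition \ref{defi:quasiframes} demands $yS_xz\rightarrow xRy\ \&\ xRz$, and you never added the edge $xRz$. Lemma \ref{lemm:extension} is stated (and works) only for quasi-frames, and its list of imperfections does not include a missing $R$-edge underneath an existing $S_x$-transition, so the closure will not create $xRz$ for you. The paper therefore adds $xRz$ (there written $aRc$) by hand, simultaneously with the new $S$-transition. Once that edge is present you also owe the invariant $(*)$ on it, i.e.\ that $\Box\neg D\notin\nu(x)$; the paper notes this explicitly, and it follows by the very argument you used for problems: $\nu(x)\crit{B}\Delta'$ gives $\nu(x)\sucs\Delta'$, so $\Box\neg D\in\nu(x)$ would force $\neg D\in\Delta'$, contradicting $D\in\Delta'$. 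With these two repairs --- add the edge, check $(*)$ on it --- your argument coincides with the paper's proof.
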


\begin{proof}
We specify the four ingredients mentioned in Remark \ref{rema:application}.
%\begin{itemize}
%\item
\medskip

{\bf Frame Condition} For \il, the frame condition is empty, that is, every 
frame is an \il frame.
\medskip

{\bf Invariants}
Given a finite set of sentences \adset{D} closed under subformulas and single 
negation, the only invariant is 
$xRy \rightarrow \exists \, A {\in} (\nu (y) \setminus \nu (x))\cap 
\{ \Box D \mid D \in \adset{D} \}$. Clearly this invariant holds on any
one-point labeled frame.
\medskip

{\bf Elimination} 
So, let $F \eqbydef \langle W , R, S, \nu \rangle$ be a labeled frame satisfying the 
invariant. We will see how to eliminate both problems and deficiencies while 
conserving the invariant.

\medskip
{\bf Problems} Any problem $\langle a, \neg (A \rhd B)\rangle$ of $F$ will
be eliminated in two steps.

\begin{enumerate}
\item
With Lemma \ref{lemm:problems} we find $\Delta$ with 
$\nu (a) \crit{B} \Delta \ni A, \Box \neg A$. We fix some $b\notin W$. We now define
\[
G':= \langle W\cup \{  b\} , R \cup \{ \langle a, b\rangle \}, S, \nu \cup
\{ \langle  b, \Delta \rangle , \langle \langle a,b \rangle, B \rangle \}\rangle.
\]
It is easy to see that $G'$ is actually a quasi-frame. Note that if 
$G' \models xRb$, then $x$ must be $a$ and whence $\nu (x) \sucs \nu (b)$
by definition of $\nu (b)$. Also it is not hard to see that 
if $b\in \crone{C}{x}$ for $x{\neq} a$, that then $\nu (x) \crit{C}\nu (b)$. 
For, $b\in \crone{C}{x}$ implies
$a \in \crone{C}{x}$ whence $\nu (x) \crit{C} \nu (a)$. By $\nu (a) \sucs \nu (b)$
we get that $\nu (x) \crit{C} \nu (b)$. In case $x{=}a$ we see that by definition
$b \in \crone{B}{a}$. But, 
%also by definition 
we have chosen $\Delta$ so that
$\nu (a) \crit{B} \nu (b)$. We also 
see that $G'$ satisfies the invariant as 
$\Box \neg A \in \nu (b) \setminus \nu (a)$ and $\sim A \in \adset{D}$.

\item
With Lemma \ref{lemm:extension} we extend $G'$ to an adequate labeled \il-frame
$G$. 
Corollary \ref{coro:invariantil} tells us that the invariant indeed holds at $G$.
Clearly $\langle a, \neg (A \rhd B ) \rangle$ is no longer a problem in 
$G$.

%We want to see that the invariant also holds in $G$. To this extend, we
%just take the invariant along in the proof of Lemma \ref{lemm:extension}. This
%is fairly routine.

\end{enumerate}
\medskip

{\bf Deficiencies}. Again, any deficiency $\langle a,b, C \rhd D\rangle$ in 
$F$ will be eliminated in two steps.
\begin{enumerate}
\item
We first define $B$ to be the formula such that $b\in \crone{B}{a}$. If such a
$B$ does not exist, we take $B$ to be $\bot$. Note that if such a $B$ does exist,
it must be unique by Property $4$ of Definition \ref{defi:quasiframes}.
By Lemma \ref{lemm:deficiencies} we can now find a 
$\Delta'$ such that
$\nu (a) \crit{B} \Delta' \ni D, \Box \neg D$. We fix some $c \not \in W$
and define
\[
G' \eqbydef \langle W, R \cup \{ a,c \}, 
S \cup \{ a,b,c \}, \nu \cup \{  c, \Delta' \} \rangle .
\]
Again it is not hard to see that $G'$ is a quasi-frame that
satisfies the invariant. Clearly $R$ is conversely well-founded.
The only new $S$ in $G'$ is $bS_ac$,
% Conformly 
but 
we 
also
defined $aRc$.
We have chosen 
$\Delta'$ such that $\nu(a)\crit{B}\nu(c)$. Clearly 
$\Box \neg D \not \in \nu (a)$.

\item
Again, $G'$ is closed off under the frame conditions with Lemma
\ref{lemm:extension}. Again we note that the invariant is preserved 
in this process. Clearly $\langle a,b, C \rhd D\rangle$ is not a deficiency 
in $G$.
\medskip

{\bf Rounding up}
Clearly the union of a bounded chain of \il-frames is again an \il-frame.

\end{enumerate}

\end{proof}
It is well known that \il has the finite model property and whence is decidable.
With some more effort however we could have obtained the finite model 
property using the Main Lemma. We have chosen not to do so, as 
for our purposes the completeness via the construction method is sufficient.

Also, to obtain the finite model property, one has to re-use worlds during the 
construction method. The constraints on which worlds can be re-used is per logic 
differently. One aim of this section was to 
prove some results on a construction that is present in all other completeness proofs too.
Therefore we needed some uniformity and did not want to consider re-using of worlds.

%
%\section{The Logic \ilm}
%\input{ilmcomplete}
%
%\section{Essentially $\Sigma_1$-sentences of \ilm}\label{sect:essigma}
%\input{sigmasentences}

\section{The logic \ilmo}

%
% Local preamble
%

\newcommand{\const}[1]{{\bf #1}}

\renewcommand{\theenumi}{\arabic{enumi}.}
\renewcommand{\labelenumi}{\theenumi}

%\theoremstyle{definition}
%\newtheorem{subclaiminlemma}{Claim}
%\renewcommand{\thesubclaiminlemma}{\thelemma\alph{subclaiminlemma}}

%\theoremstyle{theorem}
%\newtheorem*{corollarynn}{Corollary}

%
% Section ILM0
%
%
%To start, let us recall the schema \mo.
%
%\begin{itemize}
%\item[\mo] $A\rhd B\rightarrow\Diamond A\wedge\Box C\rhd B\wedge\Box C$
%\end{itemize}
%

This section is devoted to showing the following theorem.
%\footnote{A 
%proof sketch of this theorem was first given in \cite{joo98}.}

\begin{theorem}\label{theo:completenessilmo}
\ilmo is a complete logic.
\end{theorem}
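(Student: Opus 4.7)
The plan is to apply the Main Lemma (Lemma~\ref{lemm:main}) by instantiating the four ingredients from Remark~\ref{rema:application}. For the frame condition I take
\[
\mathcal{C}_{{\sf M_0}} : \quad wRxRy \wedge yS_wz \rightarrow xRz,
\]
whose implication of ${\sf M_0}$ is immediate: given $w \Vdash A \rhd B$ and $wRu' \Vdash \Diamond A \wedge \Box C$, pick $u$ with $u'Ru \Vdash A$; then $wRu$ by transitivity of $R$, so some $v$ satisfies $uS_wv \Vdash B$; applying $\mathcal{C}_{{\sf M_0}}$ to $wRu'RuS_wv$ yields $u'Rv$, hence $u'S_wv$ by $xRyRz \rightarrow yS_xz$, and $v \Vdash \Box C$ follows from $u' \Vdash \Box C$ plus transitivity of $R$. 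Being a universal first-order condition, $\mathcal{C}_{{\sf M_0}}$ automatically transfers to bounded unions, settling the rounding-up ingredient.

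Following the blueprint used for \il, I define a quasi-\ilmo-frame to be a quasi-frame in the sense of Definition~\ref{defi:quasiframes}, and prove a closure lemma showing that every quasi-\ilmo-frame extends to an adequate \ilmo-frame. The closure alternates the \il-closure of Lemma~\ref{lemm:extension} with the addition of the $R$-edges demanded by $\mathcal{C}_{{\sf M_0}}$, iterated to a fixed point; one verifies step by step, as in the proof of Lemma~\ref{lemm:extension}, that critical and generalized cones remain unchanged and that the quasi-frame axioms survive. The invariants $\mathcal{I}$ for a finite $\adset{D}$ closed under subformulas and single negation consist of $\mathcal{C}_{{\sf M_0}}$ together with the canonical invariant $xRy \rightarrow \exists A \in (\nu(y)\setminus \nu(x)) \cap \{\Box \neg D \mid D \text{ a subformula of some } B \in \adset{D}\}$; both hold trivially on any one-point labeled frame. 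A problem $\langle a, \neg(A \rhd B)\rangle$ is eliminated exactly as in the \il-case: choose $\Delta$ with $\nu(a)\crit{B}\Delta \ni A, \Box \neg A$ by Lemma~\ref{lemm:problems}, adjoin a fresh $b$ with $aR^Bb$ and $\nu(b)=\Delta$, and close. A deficiency $\langle a, b, C \rhd D\rangle$ is eliminated by identifying the unique $B$ with $b \in \crone{B}{a}$ (or $B=\bot$), choosing $\Delta'\ni D, \Box \neg D$ with $\nu(a)\crit{B}\Delta'$ by Lemma~\ref{lemm:deficiencies}, and adjoining $c$ with $aRc$, $bS_ac$, $\nu(c)=\Delta'$, followed by closure.

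The main obstacle is the interaction between $\mathcal{C}_{{\sf M_0}}$ and the closure after eliminating a deficiency. The new edge $bS_ac$, combined with any pre-existing $uS_ab$, yields via $S_a$-transitivity new edges $uS_ac$, each of which can trigger $\mathcal{C}_{{\sf M_0}}$ and force fresh $R$-edges $xRc$ for every $x$ strictly between $a$ and $u$ (i.e.~with $aRxRu$). These forced $R$-edges demand $\nu(x) \sucs \nu(c)$ and, whenever $a \in \crone{B'}{w'}$, also $\nu(w') \crit{B'}\nu(c)$. The critical-cone conditions inherit along $\nu(w') \crit{B'}\nu(a) \sucs \nu(c)$ via the standard principle $\Gamma \crit{B'}\Delta \sucs \Delta' \Rightarrow \Gamma \crit{B'}\Delta'$, but the $\sucs$-condition $\nu(x) \sucs \nu(c)$ does not follow from $\nu(a) \sucs \nu(x)$ and $\nu(a) \sucs \nu(c)$ alone. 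The delicate point of the proof is thus to fine-tune the invariants and the notion of quasi-\ilmo-frame so that this residual $\sucs$-requirement is provably inherited from the original adequate structure, or so that the configurations in which $\mathcal{C}_{{\sf M_0}}$ fires during closure are controlled enough to sidestep it; this is where the \ilmo-proof must depart most visibly from its \il-counterpart.
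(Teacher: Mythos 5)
Your choice of frame condition is where the proposal breaks, and the defect is not repairable by fine-tuning invariants. The frame condition of \mo (Theorem \ref{theo:ilmoframe}) is $wRxRyS_wy'Rz\rightarrow xRz$; your $\mathcal{C}_{{\sf M_0}}$ drops the final $R$-step and concludes $xRy'$ outright, which is strictly stronger. Your soundness argument for it is fine, but \ilmo is \emph{incomplete} with respect to the class of frames satisfying your condition. Indeed, on any such frame the formula $p\rhd q\rightarrow\Box(\Diamond p\rightarrow\Diamond q)$ is valid: if $w\Vdash p\rhd q$ and $wRx\Vdash\Diamond p$, pick $y$ with $xRy\Vdash p$; then $wRy$ by transitivity, so $yS_wz\Vdash q$ for some $z$, and $\mathcal{C}_{{\sf M_0}}$ gives $xRz$, whence $x\Vdash\Diamond q$. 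But \ilmo does \emph{not} prove this formula: take $W=\{w,x,y,z\}$, $R=\{\langle w,x\rangle,\langle w,y\rangle,\langle w,z\rangle,\langle x,y\rangle\}$, $S_w$ the smallest reflexive transitive relation on $\{x,y,z\}$ containing $\langle x,y\rangle$ and $\langle y,z\rangle$, $S_x=\{\langle y,y\rangle\}$, with $y\Vdash p$ and $z\Vdash q$. This is an \il-frame satisfying the genuine \mo frame condition (vacuously, since $y$ and $z$ have no $R$-successors), yet $w\Vdash p\rhd q$ while $x\Vdash\Diamond p\wedge\Box\neg q$; by soundness (Lemma \ref{lemm:ilxsound}) the formula is therefore not an \ilmo-theorem. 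Consequently the \ilmo-consistent set $\{p\rhd q,\ \Diamond(\Diamond p\wedge\Box\neg q)\}$ is unsatisfiable on every frame in your class, so the Main Lemma cannot be instantiated as you describe: some elimination step must fail. One can see exactly which one, namely the ``delicate point'' you flag at the end. The construction is forced into a configuration $wRxRy$ with $p\rhd q\in\nu(w)$, $p\in\nu(y)$, $\Box\neg q\in\nu(x)$; solving the deficiency $\langle w,y,p\rhd q\rangle$ requires some $c$ with $yS_wc$ and $q\in\nu(c)$, your condition then demands $xRc$ and hence $\nu(x)\sucs\nu(c)$, and no such \mcs exists because $\Box\neg q\in\nu(x)$ forces $\neg q\in\nu(c)$. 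This is not a residual technicality that invariants can sidestep; it is a proof that your class of frames is too small.

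The paper's proof succeeds precisely by never adding the edge $xRy'$. With the correct, weaker condition, the adequacy requirement created by the edges that \mo-closure genuinely forces is only $wRxRyS_wy'\rightarrow\nu(x)\boxin\nu(y')$ --- inheritance of $\Box$-formulas, not full successorship --- and this is enough, because a forced edge $xRz$ always passes through a further $R$-step $y'Rz$, so $\nu(x)\boxin\nu(y')\sucs\nu(z)$ yields $\nu(x)\sucs\nu(z)$; moreover box-inclusion \emph{is} achievable at the level of \mcs's (Lemma \ref{lemm:vetweleenlabel}). The remaining, substantial work --- which your proposal does not contain even in outline --- is to keep box-inclusion maintainable through all later steps: the auxiliary relation $K$ (Definition \ref{defi:k}) predicting which $R$-edges closure will eventually add; the invariant that $\{\nu(t)\mid tK^1y\}$ is linearly ordered by $\boxin$, so that a single $\boxin$-maximal immediate predecessor controls all of them; and Lemma \ref{lemm:ilmoexistence}, a strengthening of Lemma \ref{lemm:deficiencies} that produces simultaneously a finite family $\Delta_0,\ldots,\Delta_k$ of \mcs's, all critical over $\Gamma$ and all $\boxin$-above $\Delta$, thereby closing off in advance every deficiency that the newly added $S_w$-successors could later generate. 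Those ingredients, not a re-run of the \il argument with extra edges, constitute the completeness proof.
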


It turns out that the modal frame condition of \ilmo gives rise to a bewildering structure of possible models that seems very hard to tame. As \principel{M_0} is in \ilal, it is important that the class of \ilmo-frames is well understood. For a long time \ilwstar has been conjectured (\cite{Vis91}) to be \ilal. A first step in proving this conjecture would have been a modal completeness proof of \ilwstar. 

It is well known that \ilwstar is the union of \ilw and \ilmo, see Lemma \ref{lemm:emnulsteriswester}. The modal completeness of \ilw was proved in \cite{jonvelt99}. So, the missing link was a modal completeness proof for \ilmo. In \cite{joo98} a proof sketch of this completeness result was given. In this paper we give for the first time a fully detailed proof. 

In the light of Remark \ref{rema:application} a proof of Theorem
\ref{theo:completenessilmo} boils down to giving the four
ingredients mentioned there. Sections
\ref{sect:ilmo:framecondition}, \ref{sect:ilmo:invariants},
\ref{sect:ilmo:problems}, \ref{sect:ilmo:deficiencies}
and \ref{sect:ilmo:roundingup} below
contain those ingredients.
Before these main sections, we have in Section \ref{sect:ilmo:preliminaries}
some preliminaries.
We start in Section
\ref{sect:ilmo:diffi} with an overview of the difficulties we encounter
during the application of the construction method to \ilmo.

\subsection{Overview of difficulties}\label{sect:ilmo:diffi}

%We shortly review the construction method and
%see what difficulties we might encounter when we apply it to
%\ilmo.%
%
%Roughly the construction proceeds as follows.
%We identify a problem or a deficiency in a labeled frame $F$.
%We solve this problem or deficiency by extending $F$ to a
%labeled frame $F'$. Then we extend $F'$ to a frame $F''$
%in which all the frame conditions for the logic under consideration
%hold. We repeat this procedure until no problems nor deficiencies
%are present any more.

In the construction method we repeatedly eliminate problems
and deficiencies by extensions that satisfy all the invariants.
During these operations we need to keep track of two things.

\begin{enumerate}
\item\label{item:dd0} If $x$ has been added to solve a problem
  in $w$, say $\neg(A\rhd B)\in\nu(w)$.
  Then for all $y$ such that $xS_wy$ we have $\nu(w)\crit{B}\nu(y)$.
\item\label{item:dd1} If $wRx$ then $\nu(w)\sucs\nu(x)$
\end{enumerate}

Item \ref{item:dd0} does not impose any direct difficulties.
But some do emerge when we try to deal with the difficulties
concerning Item \ref{item:dd1}
So let us see why it is difficult to ensure \ref{item:dd1}
Suppose we have $wRxRyS_wy'Rz$. The \mo--frame condition (Theorem 
\ref{theo:ilmoframe}) requires that
we also have $xRz$. So, from \ref{item:dd1} and the \mo--frame condition we
obtain
$wRxRyS_wy'Rz\rightarrow\nu(x)\sucs\nu(z)$.
%
%If we put $\Delta\boxin\Delta':\Leftrightarrow \{\Box A\mid \Box A\in\Delta\}\subseteq\Delta'$
%then a 
A sufficient (and in certain sense necessary) condition is,
\[wRxRyS_wy'\rightarrow\nu(x)\boxin\nu(y').\]
Let us illustrate some difficulties concerning this condition by some examples.
Consider the left model in Figure \ref{fig:ilmoex00}.
That is, we have a deficiency in $w$ w.r.t. $y$. Namely, $C\rhd D\in\nu(w)$ and $C\in\nu(y)$.
If we solve this deficiency by adding a world $y'$, we thus require that
for all $x$ such that $wRxRy$ we have $\nu(x)\boxin\nu(y')$.
This difficulty is partially handled by  Lemma \ref{lemm:vetweleenlabel} below.
We omit a proof, but it can easily be given by replacing in the corresponding
lemma for \ilm, applications of the ${\sf M}$-axiom by applications of the ${\sf M}_0$-axiom.

\begin{lemma}\label{lemm:vetweleenlabel}
Let $\Gamma,\Delta$ be \mcs's such that $C\rhd D\in \Gamma$,
$\Gamma\crit{A}\Delta$ and $\Diamond C\in\Delta$. Then there
exists some $\Delta'$ with $\Gamma\crit{A}\Delta'$, $\Box\neg
D,D\in\Delta'$ and $\Delta\boxin\Delta'$.
\end{lemma}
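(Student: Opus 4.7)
The plan is to mimic the standard proof of the analogous lemma for \ilm, replacing the uses of the \principel{M}-axiom by uses of the \mo-axiom (which is weaker: it requires the antecedent $A$ to appear as $\Diamond A$). I would proceed by contradiction. Assume no such $\Delta'$ exists. Then by a Lindenbaum-style argument the set
\[
T \;\eqbydef\; \{D,\Box\neg D\} \,\cup\, \{\Box E : \Box E \in \Delta\} \,\cup\, \{\neg X,\Box\neg X : X\rhd A\in\Gamma\}
\]
must be \ilmo-inconsistent, since any MCS extending it would serve as $\Delta'$. By compactness together with \sf{J3} (used to amalgamate finitely many $X_i\rhd A\in\Gamma$ into a single $X_1\vee\cdots\vee X_n\rhd A\in\Gamma$), the inconsistency boils down to a single $X\rhd A\in\Gamma$ and a single conjunction $\Box E$ of box-formulas from $\Delta$ with
\[
\{D,\,\Box\neg D,\,\Box E,\,\neg X,\,\Box\neg X\}\;\vdash_{\ilmo}\;\bot.
\]

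Unpacking this inconsistency propositionally gives
$\vdash D\wedge\Box\neg D\wedge\Box E \rightarrow X\vee\Diamond X$. Applying necessitation and \sf{J1}, followed by Lemma~\ref{lemm:basicil}.3 (which gives $X\vee\Diamond X\rhd X$) and transitivity \sf{J2}, yields
\[
D\wedge\Box\neg D\wedge\Box E \,\rhd\, X,
\]
and then, again by \sf{J2} with $X\rhd A\in\Gamma$,
\[
D\wedge\Box\neg D\wedge\Box E \,\rhd\, A \;\in\;\Gamma.
\]

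The crucial step is to produce, on the other side, a $\rhd$-statement in $\Gamma$ whose antecedent is already in $\Delta$. Starting from $C\rhd D\in\Gamma$ and Lemma~\ref{lemm:basicil}.2 (which gives $D\rhd D\wedge\Box\neg D$), transitivity gives $C\rhd D\wedge\Box\neg D\in\Gamma$. Here \mo enters: instantiating with $A:=C$, $B:=D\wedge\Box\neg D$, $C:=E$ produces
\[
\Diamond C \wedge \Box E \,\rhd\, D\wedge\Box\neg D\wedge\Box E \;\in\;\Gamma.
\]
Chaining with the previous $\rhd$-statement we obtain $\Diamond C\wedge\Box E\rhd A\in\Gamma$. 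Since $\Gamma\crit{A}\Delta$, this forces $\neg(\Diamond C\wedge\Box E)\in\Delta$. But $\Diamond C\in\Delta$ by hypothesis and $\Box E\in\Delta$ because each conjunct $\Box E_i$ lies in $\Delta$ and $\Delta$ is an \mcs, so $\Diamond C\wedge\Box E\in\Delta$, contradiction.

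The main obstacle is purely one of recognition: one has to see that the hypothesis $\Diamond C\in\Delta$ is exactly what is needed to trigger \mo, whose antecedent carries a diamond rather than a bare formula. Once this match is spotted, the rest is a routine bookkeeping argument patterned exactly on the \ilm-case, with the weaker axiom \mo doing the job wherever \principel{M} was used, precisely because we have the additional $\Diamond$ at our disposal.
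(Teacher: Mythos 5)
Your proof is correct and is exactly what the paper prescribes: the paper omits the proof of Lemma~\ref{lemm:vetweleenlabel}, remarking only that it is obtained from the corresponding \ilm-lemma by replacing applications of the ${\sf M}$-axiom with applications of the ${\sf M_0}$-axiom, which is precisely the adaptation you carry out in detail (with the hypothesis $\Diamond C\in\Delta$ supplying the diamond that ${\sf M_0}$ requires).
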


\begin{figure}
\begin{center}
\resizebox{!}{5cm}{\input{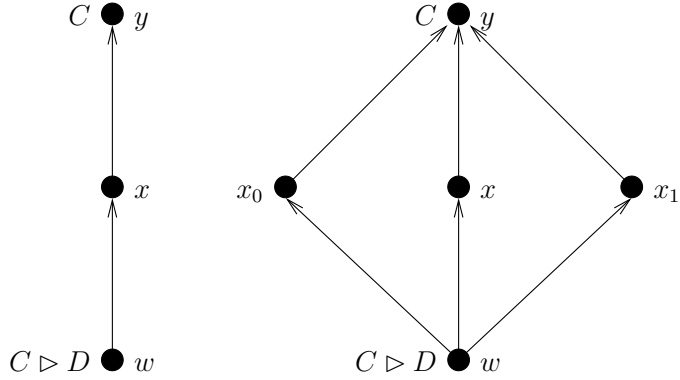}}
\end{center}
\caption{A deficiency in $w$ w.r.t. $y$}
\label{fig:ilmoex00}
\end{figure}

Let us now consider the right most model in 
Figure \ref{fig:ilmoex00}. 
We have at least for two different worlds $x$, say $x_0$ and $x_1$, that $wRxRy$.
Lemma \ref{lemm:vetweleenlabel} is applicable to $\nu(x_0)$ and $\nu(x_1)$ separately but not simultaneously.
In other words we find $y'_0$ and $y'_1$ such that $\nu(x_0)\boxin\nu(y'_0)$ and $\nu(x_1)\boxin\nu(y'_1)$.
But we actually want one single $y'$ such that $\nu(x_0)\boxin\nu(y')$ and $\nu(x_1)\boxin\nu(y')$.
We shall handle this difficulty by ensuring 
that it is enough to consider only one of the worlds
in between $w$ and $y$. To be precise, we shall ensure 
$\nu(x')\boxin\nu(x)$ or $\nu(x)\boxin\nu(x')$.

%But now some difficulties concerning Item \ref{item:dd0} occur.
%In the situations in Figure \ref{fig:ilmoex00} we were asked to solve
%a deficiency in $w$ w.r.t. $y$.
%But we actually solved one in $w$ w.r.t. some $x$ in between $w$ and $y$.
%That is to say, we applied Lemma \ref{lemm:vetweleenlabel} to $\Diamond C \in x$ to 
%obtain the required $y'$. Critical cones should be respected. Thus, if 
%$y\in \crone{A}{w}$ we can only use Lemma \ref{lemm:vetweleenlabel}
%only if we know that also $x\in \crone{A}{w}$. 

But now some difficulties concerning Item \ref{item:dd0} occur.
In the situations in Figure \ref{fig:ilmoex00} we were asked to solve
a deficiency in $w$ w.r.t. $y$.
As usual, if $w\crit{A}y$ then we should be ably to choose a solution $y'$ such that
$w\crit{A}y'$. But Lemma \ref{lemm:vetweleenlabel} takes only criticallity of $x$ w.r.t.\
$w$ into account.
This issue is solved by ensuring that $wRxRy\in\crone{A}{w}$ implies $\nu(w)\crit{A}\nu(x)$.

%We can let $y'$ solve both deficiencies if we have that $\nu(w)\crit{A}\nu(y')$ whenever $y\in\crone{A}{w}$.
%And this is assured 

\begin{figure}
\begin{center}
\resizebox{!}{5cm}{\input{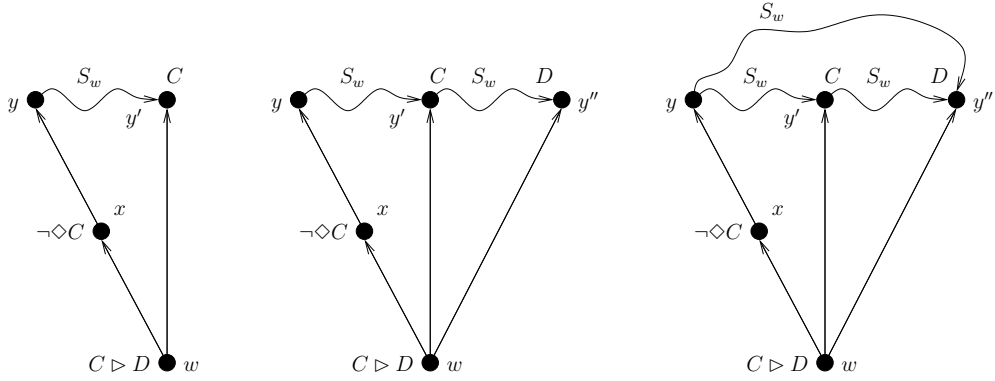}}
\end{center}
\caption{A deficiency in $w$ w.r.t.\ $y'$}
\label{fig:ilmoex01}
\end{figure}

We are not there yet.
Consider the leftmost model in Figure \ref{fig:ilmoex01}.
That is, we have a deficiency in $w$ w.r.t. $y'$.
Namely, $C\rhd D\in\nu(w)$ and $C\in\nu(y')$.
If we add a world $y''$ to solve this deficiency, as in the middle model, then
by transitivity of $S_w$ we have $yS_wy''$, as shown in the rightmost model.
So, we require that $\nu(x)\boxin\nu(y'')$.
But we might very well have $\Diamond C\not\in\nu(x)$.
So the Lemma \ref{lemm:vetweleenlabel} is not applicable.

In Lemma \ref{lemm:ilmoexistence} we formulate and prove a more complicated version of the Lemma 
\ref{lemm:vetweleenlabel}
which basically says that if we have chosen $\nu(y')$ appropriately,
then we can choose $\nu(y'')$ such that $\nu(x)\boxin\nu(y'')$.
And moreover, Lemma \ref{lemm:ilmoexistence} 
ensures us that we can, indeed, choose $\nu(y')$ appropriate.

\subsection{Preliminaries}\label{sect:ilmo:preliminaries}

\begin{definition}[$\trans{T}$, $T^*$, $T;T'$, $T^1$, $T^{\geq 2}$, $T\cup T'$]
Let $T$ and $T'$ be binary relations on a set $W$. We fix the following fairly standard
notations.
$\trans{T}$ is the transitive closure of $T$; $T^*$ is the transitive 
reflexive closure of $T$;
$xT\comp T'y\Leftrightarrow \exists t\;xTtT'y$;
$xT^1y\Leftrightarrow xTy\wedge\neg\exists t\;xTtTy$;
$xT^{\geq 2}y\Leftrightarrow xTy\wedge\neg(xT^1y)$ and 
$x T\cup T' y\Leftrightarrow xTy\vee xT'y$.
\end{definition}

\begin{definition}[$\spure{w}{S}$]
Let $F=\langle W,R,S,\nu\rangle$ be a quasi--frame. For each $w\in
W$ we define the relation $\spure{w}{S}$, of pure $S_w$
transitions, as follows.
\[
x\spure{w}{S}y\Leftrightarrow xS_wy\wedge\neg(x=y)\wedge\neg
(x(S_w\cup R)^*\comp R \comp (S_w\cup R)^*y)
\]
\end{definition}

\begin{definition}[Adequate \ilmo--frame]\label{defi:ilmoframecondition}
Let $F=\langle W,R,S,\nu\rangle$ be an adequate frame. We say that
$F$ is an adequate \ilmo--frame iff.\ the following additional
properties hold.\footnote{One might think that
\ref{i3:ilmoframecondition} is superfluous. In finite frame this
is indeed the case, but in the general case we need it as an
requirement.}
\begin{enumerate}
\addtocounter{enumi}{3}
\item\label{i1:ilmoframecondition} $wRxRyS_wy'Rz\rightarrow xRz$
\item\label{i2:ilmoframecondition} $wRxRyS_wy'\rightarrow\nu(x)\boxin\nu(y')$
\item\label{i3:ilmoframecondition} $xS_wy\rightarrow x(\spure{w}{S}\cup R)^*y$
\item\label{i4:ilmoframecondition} $xRy\rightarrow x\trans{(R^1)}y$
%\item\label{i5:ilmoframecondition} $wKxK^1y\trans{\spure{w}{S}}y'K^1z\rightarrow xK^1z$
\end{enumerate}
\end{definition}

As usual, when we speak of \ilmo--frames we shall actually mean an
adequate \ilmo--frame. Below we will construct \ilmo--frames out
of frames belonging to a certain subclass of the class of
quasi--frames. (Namely the quasi--\ilmo--frames, see Definition
\ref{defi:ilmoquasi} below.) We would like to predict on forehand
which extra $R$ relations will be added during this construction.
The following definition does just that.

\begin{definition}[$K(F)$, $K$]\label{defi:k}
Let $F=\langle W,R,S,\nu\rangle$ be a quasi--frame. We define
$K=K(F)$ to be the smallest binary relation on $W$ such that
\begin{enumerate}
\item\label{item0:defi:k} $R\subseteq K$,
\item\label{item1:defi:k} $K=K^{\textrm{tr}}$,
\item\label{item2:defi:k} $wKxK^1y\trans{(\spure{w}{S})}y'K^1z\rightarrow xKz$.
\end{enumerate}
\end{definition}

Note that for \ilmo--frames we have $K=R$.
The following lemma shows that $K$ satisfies some stability
conditions. The lemma will mainly be used to show that whenever we
extend $R$ within $K$, then $K$ does not change.

\begin{lemma}\label{lemm:kstabiel}
Let $F_0=\langle W,R_0,S,\nu\rangle$ and $F_1=\langle
W,R_1,S,\nu\rangle$ be quasi--frames.
If $R_1\subseteq K(F_0)$ and $R_0\subseteq K(F_1)$. Then
$K(F_0)=K(F_1)$.
\end{lemma}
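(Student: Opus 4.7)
The plan is to prove $K(F_1)\subseteq K(F_0)$; the reverse inclusion then follows by swapping the roles of $F_0$ and $F_1$. By the minimality characterization in Definition \ref{defi:k}, it suffices to verify that $K(F_0)$ itself satisfies the three defining clauses of $K(F_1)$, namely $R_1\subseteq K(F_0)$, $K(F_0)=\trans{K(F_0)}$, and
\[
w\,K(F_0)\,x\,K(F_0)^1\,y\,\trans{(\spure{w}{S})}\,y'\,K(F_0)^1\,z \rightarrow x\,K(F_0)\,z,
\]
where $\spure{w}{S}$ is to be interpreted in the frame $F_1$, i.e.\ with $R_1$ in place of $R$. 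Clause~1 is just the hypothesis, clause~2 is immediate from the definition of $K(F_0)$, so everything concentrates on clause~3.

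For clause~3 I intend to establish the auxiliary equality of $\spure{w}{S}$ computed in $F_0$ and in $F_1$. Granted this equality, the displayed closure condition coincides with the analogous condition for $\spure{w}{S}$ read in $F_0$, which is exactly clause~3 of Definition \ref{defi:k} for $K(F_0)$. Unpacking the definition of $\spure{w}{S}$, the equality reduces to showing, for any $x,y$ with $xS_wy$ and $x\neq y$,
\[
x\,(S_w\cup R_0)^*\comp R_0\comp (S_w\cup R_0)^*\,y \;\Longleftrightarrow\; x\,(S_w\cup R_1)^*\comp R_1\comp (S_w\cup R_1)^*\,y.
\]
For ``$\Rightarrow$'' I would first use $R_0\subseteq K(F_1)$ to replace each $R_0$-edge along the given witness by a $K(F_1)$-edge, then unfold each such $K(F_1)$-edge stage by stage through the constructive build-up of $K(F_1)$ from $R_1$ and $\spure{w'}{S}$-chains, and finally absorb the resulting $S_{w'}$-transitions into the flanking $(S_w\cup R_1)^*$ factors. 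The direction ``$\Leftarrow$'' is symmetric, using $R_1\subseteq K(F_0)$.

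The main obstacle is this last absorption step: the $\spure{w'}{S}$-chains produced when unfolding a $K$-edge through clause~3 of Definition \ref{defi:k} may involve parameters $w'\neq w$, yielding $S_{w'}$-transitions that are not, prima facie, $S_w$-transitions and therefore do not sit inside the factor $(S_w\cup R_1)^*$. To overcome this I plan a transfinite induction along the stages of the construction of $K(F_1)$, with careful bookkeeping of the outer parameter, combined with the quasi-frame clause $yS_xz\rightarrow xRy\wedge xRz$ to restitch the pieces into a single path of the required shape. Once the auxiliary equality is in hand, minimality of $K(F_1)$ delivers $K(F_1)\subseteq K(F_0)$, and the symmetric argument completes the proof.
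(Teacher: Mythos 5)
The paper prints no proof of Lemma \ref{lemm:kstabiel} at all, so your attempt can only be judged on its own merits. Your outer strategy---derive $K(F_1)\subseteq K(F_0)$ by checking that $K(F_0)$ satisfies the three defining clauses of $K(F_1)$ and invoking minimality, then symmetrize---is sound, and it is exactly the style the paper uses for the neighbouring Lemmas \ref{lemm:k1calc}, \ref{lemm:kstep} and \ref{lemm:kfg}; clauses 1 and 2 are handled correctly. The gap is your treatment of clause 3: the auxiliary claim that $\spure{w}{S}$ computed in $F_0$ equals $\spure{w}{S}$ computed in $F_1$ is \emph{false} under the hypotheses of the lemma, and so is the single inclusion your reduction actually needs (every $F_1$-pure step is $F_0$-pure). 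Concretely, take $W=\{w,w',x,a,a',y\}$, $S_w=\{(x,y)\}$, $S_{w'}=\{(a,a')\}$, all other $S_v$ empty,
\[
R_1=\{(w,x),(w,y),(w',x),(w',a),(w',a'),(x,a),(a',y)\},\qquad R_0=R_1\cup\{(x,y)\},
\]
and label the nodes by \mcs's $\Gamma_0\sucs\Gamma_1\sucs\Gamma_2$ via $\nu(w)=\nu(w')=\Gamma_0$, $\nu(x)=\nu(a')=\Gamma_1$, $\nu(a)=\nu(y)=\Gamma_2$, labelling no edges, so that conditions (4) and (5) of Definition \ref{defi:quasiframes} hold vacuously; both $F_0$ and $F_1$ are quasi-frames. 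In $F_1$ the step $aS_{w'}a'$ is pure, $x\,(K(F_1))^1\,a$ and $a'\,(K(F_1))^1\,y$, so clause 3 of Definition \ref{defi:k} with parameter $w'$ forces $(x,y)\in K(F_1)$; one checks $K(F_1)=\trans{(R_1)}\cup\{(x,y)\}$. Hence $R_0\subseteq K(F_1)$ and trivially $R_1\subseteq R_0\subseteq K(F_0)$, so both hypotheses hold. But $x\spure{w}{S}y$ is true in $F_1$ (the only $(S_w\cup R_1)$-path from $x$ to $y$ is the single $S_w$-step), while it is false in $F_0$, because $xR_0y$. The two pure relations differ, even though---as the lemma predicts---$K(F_0)=\trans{(R_0)}=K(F_1)$.

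This failure is not a defect of the example but the typical situation the lemma is designed for: one extends $R$ by edges lying inside $K$, and such new $R$-edges run exactly parallel to the pure $S_w$-chains that clause 3 used, thereby destroying their purity. For the same reason the obstacle you yourself flagged (absorbing $S_{w'}$-transitions with $w'\neq w$) cannot be overcome by transfinite induction or bookkeeping: unfolding the $R_0$-edge $(x,y)$ above through the construction of $K(F_1)$ yields the path $xR_1aS_{w'}a'R_1y$, and no argument can absorb the $S_{w'}$-step into a factor $(S_w\cup R_1)^*$, because the statement being proved---impurity of $(x,y)$ in $F_1$---is simply false. Any correct proof must verify clause 3 for $F_1$-pure chains directly and cope with the case where an $F_1$-pure step $b\spure{w}{S}b'$ is $F_0$-impure by other means (for instance, using the $F_0$-impurity witness, whose $R_0$-edges lie in $K(F_0)$, together with transitivity and clause 3 of $K(F_0)$), rather than by identifying the two purity notions.
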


In a great deal of situations we have a particular interest in
$K^1$. To determine some of its properties the following lemma
comes in handy. It basically shows that we can compute $K$
by first closing of under the \mo--condition and then
take the transitive closure.

\begin{lemma}[Calculation  of $K$]\label{lemm:k1calc}
Let $F=\langle W,R,S,\nu\rangle$ be a quasi--frame.
Let $K=K(F)$ and suppose $K$ conversely well--founded.
Let $T$ be a binary relation on $W$ such that

\begin{enumerate}
\item\label{i0:lemm:k1calc} $R\subseteq \trans{T}\subseteq K$,
\item\label{i1:lemm:k1calc} $w\trans{T}xT^1y\trans{(\spure{w}{S})}y'T^1z\rightarrow x\trans{T}z$.
\end{enumerate}

Then we have the following.
{
\renewcommand{\theenumi}{(\alph{enumi})}
\renewcommand{\labelenumi}{\theenumi}
\begin{enumerate}
\item\label{r0:lemm:k1calc} $K=\trans{T}$
\item\label{r1:lemm:k1calc} $xK^1y\rightarrow xTy$
\end{enumerate}
}
\end{lemma}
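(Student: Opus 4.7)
The plan is to prove both claims essentially simultaneously, by exploiting the minimality of $K$ in its inductive definition together with a simple observation relating $T$ and $\trans{T}$.

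First I would record the key observation that $(\trans{T})^1 \subseteq T^1$ (in fact $\subseteq T$). Indeed, if $x(\trans{T})^1 y$ then there is a $T$-path $x = x_0 \, T \, x_1 \, T \cdots T \, x_n = y$, and if $n \geq 2$ then $x_1$ would witness $x \trans{T} x_1 \trans{T} y$, contradicting the fact that the $\trans{T}$-edge is direct. Hence $n=1$, so $xTy$; and moreover there can be no $z$ with $xTzTy$ for the same reason, so $xT^1 y$. Note also that $T \subseteq \trans{T} \subseteq K$ is conversely well-founded, so $T^1$ and $(\trans{T})^1$ are well-defined.

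For part \ref{r0:lemm:k1calc}, the inclusion $\trans{T} \subseteq K$ is given by hypothesis \ref{i0:lemm:k1calc}. For the reverse inclusion $K \subseteq \trans{T}$, I would verify that $\trans{T}$ satisfies the three closure properties from Definition \ref{defi:k}, whence minimality of $K$ yields $K\subseteq\trans{T}$. Properties \ref{item0:defi:k} and \ref{item1:defi:k} are immediate ($R \subseteq \trans{T}$ by hypothesis; $\trans{T}$ is transitive by construction). For \ref{item2:defi:k}, suppose $w\trans{T}x(\trans{T})^1 y\trans{(\spure{w}{S})}y'(\trans{T})^1 z$. By the observation above, $(\trans{T})^1 \subseteq T^1$, so we obtain $w\trans{T}xT^1 y\trans{(\spure{w}{S})}y'T^1 z$, and hypothesis \ref{i1:lemm:k1calc} gives $x\trans{T}z$, as needed.

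For part \ref{r1:lemm:k1calc}, assume $xK^1y$. By part \ref{r0:lemm:k1calc}, $K = \trans{T}$, so this says $x(\trans{T})^1 y$, and the opening observation immediately yields $xTy$. The only subtle point in the whole argument is the passage $(\trans{T})^1 \subseteq T^1$, which is the reason the hypothesis \ref{i1:lemm:k1calc} on $T$ (rather than on $K$ directly) suffices to drive the closure condition for $\trans{T}$; converse well-foundedness of $K$ (hence of $\trans{T}$ and $T$) is what lets these $(\cdot)^1$-relations be formed without worry.
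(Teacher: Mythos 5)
Your proof is correct and takes essentially the same approach as the paper: the paper's (very terse) proof likewise establishes (a) by checking that $\trans{T}$ satisfies the three defining properties of $K$ from Definition \ref{defi:k}, so that minimality gives $K\subseteq\trans{T}$, and then notes that (b) follows from (a). Your explicit observation that $(\trans{T})^1\subseteq T^1$ is precisely the detail the paper leaves implicit, both in verifying the third closure property for $\trans{T}$ via hypothesis \ref{i1:lemm:k1calc} and in passing from $xK^1y$ to $xTy$.
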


\begin{proof}
To see \ref{r0:lemm:k1calc}, it is enough to see that $\trans{T}$
satisfies the three properties of the definition of $K$
(Definition \ref{defi:k}). Item \ref{r1:lemm:k1calc} follows from
\ref{r0:lemm:k1calc}. 
\end{proof}

Another entity that changes during the construction of an \ilmo--frame
out of a quasi--frame is the critical cone 
%(which is defined for all logics \ilx,
%see Definition \ref{defi:critcone}). 
In accordance with the above
definition of $K(F)$, we also like to predict what eventually
becomes the critical cone.

\newcommand{\ncone}[2]{\ensuremath{\mathcal{N}^{#1}_{#2}}\xspace}
\begin{definition}[\ncone{C}{w}]\label{defi:ncone}
For any quasi--frame $F$ we define \ncone{C}{w} to be the smallest set such that
\begin{enumerate}
\item $\nu(w,x)=C\Rightarrow x\in\ncone{C}{w}$,
\item $x\in\ncone{C}{w}\wedge x(K\cup S_w)y\Rightarrow y\in\ncone{C}{w}$.
\end{enumerate}
\end{definition}

In accordance with the notion of a quasi--frame we introduce the notion
of a quasi--\ilmo--frame. This gives sufficient conditions for
a quasi--frame to be closeable, not only under the \il--frameconditions,
but under all the \ilmo--frameconditions.

\begin{definition}[Quasi--\ilmo--frame]\label{defi:ilmoquasi}
A quasi--\ilmo--frame is a quasi--frame that satisfies the following additional
properties.

\begin{enumerate}
\addtocounter{enumi}{5}
\item\label{ilmoquasi:1}$K$ is conversely well--founded.
\item\label{ilmoquasi:2}$xKy\rightarrow\nu(x)\sucs\nu(y)$
\item\label{ilmoquasi:4}$x\in\ncone{A}{w}\rightarrow \nu(w)\crit{A}\nu(x)$
\item\label{ilmoquasi:6}$wKxKy(S_w\cup K)^*y'\rightarrow\nu(x)\boxin\nu(y')$
\item\label{ilmoquasi:7}$xS_wy\rightarrow x(\spure{w}{S}\cup R)^*y$
\item\label{ilmoquasi:9}$wKxK^1y\trans{(\spure{w}{S})}y'K^1z\rightarrow x\trans{(K^1)}z$
\item\label{ilmoquasi:8}$xRy\rightarrow x\trans{(R^1)}y$
\end{enumerate}
\end{definition}

\begin{lemma}\label{lemm:r1k1}
If $F$ is a quasi--\ilmo--frame, then $K=\trans{(K^1)}$.
\end{lemma}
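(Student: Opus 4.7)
The inclusion $\trans{(K^1)} \subseteq K$ is immediate since $K^1 \subseteq K$ and $K$ is transitive by Definition \ref{defi:k}(2). For the reverse inclusion I plan to invoke the Calculation of $K$ lemma (Lemma \ref{lemm:k1calc}) with $T := K^1$, so the task reduces to verifying its two hypotheses.

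Hypothesis (ii) of Lemma \ref{lemm:k1calc} becomes $w\trans{(K^1)}xK^1y\trans{(\spure{w}{S})}y'K^1z \to x\trans{(K^1)}z$ once one observes that $(K^1)^1 = K^1$: any $xK^1y$ already excludes $K$-intermediates, hence a fortiori $K^1$-intermediates, and the reverse inclusion is immediate. Since $\trans{(K^1)} \subseteq K$, the premise then forces $wKxK^1y\trans{(\spure{w}{S})}y'K^1z$, and property 11 of Definition \ref{defi:ilmoquasi} supplies the conclusion directly.

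The substantive part is hypothesis (i), namely $R \subseteq \trans{(K^1)}$. By property 12 of Definition \ref{defi:ilmoquasi} every $R$-edge decomposes into $R^1$-steps, so it suffices to show that each $R^1$-step $uR^1v$ already lies in $\trans{(K^1)}$. We have $uKv$; if $uK^1 v$ we are done, and otherwise converse well-foundedness of $K$ (property 6) yields a $K$-maximal element $t_0$ of the non-empty set $\{t : uKt \wedge tKv\}$. A short maximality argument using transitivity of $K$ then gives $t_0 K^1 v$: any witness $t'$ to $t_0 K t' K v$ would, by transitivity, satisfy $uKt'$, hence belong to the set and contradict maximality of $t_0$. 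One is then reduced to $uKt_0$, and iterates.

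The main obstacle will be proving that this iteration terminates, since converse well-foundedness of $K$ alone does not exclude an infinite sequence $t_0, t_1, t_2, \ldots$ with each $t_{i+1} K^1 t_i$. Closing this gap requires exploiting the structural origin of the intermediates: because $uR^1v$ excludes any $R$-intermediate, any $K$-intermediate between $u$ and $v$ must arise from the $\mo$-closure, and property 11 of Definition \ref{defi:ilmoquasi} allows us to refine such a $K$-edge into a $K^1$-chain of strictly smaller generation complexity than the one that produced it. Combining this structural reduction with property 12 for the $R$-part and the maximality extraction above, one obtains a finite $K^1$-witness for $u\trans{(K^1)}v$, which establishes hypothesis (i) and hence $K = \trans{(K^1)}$.
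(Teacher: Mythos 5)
Your overall strategy is the same as the paper's: its entire proof of Lemma \ref{lemm:r1k1} is an appeal to Lemma \ref{lemm:k1calc}, and taking $T=K^1$ there is clearly the intended instantiation. Your treatment of hypothesis (ii) is correct and complete: $(K^1)^1=K^1$, $\trans{(K^1)}\subseteq K$, and then the quasi--\ilmo--frame property $wKxK^1y\trans{(\spure{w}{S})}y'K^1z\rightarrow x\trans{(K^1)}z$ yields exactly what is needed. Your reduction of hypothesis (i) to $R^1\subseteq\trans{(K^1)}$ via the property $xRy\rightarrow x\trans{(R^1)}y$ is also correct, and your maximality extraction of a $K^1$-edge $t_0K^1v$ from a $K$-maximal intermediate is sound.

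The gap you flag yourself is, however, genuine, and your sketch for closing it does not work. The iteration produces a sequence $\cdots K^1t_1K^1t_0K^1v$ of intermediates all $K$-above $u$; converse well-foundedness of $K$ forbids infinite \emph{ascending} chains, not such descending ones, so by itself it cannot force the iteration to terminate at $u$. Your proposed repair rests on the claim that, because $uR^1v$, every $K$-intermediate of $(u,v)$ ``must arise from the \mo--closure''. This is false: in a quasi--frame $R$ is \emph{not} assumed transitive (transitivity is one of the imperfections only added at closure, Lemma \ref{lemm:extension}), so $K$-intermediates of an $R^1$-edge can come purely from the transitive-closure clause of Definition \ref{defi:k}, e.g.\ from an $R$-path $uRaRbRv$ when no two-step $R$-path from $u$ to $v$ exists. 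Moreover, the ``generation complexity'' on which your reduction inducts is never defined, and defining it is itself problematic: the generating clause of $K$ mentions $K^1$ and is therefore non-monotone in $K$ (enlarging $K$ can destroy $K^1$-edges), so $K$ does not come equipped with an obvious cumulative hierarchy along which to induct. Consequently hypothesis (i) of Lemma \ref{lemm:k1calc}, i.e.\ $R\subseteq\trans{(K^1)}$, remains unproved in your proposal; this is precisely the content that the paper's one-line proof leaves implicit, and any honest completion must derive it from the interplay of the properties $xRy\rightarrow x\trans{(R^1)}y$ and $wKxK^1y\trans{(\spure{w}{S})}y'K^1z\rightarrow x\trans{(K^1)}z$, not merely from transitivity and converse well-foundedness of $K$, which (as your own worry shows) do not suffice.
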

\begin{proof}
Using Lemma \ref{lemm:k1calc}. 
\end{proof}

\begin{lemma}\label{lemm:klemma1}
Suppose that $F$ is a quasi--\ilmo--frame. Let $K=K(F)$. Let $K'$,
$K''$ and $K'''$ the smallest binary relations on $W$ satifying
\ref{item0:defi:k} and \ref{item1:defi:k} of \ref{defi:k} and
additionaly we have the following.
{

\begin{enumerate}
\renewcommand{\theenumi}{\arabic{enumi}$'$.}
\renewcommand{\labelenumi}{\theenumi}
\addtocounter{enumi}{2}
\item\label{item2:defi:k_} $wK'x{K'}^1y(\spure{w}{S}\cup K')^*y'{K'}^1z\rightarrow xK'z$
\renewcommand{\theenumi}{\arabic{enumi}$''$.}
\renewcommand{\labelenumi}{\theenumi}
\addtocounter{enumi}{-1}
\item\label{item2:defi:k__} $wK''xK''y\trans{(\spure{w}{S})}y'K''z\rightarrow xK''z$
\renewcommand{\theenumi}{\arabic{enumi}$'''$.}
\renewcommand{\labelenumi}{\theenumi}
\addtocounter{enumi}{-1}
\item\label{item2:defi:k___} $wK'''xK'''y(S_w\cup K''')^*y'K'''z\rightarrow xK'''z$
\end{enumerate}
} Then $K=K'=K''=K'''$.
\end{lemma}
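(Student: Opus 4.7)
The plan is to prove $K=K'=K''=K'''$ by establishing, for each of the three pairs $(K,K')$, $(K,K'')$, $(K,K''')$, both inclusions via minimality of the relations involved: to obtain $K\subseteq X$ I verify that $X$ satisfies the three defining conditions of $K$, and for the reverse I verify that $K$ itself satisfies the defining conditions of $X$. Conditions \ref{item0:defi:k} and \ref{item1:defi:k} (containing $R$ and transitivity) are shared by all four relations, so only the third closure condition requires attention.

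The forward inclusions $K\subseteq K'$, $K\subseteq K''$, $K\subseteq K'''$ are immediate. Each of \ref{item2:defi:k_}, \ref{item2:defi:k__}, \ref{item2:defi:k___} is a relaxation of condition~\ref{item2:defi:k}: since $\trans{(\spure{w}{S})}\subseteq (\spure{w}{S}\cup K')^*$ and ${K'}^1\subseteq K'$, condition~\ref{item2:defi:k} for $K'$ follows at once from \ref{item2:defi:k_}, and analogous containments (using $\spure{w}{S}\subseteq S_w$ for the last case) handle $K''$ and $K'''$. Minimality of $K$ then yields the three inclusions.

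For the reverse direction I must show that $K$ itself satisfies each of \ref{item2:defi:k_}, \ref{item2:defi:k__}, \ref{item2:defi:k___}. The central tool is Lemma~\ref{lemm:r1k1}, which gives $K=\trans{(K^1)}$ and thus permits decomposing any $K$-edge into a chain of $K^1$-edges. Condition~\ref{item2:defi:k__} for $K$ is handled directly: given $wKxKy\trans{(\spure{w}{S})}y'Kz$, write $xKy$ as $x=a_0K^1a_1\cdots K^1a_m=y$ (with $m\geq 1$ since $K$ is conversely well-founded and hence irreflexive) and $y'Kz$ as $y'=b_0K^1\cdots K^1b_n=z$ with $n\geq 1$; transitivity supplies $wKa_{m-1}$, so condition~\ref{item2:defi:k} of $K$ applied to $wKa_{m-1}K^1y\trans{(\spure{w}{S})}y'K^1b_1$ produces $a_{m-1}Kb_1$, and $K$-transitivity packages everything into $xKa_{m-1}Kb_1Kz$. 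For \ref{item2:defi:k___}, property~\ref{ilmoquasi:7} together with $R\subseteq K$ gives $S_w\subseteq(\spure{w}{S}\cup K)^*$, so every walk in $S_w\cup K$ rewrites as a walk in $\spure{w}{S}\cup K$; combined with the $K^1$-decomposition trick just used, \ref{item2:defi:k___} for $K$ reduces to \ref{item2:defi:k_} for $K$.

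The main obstacle is therefore establishing \ref{item2:defi:k_} for $K$, namely $wKxK^1y(\spure{w}{S}\cup K)^*y'K^1z\Rightarrow xKz$. I proceed by induction on the number of $\spure{w}{S}$-edges in the middle walk. In the base case the walk lies entirely in $K^*$, so $K$-transitivity with the $K^1$-boundaries $xK^1y$ and $y'K^1z$ immediately gives $xKz$. In the inductive step I isolate the first maximal $\trans{(\spure{w}{S})}$-block $u\trans{(\spure{w}{S})}v$ in the middle walk. Using Lemma~\ref{lemm:r1k1}, I produce a $K^1$-predecessor $u^-$ of $u$ (taking $u^-=y$ when $u=y$, otherwise decomposing the preceding $K$-segment into $K^1$-edges and letting $u^-$ be its penultimate vertex) and a $K^1$-successor $v^+$ of $v$ (taking $v^+=z$ when $v=y'$ and the block is terminal, otherwise decomposing the following $K$-segment). $K$-transitivity yields $wKu^-$, so condition~\ref{item2:defi:k} fires on $wKu^-K^1u\trans{(\spure{w}{S})}vK^1v^+$ and collapses the block into a single $K$-edge $u^-Kv^+$. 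The resulting middle walk from $y$ to $y'$ still lies in $(\spure{w}{S}\cup K)^*$ but contains strictly fewer $\spure{w}{S}$-edges, so the induction hypothesis applies and delivers $xKz$. The delicate point throughout is preserving the $K^1$-flanked shape around the isolated block that condition~\ref{item2:defi:k} demands, particularly in the corner cases where the block abuts the boundary vertices $y$ or $y'$.
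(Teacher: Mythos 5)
Your strategy coincides with the paper's: the paper's entire proof of this lemma is a citation of Lemma \ref{lemm:r1k1}, and you correctly make $K=\trans{(K^1)}$ the engine of the whole argument. The forward inclusions by minimality of $K$, the treatment of \ref{item2:defi:k__} by decomposing the two flanking $K$-edges into $K^1$-chains and firing condition \ref{item2:defi:k} on the innermost links, and the reduction of \ref{item2:defi:k___} to \ref{item2:defi:k_} via property \ref{ilmoquasi:7} together with $R\subseteq K$ are all sound.

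The inductive step for \ref{item2:defi:k_}, however, breaks exactly in the corner case you yourself flag as delicate. When the first $\spure{w}{S}$-block starts at $y$ itself (i.e.\ $u=y$), you take $u^-=y$; then $u^-K^1u$ reads $yK^1y$, which is false since $K$ is conversely well-founded and hence irreflexive, so condition \ref{item2:defi:k} cannot fire. The correct choice is $u^-=x$: the hypothesis $xK^1y$ hands you the needed $K^1$-predecessor, mirroring your (correct) choice $v^+=z$ on the right flank. Note that with $u^-=y$ your recipe already fails on the simplest instance of \ref{item2:defi:k_} beyond the base case, namely the configuration $wKxK^1y\trans{(\spure{w}{S})}y'K^1z$, which is condition \ref{item2:defi:k} itself. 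After this fix, a second adjustment is needed in the same case: the collapse now produces an edge $xKv^+$ leaving $x$, not $y$, so there is no ``resulting middle walk from $y$ to $y'$'' to hand to the induction hypothesis. Instead one should decompose $xKv^+$ as $xK^1f_1K^*v^+$ (Lemma \ref{lemm:r1k1} again) and apply the induction hypothesis to the configuration $wKxK^1f_1(\spure{w}{S}\cup K)^*y'K^1z$, whose middle walk has strictly fewer $\spure{w}{S}$-edges; equivalently, set up the induction so that the left flank is allowed to move. When $u\neq y$ your description is fine as stated, since the collapsed edge $u^-Kv^+$ can still be routed through $y$ via $yK^*u^-Kv^+$. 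These are repairable local slips rather than a wrong approach, but as written the induction does not go through.
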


\begin{proof}
Using
Lemma \ref{lemm:r1k1}. 
\end{proof}

Before we move on, let us first sum up a few comments.

\begin{corollary}
If $F=\langle W,R,S,\nu\rangle$ is an adequate \ilmo--frame. Then
we have the following.
\begin{enumerate}
\item $K(F)=R$
\item $F\models x\in\ncone{A}{w}\Leftrightarrow F\models
x\in\crone{A}{w}$
\item $F$ is a quasi--\ilmo--frame
\end{enumerate}
\end{corollary}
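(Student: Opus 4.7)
For \textbf{(1)}, the inclusion $R\subseteq K$ is immediate from clause~\ref{item0:defi:k} of Definition~\ref{defi:k}. For the reverse inclusion, the plan is to verify that $R$ itself satisfies the three defining clauses of $K$, whence $K\subseteq R$ by minimality. Clause~\ref{item0:defi:k} is trivial; clause~\ref{item1:defi:k} holds because $R$ is transitive by Definition~\ref{defi:frames}. For clause~\ref{item2:defi:k}, suppose $wRxR^1y\trans{(\spure{w}{S})}y'R^1z$. Since $\spure{w}{S}\subseteq S_w$ and $S_w$ is transitive (axiom~6 of Definition~\ref{defi:frames}), the middle chunk collapses to $yS_wy'$ (if the $\trans{}$-path has zero length one gets $y=y'$, and then $xRz$ already follows from transitivity of $R$ applied to $xRyRz$). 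We are then in the situation $wRxRyS_wy'Rz$, and the \mo--frame condition~\ref{i1:ilmoframecondition} delivers $xRz$.

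For \textbf{(2)}, once $K=R$ has been established, the two inductive definitions become identical: closing under the single relation $K\cup S_w=R\cup S_w$ is the same as closing under both $R$ and $S_w$ separately, which is exactly the recipe of Definition~\ref{defi:critcone}. Hence $\ncone{A}{w}=\crone{A}{w}$ world-by-world.

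For \textbf{(3)}, the plan is to tick off the twelve clauses of Definition~\ref{defi:ilmoquasi}. Clauses 1--5 hold because $F$ is adequate. Clauses~\ref{ilmoquasi:1} (converse well-foundedness of $K$) and~\ref{ilmoquasi:2} follow from~(1) together with the corresponding properties of $R$; clause~\ref{ilmoquasi:4} combines~(2) with property~5 of adequacy; clauses~\ref{ilmoquasi:7} and~\ref{ilmoquasi:8} are verbatim the conditions \ref{i3:ilmoframecondition} and~\ref{i4:ilmoframecondition}; and clause~\ref{ilmoquasi:9} is the argument of part~(1) recast via~(1), i.e.\ $\trans{(K^1)}$ is just $\trans{(R^1)}\supseteq R$ by clause~\ref{i4:ilmoframecondition}, and $xRz$ is produced exactly as above.

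The only mildly substantial clause is~\ref{ilmoquasi:6}: I need $wRxRy(S_w\cup R)^*y'\Rightarrow\nu(x)\boxin\nu(y')$. The plan is to show that under $wRxRy$ any such $(S_w\cup R)^*$-path yields $yS_wy'$, after which condition~\ref{i2:ilmoframecondition} closes the job. The argument is a short induction on the path length. For the empty path $y=y'$ one uses $yS_wy$, which follows from $wRy$ via axiom~4 of Definition~\ref{defi:frames}. For the inductive step with final link $z\,(S_w\cup R)\,z'$: if it is an $S_w$-step it is kept as is; if it is an $R$-step, then $wRz$ (which propagates along the path by transitivity of $R$ starting from $wRxRy$) combined with axiom~5 of Definition~\ref{defi:frames} yields $zS_wz'$. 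Composition via transitivity of $S_w$ then turns the whole path into a single $S_w$-step $yS_wy'$, and \ref{i2:ilmoframecondition} applied to $wRxRyS_wy'$ gives $\nu(x)\boxin\nu(y')$. The potential obstacle here is precisely this interplay between $R$-steps and $S_w$-steps along the path, but the two frame axioms $xRy\to yS_xy$ and $xRyRz\to yS_xz$ are tailor-made to absorb every $R$-transition below $w$ into $S_w$.
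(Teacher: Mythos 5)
Your proof is correct. Note that the paper itself states this corollary \emph{without} proof --- it is offered as a summary of immediate consequences of the surrounding definitions --- so there is no official argument to compare against; your write-up supplies exactly the omitted verification, and along the natural lines. Two observations. First, your key step $K\subseteq R$, obtained by checking that $R$ itself satisfies the three clauses of Definition \ref{defi:k} and appealing to minimality, is precisely the style of reasoning the paper uses for $K$ elsewhere (Lemmas \ref{lemm:kstep} and \ref{lemm:kfg}); it is also preferable to the seemingly available alternative of invoking the calculation lemma for $K$ (Lemma \ref{lemm:k1calc}) with $T=R$, since that lemma assumes converse well-foundedness of $K$ as a hypothesis, which for an adequate \ilmo--frame is only known \emph{after} $K=R$ is established (it is one of the clauses of Definition \ref{defi:ilmoquasi} you must verify in part (3)); your direct route avoids this circularity. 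Second, a labelling slip only: in part (3) you cite ``property 5 of adequacy'', but adequacy has three clauses; what you need is its third clause (equivalently, clause 5 of Definition \ref{defi:quasiframes}) combined with your part (2) --- the mathematics is unaffected. Your parenthetical worry about zero-length $\trans{(\spure{w}{S})}$-paths is superfluous (a transitive closure step requires at least one link, though $y=y'$ could in principle recur via a cycle), but the way you handle it is harmless and in fact covers that degenerate case too.
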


\begin{lemma}[\ilmo--closure]\label{lemm:ilmoclosure}
Any quasi--\ilmo--frame can be extended to an adequate
\ilmo--frame.
\end{lemma}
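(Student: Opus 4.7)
The plan is to follow the same iterative closure pattern as in Lemma \ref{lemm:extension}, but augmented to handle the extra \ilmo--frame conditions. Concretely, I would enlarge the list of imperfection types of a quasi--frame with a fifth type
\[
\langle 4, w, x, y, y', z\rangle \text{ with } wRxRyS_wy'Rz \text{ but not } xRz,
\]
eliminated by adding $\langle x, z\rangle$ to $R$, together with imperfections forcing condition \ref{i3:ilmoframecondition} of Definition \ref{defi:ilmoframecondition}. We then enumerate these imperfections by some well--ordering and successively eliminate them, producing a chain $F_0 \subseteq F_1 \subseteq \cdots$ whose union $F$ is the desired adequate \ilmo--frame.

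The key structural observation is that $K = K(G)$ is exactly the set of $R$--edges of the target \ilmo--frame. By Definition \ref{defi:ilmoquasi}.\ref{ilmoquasi:1}, $K$ is conversely well--founded, so the iteration is well--defined. Moreover, Lemma \ref{lemm:kstabiel} guarantees that as long as every newly added $R$--edge lies within $K$, the relation $K$ itself is not enlarged during the process; hence the iteration terminates in the sense that $\bigcup_n R_n = K$. Finally, Lemma \ref{lemm:r1k1} gives $K = \trans{(K^1)}$, which is precisely condition \ref{i4:ilmoframecondition} of \ilmo--frames applied to the final $R$.

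At each step one has to verify that the stage $F_{n+1}$ is still a quasi--\ilmo--frame. Items \ref{ilmoquasi:2} and \ref{ilmoquasi:4} of Definition \ref{defi:ilmoquasi} are preserved because any newly added edge lies inside $K$, for which conditions \ref{ilmoquasi:2} and $x \in \ncone{A}{w} \rightarrow \nu(w)\crit{A}\nu(x)$ hold by the original assumptions on $G$. Item \ref{ilmoquasi:6} is the crucial $\boxin$ invariant and is preserved for the same reason, combined with Lemma \ref{lemm:klemma1} which allows us to compute $K$ in several equivalent ways and in particular absorbs the $S_w\cup K$ paths into the inductive structure of $K$. As in Corollary \ref{coro:invariantil} one verifies that the generalized and critical cones do not change, so adequacy is preserved throughout.

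The main obstacle will be preserving item \ref{ilmoquasi:7} of Definition \ref{defi:ilmoquasi}, namely $xS_wy \rightarrow x(\spure{w}{S} \cup R)^*y$, under the closure. The pure transition relation $\spure{w}{S}$ is defined globally in terms of $R$ and $S_w$, so a newly added $R$--edge can cause a previously pure $S_w$--step to stop being pure, and a newly added $S_w$--edge (arising from $S$--transitivity or from the $R \subseteq S$ closure) must itself be decomposable into a pure--$S_w$/$R$ path. The proof of this point will require a simultaneous induction on the construction stage, exploiting both the definition of $\spure{w}{S}$ and the fact that the only new $R$--edges come from the $K$--closure; this in turn uses the characterisation of $K$ via $K^1$ from Lemma \ref{lemm:r1k1}. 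Once this is in place, the remaining frame conditions of Definition \ref{defi:ilmoframecondition} follow in the same mechanical fashion as the closure in Lemma \ref{lemm:extension}, and the union $F$ is the required adequate \ilmo--frame extending $G$.
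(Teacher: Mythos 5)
Your proposal follows essentially the same route as the paper's own proof: the paper likewise extends the \il-closure chain of Lemma \ref{lemm:extension} with exactly one fifth imperfection $\langle 4,w,a,b,b',c\rangle$ (with $wRaRbS_wb'Rc$ but not $aRc$, eliminated by adding $\langle a,c\rangle$ to $R$), shows by induction — using Lemma \ref{lemm:klemma1} for the new imperfection and Lemma \ref{lemm:kstabiel} for the stability of $K$ — that every stage remains a quasi-\ilmo-frame, and then takes the union. The only deviations are cosmetic: no separate imperfections are needed for condition \ref{i3:ilmoframecondition}, since that condition is preserved rather than forced (exactly as you yourself observe when you later flag \ref{ilmoquasi:7} as the main obstacle), and the critical cones may in fact grow during this closure — what stays fixed are the $\mathcal{N}$-cones containing them, which is what the paper's two claims record and which your appeal to the $K$-based conditions of Definition \ref{defi:ilmoquasi} already supplies.
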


\begin{proof}

Given a quasi--\ilmo--frame $F$ we construct a sequence
\[F=F_0\subseteq F_1\subseteq\cdots\]
very similar to the sequence
constructed for the \il closure of a quasi--frame (Lemma \ref{lemm:extension}). The
only difference is that we add a fifth entry to the list of imperfections.
{\renewcommand{\theenumi}{(\roman{enumi})}
\renewcommand{\labelenumi}{\theenumi}
\begin{enumerate}
\addtocounter{enumi}{4}
\item\label{imp:mo}
$\gamma=\langle 4,w,a,b,b',c\rangle$ with $F_n\models wRaRbS_wb'Rc$ but $F_n\not\models aRc$
\end{enumerate}}
\noindent
In this case we set, of course, 
$F_{n+1} \eqbydef \pair{W_n, R_n \cup \pair{a,c}, S_n, \nu_n}$.
First we will show by induction that each $F_n$ is a quasi--\ilmo--frame. Then
we show that the union $\hat F=\bigcup_{n\geq 0}F_n$, is quasi and
satisfies all the \ilmo frame conditions.

We assume that $F_n$ is a quasi-\ilmo-frame and define
$K^n \eqbydef K(F_n)$,
$R^n\eqbydef R^{F_n}$ and 
$S^n \eqbydef S^{F_n}$. 
Quasi-ness of $F_{n+1}$ will follow from Claim \ref{subclaim0:moclos}, and
from Claim \ref{subclaim2:moclos} we may conlude that $F_{n+1}$ is indeed a quasi-\ilmo-frame.
\begin{claim}\label{subclaim0:moclos}
For all $w,x,y$ and $A$ we have the following.
{
\renewcommand{\theenumi}{(\alph{enumi})}
\renewcommand{\labelenumi}{\theenumi}
\begin{enumerate}
\item\label{i1:subclaim0:moclos} $R^{n+1}\subseteq K^n$
\item\label{i2:subclaim0:moclos} $x(S_w^{n+1}\cup R^{n+1})^*y\Rightarrow x(S_w^{n}\cup K^n)^* y$
\item\label{i3:subclaim0:moclos} $F_{n+1}\models x\in\crone{A}{w}\Rightarrow F_n\models x\in\ncone{A}{w}$.
\end{enumerate}
}
\end{claim}

\begin{proof}
We distinguish cases according to which imperfection is dealt with in the step
from $F_n$ to $F_{n+1}$. The only interesting case is the new imperfection which is dealt with
by Lemma \ref{lemm:klemma1}, Item \ref{item2:defi:k__}
\end{proof}

\begin{claim}\label{subclaim2:moclos}
For all $w,x$ and $A$ we have the following.
\begin{enumerate}
\item\label{i1:subclaim2:moclos} $K^{n+1}\subseteq K^n$.
\item\label{i2:subclaim2:moclos} $x(S^{n+1}_w\cup K^{n+1})^*y\Rightarrow x(S^n_w\cup K^n)^* y$
\item\label{i3:subclaim2:moclos} $F_{n+1}\models x\in\ncone{A}{w}\Rightarrow F_n\models x\in\ncone{A}{w}$.
\end{enumerate}
\end{claim}

\begin{proof}
Item \ref{i1:subclaim2:moclos} follows by Claim \ref{subclaim0:moclos} and Lemma \ref{lemm:kstabiel}.
Item \ref{i2:subclaim2:moclos} follows from Item \ref{i1:subclaim2:moclos} and
Claim \ref{subclaim0:moclos}-\ref{i2:subclaim0:moclos}.
Item \ref{i3:subclaim2:moclos} is an immediate corollary of item \ref{i2:subclaim2:moclos}
\end{proof}
Again, it is not hard to see that $\hat F=\bigcup_{n\geq 0}F_n$ is an adequate \ilmo-frame.
\end{proof}

\newcommand{\kstepsfull}{\{(x,y)\mid\exists z\,(\nu(x)\boxin\nu(z)\wedge x(R\cup S)^*zRy)\}}
\begin{lemma}\label{lemm:kstep}
Let $F=\langle W,R,S,\nu\rangle$ be a quasi--\ilmo--frame and $K=K(F)$.
Then
\[xKy\rightarrow \exists z\,(\nu(x)\boxin\nu(z)\wedge x(R\cup S)^*zRy).\]
\end{lemma}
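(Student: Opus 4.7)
The plan is to introduce the auxiliary relation
\[
T \eqbydef \{(x,y) \mid xKy \ \text{and}\ \exists z\,(\nu(x)\boxin\nu(z) \wedge x(R\cup S)^*zRy)\}
\]
and prove $K\subseteq T$ by exploiting minimality. By Lemma \ref{lemm:klemma1}, $K$ coincides with $K''$, the smallest binary relation containing $R$, closed under transitivity, and closed under the rule $wK''xK''y\trans{(\spure{w}{S})}y'K''z \rightarrow xK''z$. It therefore suffices to verify that $T$ itself contains $R$, is transitive, and satisfies the same closure rule; the lemma will then follow because the second conjunct in the definition of $xTy$ is exactly the conclusion we want. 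Including the clause $xKy$ inside $T$ is a crucial design choice: it is what will allow me to invoke property \ref{ilmoquasi:6} in the critical step.

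First I would dispense with the easy parts. $R\subseteq T$ is immediate, taking $z=x$ (and $R\subseteq K$). For transitivity, assume $xTy$ is witnessed by $z_0$ and $yTy''$ by $z_1$. Then $xKy''$ follows from transitivity of $K$, the concatenated path $x(R\cup S)^* z_0 R y (R\cup S)^* z_1$ contracts to $x(R\cup S)^* z_1$, and chaining $\nu(x)\boxin\nu(z_0)\boxin\nu(y)\boxin\nu(z_1)$ (the middle inclusion coming from $z_0Ry$ via property \ref{ilmoquasi:2} together with $\sucs \subseteq \boxin$) shows that $z_1$ witnesses $xTy''$.

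The main obstacle is the closure rule. Suppose $wTx$, $xTy$, $y\trans{(\spure{w}{S})}y'$, $y'Tz$, with respective witnesses $q_0, q_1, q_2$. The first conjunct of $T$ yields $wKx$, $xKy$, $y'Kz$, and consequently $xKz$ because $K$ itself satisfies the closure rule. For the $T$-witness I would take $q_2$. Since $\spure{w}{S}\subseteq S_w$, the chain $wKxKy(S_w\cup K)^* y'$ meets the hypothesis of property \ref{ilmoquasi:6}, yielding $\nu(x)\boxin\nu(y')$; combining with $\nu(y')\boxin\nu(q_2)$ gives $\nu(x)\boxin\nu(q_2)$, and splicing the three paths produces $x(R\cup S)^* q_2 R z$. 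Thus all three closure properties hold for $T$, so $K\subseteq T$ and the lemma is proved. The hard conceptual point is that bridging $\boxin$ across the $S_w$-chain from $y$ to $y'$ requires property \ref{ilmoquasi:6}, whose hypothesis in turn demands $K$-edges on the left; this is precisely why the $xKy$ clause must be baked into the definition of $T$.
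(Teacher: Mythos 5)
Your proof is correct and takes essentially the same route as the paper: the paper defines $T \eqbydef \{(x,y)\mid\exists z\,(\nu(x)\boxin\nu(z)\wedge x(R\cup S)^{*}zRy)\}$, intersects it with $K$ (setting $K'=K\cap T$, which is exactly your relation), and concludes $K\subseteq K'$ by minimality of $K$, just as you do. The only difference is bookkeeping: you verify the closure clauses against the $K''$-characterization of Lemma \ref{lemm:klemma1}, whereas the paper checks the clauses of Definition \ref{defi:k} directly; both verifications turn on the same application of property \ref{ilmoquasi:6} of Definition \ref{defi:ilmoquasi} to carry $\boxin$ across the $S_w$-chain.
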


\begin{proof}

We define
\newcommand{\ksteps}{T}
$ T \eqbydef \kstepsfull$.
It is not hard to see that $T$ is transitive and that 
$\{(x,y)\mid\exists t\,(\nu(x)\boxin\nu(t)\wedge xT\comp(S\cup K)^*tTy)\}\subseteq T$.
We now define $K'=K\cap T$. 
We have to show that $K'=K$. As $K'\subseteq K$ is trivial, we will show $K\subseteq K'$.

It is easy to see that $K'$ satisfies properties \ref{item0:defi:k}, \ref{item1:defi:k} 
and \ref{item2:defi:k}
of Definition \ref{defi:k}; It follows on the two observations on $T$ we just made.
Since $K$ is the smallest binary relation that satisfies these properties we conclude
$K\subseteq K'$.
\end{proof}

The next lemma shows that $K$ is a rather stable relation.
We show that if we extend a frame $G$ to a frame $F$ such that
from worlds in $F-G$ we cannot reach worlds in $G$,
then $K$ on $G$ does not change.

\begin{lemma}\label{lemm:kfg}
Let $F=\langle W,R,S,\nu\rangle$ be a quasi--\ilmo--frame.
And let $G=\langle W^-,R^-,S^-,\nu^-\rangle$ be a subframe of $F$
(which means $W^-\subseteq W$, $R^-\subseteq R$, $S^-\subseteq S$ and $\nu^-\subseteq\nu$).
If
\renewcommand{\theenumi}{(\alph{enumi})}
\renewcommand{\labelenumi}{\theenumi}
\begin{enumerate}
\item\label{ass1:lemm:kfg} for each $f\in W-W^-$ and $g\in W^-$ not $f(R\cup S)g$ and
\item\label{ass2:lemm:kfg} $R{\upharpoonright_{W^-}}\subseteq K(G)$.
\end{enumerate}
Then $K(G)=K(F){\upharpoonright_{W^-}}$.
\end{lemma}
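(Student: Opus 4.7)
The plan is to prove the two inclusions $K(G)\subseteq K(F){\upharpoonright_{W^-}}$ and $K(F){\upharpoonright_{W^-}}\subseteq K(G)$ separately. It is convenient to work with the alternative characterisation of $K$ provided by Lemma~\ref{lemm:klemma1}: $K$ is the smallest binary relation containing $R$, transitive, and closed under the rule $wKxKy(S_w\cup K)^*y'Kz\rightarrow xKz$, which I shall refer to as clause $(3''')$. The combinatorial engine of the whole argument is the following propagation fact, which is where assumption $(a)$ will be used repeatedly: if $g\in W^-$ and $f(R\cup S)g$ in $F$ then $f\in W^-$, and combining this with Lemma~\ref{lemm:kstep}, which witnesses every $K(F)$-edge by an $(R\cup S)^*\comp R$-path, we also obtain that whenever $xK(F)y$ and $y\in W^-$ we have $x\in W^-$.

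For the inclusion $K(G)\subseteq K(F){\upharpoonright_{W^-}}$, I would verify that the relation $K(F){\upharpoonright_{W^-}}$ on $W^-$ satisfies the three defining clauses of $K(G)$ (with clause~$(3)$ replaced by $(3''')$); minimality of $K(G)$ then yields the inclusion. Clauses $(1)$ and $(2)$ are immediate from $R^-\subseteq R\subseteq K(F)$ and from transitivity of $K(F)$. For $(3''')$, suppose $w,x,y,y',z\in W^-$ together with $wK(F)x$, $xK(F)y$, $y(S^-_w\cup K(F){\upharpoonright_{W^-}})^* y'$ and $y'K(F)z$; every such step is also a step in $F$, so clause $(3''')$ applied to $K(F)$ in $F$ gives $xK(F)z$, and hence $xK(F){\upharpoonright_{W^-}}z$.

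The reverse inclusion I would do by transfinite induction on a stratification of $K(F)$. Set $K_0\eqbydef R$ and $K_{\alpha+1}\eqbydef K_\alpha^{\textrm tr}\cup\{(x,z):\exists w,y,y'.\; wK_\alpha xK_\alpha y(S_w\cup K_\alpha)^*y'K_\alpha z\}$, with unions at limit stages; it is standard that $K(F)=\bigcup_\alpha K_\alpha$. I would show by induction on $\alpha$ that $K_\alpha\cap(W^-)^2\subseteq K(G)$. The base case $\alpha=0$ is exactly assumption $(b)$. At a successor step, take $(x,z)\in K_{\alpha+1}$ with $x,z\in W^-$. If $(x,z)\in K_\alpha^{\textrm tr}$, then by the propagation fact the whole chain witnessing this lies in $W^-$, and the induction hypothesis combined with transitivity of $K(G)$ gives $xK(G)z$. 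If $(x,z)$ is added through the $(3''')$-rule, propagation forces $w,y,y'$ and every intermediate node along the $y(S_w\cup K_\alpha)^*y'$-path into $W^-$; identifying each $S_w$-step between nodes of $W^-$ with its $S^-_w$-counterpart in $G$ and applying the induction hypothesis to the $K_\alpha$-steps, we obtain a witnessing configuration to which clause~$(3''')$ for $K(G)$ applies, yielding $xK(G)z$. Limit stages are immediate.

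The principal delicacy is exactly this last identification: the rule $(3''')$ in $G$ needs $S^-$-transitions, while the premise produces $S$-transitions of $F$, and one must be sure that the two agree on $W^-$. This is the one place where one tacitly uses that $G$ faithfully inherits the relevant $S$-structure from $F$ on $W^-$, in the same spirit as assumption~$(b)$ guarantees for $R$; in the intended applications (where $G$ is a stage of the construction and $F$ a further extension) this inheritance holds by design, and the rest of the argument is purely a bookkeeping exercise driven by Lemma~\ref{lemm:kstep} and the propagation fact.
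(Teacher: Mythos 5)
Your proof is sound and, at bottom, follows the same strategy as the paper's: both inclusions are minimality (least--fixed--point) arguments, with Lemma \ref{lemm:kstep} plus hypothesis (a) yielding the key propagation fact (any $(R\cup S)$-step or $K(F)$-step ending in $W^-$ must start in $W^-$), and hypothesis (b) discharging the base clause $R\subseteq K$. The differences are in execution. For $K(G)\subseteq K(F){\upharpoonright_{W^-}}$ the paper checks the clauses of Definition \ref{defi:k} directly for $K(F){\upharpoonright_{W^-}}$ and calls this clear; you route through the monotone clause defining $K'''$ in Lemma \ref{lemm:klemma1}. Yours is the more careful move: clause 3 of Definition \ref{defi:k} contains the non-monotone ingredients $K^1$ and $\trans{(\spure{w}{S})}$, which do not restrict well to $W^-$, whereas closure under the $K'''$-clause implies closure under clause 3, so minimality of $K(G)$ still applies. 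For the reverse inclusion the paper invokes minimality of $K(F)$ against the pruned relation $K(F)-(K(F){\upharpoonright_{W^-}}-K(G))$, while you unfold the very same fixed-point induction as a transfinite stratification; the two are equivalent (the paper's version avoids ordinals, while yours is legitimate only because the $K'''$-presentation is monotone, so here too Lemma \ref{lemm:klemma1} is doing real work). One small point you leave implicit: your successor step (and equally the paper's pruning step) uses that $K(G)$ is closed under the $K'''$-clause, i.e., Lemma \ref{lemm:klemma1} applied to $G$; this requires $G$ to be a quasi-\ilmo-frame, which the statement does not demand but which holds in both places the lemma is used.

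The delicacy you flag at the end is a genuine gap in the lemma as literally stated, not a defect of your argument, and the paper's proof is silent on it. Since a subframe is only required to satisfy $S^-\subseteq S$, an $S_w$-transition of $F$ between two points of $W^-$ need not exist in $G$; such an edge can fire clause 3 of Definition \ref{defi:k} in $F$ and create pairs in $K(F){\upharpoonright_{W^-}}$ that $K(G)$ cannot reach, while (a) is vacuous and (b) holds. (Take $W=W^-$, $R=R^-$, $\nu=\nu^-$, $S^-_w=\varnothing$ and $S_w$ a single pure transition; with suitable labels, read off from a finite \ilmo-model, both structures are quasi-\ilmo-frames, yet $K(F)\neq K(G)$.) So the lemma tacitly presupposes something like $S{\upharpoonright_{W^-}}=S^-$, exactly as you say; this holds by design in the two applications, where the extension only adds transitions involving new worlds. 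Flagging this assumption explicitly, rather than burying it under ``clearly'', is a merit of your write-up.
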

\begin{proof}
\newcommand{\kf}{K(F)}
\newcommand{\kg}{K(G)}
\newcommand{\kfg}{K(F){\upharpoonright_{W^-}}}

Clearly $\kfg$ satisfies the properties \ref{item0:defi:k}, \ref{item1:defi:k} and \ref{item2:defi:k}
of the definition of $\kg$ (Definition \ref{defi:k}). Thus, since $K_G$ is the smallest such relation,
we get that $\kg\subseteq \kfg$.

Let $K' = \kf - (\kfg - \kg)$.
Using Lemma \ref{lemm:kstep} one can show that $\kf\subseteq K'$.
From this it immediately follows that
$\kfg\subseteq \kg$.
\end{proof}
We finish the basic preliminaries with a somewhat complicated
variation of Lemma \ref{lemm:deficiencies}.

\begin{lemma}\label{lemm:ilmoexistence}

Let $\Gamma$ and $\Delta$ be \mcs's. $\Gamma\crit{C}\Delta$.
\[
P\rhd Q,S_1\rhd T_1,\ldots,S_n\rhd T_n\in\Gamma \ \ \mbox{ and }\ \ \ 
%\]
%and
%\[
\Diamond P\in\Delta.
\]
There exist $k\leq n$. \mcs's $\Delta_0,\Delta_1,\ldots,\Delta_k$ such that
\begin{itemize}
\item
Each $\Delta_i$ lies $C$-critical above $\Gamma$,
\item
Each $\Delta_i$ lies $\boxin$ above $\Delta$ (i.e.\ $\Delta\boxin\Delta_i$),

\item
$Q\in\Delta_0$,

\item
For all $1\leq j\leq n$, $S_j\in\Delta_h\Rightarrow \textrm{for some $i\leq k$, }T_j\in\Delta_i$.

\end{itemize}
\end{lemma}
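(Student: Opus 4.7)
The plan is to combine a greedy consistency argument for the first MCS $\Delta_0$ with a recursive step that handles any obligations $\Delta_0$ could not absorb.

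I would first try to build a single $\Delta_0$ that handles every obligation $S_j \rhd T_j$ at once. Consider the set
\[
T \;=\; \{Q,\ \Box\neg Q\} \,\cup\, \{\Box X \mid \Box X \in \Delta\} \,\cup\, \{\neg E,\Box\neg E \mid E\rhd C\in\Gamma\} \,\cup\, \{T_j \vee \neg S_j \mid j=1,\ldots,n\}.
\]
If $T$ is \ilmo-consistent, I extend it to a maximal consistent $\Delta_0$ and stop: every $S_j\in\Delta_0$ automatically carries its $T_j$, so $k=0$ works. Otherwise, by a minimality argument on the culprit subset of $\{T_j\vee\neg S_j\}$, I identify an index $j^*$ for which the set $(T\setminus\{T_{j^*}\vee\neg S_{j^*}\})\cup\{S_{j^*},\neg T_{j^*}\}$ is still consistent, and extend that to $\Delta_0$. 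In this $\Delta_0$ every obligation except $S_{j^*}\rhd T_{j^*}$ is absorbed internally (each remaining $S_j$ comes paired with $T_j$), and it remains only to place $T_{j^*}$ somewhere.

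For the remaining obligation I would invoke the lemma itself recursively, with $S_{j^*}\rhd T_{j^*}$ playing the role of $P\rhd Q$ and the obligation list shortened to $\{S_j\rhd T_j\mid j\neq j^*\}$, a list of length $n-1$. The required hypothesis $\Diamond S_{j^*}\in\Delta$ is extracted from the inconsistency derivation of $T$ above. Specifically, from that derivation one obtains $\il$-derivable formulas of the shape $Q\wedge\Box X \to E \vee \Diamond E \vee \Diamond Q \vee \bigvee_{j\in J}(S_j\wedge \neg T_j)$ for some $\Box X\in\Delta$ and $E\rhd C\in\Gamma$; combining this with \principle{M_0} applied to $P\rhd Q$ (giving $\Diamond P\wedge\Box X \rhd Q\wedge\Box X$), with \principle{J5} and with $A\vee\Diamond A\rhd A$ from Lemma~\ref{lemm:basicil}, one derives a statement of the form $\Diamond P\wedge\Box X\rhd C\vee \bigvee_{j\in J,\ j\neq j^*}T_j \vee S_{j^*}$ in $\Gamma$. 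By the minimality of $j^*$ all other disjuncts are ruled out by $\Gamma\crit{C}\Delta$ and $\Diamond P\in\Delta$, leaving $\Diamond S_{j^*}\in\Delta$. Each recursive call consumes exactly one obligation, so the recursion has depth at most $n$ and produces at most $n$ further MCS's, yielding $k\le n$.

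The main obstacle is precisely this last extraction of $\Diamond S_{j^*}\in\Delta$: one has to match the exact shape of the inconsistency derivation with a carefully chosen instance of \principle{M_0} and Lemma~\ref{lemm:basicil}, and use the $\Gamma\crit{C}\Delta$ assumption to peel off every disjunct except $S_{j^*}$. Once this is in place, the $C$-criticality of each $\Delta_i$ above $\Gamma$ and the $\boxin$-relation to $\Delta$ are built into the construction by design, and the four required conclusions of the lemma fall out.
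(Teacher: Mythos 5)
There are two genuine gaps here, and each one is fatal on its own. The first is the selection of a \emph{single} flipped index $j^*$. When several obligations clash with the fixed part of $T$ independently, no single flip restores consistency, so the ``minimality argument on the culprit subset'' has nothing to return. Concretely, take $n=2$, $S_1=S_2=\top$ and $T_1=T_2=\neg Q$, so that $\top\rhd\neg Q\in\Gamma$; this is jointly realizable with all hypotheses of the lemma at the root of a small \ilmo-model. Each $T_j\vee\neg S_j$ is then equivalent to $\neg Q$, so $T$ is inconsistent; but for either choice of $j^*$ the set $(T\setminus\{T_{j^*}\vee\neg S_{j^*}\})\cup\{S_{j^*},\neg T_{j^*}\}$ still contains both $Q$ and the remaining copy of $\neg Q$, hence is inconsistent as well. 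This failure is structural, not a technicality: the paper never asks one \mcs to absorb the obligations internally via the disjunctions $T_j\vee\neg S_j$. Instead it proves there is a \emph{set} $I$ of indices such that the formulas $\neg S_i$, $i\in I$, can consistently be placed in \emph{all} of the produced sets, while every index outside $I$ receives its own dedicated \mcs containing $T_i$; obligations are discharged across the whole family, never inside $\Delta_0$.

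The second gap is the recursion hypothesis $\Diamond S_{j^*}\in\Delta$, which is not extractable because it can simply be false. Take $n=1$, $S_1=Q$, $T_1=\neg Q$, i.e.\ $\Gamma\ni P\rhd Q,\;Q\rhd\neg Q$, and $\Delta\ni\Diamond P,\;\Box\neg Q$ with $\Gamma\crit{C}\Delta$; again this configuration is realized in a small \ilmo-model (a root $w$ seeing a $P\wedge Q$-world, a $\neg Q$-world $S_w$-reachable from it, and a world $b$ whose only $R$-successor forces $P\wedge\neg Q$). Since $Q\in T$ and $T_1\vee\neg S_1$ is equivalent to $\neg Q$, your construction is forced to flip $j^*=1$; the flipped set is indeed consistent (this is exactly Lemma \ref{lemm:vetweleenlabel}), but $\Diamond S_{j^*}=\Diamond Q\notin\Delta$ because $\Box\neg Q\in\Delta$. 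The extraction sketch commits a direction error: $\rhd$-statements in $\Gamma$ together with $\Gamma\crit{C}\Delta$ only ever yield \emph{negative} membership facts about $\Delta$, of the form $\neg Y,\Box\neg Y\in\Delta$. Ruling out the disjuncts $C$ and $T_j$ from $\Diamond P\wedge\Box X\rhd C\vee\bigvee_{j\neq j^*}T_j\vee S_{j^*}$ can at best show $\neg(S_{j^*}\rhd C)\in\Gamma$, which by Lemma \ref{lemm:problems} produces \emph{some} $C$-critical \mcs containing $S_{j^*}$ --- but not one lying $\boxin$ above the fixed $\Delta$, and certainly not $\Diamond S_{j^*}\in\Delta$. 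The paper's proof needs no such positive fact about $\Delta$: it establishes the required consistency statements by contradiction, peeling the disjuncts $S_{i_k}$ off a chain $P\rhd Q\rhd\cdots$ in a carefully chosen order (each step licensed by an inconsistency established at an earlier stage), applying \mo exactly once at the very end and then invoking criticality. Finally, note that your recursion bottoms out at $n=0$ with the set $\{Q,\Box\neg Q\}\cup\{\Box X\mid\Box X\in\Delta\}\cup\{\neg E,\Box\neg E\mid E\rhd C\in\Gamma\}$, whose consistency is precisely Lemma \ref{lemm:vetweleenlabel}; that is exactly where the \mo-based argument must be carried out, and your sketch never supplies it.
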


\begin{proof}

\newcommand{\notss}{\ensuremath{\overline S}}
\newcommand{\bb}{\ensuremath{\neg B\wedge\Box\neg B}}
\newcommand{\notbb}{\ensuremath{B\vee\Diamond B}}
\newcommand{\dnota}{\ensuremath{\Diamond\neg A}}
\newcommand{\supto}[1]{S_{i_1}\vee\cdots\vee S_{i_{#1}}}

First a definition. For each $I\subseteq\{1,\ldots,n\}$ put
\[
\notss_I \equivbydef \bigwedge\{\neg S_i\mid i\in I\}.
\]
The lemma can now be formulated as follows. There exists $I\subseteq\{1,\ldots,n\}$ such that
\[
\{Q,\notss_I\}\cup\{\neg B,\Box\neg B\mid B\rhd C\in\Gamma\}\cup\{\Box A\mid\Box A\in\Delta\}\not\vdash\bot
\]
and, for all $i\not\in I$,
\[
\{T_i,\notss_I\}\cup\{\neg B,\Box\neg B\mid B\rhd C\in\Gamma\}\cup\{\Box A\mid\Box A\in\Delta\}\not\vdash\bot.
\]

So let us assume, for a contradiction, that this is false. Then there exist finite sets
$\mathcal{A}\subseteq\{A\mid\Box A\in\Delta\}$ and $\mathcal{B}\subseteq\{B\mid B\rhd C\in\Gamma\}$
such that, if we put
\[
A \equivbydef\bigwedge\mathcal{A},\mbox{ and } 
B \equivbydef\bigvee\mathcal{B},
\]
then, for all $I\subseteq\{1,\ldots,n\}$,
\begin{equation}\label{lem0_c1}
Q,\notss_I,\Box A,\bb\vdash\bot
\end{equation}
or,
\begin{equation}\label{lem0_c2}
\textrm{for some $i\not\in I$,}\quad T_i,\notss_I,\Box A,\bb\vdash\bot.
\end{equation}

We are going to define a permutation $i_1,\ldots,i_n$ of $\{1,\ldots,n\}$ such that if we
put $I_k = \{ i_j \mid j<k\}$ then
\begin{equation}\label{lem0_c5}
T_{i_k},\notss_{I_k},\Box A,\bb\vdash\bot.
\end{equation}
Additionally, we will verify that for each $k$
\[
\textrm{\eqref{lem0_c1} does not hold with $I_k$ for $I$}.
\]
We will define $i_k$ with induction on $k$. We define
$I_1=\emptyset$. And by Lemma \ref{lemm:deficiencies},
\eqref{lem0_c1} does not hold with $I=\emptyset$. Moreover,
because of this, \eqref{lem0_c2} must be true with $I=\emptyset$.
So, there exists some $i\in\{1,\ldots,n\}$ such that
\[
T_i,\Box A,\bb\vdash\bot.
\]
It is thus sufficient to take for $i_1$, for example, the least such $i$.

Now suppose $i_k$ has been defined.
We will first show that
\begin{equation}\label{lem0_c4}
Q,\notss_{I_{k+1}},\Box A,\bb\not\vdash\bot.
\end{equation}
Let us suppose that this is not so. Then
\begin{equation}\label{lem0_c7}
\vdash\Box(Q\rightarrow\dnota\vee\notbb\vee\supto{k}).
\end{equation}
So,
\begin{align*}
\Gamma\vdash P
&\rhd Q\\
&\rhd \dnota\vee\notbb\vee\supto{k-1}\vee S_{i_k}\ \ \ \ \ \ \ \ \ \textrm{by \eqref{lem0_c7}}\\
&\rhd \dnota\vee\notbb\vee\supto{k-1}\vee T_{i_k}\\
&\rhd \dnota\vee\notbb\vee\supto{k-1}\vee (T_{i_k}\wedge\Box A\wedge\bb\wedge\notss_{I_k})\\
&\rhd \dnota\vee\notbb\vee\supto{k-1}\ \ \ \ \ \ \ \ \ \ \ \ \ \ \ \ \ \textrm{by \eqref{lem0_c5}}\\
&\qquad\vdots\\
&\rhd \dnota\vee\notbb\vee S_{i_1}\\
&\rhd \dnota\vee\notbb\vee T_{i_1}\\
&\rhd \dnota\vee\notbb\vee (T_{i_1}\wedge\Box A\wedge\bb)\\
&\rhd \dnota\vee\notbb.\ \ \ \ \ \ \ \ \ \ \ \ \ \ \ \ \ \ \ \ \ \ \ \ \ \ \ \ \ \ \ \ \ \ \ \ \ \ \ \ \textrm{by \eqref{lem0_c5}, with $k=1$}.
\end{align*}
So by $\mo$,
\[
\Diamond P\wedge\Box A\rhd(\dnota\vee\notbb)\wedge\Box A \in\Gamma.
\]
But $\Diamond P\wedge\Box A\in\Delta$. So, by Lemma
\ref{lemm:deficiencies} there exists some $\mcs$ $\Delta$ with
$\Gamma\crit{C}\Delta$ that contains $\notbb$. This is a contradiction,
so we have shown \eqref{lem0_c4}.

But now, since \eqref{lem0_c4} is indeed true, and thus
\eqref{lem0_c1} with $I_{k+1}$ for $I$ is false, \eqref{lem0_c2}
must hold. Thus there must exist some $i\not\in I_{k+1}$ such that
\[
T_i,\notss_{I_{k+1}},\Box A,\bb\vdash\bot.
\]
So we can take for $i_{k+1}$, for example, the smallest such $i$.

It is clear that for $I=\{1,2,\dots,n\}$, \eqref{lem0_c2} cannot be true.
Thus, for $I=\{1,2,\ldots,n\}$, \eqref{lem0_c1} must be true. This implies
\begin{equation*}\label{lem0_c6}
\vdash\Box(Q\rightarrow\dnota\vee\notbb\vee\supto{n}).
\end{equation*}
Now exactly as above we can show $\Gamma\vdash P\rhd\dnota\vee\notbb$.
And again as above, this leads to a contradiction.
\end{proof}

In order to formulate the invariants needed in the main lemma
applied to \extil{M_0}, we need one more definition and a corollary.

\newcommand{\below}{\subset\xspace}
\newcommand{\belowone}{\subset_1\xspace}
\begin{definition}[$\belowone$, $\below$]
Let $F=\langle W,R,S,\nu\rangle$ be a quasi--frame.
Let $K=K(F)$. We define $\belowone$ and $\below$ as follows.
\begin{enumerate}
\item $x\belowone y\Leftrightarrow \exists w y' wKxK^1y'\trans{(\spure{w}{S})}y$
\item $x\below y\Leftrightarrow x(\belowone\cup K)^*y$
\end{enumerate}
\end{definition}

\begin{corollary}\label{lemm:ord0}
Let $F=\langle W,R,S,\nu\rangle$ be a quasi--frame. And let $K=K(F)$.
\begin{enumerate}
\item\label{i0:lemm:ord0}
$x\below y\wedge yKz\rightarrow xKz$
\item\label{i1:lemm:ord0}
If $F$ is a quasi--\ilmo--frame, then $x\below y\Rightarrow\nu(x)\boxin\nu(y)$.
\end{enumerate}
\end{corollary}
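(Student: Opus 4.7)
The plan is to prove both items by induction on the length of the $(\belowone \cup K)^*$-chain witnessing $x \below y$. The base case $x = y$ is immediate for both: item \ref{i0:lemm:ord0} collapses to the hypothesis $yKz$, and item \ref{i1:lemm:ord0} to reflexivity of $\boxin$. For the inductive step, split off the first edge of the chain as $x \mathrel{T} x_1 \below y$ with $T$ being either $\belowone$ or $K$, and apply the inductive hypothesis to $x_1 \below y$.

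For item \ref{i0:lemm:ord0} the IH yields $x_1 K z$. If $x K x_1$, then $xKz$ follows at once from transitivity of $K$ (clause \ref{item1:defi:k} of Definition \ref{defi:k}). If instead $x \belowone x_1$, unfold to get $w, y''$ with $wKxK^1y''\trans{(\spure{w}{S})}x_1$. The defining clause \ref{item2:defi:k} of $K$ requires a $K^1$-edge at its right end, so I would decompose $x_1 K z$ into single $K^1$-steps $x_1 K^1 z_1 K^1 \cdots K^1 z_n = z$ using $K = \trans{(K^1)}$ (Lemma \ref{lemm:r1k1}). A single application of clause \ref{item2:defi:k} to $wKxK^1y''\trans{(\spure{w}{S})}x_1 K^1 z_1$ then delivers $xKz_1$, and transitivity of $K$ climbs the remaining $K^1$-steps to produce $xKz$.

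For item \ref{i1:lemm:ord0} the IH gives $\nu(x_1) \boxin \nu(y)$, so it is enough to show $\nu(x) \boxin \nu(x_1)$ and to invoke transitivity of $\boxin$. If $xKx_1$, property \ref{ilmoquasi:2} of Definition \ref{defi:ilmoquasi} yields $\nu(x) \sucs \nu(x_1)$, which is strictly stronger than $\boxin$. If $x \belowone x_1$, unfold to $wKxK^1y''\trans{(\spure{w}{S})}x_1$ and observe that $K^1 \subseteq K$ and $\trans{(\spure{w}{S})} \subseteq (S_w \cup K)^*$; property \ref{ilmoquasi:6} of Definition \ref{defi:ilmoquasi} then fires with $y'' $ playing the role of the middle witness and $x_1$ of the terminal one, giving $\nu(x) \boxin \nu(x_1)$ directly.

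The main obstacle is the $\belowone$-case of item \ref{i0:lemm:ord0}: clause \ref{item2:defi:k} of Definition \ref{defi:k} only fires on $K^1$-edges at its right end, so refactoring the single step $x_1 K z$ into a chain of $K^1$-edges is essential. This refactoring relies on $K$ being conversely well-founded, which in the quasi-\ilmo-frame setting is exactly what Lemma \ref{lemm:r1k1} (resting on Lemma \ref{lemm:k1calc}) provides; the same underlying hypothesis also underwrites the use of property \ref{ilmoquasi:6} in item \ref{i1:lemm:ord0}.
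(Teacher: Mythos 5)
Your inductive scheme is the right one, and your treatment of item \ref{i1:lemm:ord0} is complete and correct: the base case is reflexivity of $\boxin$, a $K$-step is handled by property \ref{ilmoquasi:2} of Definition \ref{defi:ilmoquasi} (noting $\sucs$ implies $\boxin$), a $\belowone$-step by property \ref{ilmoquasi:6} together with $\trans{(\spure{w}{S})}\subseteq (S_w\cup K)^*$, and transitivity of $\boxin$ glues the chain. The paper gives no proof of this corollary at all, so there is no official argument to match; but note that the $\belowone$-case of item \ref{i0:lemm:ord0} can be dispatched in one stroke, without any $K^1$-decomposition, by citing Lemma \ref{lemm:klemma1}: there $K=K''$, where $K''$ is closed under the rule $wK''xK''y\trans{(\spure{w}{S})}y'K''z\rightarrow xK''z$, i.e.\ the analogue of clause \ref{item2:defi:k} with a full $K$-edge (not a $K^1$-edge) at the right end. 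Since $K^1\subseteq K$, that rule applies directly to $wKxK^1y''\trans{(\spure{w}{S})}x_1Kz$; this is presumably why the paper labels the statement a corollary.

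The genuine issue is a hypothesis mismatch in item \ref{i0:lemm:ord0}. That item is asserted for an arbitrary quasi-frame, but your decomposition of $x_1Kz$ into $K^1$-steps invokes Lemma \ref{lemm:r1k1}, which the paper proves only for quasi-\ilmo-frames (and the alternative via Lemma \ref{lemm:klemma1} has the same hypothesis). This cannot be patched: in a bare quasi-frame $K=\trans{(K^1)}$ can fail even when $K$ is conversely well-founded, because a world may have $K$-successors but no $K^1$-successors at all. Concretely, take worlds $w,x,y'',x_1,z$ and $c_n$ ($n\in\omega$) with $wRx$, $xRy''$, $wRy''$, $wRx_1$, $y''S_wx_1$, $x_1Rz$, $x_1Rc_n$ and $c_{n+1}Rc_n$ for all $n$, and $c_0Rz$; label the worlds by the theories of a genuine \ilmo-model whose accessibility relation is the transitive closure of this $R$, and label no pair with a formula, so that all cone conditions hold vacuously (recall quasi-frames do not require $R$ to be transitive). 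Then every $K$-edge out of $x_1$ factors through some $c_m$, so $x_1$ has no $K^1$-successor, clause \ref{item2:defi:k} of Definition \ref{defi:k} never fires, and $K$ is just the transitive closure of $R$; hence $x\belowone x_1$ and $x_1Kz$ hold while $xKz$ fails. So item \ref{i0:lemm:ord0} is false as literally stated, and your proof is correct exactly when the quasi-\ilmo-hypothesis of item \ref{i1:lemm:ord0} is read into item \ref{i0:lemm:ord0} as well --- which is the setting in which the paper actually uses it. Your write-up should make that strengthening of the hypothesis explicit rather than importing Lemma \ref{lemm:r1k1} silently.
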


\subsection{Frame condition}\label{sect:ilmo:framecondition}

The following theorem is well known.
\begin{theorem}\label{theo:ilmoframe}
For an \il-frame $F=\langle W,R,S,\nu\rangle$ we have
\[ \forall wxyy'z\;(wRxRyS_wy'Rz\rightarrow xRz) \Leftrightarrow F\models \mo.\]
\end{theorem}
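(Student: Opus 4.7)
The statement is an ``iff'', so I would split the proof into the two directions, treating the modal-to-combinatorial direction by contraposition with an explicit counter-valuation.

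For the direction $\Rightarrow$, I would assume the first-order condition and fix a valuation on $F$, a world $w$ with $w \Vdash A \rhd B$, and a world $u$ with $wRu$ and $u \Vdash \Diamond A \wedge \Box C$. Let $y$ witness $\Diamond A$, so $uRy$ and $y \Vdash A$. From $wRu R y$ and the $\il$-frame axiom $xRyRz \to yS_xz$ I get $uS_w y$; from $wRy$ (transitivity of $R$) and $w \Vdash A \rhd B$ I get some $y'$ with $yS_w y'$ and $y' \Vdash B$; and transitivity of $S_w$ delivers $uS_w y'$. For any $z$ with $y'Rz$ the chain $wRuRyS_w y' Rz$ lies in $F$, so the first-order condition gives $uRz$, and $u \Vdash \Box C$ then yields $z \Vdash C$. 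Hence $y' \Vdash B \wedge \Box C$ and $w \Vdash \Diamond A \wedge \Box C \rhd B \wedge \Box C$.

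For the direction $\Leftarrow$, I would argue contrapositively. Suppose the first-order condition fails at some tuple $wRxRyS_wy'Rz$ with $\neg(xRz)$. I would construct a single valuation violating an instance of $\mo$ at $w$. Pick three distinct propositional variables $p, q, r$ and define $v \Vdash p \Leftrightarrow v = y$, $v \Vdash q \Leftrightarrow v = y'$, and $v \Vdash r \Leftrightarrow v \neq z$. Then:
\begin{itemize}
\item $w \Vdash p \rhd q$ because the only $p$-world is $y$ (and $wRy$ by transitivity), while $yS_w y'$ and $y' \Vdash q$;
\item $x \Vdash \Diamond p$ via $xRy$, and $x \Vdash \Box r$ because by assumption no $R$-successor of $x$ equals $z$;
\item no $S_w$-successor $v$ of $x$ satisfies $q \wedge \Box r$: the only candidate is $v = y'$, but $y'Rz$ and $z \not\Vdash r$, so $y' \not\Vdash \Box r$.
\end{itemize}
Thus $w \not\Vdash \Diamond p \wedge \Box r \rhd q \wedge \Box r$, so the instance $p \rhd q \to \Diamond p \wedge \Box r \rhd q \wedge \Box r$ of $\mo$ fails on $F$, contradicting $F \models \mo$.

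The main obstacle is a bookkeeping one rather than a conceptual one: in the $\Rightarrow$ direction one must feed exactly the right triple into the frame condition after first assembling $uS_w y'$ from the $\il$-axioms, and in the $\Leftarrow$ direction one must keep the valuation minimal enough that no accidental $S_w$-successor of $x$ acquires $q \wedge \Box r$. Because $p$, $q$, $r$ are chosen to be true at a single world (respectively at the complement of $\{z\}$), the only candidate for a bad $q$-world is $y'$, and its failure of $\Box r$ is forced by the very edge $y'Rz$ that the failure of $xRz$ refuses to short-circuit.
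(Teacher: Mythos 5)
Your proof is correct, but note that the paper itself gives no argument here at all: Theorem \ref{theo:ilmoframe} is stated as ``well known'' and left unproved (the frame condition for \mo goes back to the correspondence literature on interpretability logics, e.g.\ de Jongh--Veltman and Visser). So there is no paper proof to compare against; your argument supplies exactly the missing standard correspondence proof. The soundness direction ($\Rightarrow$) is assembled correctly: you use the \il-frame axioms $wRuRy \rightarrow uS_wy$, transitivity of $R$ to get $wRy$ (needed before you may invoke $w \Vdash A \rhd B$ at $y$), and transitivity of $S_w$ to produce the witness $uS_wy'$, after which the frame condition applied to the chain $wRuRyS_wy'Rz$ gives $uRz$ and hence $y' \Vdash \Box C$. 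The converse direction via the minimal valuation ($p$ only at $y$, $q$ only at $y'$, $r$ everywhere but $z$) is also sound, and its one potentially delicate point is handled for free: the only candidate witness for $q \wedge \Box r$ is $y'$, and $y' \not\Vdash \Box r$ holds unconditionally because of the edge $y'Rz$, so no accidental coincidences among $w,x,y,y',z$ (which the frame axioms do not all exclude, e.g.\ $y=y'$ is possible) can rescue the \mo-instance. One could shorten the counter-valuation slightly by noting that the paper's lemma on base substitutions makes it sufficient to refute a single instance in propositional variables, which is precisely what you do.
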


\subsection{Invariants}\label{sect:ilmo:invariants}

\newcommand{\invmain}[1]{\ensuremath{\mathcal{I}_{#1}}\xspace}
\newcommand{\invsub}[1]{\ensuremath{\mathcal{J}_{#1}}\xspace}

\newcommand{\invdef}{\invmain{\textrm{d}}}
\newcommand{\invlinbox}{\invmain{\Box}}
\newcommand{\invsbox}{\invmain{S}}
\newcommand{\invcback}{\invmain{\mathcal{N}}}
\newcommand{\invadd}{\invmain{\adset{D}}}
\newcommand{\invmo}{\invmain{\mo}}

\newcommand{\invone}{\invsub{\textrm{u}}}
\newcommand{\invtwo}{\invsub{K^1}}
\newcommand{\invswitch}{\invsub{\nu_1}}
\newcommand{\invs}{\invsub{\nu_2}}
\newcommand{\invnu}{\invsub{\nu_3}}
\newcommand{\invid}{\invsub{\nu_4}}
\newcommand{\invorder}{\invsub{\below}}
\newcommand{\invpres}{\invsub{\mathcal{N}_1}}
\newcommand{\invcsback}{\invsub{\mathcal{N}_2}}

\newcommand{\invlinboxfull}{for all $y$, $\{ \nu(x) \mid xK^1y \} \textrm{ is linearly ordered by }\boxin$}
\newcommand{\invdeffull}{$wK^1x\wedge wK^{\geq 2}x'(S_w\cup K)^*x\rightarrow
                                    \textrm{`there does not exists a deficiency in $w$ w.r.t. $x$'}$}
\newcommand{\invsboxfull}{   $wKxKy(S_w\cup K)^*y'\rightarrow$\\
            `the $\boxin$-max of $\{\nu(t)\mid wKtK^1y'\}$, if it exists,
            is $\boxin$-larger than $\nu(x)$'}
\newcommand{\invcbackfull}{$wKxKy\wedge y\in\ncone{A}{w}\rightarrow x\in\ncone{A}{w}$}
\newcommand{\invonefull}{$wK^{\geq 2}x\trans{(\spure{w}{S})}y\wedge wK^{\geq 2}x'\trans{(\spure{w}{S})}y\rightarrow x=x'$}
\newcommand{\invtwofull}{$wKxK^1y\trans{(\spure{w}{S})}y'K^1z\rightarrow xK^1z$}
\newcommand{\invswitchfull}{$\textrm{`$\nu(w,y)$ is defined'}\wedge vKy\rightarrow v\below w$}
\newcommand{\invsfull}{$\textrm{`$\nu(w,y)$ is defined'}\rightarrow wK^1y$}
\newcommand{\invorderfull}{$y\below x\wedge x\below y\rightarrow y=x$}
\newcommand{\invpresfull}{$x\trans{(\spure{v}{S})}y\wedge wKy\wedge x\in\ncone{A}{w}\rightarrow y\in\ncone{A}{w}$}
\newcommand{\invcsbackfull}{$x\trans{(\spure{w}{S})}y\wedge y\in\ncone{A}{w}\rightarrow x\in\ncone{A}{w}$}
\newcommand{\invnufull}{If $\nu(v,y)$ and $\nu(w,y)$ are defined then $w=v$}
\newcommand{\invidfull}{If $x\trans{(\spure{w}{S})}y$, then $\nu(w,y)$ is defined}
\newcommand{\invaddfull}{$xRy \rightarrow \exists \, A {\in} (\nu (y) \setminus \nu (x))\cap
                    \{ \Box D \mid D \in \adset{D} \}$}

Let $\adset{D}$ be some finite set of formulas, closed under
subformulas and single negation.
%First we list all the invariants.
%Then we explain them

During the construction we will keep track of the following main--invariants.
\begin{enumerate}
\item[\invlinbox]\invlinboxfull
\item[\invdef]\invdeffull
\item[\invsbox]\invsboxfull
\item[\invcback]\invcbackfull
\item[\invadd]\invaddfull
\item[\invmo] All conditions for an adequate \ilmo--frame hold
\end{enumerate}

In order to ensure that the main--invariants are preserved during the construction
we need to consider the following sub--invariants.\footnote{We call them
sub--invariants since they merely serve the purpose of showing that
the main-invariants are, indeed, invariant.}
\begin{enumerate}
\item[\invone]\invonefull
\item[\invtwo]\invtwofull
\item[\invorder] \invorderfull
\item[\invpres] \invpresfull
\item[\invcsback] \invcsbackfull
\item[\invswitch]\invswitchfull
\item[\invs]\invsfull
\item[\invid] \invidfull
\item[\invnu] \invnufull
\end{enumerate}

%Now let us explain some of these invariants. Let us assume that
%$wRy$, $A\rhd B\in\nu(w)$, $A\in\nu(y)$ and $y\in\ncone{C}{w}$. We
%want to find some $\Delta$ such that $B\in\Delta$,
%$\nu(w)\crit{C}\Delta$ and $wRtRy\rightarrow\nu(t)\boxin\Delta$.
%If the cardinality of the set $\{\nu(t)\mid wRtRy\}$ is larger
%than one it is not clear with which $\nu(t)$ we have to apply
%Lemma \ref{lemm:ilmoexistence}. \invlinbox assures that this set
%has a $\boxin$-maximum and we can take just that element. Another
%point of attention in this situation is the criticality of
%$\nu(y)$. We want to find a $\Delta$ that lies $C$ critical above
%$\nu(w)$. But we do not work with $\nu(y)$ but with some $t$ for
%which $wRtRy$. \invcback assures that $\nu(t)$ lies $C$-critical
%above $\nu(w)$.

What can we say about these invariants?
\invlinbox, \invsbox, \invcback and \invdef were discussed in Section \ref{sect:ilmo:diffi}.
%\invadd ensures the boundedness of all the frames. Which is needed
%to ensure the conversely well--foundedness of $R$ in the end.
\invmo is there to ensure that our final frame is an \ilmo--frame.
About the sub--invariants there is not much to say. They are merely technicalities
that ensure that the main--invariants are invariant.

Let us first show that if we have a quasi--\ilmo--frame that
satisfies all the invariants, possibly \invmo excluded, then we
can assume, nevertheless, that \invmo holds as well.

\begin{corollary}\label{coro:invilmoextension}
Any quasi--\ilmo--frame that satisfies all of the above invariants,
except possibly \invmo, can be extended to an \ilmo--frame that
satisfies all the invariants.
\end{corollary}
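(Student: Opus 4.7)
The plan is to invoke Lemma \ref{lemm:ilmoclosure} on the given quasi--\ilmo--frame $F$ to obtain an adequate \ilmo--frame $\hat F$ extending $F$; by construction this immediately establishes \invmo. The remaining task is to check that none of the other invariants is destroyed. This is plausible because the closure adds no new worlds and leaves $\nu$ untouched; it only adjoins $R$- and $S$-transitions among existing worlds, and in a tightly controlled way.

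The backbone of the verification is a small cluster of stability properties. Writing $\hat F = \bigcup_n F_n$ with $F_0 = F$, the claims inside the proof of Lemma \ref{lemm:ilmoclosure} combined with Lemma \ref{lemm:kstabiel} yield $K(F) = K(F_n) = K(\hat F)$; since $\hat F$ is an adequate \ilmo--frame, this common relation coincides with $R^{\hat F}$. The same bookkeeping shows that the cones $\ncone{A}{w}$ are unchanged and that $(S_w \cup K)^{*}$-reachability is preserved step by step. Thus every notion built from $K$, $K^1$, $\nu$, $\spure{w}{S}$, $\below$ and $\ncone{A}{w}$ is computed identically on $F$ and on $\hat F$.

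With these stability facts in hand, the bulk of the invariants, namely \invone, \invtwo, \invorder, \invswitch, \invs, \invnu, \invid, \invpres, \invcsback, \invlinbox, \invsbox, and \invcback, transfer verbatim, since their defining formulae mention only stable data. For \invadd one reduces each new edge $xR^{\hat F}y$ via Lemma \ref{lemm:kstep} to some $z$ with $\nu(x) \boxin \nu(z)$ and $zR^F y$; applying \invadd on $F$ to $zR^F y$ produces a $\Box D \in \nu(y) \setminus \nu(z)$ with $D \in \adset{D}$, and $\nu(x) \boxin \nu(z)$ then forces this same $\Box D$ to lie outside $\nu(x)$ as well.

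The hard part, and the main obstacle, is \invdef. Its hypothesis is $K$-based and hence stable, but its conclusion forbids deficiencies with respect to the enlarged $R^{\hat F}$ and $S^{\hat F}_w$. My plan is to argue that any candidate deficiency $\langle w, x, C \rhd D\rangle$ in $\hat F$ would yield, via the factorisation of $S^{\hat F}_w$-transitions as $(S_w \cup K)^{*}$-paths in $F$ together with $K = R^{\hat F}$, a deficiency already visible in $F$ whose witnesses $wK^1 x$ and $wK^{\geq 2}x'(S_w \cup K)^{*} x$ are preserved; this contradicts \invdef on $F$. Everything else amounts to routine bookkeeping once the stability of $K$ and of the cones $\ncone{A}{w}$ has been pinned down.
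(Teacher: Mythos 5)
Your overall route coincides with the paper's own proof: invoke the closure of Lemma \ref{lemm:ilmoclosure}, observe that $W$, $\nu$, $K$, $\spure{w}{S}$, the cones \ncone{A}{w} and hence $\below$ are left unchanged by it (this is what the claims inside Lemma \ref{lemm:ilmoclosure} together with Lemma \ref{lemm:kstabiel} give), so that every invariant phrased in these stable terms transfers verbatim, and then give separate attention to the two $R$-based invariants. The paper's proof is exactly this remark, stating that ``only \invadd and \invdef need some attention'' without supplying the details; your explicit treatment of \invadd via Lemma \ref{lemm:kstep} is correct and fills in one of those details properly.

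The gap is in your plan for \invdef. A \adset{D}-deficiency in $w$ w.r.t.\ $x$ is by definition a triple that requires $wRx$, and this is the one conjunct that does \emph{not} transfer downward from $\hat F$ to $F$. Indeed $R^{\hat F}=K(F)$, and $K(F)$ is in general strictly larger than $R^F$: the pairs created by clause \ref{item2:defi:k} of Definition \ref{defi:k} (i.e.\ $wKx$ obtained from some $vKwK^1y\trans{(\spure{v}{S})}y'K^1x$) need not be $R^F$-related, and by \invtwo these are precisely $K^1$-pairs, so they can satisfy the hypothesis of \invdef. For such a pair your reduction produces no ``deficiency already visible in $F$'' at all: in $F$ the statement ``there is no deficiency in $w$ w.r.t.\ $x$'' holds vacuously because $wR^Fx$ fails, so there is nothing to contradict --- and these are not marginal cases, since the new \mo-edges are exactly the pairs that \invdef is meant to police. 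The repair is to note that the hypothesis of \invdef already supplies $wKx$ and to read (and maintain throughout the construction) its conclusion in $K$-based form: for every $C\rhd D\in\nu(w)\cap\adset{D}$ with $C\in\nu(x)$ there exists $z$ with $xS_wz$ and $D\in\nu(z)$. In that formulation preservation under the closure is immediate by monotonicity: the hypothesis is stable by your own stability cluster, and any witnessing $z$ with $xS^F_wz$ still satisfies $xS^{\hat F}_wz$; no contradiction argument, and no factorisation of $S^{\hat F}_w$-transitions (which you need only for stability of the hypothesis), is required. As you stated it, however, the reduction step would fail on exactly the new \mo-edges.
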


\begin{proof}
Only \invadd and \invdef need some attention. All the other
invariants are given in terms of relations that do not change
during the construction of the \ilmo-closure (Lemma
\ref{lemm:ilmoclosure}).
\end{proof}

\begin{lemma}\label{lemm:ncone1}
Let $F=\langle W,R,S,\nu\rangle$ be a quasi--\ilmo--frame. Then
$F\models x\in\ncone{A}{w}$ iff. one of the following cases
applies.
\begin{enumerate}
\item $\nu(w,x)=A$
\item There exists $t\in\ncone{A}{w}$ such that $tKx$
\item There exists $t\in\ncone{A}{w}$ such that $t\spure{w}{S}x$
\end{enumerate}
\end{lemma}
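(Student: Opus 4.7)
The plan is to establish the two directions of the iff separately, exploiting the fact that the only structural difference between Definition \ref{defi:ncone} and the characterization in the lemma is that closure under $S_w$ is replaced by closure under $\spure{w}{S}$, with $K$ absorbing any ``missing'' $R$-steps.

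For the direction ``$\Leftarrow$'', each of the three cases immediately yields $x \in \ncone{A}{w}$ directly from Definition \ref{defi:ncone}: case~1 is the base clause; case~2 follows since $\ncone{A}{w}$ is by definition closed under $K$ (as $K \subseteq K \cup S_w$); and case~3 follows because $\spure{w}{S} \subseteq S_w$, so closure under $S_w$ already gives $x \in \ncone{A}{w}$.

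For the direction ``$\Rightarrow$'', I would proceed by induction on the generation of $\ncone{A}{w}$ as given by Definition \ref{defi:ncone}. The base case $\nu(w,x)=A$ yields case~1. For the inductive step, suppose $x \in \ncone{A}{w}$ because $t(K \cup S_w)x$ for some $t \in \ncone{A}{w}$. If $tKx$, case~2 applies. Otherwise $tS_w x$, and the key move is to invoke property~\ref{ilmoquasi:7} of quasi-\ilmo-frames, which yields $t(\spure{w}{S} \cup R)^* x$. Since $R \subseteq K$, this exhibits a finite path $t = t_0, t_1, \ldots, t_n = x$ whose steps are each either pure $S_w$-transitions or $K$-transitions. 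A short auxiliary induction on the length of this path, using again that $\ncone{A}{w}$ is closed under both $K$ and $S_w$ by definition, shows that every intermediate node $t_i$ lies in $\ncone{A}{w}$. The final step from $t_{n-1}$ to $t_n = x$ is then precisely of the form required by case~2 (when it is an $R$- hence $K$-step) or case~3 (when it is a pure $S_w$-step); in the degenerate case $n=0$ we have $t=x$ and the outer induction hypothesis applied to the prior witness for $t \in \ncone{A}{w}$ supplies the required case.

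The main obstacle, which is genuinely mild, is the bookkeeping for the path-decomposition argument and the degenerate zero-length case. All the substantive content is packed into property~\ref{ilmoquasi:7}, which guarantees that any $S_w$-transition in a quasi-\ilmo-frame can be rewritten as a composition of pure $S_w$-transitions and $R$-transitions; once this is in hand, the lemma is essentially a translation between two equivalent closure presentations of the same set.
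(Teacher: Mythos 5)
Your proof is correct. The paper in fact states Lemma \ref{lemm:ncone1} without giving any proof, and your argument is exactly the natural one it presupposes: the ``$\Leftarrow$'' direction is immediate from the closure clauses of Definition \ref{defi:ncone} (since $\spure{w}{S}\subseteq S_w$ and $K\subseteq K\cup S_w$), and the ``$\Rightarrow$'' direction goes by induction on the generation of \ncone{A}{w}, replacing an $S_w$-step by a $(\spure{w}{S}\cup R)^*$-path via property \ref{ilmoquasi:7} of quasi--\ilmo--frames, using $R\subseteq K$ for the last step and the outer induction hypothesis for the zero-length (reflexive) case.
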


\begin{corollary}\label{lemm:ncone2}
Let $F$ be a quasi--\ilmo--frame that satisfies \invid.
Let $w,x\in F$ and let $A$ be a formula.
Then $x\in\ncone{A}{w}$ implies $\nu(w,x)=A$ or there exists some $t\in\ncone{A}{w}$ such that $tKx$.
\end{corollary}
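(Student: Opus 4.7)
The plan is to invoke Lemma \ref{lemm:ncone1} and dispatch its third case by combining invariant \invid with the disjointness of generalized cones guaranteed by the quasi--frame definition.

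Suppose $x \in \ncone{A}{w}$. Lemma \ref{lemm:ncone1} gives three cases. The first two cases are exactly the two disjuncts of the conclusion, so only the third requires work: we have some $t \in \ncone{A}{w}$ with $t\spure{w}{S}x$, and in particular $t\trans{(\spure{w}{S})}x$ via a single step. By \invid, $\nu(w,x)$ is then defined; write $B\eqbydef\nu(w,x)$. The base clause of the definition of \ncone{}{} immediately yields $x\in\ncone{B}{w}$, so it suffices to prove $A=B$, since then the first disjunct $\nu(w,x)=A$ holds.

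To establish $A=B$, I would use the auxiliary inclusion $\ncone{C}{v}\subseteq\geone{C}{v}$ for any formula $C$ and world $v$ in a quasi--\ilmo--frame. This is proved by induction on the definition of $\ncone{C}{v}$. The base case is immediate since $\nu(v,y)=C$ puts $y$ into $\crone{C}{v}\subseteq\geone{C}{v}$. Closure of $\ncone{C}{v}$ under $S_v$ follows from the closure of $\geone{C}{v}$ under $S_u$ for arbitrary $u$. Closure under $K$ uses Lemma \ref{lemm:kstep}: if $y\in\ncone{C}{v}$ (so by the inductive hypothesis $y\in\geone{C}{v}$) and $yKz$, then there exists $z'$ with $y(R\cup S)^* z' R z$; since $\geone{C}{v}$ is closed under $R$ and under $S_u$ for every $u$, every step of this path remains in $\geone{C}{v}$, so $z\in\geone{C}{v}$.

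Equipped with this inclusion, suppose for contradiction that $A\neq B$. Then $x\in\ncone{A}{w}\cap\ncone{B}{w}\subseteq\geone{A}{w}\cap\geone{B}{w}$, contradicting condition $4$ of Definition \ref{defi:quasiframes} (which is inherited by quasi--\ilmo--frames and states $\geone{A}{w}\cap\geone{B}{w}=\varnothing$ whenever $A\neq B$). Therefore $A=B$ and $\nu(w,x)=A$, as required. The main obstacle is the auxiliary inclusion $\ncone\subseteq\geone$; everything else is a routine invocation of the two lemmas already in hand, and the inclusion itself reduces cleanly to one application of Lemma \ref{lemm:kstep}.
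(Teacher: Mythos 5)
Your proof is correct and takes the route the paper intends: the result is stated there as an unproved corollary, to be read off from Lemma \ref{lemm:ncone1} together with \invid, which is exactly your case analysis. The one piece of substance you supply --- the inclusion $\ncone{C}{v}\subseteq\geone{C}{v}$, established via Lemma \ref{lemm:kstep} so that the disjointness condition $4$ of Definition \ref{defi:quasiframes} can be used to force $\nu(w,x)=A$ in the third case --- is precisely the fact the paper leaves implicit, and your derivation of it is sound.
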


\begin{lemma}\label{lemm:conelemma}
Let $F$ be a quasi--frame which satisfies \invcsback, \invswitch,
\invnu and \invid. Then
$x\spure{v}{S}y,\, y\in\ncone{A}{w}\Rightarrow x\in\ncone{A}{w}$.
\end{lemma}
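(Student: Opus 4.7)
The plan is to invoke Lemma \ref{lemm:ncone1} and case-split on how $y$ belongs to $\ncone{A}{w}$, then use the four invariants to place $x$ in the cone as well. The standing observation behind every case is that $x\spure{v}{S}y$ is a one-step chain $x\trans{(\spure{v}{S})}y$, whence \invid forces $\nu(v,y)$ to be defined; this is the hook that unlocks \invnu and \invswitch below.

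First I would handle the two ``pure'' cases, namely $\nu(w,y)=A$ and $t\spure{w}{S}y$ for some $t\in\ncone{A}{w}$. In both subcases $\nu(w,y)$ is defined: in the first case by hypothesis, in the second by \invid applied to the chain $t\trans{(\spure{w}{S})}y$. Combined with $\nu(v,y)$ being defined, \invnu collapses $w=v$. Therefore $x\spure{w}{S}y$ is a one-step $\spure{w}{S}$-chain into the cone, and \invcsback immediately gives $x\in\ncone{A}{w}$.

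The remaining case is $tKy$ for some $t\in\ncone{A}{w}$, which is the genuine content of the lemma because now $v$ and $w$ may differ and \invcsback cannot be applied directly. Here I would instantiate \invswitch with $v$ in the role of $w$ and $t$ in the role of $v$: since $\nu(v,y)$ is defined and $tKy$, the invariant yields $t\below v$. From $xS_vy$ the quasi--frame condition ($yS_xz\rightarrow xRy\wedge xRz$) forces $vRx$, and $R\subseteq K$ gives $vKx$. Corollary \ref{lemm:ord0}.\ref{i0:lemm:ord0} applied to $t\below v$ and $vKx$ produces $tKx$, and finally the $K$-closure of $\ncone{A}{w}$ (Definition \ref{defi:ncone}) together with $t\in\ncone{A}{w}$ yields $x\in\ncone{A}{w}$.

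The main obstacle is precisely this $tKy$-case. One needs a bridge carrying information from a $K$-predecessor $t$ of $y$ (on the $w$-side) over to the world $v$ from which the pure step $x\spure{v}{S}y$ originates. The invariant \invswitch is exactly that bridge, translating ``$\nu(v,y)$ defined plus $tKy$'' into the $\below$-comparison $t\below v$; Corollary \ref{lemm:ord0}.\ref{i0:lemm:ord0} then transports this comparison along the $K$-step $vKx$ to obtain $tKx$, after which $K$-closure of the cone does the rest.
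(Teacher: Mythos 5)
Your proof is correct and takes essentially the same route as the paper's: the paper appeals to Corollary \ref{lemm:ncone2} (which, via \invid, already folds your pure-$\spure{w}{S}$ case into the remaining two), then settles the case $\nu(w,y)=A$ by \invnu and \invcsback, and the case $tKy$ by \invid, \invswitch and Corollary \ref{lemm:ord0}, exactly as you do. The only differences are that you start from Lemma \ref{lemm:ncone1} with three cases instead of the corollary, and that you spell out two steps the paper leaves tacit, namely $vKx$ (from the quasi-frame condition plus $R\subseteq K$) and the closing appeal to $K$-closure of \ncone{A}{w}.
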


\begin{proof}
Suppose $x\spure{v}{S}y$ and $y\in\ncone{A}{w}$. Then, by Corollary
\ref{lemm:ncone2}, $\nu(w,y)=A$ or, for some
$t\in\ncone{A}{w}$, $tKy$. In the first case
we obtain $w=v$ by \invnu and \invid. And thus by \invcsback,
$x\in\ncone{A}{w}$. In the second case we have, by \invid
and \invswitch that $t\below v$. Which implies, by Lemma
\ref{lemm:ord0}--\ref{i0:lemm:ord0}, $tKx$.
\end{proof}

\subsection{Solving problems}\label{sect:ilmo:problems}

% Point with problem
\newcommand{\problemp}{\const{a}}
% Point that solves the problem
\newcommand{\solve}{\const{b}}

Let $F=\langle W,R,S,\nu\rangle$ be a quasi--\ilmo--frame that
satisfies all the invariants. Let $(\problemp,\neg(A\rhd B))$ be a
\adset{D}-problem in $F$. We fix some $\solve\not\in W$. Using Lemma
\ref{lemm:problems} we find a \mcs $\Delta_\solve$, such that
$\nu(\problemp)\crit{B}\Delta_\solve$ and $A,\Box\neg
A\in\Delta_\solve$. We put
\begin{align*}
 \hat F & = \langle\hat W,\hat R,\hat S,\hat\nu\rangle \\
        & = \langle W\cup\{\solve\},
                R\cup\{\pair{\problemp,\solve}\},
                    S,
                    \nu\cup\{\pair{\solve,\Delta_\solve},\pair{\pair{\problemp,\solve},B}\}\rangle,
\end{align*}
and define $\hat K = K(\hat F)$.
The frames $F$ and $\hat F$ satisfy the conditions of Lemma
\ref{lemm:kfg}. Thus we have
\begin{equation}\label{prob:ilmoapp:k}
\forall xy{\in}F\; xKy\Leftrightarrow x\hat Ky.
\end{equation}
Since $\hat S{=}S$, this implies that all simple enough
properties expressed in $\hat K$ and $\hat S$ using only parameters from $F$ are
true if they are true with $\hat K$ replaced by $K$.

%%%%%%%%%%%%%%%%%%%%%%%%%%%%
%                          %
% Problems quasi-frame     %
%                          %
%%%%%%%%%%%%%%%%%%%%%%%%%%%%

\begin{claim}
$\hat F$ is a quasi--\ilmo--frame.
\end{claim}

\begin{proof}
A simple check of Properties (1.--5.) of Definition \ref{defi:quasiframes}
(quasi--frames) and Properties (6.--10.) of Definition \ref{defi:ilmoquasi}
(quasi--\ilmo--frames) and the remaining ones in Definition
\ref{defi:quasiframes} (quasi--frames).
Let us comment on two of them.

$x\hat Ky\rightarrow \hat\nu(x)\sucs\hat\nu(y)$
follows from Lemma \ref{lemm:kstep} and \eqref{prob:ilmoapp:k}.

Let us show $\hat F\models x\in\ncone{C}{w}\Rightarrow\hat\nu(w)\crit{C}\hat\nu(x)$.
  We have
$\forall xw{\in}F\;F\models x\in\mathcal{N}_w^C\Leftrightarrow \hat F\models x\in\mathcal{N}_w^C$.
  So we only have to consider the case $\hat F\models\solve\in\mathcal{N}_w^C$.
  If $w=\problemp$ then we are done by choice of
  $\hat\nu(\solve)$. Otherwise, by Lemma \ref{lemm:conelemma}, we have for some $x\in F$,
  $F\models x\in\mathcal{N}_w^C$ and $x\hat K\solve$.
  By the first property we proved, we get $\hat\nu(x)\sucs\hat\nu(\solve)$. 
So, since $\hat\nu(w)\crit{C}\hat\nu(x)$ we have
  $\hat\nu(w)\crit{C}\hat\nu(\solve)$.
\end{proof}

Before we show that $\hat F$ satisfies all the invariants we prove some lemmata.

\begin{lemma}\label{lemm:probl:k1}
If for some $x\neq\problemp$, $x\hat K^1\solve$. Then there exist
unique $u$ and $w$ (independent of $x$) such that $w K^{\geq
2}u\trans{(\spure{w}{S})}\problemp$.
\end{lemma}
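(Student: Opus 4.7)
The plan is to exploit the monotone characterization of $\hat K$ from Lemma \ref{lemm:klemma1}, under which $\hat K$ is generated from $\hat R = R \cup \{(\problemp,\solve)\}$ by iterating transitivity and the rule $w\hat Kx\hat Ky\trans{(\spure{w}{S})}y'\hat Kz \to x\hat Kz$. Since $x \neq \problemp$, we have $(x,\solve)\notin\hat R$, so $(x,\solve)$ must enter $\hat K$ through some rule application. If the top application were transitivity then some intermediate $t$ with $x\hat K t\hat K\solve$ would exist, contradicting $x\hat K^1\solve$. Hence the top application is of the \mo-rule, producing $w,y,y'$ with $w\hat Kx\hat Ky$, $y\trans{(\spure{w}{S})}y'$, and $y'\hat K\solve$.

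Next, I would define the ``spine'' of the derivation of $x\hat K\solve$: starting from $x\hat K\solve$, at each node with conclusion $a\hat K\solve$ I follow the unique child whose conclusion is again of the form $a'\hat K\solve$ (the intermediate $b$ of a transitivity step, or the $y'$ of a \mo-step). This yields a finite sequence $x = a_0, a_1, \dots, a_k = \problemp$ terminating at the base leaf $\problemp\hat R\solve$. The main claim is that some internal node along this spine is a \mo-application with $y' = \problemp$. Arguing by contradiction, assume every \mo-node on the spine has $y'\neq\problemp$ and show by reverse induction on $i\leq k$ that $a_i\hat K\problemp$ for all $i$. Transitivity nodes give this directly from $a_i\hat K a_{i+1}$ and the induction hypothesis. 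At a \mo-node $w\hat K a_i\hat K y\trans{(\spure{w}{S})}a_{i+1}$, Lemma \ref{lemm:kfg} (applicable since $\solve$ has no outgoing edges) shows that $\hat K$ restricted to $F$ equals $K=R$; the hypothesis then reads $a_{i+1}R\problemp$ in $F$, transitivity of $S_w$ collapses $y\trans{(\spure{w}{S})}a_{i+1}$ to $yS_wa_{i+1}$, and a single application of the \ilmo-frame condition for $F$ (Theorem \ref{theo:ilmoframe}) to $wRa_iRyS_wa_{i+1}R\problemp$ yields $a_iR\problemp$, hence $a_i\hat K\problemp$. In particular $x = a_0 \hat K \problemp \hat K \solve$ would give $\problemp$ as an intermediate, contradicting $x\hat K^1\solve$.

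Consequently some spine node applies the \mo-rule with $y' = \problemp$, yielding $w\hat K a\hat K y\trans{(\spure{w}{S})}\problemp$ for some $a$, all within $F$. In $F$ this reads $wKa$ and $aKy$, so $wK^{\geq 2}y$, and taking $u := y$ produces the desired configuration $wK^{\geq 2}u\trans{(\spure{w}{S})}\problemp$. Uniqueness of $(u,w)$ is then immediate from the invariants: \invid tells us $\nu(w,\problemp)$ is defined (since $u\trans{(\spure{w}{S})}\problemp$), \invnu forces $w$ to be uniquely determined by this property (and hence independent of $x$), and \invone then forces $u$ to be unique given $w$ and target $\problemp$. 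The main obstacle is the spine-propagation step, where one has to combine the \mo-frame condition of $F$ with the monotone rule-based description of $\hat K$ to push $a_{i+1}\hat K\problemp$ back to $a_i\hat K\problemp$ across a \mo-rule application; once this is in place, the extraction of $u,w$ and their uniqueness are a matter of routine bookkeeping with the invariants.
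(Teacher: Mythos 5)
Your proposal is correct, and its uniqueness half (via \invid, \invnu and \invone) is exactly the paper's argument; for the existence half, however, you take a genuinely different route. The paper argues contrapositively through Lemma \ref{lemm:k1calc}: if no pair $(w,u)$ with $wK^{\geq 2}u\trans{(\spure{w}{S})}\problemp$ existed, then the candidate relation $T=K\cup\{\langle\problemp,\solve\rangle\}$ would satisfy the hypotheses of that lemma (a violation of its closure condition could only occur with $z=\solve$, hence $y'=\problemp$, and would exhibit exactly the forbidden configuration), so that $\hat K=\trans{T}$ and $x\hat K^1\solve$ gives $xT\solve$, i.e.\ $x=\problemp$. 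You instead work from below: via Lemma \ref{lemm:klemma1} you view $\hat K$ as generated by well-founded derivations from $\hat R$, follow the spine of a derivation of $x\hat K\solve$ down to the axiom $\problemp\hat R\solve$, and push $\hat K$-reachability of $\problemp$ backwards across \mo-steps using Lemma \ref{lemm:kfg}, $K=R$, and the frame condition of Theorem \ref{theo:ilmoframe}, concluding that a spine node with $y'=\problemp$ must occur on pain of contradicting $x\hat K^1\solve$. Both are least-fixed-point arguments, but the paper's exploits minimality through a ready-made lemma and needs only the quasi-\ilmo-properties of $F$ (notably $wKxK^1y\trans{(\spure{w}{S})}y'K^1z\rightarrow x\trans{(K^1)}z$), whereas yours leans on the full strength of \invmo (so that $K=R$, each $S_w$ is transitive, and the \mo-frame condition is available) together with the standard fact that least relations closed under Horn rules are generated by well-founded derivation trees; all of this is legitimate here since $F$ satisfies all invariants, and in exchange your argument pinpoints concretely which rule application creates the new $K^1$-edge into $\solve$. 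Two cosmetic repairs: the induction claim should read ``$a_i\hat K\problemp$ or $a_i=\problemp$'' (as literally stated it fails at $i=k$), and you should note explicitly that $\spure{w}{\hat S}=\spure{w}{S}$, since no path witnessing impurity of an $S_w$-transition can pass through $\solve$, so the pure $S_w$-chains appearing in derivations over $\hat F$ are really those of $F$.
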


\begin{proof}
If such $w$ and $u$ do not exists then $T=K\cup\{\problemp,\solve\}$ satisfies
the conditions of Lemma \ref{lemm:k1calc}.
In which case $xK^1\solve$ gives $xT\solve$ which implies $x=\problemp$.
The uniqueness of $w$ follows from \invnu and \invid.
The uniqueness of $u$ follows from \invone and the uniqueness of $w$.
\end{proof}

In what follows we will denote these $w$ and $u$, if they exist, by $\const w$ and $\const u$.

\begin{lemma}\label{lemm:probl:k3}
For all $x$. If $x\hat K^1\solve$ then $x\below\problemp$.
\end{lemma}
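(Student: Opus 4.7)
The plan is to exhibit a single relation
\[
T \eqbydef K \cup \{(x,\solve) \mid x \below \problemp\}
\]
and show that $T$ satisfies the three closure properties defining $\hat K = K(\hat F)$ in Definition~\ref{defi:k}. By minimality of $\hat K$ this yields $\hat K \subseteq T$, and the lemma follows: if $x \hat K^1 \solve$ then $x \hat K \solve$, so $(x,\solve)\in T$; since $\solve\notin W$ forces $(x,\solve)\notin K$, we must have $x \below \problemp$.

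Before the verification I would record two preliminary observations, both stemming from the fact that $\hat S = S$ and $\solve$ has no outgoing $\hat R$- or $\hat S$-edges. First, the hypotheses of Lemma~\ref{lemm:kfg} hold for the subframe $F\subseteq \hat F$, so $\hat K$ restricted to $W$ equals $K$; consequently $T$ restricted to $W$ equals $K$, and the relations $T^1$, $\hat K^1$, $K^1$ all agree between elements of $W$. Second, $\spure{w}{S}$ in $\hat F$ restricted to $W$ coincides with $\spure{w}{S}$ in $F$, because a path inside $W$ cannot use $\solve$ as an intermediate.

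With these in hand, $\hat R\subseteq T$ is immediate from $R\subseteq K$ and $\problemp \below \problemp$. Transitivity is routine: in a composition $xTtTz$ the middle $t$ must lie in $W$ (since $\solve$ has no outgoing $T$-edges), and then so must $x$; splitting on $z\in W$ versus $z=\solve$ one concludes $xKz$ or, using $K\subseteq \below$ and transitivity of $\below$, $x\below \problemp$. The essential case is the $M_0$-closure: assume $w T x T^1 y \trans{(\spure{w}{S})} y' T^1 z$. If $z\in W$, all elements lie in $W$ and the $M_0$-closure of $K$ in $F$ gives $xKz\in T$. If $z = \solve$, the preliminary observations place $w,x,y,y'$ in $W$ with $wKx$, $xK^1y$, $y\trans{(\spure{w}{S})}y'$, and they reduce $y' T \solve$ to $y' \below \problemp$.

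The main obstacle, and the only real content of the proof, is reading the previous configuration correctly: $wKxK^1y\trans{(\spure{w}{S})}y'$ is exactly the definition of $x\belowone y'$, so chaining with $y'\below \problemp$ yields $x\below \problemp$, i.e.\ $(x,\solve)\in T$. Everything else is bookkeeping inherited from the fact that $\solve$ is a fresh sink.
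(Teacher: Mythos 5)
Your proposal is correct and is essentially the paper's own argument: the paper sets $K'=K\cup\{(x,\solve)\mid x\hat K\solve\wedge x\below\problemp\}$ and invokes Lemma \ref{lemm:k1calc}, whose proof is precisely the minimality argument over Definition \ref{defi:k} that you carry out directly, with the same key observation that a new closure edge into $\solve$ forces its source to satisfy $x\belowone y'\below\problemp$. The only cosmetic difference is that the paper's auxiliary relation carries the extra conjunct $x\hat K\solve$, needed because Lemma \ref{lemm:k1calc} requires $\trans{T}\subseteq\hat K$, whereas your direct appeal to the minimality of $K(\hat F)$ makes that constraint unnecessary.
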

\begin{proof}
Let $K'=K\cup\{(x,\solve)\mid x\hat K\solve\wedge x\below\problemp\}$.
It is not hard to show that $K'$ satisfies the conditions of $T$ in Lemma
\ref{lemm:k1calc}.
\end{proof}

\begin{lemma}\label{lemm:probl:k2}
Suppose the conditions of Lemma \ref{lemm:probl:k1} are satisfied
and let $\const u$ be the $u$ asserted 
%(by that lemma) 
to exist.
Then for all $x\neq\problemp$, if $x\hat K^1\solve$, then $x K^1\const u$.
\end{lemma}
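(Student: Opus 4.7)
The plan is to adapt the template set by Lemmata \ref{lemm:probl:k1} and \ref{lemm:probl:k3}: apply Lemma \ref{lemm:k1calc} to the frame $\hat F$, so that $\hat K$ plays the role of $K$ there, with the candidate relation
\[
T \eqbydef K \cup \{(\problemp,\solve)\} \cup \{(x,\solve) \mid xK^1\const u \text{ and } x\hat K\solve\}.
\]
Once the two hypotheses of Lemma \ref{lemm:k1calc} are verified, its conclusion yields $x\hat K^1\solve \Rightarrow xT\solve$, from which the lemma is immediate: if $x\ne\problemp$ then $xT\solve$ is literally $xK^1\const u\wedge x\hat K\solve$, so in particular $xK^1\const u$.

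The inclusions $\hat R\subseteq\trans T\subseteq\hat K$ are clear from the definition, so the content is the $M_0$-style clause $w\trans T xT^1 y\trans{(\spure w S)} y'T^1 z \rightarrow x\trans T z$. A first observation simplifies things: since every newly added $T$-edge ends in $\solve$ and $\solve$ has no outgoing $T$-edge, any $\trans T$-chain whose endpoints lie in $F$ is a $K$-chain, and $T^1$ agrees with $K^1$ on pairs from $F$. Hence for $z\in F$ the clause reduces to invariant \invtwo applied in $F$, and the real work lies in $z=\solve$. Here $y'T^1\solve$ forces $y'$ to be either $\problemp$ or one of the newly linked worlds, i.e.\ $y'K^1\const u\wedge y'\hat K\solve$.

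For $y'=\problemp$, I would first note that $\problemp T^1\solve$ itself holds: a violating intermediate $\tau$ would force $\problemp K\tau K^1\const u$, hence $\problemp K\const u$, contradicting converse well-foundedness of $K$ together with $\const u\trans{(\spure{\const w}{S})}\problemp$. Reducing $w\trans TxT^1y\trans{(\spure w S)}\problemp$ to $wKxK^1y\trans{(\spure w S)}\problemp$ in $F$, Lemma \ref{lemm:probl:k1}'s uniqueness clause pins down $w=\const w$ and $y=\const u$, so $xK^1\const u$ and $\const w K x$. Applying clause 3 of Definition \ref{defi:k} to $\const w\hat K x\hat K^1\const u\trans{(\spure{\const w}{S})}\problemp\hat K^1\solve$ in $\hat F$ then gives $x\hat K\solve$, so $(x,\solve)\in T$, as required.

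The remaining sub-case --- $y'$ a newly linked world --- is the main obstacle I anticipate, since the outer $w$ is no longer forced to be $\const w$. The idea is to use the $K''$-characterisation from Lemma \ref{lemm:klemma1} to convert $wKxK^1y\trans{(\spure w S)}y'K\const u$ into $xK\const u$, then invoke converse well-foundedness of $K$ to extract $\tau$ with $xK\tau K^1\const u$. The remaining hard step is to show $\tau\hat K\solve$, yielding $(\tau,\solve)\in T$ and hence $x\trans T\solve$ via $xK\tau T\solve$; for this I would combine the invariants \invlinbox, \invsbox and \invswitch to tie $\tau$ back to $\const w$ and close with another application of clause 3 of Definition \ref{defi:k} in $\hat F$, paralleling the $y'=\problemp$ case.
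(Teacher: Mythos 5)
Your template --- running Lemma \ref{lemm:k1calc} on $\hat F$ with a tailored relation $T$, exactly as the paper does for Lemma \ref{lemm:probl:k3} --- is sound, the inclusions $\hat R\subseteq\trans{T}\subseteq\hat K$ are indeed immediate, and the reduction to the case $z=\solve$ is correct. But both sub-cases that carry all the weight have genuine gaps, and they are of the same kind: to get $(x,\solve)\in T$ you must produce the conjunct $x\hat K\solve$, and your only proposed tool, the \mo-closure clause of Definition \ref{defi:k}, requires a premise of the form $y'\hat K^1\solve$, while your hypothesis supplies only $y'T^1\solve$. Since $\trans{T}\subseteq\hat K$ may be a proper inclusion, a $T^1$-edge into $\solve$ need not be a $\hat K^1$-edge, so the application of Definition \ref{defi:k} in the case $y'=\problemp$ is unjustified as it stands ($\problemp\hat K^1\solve$ is in fact provable, via Lemma \ref{lemm:probl:k3}, $K\subseteq\below$ and \invorder, but that argument is absent from your proposal). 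Worse, your supporting claim is false: $\problemp K\const u$ together with $\const u\trans{(\spure{\const w}{S})}\problemp$ is not a $K$-cycle, and cycles mixing $S_w$-steps with $K$-steps are not excluded in the \ilmo setting --- excluding them is precisely the $\principel{W}$-half of the $\principel{W^*}$ frame condition --- so converse well-foundedness of $K$ yields no contradiction there. Finally, the second sub-case, which you yourself flag as the main obstacle, is left open, and it is exactly the point where this approach stalls.

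The missing tool is Corollary \ref{lemm:ord0}, not Definition \ref{defi:k}. The antecedent $w\trans{T}xT^1y\trans{(\spure{w}{S})}y'$, read in $F$, says precisely $x\belowone y'$; and in both sub-cases $y'T\solve$ gives $y'\hat K\solve$ (when $y'=\problemp$ because $\problemp\hat R\solve$, otherwise by the very definition of $T$). Hence Corollary \ref{lemm:ord0} yields $x\hat K\solve$ at once (the passage between $F$ and $\hat F$ being harmless by the remark following \eqref{prob:ilmoapp:k}). The other conjunct, $xK^1\const u$, comes from the uniqueness clause of Lemma \ref{lemm:probl:k1} when $y'=\problemp$, and in the remaining sub-case it is a one-step application of $F\models\invtwo$ --- no detour through Lemma \ref{lemm:klemma1} is needed. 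With these repairs your route does go through. Note, though, that the paper's own proof is different and shorter: it applies Lemma \ref{lemm:probl:k3} to get $x\below\problemp$, writes this as a chain of $(\belowone\cup K)$-steps, rules out any $K$-step (such a step would give $x\hat K^{\geq 2}\solve$ by Corollary \ref{lemm:ord0}), and then walks backwards along the resulting pure $\belowone$-chain: the final step is pinned to $\const w$ and $\const u$ by the uniqueness in Lemma \ref{lemm:probl:k1}, and every earlier step is literally an instance of $F\models\invtwo$. That argument never needs to know which $\hat K$-edges into $\solve$ are $\hat K^1$, which is what your version struggles with.
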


\begin{proof}

By Lemma \ref{lemm:probl:k3} we have $x\below\problemp$.
Let
\[
x=x_0(\belowone\cup K)x_1(\belowone\cup K)\cdots(\belowone\cup K)x_n=\problemp.
\]
First we show $x=x_0\belowone x_1\belowone\cdots\belowone x_n=\problemp$.
Suppose, for a contradiction, that for some $i<n$, $x_iKx_{i+1}$.
Then, by Lemma \ref{lemm:ord0}, $xKx_{i+1}K\solve$. So, $xK^{\geq 2}\solve$. A contradiction.
The lemma now follows by showing, with induction on $i$ and using $F\models\invtwo$,
that for all $i\geq 0$, $x_{n-(i+1)}K^1\const u$.

\end{proof}

%%%%%%%%%%%%%%%%%%%%%%%%%%%%
%                          %
% Problems  sub-invariants %
%                          %
%%%%%%%%%%%%%%%%%%%%%%%%%%%%

\begin{lemma}
$\hat F$ satisfies all the sub-invariants.
\end{lemma}

\begin{proof}
We only comment on $\invtwo$ and $\invswitch$.
Let $K=K(\hat F)$.

%\begin{enumerate}

%\item[\invtwo] $w\hat K x\hat K^1y\trans{(\spure{w}{\hat S})}y'\hat K^1z\rightarrow x\hat K^1z$
%\invtwofull

\invswitch follows from Lemma \ref{lemm:probl:k3}, so let us treat \invtwo.
Suppose $w\hat Kx\hat K^1y\trans{(\spure{w}{\hat S})}y'\hat K^1z$.
We can assume that at least one of $w,x,y,y',z$ is not in $F$ and
the only candidate for this is $z$. So we have $z=\solve$. We can
assume that $x\neq y'$ (otherwise we are done at once), so the
conditions of Lemma \ref{lemm:probl:k1} are fulfilled and thus
$\const w$ and $\const u$ as stated there exist.

Suppose now, for a contradiction, that for some $t$, $x\hat Kt\hat K^1\solve$. Then by Lemma
\ref{lemm:probl:k2}, $t=\problemp$ or $t\hat K^1\const u$.
Suppose we are in the case $t=\problemp$.
Since $\nu(\const w,\const a)$ is defined and $x\hat K\const a$ we obtain by
\invswitch, that $x\below\const w$. Since $\const w\hat K^{\geq 2}\const u$ we obtain by
Lemma \ref{lemm:ord0} that $x\hat K^{\geq 2}\const u$.
In the case $t\hat K^1\const u$ we have $x\hat K^{\geq 2}\const u$ trivially.
So in any case we have
\[ x\hat K^{\geq 2}\const u. \]

However, by Lemma \ref{lemm:probl:k2} and since $y'\hat K^1z$ we have $y'\hat K^1\const u$ or $y'=\problemp$.
In the first case, since $F\models\invtwo$, we have $x\hat K^1\const u$.
In the second case we obtain, by the uniqueness of $\const u$, that $y=\const u$ and
thus $x\hat K^1\const u$. So in any case we have
\[ x\hat K^1\const u. \]
A contradiction.

%\item[\invswitch] $\textrm{`$\hat\nu(w,y)$ is defined'}\wedge v\hat Ky\rightarrow v\below w$

%\invswitchfull

%\end{enumerate}

\end{proof}

%%%%%%%%%%%%%%%%%%%%%%%%%%%%
%                          %
% Problems main-invariants %
%                          %
%%%%%%%%%%%%%%%%%%%%%%%%%%%%

\begin{lemma}
Possibly with the exception of \invmo, $\hat F$ satisfies all the main-invariants.
\end{lemma}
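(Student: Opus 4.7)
The plan is to verify each main-invariant of $\hat F$ in turn, using three structural facts already in hand: (i) the identity $\hat K{\upharpoonright_F}=K$ of \eqref{prob:ilmoapp:k}, together with $\hat S=S$; (ii) the equivalence $F\models x\in\ncone{A}{w}\Leftrightarrow\hat F\models x\in\ncone{A}{w}$ for $x,w\in F$, already established while checking that $\hat F$ is a quasi--\ilmo--frame; and (iii) the fine description of the $\hat K^{1}$-predecessors of $\solve$ given by Lemmata \ref{lemm:probl:k1}, \ref{lemm:probl:k2} and \ref{lemm:probl:k3}. These facts mean that whenever all parameters lie in $F$, every main-invariant reduces at once to its counterpart in $F$, so the real work is confined to the cases in which $\solve$ participates.

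For \invadd the only fresh $\hat R$-edge is $\problemp\hat R\solve$: the choice $A,\Box\neg A\in\Delta_{\solve}$ supplies $\Box\neg A\in\nu(\solve)$, the formula $\neg A$ belongs to $\adset{D}$ since $\neg(A\rhd B)\in\adset{D}$ and $\adset{D}$ is closed under subformulas and single negations, and $\Box\neg A\notin\nu(\problemp)$ because otherwise Lemma \ref{lemm:basicil}.\ref{lemm:basicil:box} together with $\bot\rhd B$ would force $A\rhd B\in\nu(\problemp)$, contradicting $\neg(A\rhd B)\in\nu(\problemp)$. For \invlinbox the only non-trivial target is $y=\solve$: by Lemma \ref{lemm:probl:k3} and Corollary \ref{lemm:ord0}.\ref{i1:lemm:ord0} every $x$ with $x\hat K^{1}\solve$ satisfies $\nu(x)\boxin\nu(\problemp)$, so $\nu(\problemp)$ is a $\boxin$-upper bound; Lemma \ref{lemm:probl:k2} then embeds the remaining labels $\{\nu(x)\mid x\neq\problemp,\,x\hat K^{1}\solve\}$ inside $\{\nu(x)\mid xK^{1}\const u\}$, which is linearly ordered by $F\models\invlinbox$.

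Invariants \invsbox and \invcback collapse to the corresponding $F$-invariants when every parameter lies in $F$. The new cases all rest on a single observation: since the only $\hat R$-step into $\solve$ is $\problemp\hat R\solve$ and $\solve$ has no $\hat S$-predecessors, any $\hat K$-path ending at $\solve$ and any generation of $\solve$ inside a cone must route through $\problemp$. For \invsbox this identifies $\nu(\problemp)$ as the $\boxin$-maximum of $\{\nu(t)\mid w\hat K t\hat K^{1}\solve\}$ by the \invlinbox-analysis just given, and Lemma \ref{lemm:probl:k3} supplies the needed $\nu(x)\boxin\nu(\problemp)$ directly. For \invcback with $\solve\in\hat{\ncone{A}{w}}$, Lemma \ref{lemm:ncone1} splits the situation into the seed case $w=\problemp$, $A=B$, handled by $\nu(\problemp)\crit{B}\nu(\solve)$ together with Lemma \ref{lemm:conelemma} and the quasi--\ilmo--frame property \ref{ilmoquasi:4}, and the inductive case in which some $t\in F$ with $t\hat K\solve$ already lies in the cone; in the latter the in-$F$ cone-equivalence reduces matters to $F\models\invcback$ applied to $wKxKt$.

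The main obstacle is \invdef. When both $w$ and $x$ lie in $F$, the deficiency is simultaneously a deficiency in $F$ and the qualifiers $wK^{1}x$ and $wK^{\geq 2}x'(S_w\cup K)^{\ast}x$ transfer by fact (i), so $F\models\invdef$ produces a contradiction. The genuinely new case is $x=\solve$: Lemma \ref{lemm:probl:k1} supplies the unique $\const w$, $\const u$, Lemma \ref{lemm:probl:k2} turns $w\hat K^{1}\solve$ into $wK^{1}\const u$, and any alternative $x'$ witnessing $w\hat K^{\geq 2}x'(\hat S_w\cup\hat K)^{\ast}\solve$ is either a pre-existing offender in $F$ against $\const u$ (barred by $F\models\invdef$) or is $\problemp$ itself; the last sub-case is blocked by the choice of $\Delta_{\solve}$, which by Lemma \ref{lemm:ilmoexistence} already absorbs every possible deficiency-witness either at some $K^{1}\const u$-predecessor or at $\solve$. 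Weaving the sub-invariants \invone, \invtwo, \invswitch, \invid, \invnu, \invorder, \invpres and \invcsback through these case distinctions so that no combinatorial loose end remains is the technical heart of the argument.
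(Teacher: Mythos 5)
Your high-level skeleton coincides with the paper's: reduce every case whose parameters all lie in $F$ to the corresponding invariant of $F$ (using $\hat K\upharpoonright_{F}=K$, $\hat S=S$ and the cone equivalence), and then isolate the cases in which $\solve$ participates. Your treatments of \invadd and of \invlinbox are correct; for the latter the paper obtains the upper bound $\nu(x)\boxin\nu(\problemp)$ from Lemma \ref{lemm:kstep} where you use Lemma \ref{lemm:probl:k3} together with Corollary \ref{lemm:ord0}, and both routes work. The trouble is that the two cases the paper singles out as the real content are exactly where your argument has holes. For \invcback the case analysis fails twice. The seed case $\hat\nu(w,\solve)=A$ is not ``handled'' by criticality; it is \emph{impossible}: $w\hat Kx\hat K\solve$ gives $w\hat K^{\geq 2}\solve$, whereas \invs (available for $\hat F$ by the preceding lemma on sub-invariants) would force $w\hat K^{1}\solve$. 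Your appeal to $\nu(\problemp)\crit{B}\nu(\solve)$ and property \ref{ilmoquasi:4} cannot succeed even in principle, since that property passes from cone membership to criticality and not conversely, while the required conclusion is cone membership of $x$. More seriously, your reduction of the main case to ``$F\models\invcback$ applied to $wKxKt$'' silently assumes $xKt$, which is unjustified: $x$ and $t$ are merely two $\hat K$-predecessors of $\solve$ and need not be $K$-comparable at all. Supplying precisely this missing link is the point of the paper's proof: by Lemma \ref{lemm:probl:k2} one has $t=\problemp$ or $tK^{1}\const u$; cone membership is then transported between $\const u$ and $\problemp$ along the pure path $\const u\trans{(\spure{\const w}{S})}\problemp$ by means of Lemma \ref{lemm:conelemma} and \invpres, so that \emph{both} $\const u$ and $\problemp$ lie in $\ncone{A}{w}$; and only because $x\hat K\const u$, $x=\problemp$ or $x\hat K\problemp$ can $F\models\invcback$ then be applied.

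Your \invdef argument contains a second genuine error: Lemma \ref{lemm:ilmoexistence} plays no role in this step at all. Here $\Delta_{\solve}$ is produced by Lemma \ref{lemm:problems}, and it has no ``absorption'' property; absorption of deficiency-witnesses belongs to the deficiency-elimination step of Section \ref{sect:ilmo:deficiencies}. The correct observation is that the new instance of \invdef holds vacuously: a deficiency in $w$ w.r.t.\ $\solve$ requires $w\hat R\solve$, hence $w=\problemp$, and the hypothesis $\problemp\hat K^{\geq 2}x'(\hat S_{\problemp}\cup\hat K)^{*}\solve$ would produce some $s'$ with $s'\hat K^{1}\solve$ and $\problemp\hat Ks'$, which together with $s'\below\problemp$ (Lemma \ref{lemm:probl:k3}) and Corollary \ref{lemm:ord0} yields $s'\hat Ks'$, contradicting converse well-foundedness of $\hat K$. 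A smaller but analogous over-claim occurs in your \invsbox paragraph: that $\nu(\problemp)$ is the $\boxin$-maximum of $\{\nu(t)\mid w\hat Kt\hat K^{1}\solve\}$ presupposes $w\hat K\problemp$, and Lemma \ref{lemm:probl:k3} by itself says nothing about a world $x$ that is not an immediate $\hat K$-predecessor of $\solve$; one needs the quasi-\ilmo-frame property \ref{ilmoquasi:6} of $\hat F$ (applied towards $\problemp$ or towards a $K^{1}$-predecessor of $\const u$) to bound $\nu(x)$.
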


\begin{proof}
Let $K=K(\hat F)$. We only comment on \invlinbox and \invcback.
%\begin{enumerate}

First we treat \invlinbox.
So we have to show that for all $y$,
$\{\hat\nu(x)\mid x\hat K^1 y\}$ is linearly ordered by $\boxin$.
We only need to consider the case $y=\const \solve$.
If $\{\problemp\}=\{ x \mid x\hat K^1\solve \}$ then the claim is obvious.
So we can assume that the condition of Lemma \ref{lemm:probl:k1} is fulfilled
and we fix $\const u$ as stated.
The claim now follows by $F\models\invlinbox$ (with $y=\const u$) and noting that, by Lemma \ref{lemm:kstep},
$x\hat K^1\solve\Rightarrow x\boxin\problemp$.

Now we look at \invcback: $w\hat K x\hat K y\wedge \hat F\models y\in\ncone{A}{w}
\rightarrow \hat F\models x\in\ncone{A}{w}$.
Suppose $w\hat Kx\hat Ky$ and $\hat F\models y\in\ncone{A}{w}$. We
only have to consider the case $y=\solve$. Then, by Lemma
\ref{lemm:ncone1}, $\hat\nu(w,\solve)=A$ or for some
$t\in\ncone{A}{w}$ we have $t\spure{w}{\hat S}\solve$ or $t\hat
K^1\solve$. The first case is impossible by \invs. The second is
also clearly not so. Thus we have
\begin{equation}\label{eq0:clai:pinvsback}
t\hat K^1\solve.
\end{equation}
We suppose that the conditions of Lemma \ref{lemm:probl:k1} are fulfilled
(the other case is easy).
If $t\hat{K}^1\const u$ and $x\hat{K}^*\const u$ then we are done
simmilarly as the case above. So assume $t\hat{K}^1\problemp$ or
$x\hat{K}^*\problemp$. Since $wRt$ and $wRx$ in any case we have $w\hat{K}\problemp$.
Now by Lemma \ref{lemm:conelemma} and \invpres we have
$\const u\in\ncone{A}{w}\Leftrightarrow \problemp\in\ncone{A}{w}$.
Also, by \eqref{eq0:clai:pinvsback},
$\const u\in\ncone{A}{w}\vee\problemp\in\ncone{A}{w}$.
So since $x\hat K\const u$ or $x=\problemp$ or $x\hat K\problemp$
we obtain $x\in\ncone{A}{w}$ by $F\models\invcback$.

\end{proof}

To finish this subsection we note that by Lemma \ref{lemm:ilmoclosure}
and Corollary \ref{coro:invilmoextension}
we can extend $\hat F$ to an adequate \ilmo--frame that satisfies all invariants.
%Moreover, by Corollary \ref{coro:invilmoextension}, we know that this frame
%satisfies all the invariants.

\subsection{Solving deficiencies}\label{sect:ilmo:deficiencies}

%%%%%%%%%%%%%%%%%%%%%
%                   %
% Deficiencies      %
%                   %
%%%%%%%%%%%%%%%%%%%%%
Let $F=\langle W,R,S,\nu\rangle$ be an \ilmo--frame satisfing all
the invariants. Let \mbox{$(\const a,\const b,C\rhd D)$} be a
\adset{D}-deficiency in $F$.

Suppose $\const a R^{\geq 2}\const b$ (the case $\const a R^1\const b$ is easy).
Let $x$ be the $\boxin$-maximum of $\{x\mid\const aKxK^1\const b\}$.
This maximum exists by \invlinbox.
Pick some $A$ such that $\const b\in\ncone{A}{\const a}$.
(If such an $A$ exists, then by adequacy of $F$, it is unique.
If no such $A$ exists, take $A=\bot$.)
By \invcback and adequacy we have $\nu(\const a)\crit{A}\nu(x)$.
%(By Lemma \ref{lemm:botcrit}, $\crit{\bot}=\sucs$.)
So we have
$C\rhd D\in\nu(\const a)\crit{A}\nu(x)\ni\Diamond C$.
We apply Lemma \ref{lemm:ilmoexistence} to obtain,
for some set $Y$, disjoint from $W$, a set $\{\Delta_y\mid y\in Y\}$ of \mcs's with all the properties as stated in that lemma.
We define
\begin{align*}
\hat F =\langle&W\cup Y,
            R\cup\{\pair{\const a,y}\mid y\in Y\},\\
        &S\cup\{\pair{\const a,\const b,y}\mid y\in Y\}\cup\{\pair{\const a,y,y'}
\mid y,y'\in Y,y\neq y'\},\\
           &\nu\cup\{\pair{y,\Delta_y},\pair{\pair{\const a,y},A}\mid y\in Y\}\rangle.
\end{align*}

%%%%%%%%%%%%%%%%%%%%%%%
%                     %
% Deficiencies, quasi %
%                     %
%%%%%%%%%%%%%%%%%%%%%%%

\begin{claim}
$\hat F$ is a quasi--\ilmo--frame.
\end{claim}

\begin{proof}

An easy check of Properties (1.--5.) of Definition \ref{defi:quasiframes} (quasi--frames)
and Properties (6.--10.) of Definition \ref{defi:ilmoquasi} (quasi--\ilmo--frames).
Let us comment on two cases.

First we see that $x\hat Ky\rightarrow\hat\nu(x)\sucs\hat\nu(y)$.
We can assume $y\in Y$.
By Lemma \ref{lemm:kstep} we obtain some $z$ with $\hat\nu(x)\boxin\hat\nu(z)$ and $x(\hat R\cup \hat S)^*z\hat Ry$.
This $z$ can only be $\const a$.
By choice of $\hat\nu(y)$ we have $\hat\nu(\const a)\sucs\hat\nu(y)$.
And thus $\hat\nu(x)\sucs\hat\nu(y)$.

We now see that $w\hat Kx\hat Ky(\hat S_w\cup \hat K)^*y'\rightarrow\hat\nu(x)\boxin\hat\nu(y')$.
We can assume at least one of $w,x,y,y'$ is in $Y$.
The only candidates for this are $y$ and $y'$.
If both are in $Y$ then $w=\const a$ and an $x$ as stated does not exists.
So only $y'\in Y$ and thus in particular $y\neq y'$.
Now there are two cases to consider.

The first case is that for some
 $t$, $w\hat Kx\hat Ky(\hat S_w\cup \hat K)^*t \hat K y'$.
But, $\hat\nu(y')$ is $\boxin$-larger than $\hat\nu(t)$ by $x\hat Ky\rightarrow\hat\nu(x)\sucs\hat\nu(y)$.
Also we have $w Kx Ky( S_w\cup K)^*t$. So, $\hat\nu(x)=\nu(x)\boxin\nu(t)=\hat\nu(t)$.

The second case is
$w\hat Kx\hat Ky(\hat S_w\cup \hat K)^*\const b \hat S_w y'$.
In this case we have $w=\const a$.
$y'$ is chosen to be $\boxin$--larger than the
$\boxin$-maximum of $\{\nu(r)\mid \const aKrK^1\const b\}$.
We have $w Kx Ky( S_w\cup K)^*\const b$
So, by $F\models\invsbox$, this $\boxin$--maximum is $\boxin$--larger than $\nu(x)$.
\end{proof}

\begin{lemma}\label{lemm:deflinbox}
For any $x\in\hat F$ and $y\in Y$ we have
$x\hat K^1y\rightarrow x\below \const a$.
\end{lemma}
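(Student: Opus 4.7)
The plan is to imitate the argument of Lemma~\ref{lemm:probl:k3}, with the deficiency root $\const{a}$ now in the role previously played by $\problemp$ and the set $Y$ in the role previously played by $\{\solve\}$. Concretely I would define
\[
T \;\eqbydef\; K \,\cup\, \{(x,y) \mid y\in Y \wedge x\hat{K} y \wedge x\below \const{a}\},
\]
and aim to verify the two hypotheses of the calculation lemma~\ref{lemm:k1calc} for $T$: that $\hat R \subseteq \trans{T} \subseteq \hat K$, and that $T$ satisfies the $\mo$--type closure condition. Once this is done, Lemma~\ref{lemm:k1calc}\ref{r1:lemm:k1calc} gives $x\hat K^1 y \Rightarrow xTy$; specialising to $y\in Y$ and reading off the definition of $T$ yields either $x = \const{a}$ (and then $\const{a}\below\const{a}$ trivially) or $x\in W$ with $x\below\const{a}$, as desired.

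The inclusion $T\subseteq\hat K$ is immediate, and the only new $\hat R$-edges, the $(\const{a},y)$ for $y\in Y$, lie in $T$ because $\const{a}\hat K y$ and $\const{a}\below\const{a}$. The substantive task is the closure property: assuming $w\trans{T} x \,T^1 y \trans{(\spure{w}{\hat S})} y' \,T^1 z$, we must derive $x\trans{T} z$. The key initial observation is that no element of $Y$ has a $\hat K$-successor or any $\hat S$-successor landing back in $W$; together with the shape of $T$ this forces $w,x,y,y'\in W$, so the only variable that might escape $F$ is $z$. A short routine check then shows that on $W$ the relations $T^1$, $\trans{T}$ and $\spure{w}{\hat S}$ coincide, respectively, with $K^1$, $K$ and $\spure{w}{S}$.

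Two cases remain. If $z\in W$, the hypothesis becomes exactly the premise of the sub--invariant \invtwo on $F$, which yields $xK^1 z\subseteq T$. If $z\in Y$, then $y'T^1z$ already delivers $y'\below\const{a}$ by the definition of $T$, while the chain $wKxK^1y\trans{(\spure{w}{S})}y'$ is precisely the defining pattern of $\belowone$, so $x\belowone y'\below\const{a}$ and hence $x\below\const{a}$; for the missing ingredient $x\hat K z$ I would invoke Corollary~\ref{lemm:ord0}\ref{i0:lemm:ord0} inside $\hat F$ applied to $x\below\const{a}$ and $\const{a}\hat K z$ (the latter holding since $\const{a}\hat R z$), giving $(x,z)\in T$. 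The main obstacle I expect is not any single step but the small pile of routine checks that $\below$, $\belowone$ and $\spure{\cdot}{\hat S}$ really do restrict on $W$ to their $F$-counterparts; these must be in hand before the case analysis above is rigorous, after which the proof follows the pattern of Lemma~\ref{lemm:probl:k3} almost mechanically.
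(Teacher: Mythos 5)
Correct, and essentially identical to the paper's own proof: the paper likewise defines $K'=K\cup\{(x,y)\mid y\in Y,\; x\hat Ky,\; x\below\const a\}$ (your $T$), shows that it satisfies the conditions on $T$ in Lemma~\ref{lemm:k1calc}, and concludes from $x\hat K^1y\rightarrow xK'y$ that $x\below\const a$, since $xKy$ is impossible for $y\in Y$. Your explicit case analysis for the closure condition (using the sub-invariant for $K^1$ when $z\in W$, and the $\belowone$-pattern together with Corollary~\ref{lemm:ord0} when $z\in Y$) merely fills in the verification the paper leaves to the reader.
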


\begin{proof}
We put
$K'=K\cup\{(x,y)\mid y\in Y,\; x\hat Ky,\; x\below\const a\}$.
By showing that $K'$ satisfies the conditions of $T$ in Lemma \ref{lemm:k1calc}.
we obtain $x\hat K^1y\rightarrow xK'y$.
So if $x\hat K^1y$ then $xK'y$. But if $y\in Y$ then $xKy$ does not hold.
Thus we have $x\below\const a$.
\end{proof}

\begin{lemma}\label{lemm:defk1}
Suppose $y\in Y$ and $\const a \hat K^1 z$. Then for all $x$,
$x\hat K^1 y\rightarrow x\hat K^1 z$.
\end{lemma}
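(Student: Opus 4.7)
The plan is analogous to the proof of Lemma \ref{lemm:probl:k2}. Set $\hat K = K(\hat F)$. First, Lemma \ref{lemm:deflinbox} applied to $x \hat K^1 y$ gives $x \below \const a$. Note also that elements of $Y$ have no $\hat R$-successors, hence by Lemma \ref{lemm:kstep} no $\hat K$-successors at all; in particular $x \in F$, and this observation will be used repeatedly below.

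I split on the location of $z$. If $z \in Y$, I show directly the stronger fact that $x \hat K^1 y \Leftrightarrow x \hat K^1 z$ for any two worlds $y, z \in Y$. Indeed, for any $x \in F$ and $y' \in Y$, Lemma \ref{lemm:kstep} tells us that $x \hat K y'$ is equivalent to the condition $\exists v\,(\hat\nu(x) \boxin \hat\nu(v) \wedge x(\hat R \cup \hat S)^* v \hat R \const a)$, independently of $y'$. The same analysis shows that the potential strict intermediates $t$ for $x \hat K t \hat K y'$ (which necessarily lie in $F$) are the same for every $y' \in Y$. Hence $x \hat K^1 y$ and $x \hat K^1 z$ are equivalent, and the lemma follows in this case.

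If $z \in F$, then $\const a \hat K^1 z$ coincides with $\const a K^1 z$ in $F$, since any strict intermediate would have to lie in $Y$ and so possess no $\hat K$-successors. I claim moreover that the $\below$-chain $x = x_0 (\belowone \cup \hat K)^* x_n = \const a$ can be chosen to contain no $\hat K$-step: if $x_i \hat K x_{i+1}$ occurred, Corollary \ref{lemm:ord0}.\ref{i0:lemm:ord0} would give $x \hat K x_{i+1}$, and applying the corollary again to $x_{i+1} \below \const a$ and $\const a \hat K y$ would yield $x_{i+1} \hat K y$; composition produces $x \hat K x_{i+1} \hat K y$, contradicting $x \hat K^1 y$. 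So all steps are $\belowone$-steps and all $x_i$ lie in $F$. A reverse induction on $i$ now shows $x_{n-i} K^1 z$ in $F$: the base case is $\const a K^1 z$, and the inductive step invokes $F \models \invtwo$ on the data $w K x_{n-(i+1)} K^1 y' \trans{(\spure{w}{S})} x_{n-i} K^1 z$ supplied by each $\belowone$-link. In particular $x K^1 z$ in $F$, hence $x \hat K^1 z$.

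The principal obstacle is the purification of the $\below$-chain in the second case: one must preclude any $\hat K$-step so that $\invtwo$ is applicable at each $\belowone$-link. This relies essentially on the fact that $\const a \hat K y$ together with Corollary \ref{lemm:ord0}.\ref{i0:lemm:ord0} would otherwise propagate any stray $\hat K$-step in the chain into a $\hat K^{\geq 2}$-path from $x$ to $y$, colliding with the hypothesis $x \hat K^1 y$. Once purity is secured, the \invtwo-induction is mechanical, and the first case reduces cleanly by symmetry among the $Y$-worlds.
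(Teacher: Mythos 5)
Your second case ($z\in F$) is, in essence, the paper's own proof of Lemma \ref{lemm:defk1}: get $x\below\const a$ from Lemma \ref{lemm:deflinbox}, purify the $(\belowone\cup\hat K)$-chain into a pure $\belowone$-chain by deriving $x\hat K x_{i+1}\hat K y$ (via Corollary \ref{lemm:ord0}.\ref{i0:lemm:ord0}) in contradiction with $x\hat K^1y$, and then run a reverse induction along the $\belowone$-links using $F\models\invtwo$. Your bookkeeping about $F$ versus $\hat F$ (that $Y$-worlds have no $\hat K$-successors, that $\hat K$, $\hat K^1$ and the pure $S$-transitions restricted to $F$ agree with those of $F$, so that $F$'s invariant is applicable) is in fact more explicit than the paper, which silently identifies $K$ and $\hat K$ throughout.

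The genuine gap is in your first case ($z\in Y$). Lemma \ref{lemm:kstep} is an implication, not an equivalence: it gives a \emph{necessary} condition for $x\hat Ky'$, and the converse is exactly what your argument needs and exactly what is not available. Indeed the converse fails in general: from $\hat\nu(x)\boxin\hat\nu(v)$ and, say, $x\hat S_w v\hat R y'$ one cannot conclude $x\hat Ky'$ --- that inference has the shape of an \principel{M}-style closure, whereas $K(\hat F)$ is only closed under the \mo-pattern of Definition \ref{defi:k}.\ref{item2:defi:k} (with $K^1$-steps and pure $S_w$-chains). Moreover the condition you display ends in ``$v\hat R\const a$'', which is the Lemma \ref{lemm:kstep} condition for $x\hat K\const a$ rather than for $x\hat Ky'$; for $y'\in Y$ the final step $v\hat Ry'$ forces $v=\const a$, so the condition should read $\hat\nu(x)\boxin\hat\nu(\const a)\wedge x(\hat R\cup\hat S)^*\const a$. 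So the asserted equivalence ``$x\hat K^1y\Leftrightarrow x\hat K^1z$ for $y,z\in Y$'', while true, is not established by what you wrote. The clean repair is a symmetry argument you never invoke: any permutation of $Y$ (fixing $W$ pointwise) is an automorphism of $\langle\hat W,\hat R,\hat S\rangle$, since the construction of $\hat F$ treats all elements of $Y$ identically, and $K(\hat F)$ --- hence also $\hat K^1$ --- is defined from $\langle\hat W,\hat R,\hat S\rangle$ alone, without reference to $\hat\nu$; invariance under the transposition of $y$ and $z$ then yields the equivalence at once. (Alternatively, note that the paper only ever applies the lemma with $z=a_1\in F$, so your second case covers the actual use; but the lemma as stated quantifies over all $z$ with $\const a\hat K^1z$, and for $z\in Y$ your argument does not go through.)
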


\begin{proof}
Suppose $x K^1 y$. By Lemma \ref{lemm:deflinbox} we have $x\below\const a$.
There exist $x_0,x_1,x_2,\ldots,x_n$ such that
$x=x_0(\belowone\cup K)x_1(\belowone\cup K)\cdots(\belowone\cup K)x_n=\const a$.
First we show that
$x=x_0\belowone x_1\belowone\cdots\belowone\const a$.
Suppose, for a contradiction that for some $i<n$, we have $x_iKx_{i+1}$.
Then $xKx_{i+1}Ky$ and thus $xK^{\geq 2}y$. A contradiction.
The lemma now follows by showing, with induction on $i$, using \invtwo, that for all $i\leq n$, $x_{n-i}K^1z$.
\end{proof}

%%%%%%%%%%%%%%%%%%%%%%%%%%%%%%%
%                             %
% Deficiencies sub-invariants %
%                             %
%%%%%%%%%%%%%%%%%%%%%%%%%%%%%%%

\begin{lemma}
$\hat F$ satisfies all the sub-invariants.
\end{lemma}

\begin{proof}The proofs are rather straightforward. We give two examples.

First we show \invone: $w\hat K^{\geq 2}x\trans{(\spure{w}{\hat S})}y\wedge w\hat K^{\geq 2}x'\trans{(\spure{w}{\hat S})}y
\rightarrow x=x'$.
Suppose that $w\hat K^{\geq 2}x\trans{(\spure{w}{\hat S})}y$ and $w\hat
K^{\geq 2}x'\trans{(\spure{w}{\hat S})}y$. We can assume that
$y\in Y$. (Otherwise all of $w,x,x',y$ are in $F$ and we are done
by $F\models\invone$.) We clearly have $w\in F$. If $x\in Y$ then
$w=\const a$ and thus $w\hat K^1x$. So, $x\not\in Y$. Next we show
that both $x,x'\neq\const b$.

Assume, for a contradiction, that at least one of them equals
$\const b$. W.l.o.g. we assume it is $x$. But then $wK^{\geq
2}\const b$ and $wK^{\geq 2}x'\trans{(\spure{w}{S})}\const b$. By
$F\models\invid$ we now obtain that $\nu(w,\const b)$ is defined.
And thus by $F\models\invs$, $wK^1\const b$. A contradiction.

So, both $x,x'\neq\const b$. But now $wK^{\geq
2}x\trans{(\spure{w}{S})}\const b$ and $wK^{\geq
2}x'\trans{(\spure{w}{S})}\const b$. So, by $F\models \invone$, we
obtain $x=x'$.

Now let us see that \invtwo holds, that is
$w\hat K x\hat K^1y\trans{(\spure{w}{\hat S})}y'\hat K^1
z\rightarrow x\hat K^1 z$.
Suppose $w\hat Kx\hat K^1y\trans{(\spure{w}{\hat S})}y'\hat K^1z$.
We can assume that $z\in Y$. (Otherwise all of $w,x,y,y',z$ are in
$F$ and we are done by $F\models\invtwo$.) Fix some $a_1\in F$ for
which $\const aK^1 a_1$. By Lemma \ref{lemm:defk1} we have
$y'K^1a_1$ and thus, since $F\models \invtwo$, $xK^1a_1$. By
definition of $\hat K$ we have $x\hat Kz$. Now, if for some $t$,
we have $x\hat Kt\hat K^1z$, then similarly as above,$tK^1a_1$.
So, this implies $xK^{\geq 2}a_1$. A contradiction, conclusion:
$xK^1z$.
\end{proof}

%%%%%%%%%%%%%%%%%%%%%%%%%%%%%%%%
%                              %
% Deficiencies main-invariants %
%                              %
%%%%%%%%%%%%%%%%%%%%%%%%%%%%%%%%

\begin{lemma}
Except for \invmo, $\hat F$ satisfies all main-invariants.
\end{lemma}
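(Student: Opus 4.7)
The plan is to verify \invlinbox, \invsbox, \invcback, \invdef and \invadd in turn for $\hat F$, exploiting two structural observations: each $y\in Y$ has $\const a$ as its unique $\hat R$-predecessor and no $\hat R$-successors, and by Lemma \ref{lemm:kstep} every $\hat K$-transition ending in $Y$ factors through $\const a$. Consequently the only genuinely new configurations are those in which a witness lies in $Y$. The invariant \invadd is immediate: by Lemma \ref{lemm:ilmoexistence} we have $\nu(\const a)\crit{A}\Delta_y$, so $\Box\neg A\in\Delta_y\setminus\nu(\const a)$, and $\sim A\in\adset{D}$ by the choice of $A$ as a cone-label occurring in $\adset{D}$.

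For \invlinbox, the case $y\in F$ is preserved since no new $\hat K^1$-edge ends in $F$, so we need only consider $y\in Y$. Since we are in the case $\const aR^{\geq 2}\const b$, fix some $a_1$ with $\const aK^1 a_1$. Lemma \ref{lemm:defk1} yields $x\hat K^1y\Rightarrow xK^1a_1$ for every $x\neq\const a$, and $F\models\invlinbox$ at $a_1$ gives $\boxin$-linearity of $\{\nu(x)\mid x\neq\const a,\,x\hat K^1y\}$; Lemma \ref{lemm:deflinbox} together with Corollary \ref{lemm:ord0}.\ref{i1:lemm:ord0} places $\nu(\const a)$ as the $\boxin$-top of the whole set. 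For \invsbox, any new chain $w\hat Kx\hat Ky(\hat S_w\cup\hat K)^*y'$ with $y'\in Y$ forces $w=\const a$ and must factor through $\const b$; the labels $\Delta_{y'}$ were chosen in Lemma \ref{lemm:ilmoexistence} to lie $\boxin$-above $\nu(x)$, where $x$ is the $\boxin$-maximum of $\{t\mid\const aKtK^1\const b\}$, so by $F\models\invsbox$ $\nu(y')$ is $\boxin$-above every intermediate label. For \invdef, Lemma \ref{lemm:ilmoexistence} was applied precisely so that for every $S_j\rhd T_j\in\nu(\const a)$ with $S_j\in\Delta_{y_h}$ some $\Delta_{y_i}$ contains $T_j$; since any two distinct $y,y'\in Y$ are joined by $\hat S_{\const a}$ and $\const b\hat S_{\const a}y$ for every $y\in Y$, no new deficiency at $\const a$ can arise.

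The main obstacle is \invcback: $w\hat Kx\hat Ky\wedge y\in\ncone{A}{w}\Rightarrow x\in\ncone{A}{w}$. Only the case $y\in Y$ is new, and by Lemma \ref{lemm:kstep} this forces $w=\const a$. Applying Corollary \ref{lemm:ncone2} in $\hat F$ together with the sub-invariants \invid, \invnu, \invs (already shown to hold for $\hat F$), the hypothesis $y\in\ncone{A}{\const a}$ reduces to either $\hat\nu(\const a,y)=A$ (which matches the label we installed on every new $\hat R$-edge) or the existence of some $t\in\ncone{A}{\const a}$ in $\hat F$ with $t\hat Ky$ or with a pure $\hat S_{\const a}$-step from $t$ to $y$. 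In each sub-case, Lemma \ref{lemm:conelemma} together with \invpres and \invcsback on $\hat F$ pushes the relevant $t$ back into $F$, showing that any intermediate $x$ satisfies $\const aKxK\const b$ in $F$; since by the original choice $\const b\in\ncone{A}{\const a}$, the conclusion $x\in\ncone{A}{\const a}$ then follows from $F\models\invcback$. The delicate point is tracking cone-membership along the newly installed $\hat S_{\const a}$-edges connecting $\const b$ to $Y$ and the members of $Y$ among themselves, which is handled uniformly by \invpres and \invcsback.
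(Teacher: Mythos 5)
Your paragraphs on \invlinbox and \invdef are sound and essentially reproduce the paper's reasoning, but the proof breaks down exactly where the real work lies, namely at \invcback. You claim that in a configuration $w\hat Kx\hat Ky$ with $y\in Y$, Lemma \ref{lemm:kstep} ``forces $w=\const a$''. It does not: that lemma constrains the \emph{source} of a $\hat K$-edge into $Y$ (any such edge factors as $x(\hat R\cup\hat S)^*\const a\hat Ry$ with $\nu(x)\boxin\nu(\const a)$), and says nothing about the outer world $w$, which sits $\hat K$-\emph{below} $x$. A world $y\in Y$ can genuinely lie in $\ncone{A}{w}$ for $w\neq\const a$: by the second clause of Definition \ref{defi:ncone} it suffices that some $t\in\ncone{A}{w}$ has $t\hat Ky$, for instance $t=\const a$ whenever $\const a$ itself lies in a cone over $w$. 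Worse, the case $w=\const a$ that you do handle is vacuous: using Lemma \ref{lemm:deflinbox} and the antisymmetry invariant \invorder one checks $\const a\hat K^1y$, so there is no intermediate $x$ with $\const a\hat Kx\hat Ky$ at all. Hence the entire content of \invcback for $y\in Y$ lies in the case $w\neq\const a$, where your closing step ``$\const b\in\ncone{A}{\const a}$ by the original choice'' is of no use; what is needed is $\const b\in\ncone{A}{w}$. The paper gets this by deriving $x\below\const a$ (Lemma \ref{lemm:deflinbox}), hence $wKxK\const b$ (Corollary \ref{lemm:ord0}), and then applying Lemma \ref{lemm:conelemma} with pure-$S$ index $\const a$ but cone index $w$: from $\const b\spure{\const a}{\hat S}y$ and $y\in\ncone{A}{w}$ it yields $\const b\in\ncone{A}{w}$, after which $F\models\invcback$ finishes the argument. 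The same false dichotomy (``$y'\in Y$ forces $w=\const a$ and factoring through $\const b$'') undermines your \invsbox paragraph, since a path $w\hat Kx\hat Ky(\hat S_w\cup\hat K)^*y'$ may enter $Y$ by a $\hat K$-step, with $w$ arbitrary.

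Your argument for \invadd is also incorrect. From $\nu(\const a)\crit{A}\Delta_y$ you do get $\Box\neg A\in\Delta_y$ (via $A\rhd A\in\nu(\const a)$), but neither of your other two claims holds in general: if no cone-label exists one takes $A=\bot$, and then $\Box\neg A$ belongs to every \mcs, in particular to $\nu(\const a)$, while ${\sim}A\notin\adset{D}$; and even for a genuine label $A$, having $\Box\neg A\in\nu(\const a)$ is perfectly possible --- indeed $\Box\neg A\wedge\Diamond E$ is a typical reason for $\neg(E\rhd A)\in\nu(\const a)$, which is what created such a label in the first place. The invariant is instead secured by the second bullet of Lemma \ref{lemm:ilmoexistence}: each $\Delta_y$ lies $\boxin$-above $\Delta=\nu(x)$, where $x$ is the $\boxin$-maximum of $\{t\mid\const aKtK^1\const b\}$, so the witness $\Box E\in\nu(x)\setminus\nu(\const a)$ with $E\in\adset{D}$, supplied by $F\models\invadd$ on the edge $\const aRx$, also lies in $\Delta_y\setminus\nu(\const a)$. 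This is precisely why the $\boxin$-above-$\Delta$ clause appears in Lemma \ref{lemm:ilmoexistence}, and your proposal never uses it.
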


\begin{proof}
We only comment on \invlinbox and \invcback.

First we show \invlinbox:
For all $y$, $\{\hat\nu(x)\mid x\hat K^1y\}$ is linearly ordered by $\boxin$.
Let $y\in\hat F$ and consider the set $\{x\mid xK^1y\}$.
Since $\hat K\upharpoonright_{F}=K$ and for all $y\in Y$ there does not exists 
$z$ with $y\hat K^1z$ we only
have to consider the case $y\in Y$.
Fix some $a_1$ such that $\const a K^1 a_1K^*\const b$.
By Lemma \ref{lemm:deflinbox} for any such $y$ we have
\[ \{x\mid xK^1y\}\subseteq\{x\mid xK^1a_1\}.\]
And by $F\models \invlinbox$ with $a_1$ for $y$, we know that $\{\nu(x)\mid xK^1a_1\}$ is linearly
ordered by $\boxin$.

Now let us see \invcback:
$w\hat K x\hat K y\wedge \hat F\models y\in\ncone{A}{w}
\rightarrow \hat F\models x\in\ncone{A}{w}$.
Suppose $w\hat Kx\hat Ky$ $\hat F\models y\in\ncone{A}{w}$.
We can assume $y\in Y$.
By Lemma \ref{lemm:deflinbox}, $x\below\const a$.
So, $w KxK\const b$.
By Lemma \ref{lemm:conelemma}, $F\models\const b\in\ncone{A}{w}$ and 
thus $\hat F\models x\in\ncone{A}{w}$.
\end{proof}

To finish this section we noting that by Lemma \ref{lemm:ilmoclosure}
and  Corollary \ref{coro:invilmoextension}
we can extend $\hat F$ to an adequate \ilmo--frame that satisfies all invariants.
%Moreover, by Corollary \ref{coro:invilmoextension}, we know that this frame
%satisfies all the invariants.

\subsection{Rounding up}\label{sect:ilmo:roundingup}

%
%We have to show that the union of a bounded chain of frames that satisfy all the invariants is an \ilmo--frame.
%But the \ilmo--frame conditions are part of the invariants and
It is clear that the union of a bounded chain of \ilmo--frames is itself an \ilmo--frame.

\section{The logic \ilwstar}\label{sect:wstar}

In this section we are going to prove the following theorem.

\begin{theorem}
\ilwstar is a complete logic.
\end{theorem}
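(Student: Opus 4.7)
The plan is to follow exactly the template laid out in Remark \ref{rema:application}, reusing as much of the \ilmo machinery as possible and then adding W-style constraints on top. Recall that by Lemma \ref{lemm:emnulsteriswester} the logic \ilwstar is the join of \ilw and \ilmo, so a quasi--\ilwstar--frame will be a quasi--\ilmo--frame together with an additional W--style invariant on critical cones. Concretely, I would first isolate the frame condition of \ilwstar, which combines the \ilmo condition $wRxRyS_wy'Rz\rightarrow xRz$ with a W--type condition saying that $S_w$--successors of worlds in the relevant $B$-critical cone above $w$ stay inside that cone (so that the promised $\Box\neg A$ is forced). I would then define an adequate \ilwstar--frame, a quasi--\ilwstar--frame, and prove an \ilwstar--closure lemma in the same style as Lemma \ref{lemm:ilmoclosure}, adding one more clause to the list of imperfections to be repaired.

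Next I would formulate the invariants. All the main-- and sub--invariants $\invlinbox$, $\invdef$, $\invsbox$, $\invcback$, $\invadd$, $\invmo$, $\invone$, $\invtwo$, $\invorder$, $\invpres$, $\invcsback$, $\invswitch$, $\invs$, $\invid$, $\invnu$ from Section \ref{sect:ilmo:invariants} carry over verbatim, and I would adjoin one new main invariant $\invmain{W^*}$ asserting that whenever $y\in\crone{A}{w}$ and $yS_v y'$ with $v$ above $w$ in the appropriate sense, then $y'$ still lies in the $A$-critical cone above $w$, i.e.\ $\nu(w)\crit{A}\nu(y')$ already holds at the labelling level. All of these invariants are easily seen to hold on the one-point frame, so the hypothesis of the Main Lemma is satisfied at the base.

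For the elimination of problems $\pair{a,\neg(A\rhd B)}$, I would reuse the \ilmo construction: pick $\Delta_b$ $B$-critical above $\nu(a)$ with $A,\Box\neg A\in\Delta_b$ via Lemma \ref{lemm:problems}, add $b$ with $aR^Bb$, and close under Lemma \ref{lemm:ilmoclosure} extended by the new imperfection. The verification that $\invmain{W^*}$ is preserved is automatic because the only new critical--cone membership introduced is at $b$ itself, and $b$ has no outgoing $R$-- or $S$--edges. The main obstacle, as in the \ilmo case, will be the elimination of deficiencies $\pair{a,b,C\rhd D}$. Here Lemma \ref{lemm:ilmoexistence} must be strengthened: we need \mcs's $\Delta_0,\ldots,\Delta_k$ that are simultaneously $A$-critical above $\nu(a)$, $\boxin$-above $\nu(b)$, cover the transitivity obligations stemming from $S_1\rhd T_1,\ldots,S_n\rhd T_n$, and in addition contain $\Box\neg C'$ for every $C'\rhd \cdot$ previously used in $a$-directed critical cones that $b$ has entered. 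Formulating and proving this W$^*$--existence lemma by the same $\mo$-based chain argument as in Lemma \ref{lemm:ilmoexistence}, but with the extra $\Box\neg C'$--conjuncts absorbed into the inductive step using the $\sf W^*$--axiom, will be the central piece of work.

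Finally, for rounding up, once the invariants (including $\invmain{W^*}$) are preserved step by step, the union of a bounded chain of quasi--\ilwstar--frames is again a quasi--\ilwstar--frame; after one last application of the \ilwstar--closure we obtain an adequate \ilwstar--frame on which a truth lemma with respect to \adset{D} holds. Together with Lindenbaum's lemma applied to $\{\neg\varphi\}$ for any non-theorem $\varphi$, this yields a countermodel and hence modal completeness of \ilwstar. The genuinely novel ingredient is the strengthened existence lemma in the deficiency step; everything else is a disciplined combination of the \ilmo machinery developed in Section \ref{sect:ilmo:preliminaries} with the $\sf W$-style critical-cone bookkeeping already familiar from \cite{jonvelt99}.
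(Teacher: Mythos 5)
Your high-level plan --- reuse the \ilmo machinery, add one \principel{W}-style invariant, and strengthen the existence lemma used for deficiencies --- is indeed the architecture of the paper's proof, but the specific \principel{W}-ingredient you chose is the wrong one, and the gap is fatal rather than cosmetic. By Theorem \ref{homo} and Corollary \ref{coro:framecondwstar}, the \principel{W}-half of the \ilwstar frame condition is \emph{converse well-foundedness of $S_w\comp R$}. Your frame condition and your new invariant $\mathcal{I}_{W^*}$ never mention this: they assert preservation of criticality under certain $S_v$-transitions, which is just a variant of the criticality-preservation sub-invariants $\mathcal{J}_{\mathcal{N}_1}$ and $\mathcal{J}_{\mathcal{N}_2}$ of Section \ref{sect:ilmo:invariants} that you already carry over from \ilmo, and hence contributes nothing specific to \principel{W^*}. (Promise-keeping for problems, i.e.\ forcing the ``$\Box\neg A$'', is already handled by critical cones and adequacy in every logic treated by this method; it is not what distinguishes \ilwstar from \ilmo.) As a consequence your rounding-up step fails: the final frame is the union of an infinite chain of extensions, and nothing in your invariant list excludes an infinite $S_w\comp R$-chain in that union --- deficiency elimination can keep appending $S_w$-successors followed by fresh $R$-successors forever. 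Note also that well-foundedness cannot be restored afterwards, so your idea of adding one more imperfection-repair clause to the closure lemma is a non-starter; the paper's \ilwstar-closure (Lemma \ref{lemm:ilwstarclosure}) deliberately adds \emph{no} new repair steps and instead checks that $K$ and the pure-$S_w$-paths do not change during closure, so that the extra property persists.

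What is needed is a measure that strictly increases along every $R\comp S_w$-step and can increase only finitely often. That is exactly the paper's key device: a relation $\nu(x)\subsetneq^{\adset{D}}_{\Box}\nu(y')$ meaning $\nu(x)\boxin\nu(y')$ \emph{plus} the existence of a box-formula associated with the finite set \adset{D} lying in $\nu(y')\setminus\nu(x)$; the extra invariant is $wKxKy\trans{(\spure{w}{S})}y'\rightarrow\nu(x)\subsetneq^{\adset{D}}_{\Box}\nu(y')$, and finiteness of \adset{D} alone then yields converse well-foundedness of $R\comp S_w$ in any frame satisfying it, in particular in the union (Lemma \ref{lemm:zuigendlemma}). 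This dictates the correct strengthening of the existence lemma: in Lemma \ref{lemm:ilwstarexistence} every new \mcs $\Delta_i$ must contain $\Box\neg P$, where $P\rhd Q$ is the deficiency formula being treated; since each intermediate world $x$ has $\Diamond P\in\nu(x)$, the formula $\Box\neg P$ is precisely the witness of the strict increase. Its proof is the chain argument of Lemma \ref{lemm:ilmoexistence} with one twist: each postponed application of \principel{M_0} is preceded by an application of \principel{W}, turning $P\rhd\Diamond P\vee\psi$ into $P\rhd\psi$. Your proposed strengthening (adding $\Box\neg C'$ for formulas ``previously used in critical cones that $b$ has entered'') is aimed at cone bookkeeping rather than at producing this strictly increasing finite measure, so even if it could be proved it would leave the well-foundedness of $S_w\comp R$ --- the actual frame content of \principel{W^*} --- unaccounted for.
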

For a long time \ilwstar has been conjectured (\cite{Vis91}) to be \ilal. A first step in proving this conjecture would have been a modal completeness result. However, the modal completeness of \ilwstar resisted many attempts as the modal completeness of \ilmo, which is an essential part of \ilwstar, was so hard and involved. (In \cite{jonvelt99} a completeness proof for \ilw was given.) 

Finally, now that all the machinery has been developed, a modal completeness proof for \ilwstar can be given.
The completeness proof of \ilwstar lifts almost completely along
with the completeness proof for \ilmo. We only need some minor 
adaptations.

\subsection{Preliminaries}
The frame condition of \principel{W} is well known.
\newcommand{\ww}{\principel{W}\xspace}
\begin{theorem}\label{homo}
For any \il-frame $F$ we have that
$F\models \ww \Leftrightarrow \forall w\;(S_w\comp R)$ is conversely well-founded.
\end{theorem}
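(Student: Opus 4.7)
The plan is to prove the two implications separately.

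\medskip

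For the $(\Leftarrow)$ direction, assume $S_w\comp R$ is conversely well-founded for every $w$ in $F$. Fix a valuation and suppose $w\Vdash A\rhd B$, with $wRu$ and $u\Vdash A$; the aim is to find $y$ with $uS_wy\Vdash B\wedge\Box\neg A$. I would build the sequence iteratively. Applying $A\rhd B$ to $u$ yields $uS_wy_0\Vdash B$; if $y_0\Vdash\Box\neg A$ we halt, otherwise pick $z_0$ with $y_0Rz_0\Vdash A$. Since $wRy_0$ (by (3) of Definition \ref{defi:frames}) and $y_0Rz_0$ give $wRz_0$ by transitivity, applying $A\rhd B$ to $z_0$ produces $z_0S_wy_1\Vdash B$; frame condition (5) gives $y_0S_wz_0$ from $wRy_0Rz_0$, and then (6) gives $uS_wy_0S_wz_0S_wy_1$, hence $uS_wy_1$. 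Iterating, either some $y_n\Vdash B\wedge\Box\neg A$ is eventually found (with $uS_wy_n$ by the same bookkeeping), or we produce an infinite chain $z_0(S_w\comp R)z_1(S_w\comp R)\cdots$ witnessed by the $y_{i+1}$, contradicting the assumption.

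\medskip

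For the $(\Rightarrow)$ direction, suppose $S_w\comp R$ is not conversely well-founded for some $w$, and fix an infinite chain $z_0(S_w\comp R)z_1(S_w\comp R)\cdots$ realised by intermediaries $y_i$ with $z_iS_wy_iRz_{i+1}$. I propose the valuation $V(p)=\{z_i\mid i\geq 0\}$, $V(q)=\{y_i\mid i\geq 0\}$, and claim $w$ refutes the instance $p\rhd q\to p\rhd(q\wedge\Box\neg p)$ of \ww. On the one hand $w\Vdash p\rhd q$: every $p$-world accessible from $w$ is some $z_i$ (and $wRz_i$ holds by (3) applied to $z_iS_wy_i$), while $z_iS_wy_i\Vdash q$ supplies a witness. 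On the other hand $w\not\Vdash p\rhd(q\wedge\Box\neg p)$: take the $p$-world $z_0$; any candidate $y'$ with $z_0S_wy'\Vdash q\wedge\Box\neg p$ would be some $y_j$, but $y_jRz_{j+1}$ with $z_{j+1}\in V(p)$ contradicts $y_j\Vdash\Box\neg p$.

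\medskip

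The main obstacle I expect is the bookkeeping in the forward direction: keeping $uS_wy_n$ intact throughout the iteration hinges on a careful alternation of (5) and (6) at each step. In the converse direction, a subtle but essential design choice is to use two distinct propositional variables for antecedent and consequent; a naive choice like $p\rhd p$ would force every $B$-witness to have an $R$-successor in $V(p)$, which the $S_w\comp R$-chain does not in general provide directly (since the chain only guarantees $y_iRz_{i+1}$, not $z_iRz_{i+1}$).
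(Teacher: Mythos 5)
Your proof is correct in both directions: the iterative construction with the bookkeeping via conditions (3), (5) and (6) of Definition \ref{defi:frames} soundly produces either a witness $y_n$ with $uS_wy_n\Vdash B\wedge\Box\neg A$ or an infinite $(S_w\comp R)$-chain, and the two-variable counter-valuation correctly refutes the instance $p\rhd q\to p\rhd(q\wedge\Box\neg p)$ when a chain exists. Note that the paper itself gives no proof of Theorem \ref{homo}\ --- it cites the frame condition of ${\sf W}$ as well known (going back to de Jongh and Veltman) --- and your argument is precisely the standard one for this result.
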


\noindent
We can define a new principle $\sf M_0^*$ that is equivalent to 
\principel{W^*}, as follows.
\[
{\sf M_0^*}: \ \ \  A \rhd B 
\rightarrow \Diamond A \wedge \Box C \rhd B \wedge \Box C \wedge  \Box \neg A
\]
\begin{lemma} \label{lemm:emnulsteriswester}
$\extil{M_0W} = \ilwstar = \extil{M_0^*}$
\end{lemma}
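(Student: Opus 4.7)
The plan is to close a cycle of mutual derivations among the three sets of axioms: two of the implications are short syntactic manipulations inside $\il$, while the third, $\il + W^* \vdash M_0$, is the delicate step.

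First I would verify the easy equivalence $\extil{M_0W} = \extil{M_0^*}$. For $\extil{M_0W} \vdash M_0^*$, given $A \rhd B$, axiom ${\sf W}$ yields $A \rhd B \wedge \Box \neg A$, and then axiom ${\sf M_0}$ applied with the same $C$ gives $\Diamond A \wedge \Box C \rhd B \wedge \Box C \wedge \Box \neg A$, which is exactly $M_0^*$. Conversely, $M_0^* \vdash M_0$ follows by weakening the right-hand side of $\rhd$ via ${\sf J1}$ and transitivity (dropping the $\Box \neg A$ conjunct), and $M_0^* \vdash W^*$ follows from the tautology $B \wedge \Box C \leftrightarrow (B \wedge \Box C \wedge \Diamond A) \vee (B \wedge \Box C \wedge \Box \neg A)$: the first disjunct lies $\rhd$-below $\Diamond A \wedge \Box C$ by ${\sf J1}$ and hence, via $M_0^*$ applied to $A \rhd B$, below $B \wedge \Box C \wedge \Box \neg A$, whereas the second disjunct is trivially $\rhd$ itself, so ${\sf J3}$ combines the two cases.

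Next, for the equivalence $\ilwstar = \extil{M_0^*}$, the inclusion $\ilwstar \subseteq \extil{M_0^*}$ reduces to $M_0^* \vdash W^*$, already established. For the reverse inclusion it is enough to derive $W^* \vdash M_0$ (and $W^* \vdash W$, which is immediate by instantiating $C := \top$ in $W^*$ and composing with $A \rhd B$ by transitivity), after which $W^* \vdash M_0^*$ follows via the derivation $M_0 + W \vdash M_0^*$ obtained above. For the key step $W^* \vdash M_0$, the idea is first to derive, inside $\il$ alone, the auxiliary statement $\Diamond A \wedge \Box C \rhd A \wedge \Box \neg A \wedge C \wedge \Box C$: this uses the L\"obian consequence $\Diamond A \rightarrow \Diamond(A \wedge \Box \neg A)$, combined with ${\sf L2}$ in the form $\Box C \rightarrow \Box \Box C$ to transport both $C$ and $\Box C$ to the L\"ob-witness, followed by ${\sf J1}$, ${\sf J5}$ and transitivity. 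Composing with the hypothesis $A \rhd B$ yields $\Diamond A \wedge \Box C \rhd B$, reducing the problem to restoring the $\Box C$ conjunct on the right.

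This restoration step is the main technical obstacle. The plan is to apply $W^*$ repeatedly to well-chosen derived premises --- in particular to $A \wedge \Box \neg A \wedge C \wedge \Box C \rhd B$ --- so as to single out, below any $\Diamond A \wedge \Box C$-world, an $S_w$-successor that simultaneously forces $B$ and $\Box C$. The semantic justification for believing that such a derivation must exist is that the $W^*$ frame condition already entails the $M_0$ frame condition: any putative violation $wRxRyS_wy'Rz$ (with $\neg xRz$) can be refuted by forcing $A$ only at $y$, $B$ at $x$ and $y'$, and $C$ everywhere except $z$, which makes the instance $W^*(A,B,C)$ fail at $w$. Once $W^* \vdash M_0$ is in place, all three systems derive the same schemata and the lemma is proved.
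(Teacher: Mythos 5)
Your three ``easy'' derivations are correct and coincide with what the paper does: $\extil{M_0W}\vdash{\sf M_0^*}$ via ${\sf W}$ followed by ${\sf M_0}$; $\extil{M_0^*}\vdash{\sf M_0}$ by weakening; $\extil{M_0^*}\vdash{\sf W^*}$ by the case split $(B\wedge\Box C\wedge\Diamond A)\vee(B\wedge\Box C\wedge\Box\neg A)$ and ${\sf J3}$; and ${\sf W}$ from ${\sf W^*}$ by taking $C:=\top$. The genuine gap is the step you yourself call the crux: $\ilwstar\vdash{\sf M_0}$ is never derived. Your plan --- first prove in \il that $\Diamond A\wedge\Box C\rhd A\wedge\Box\neg A\wedge C\wedge\Box C$ (this auxiliary statement is correct), compose with $A\rhd B$ to get $\Diamond A\wedge\Box C\rhd B$, then ``restore'' the $\Box C$ by further applications of ${\sf W^*}$ --- stalls exactly at the restoration, and for a structural reason: transitivity of $\rhd$ has already discarded the $\Box C$ information, and ${\sf W^*}$ applied to $A\rhd B$ or to $A\wedge\Box\neg A\wedge C\wedge\Box C\rhd B$ only produces $\rhd$-statements whose left-hand sides ($B\wedge\Box D$, etc.) are not implied by $\Diamond A\wedge\Box C$, so no chain back to the desired conclusion can be assembled from them. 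The reduction to ``restoring $\Box C$'' is thus a step backward, not forward. Nor does your fallback close the gap: the frame-correspondence observation (every frame validating ${\sf W^*}$ satisfies the ${\sf M_0}$ frame condition) yields $\ilwstar\vdash{\sf M_0}$ only via completeness of \ilwstar with respect to its frame condition --- and that completeness is precisely the theorem this lemma is an ingredient of, so within the paper the argument is circular, and in any case a heuristic ``justification for believing a derivation exists'' is not a derivation.

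The missing idea is to weaken the hypothesis \emph{before} applying ${\sf W^*}$, not after. From $A\rhd B$ infer $A\rhd B\vee\Diamond A$, and apply ${\sf W^*}$ to \emph{this} interpretability statement with the same parameter $C$, obtaining $(B\vee\Diamond A)\wedge\Box C\rhd(B\vee\Diamond A)\wedge\Box C\wedge\Box\neg A$. The point of the disjunct $\Diamond A$ is that now the left-hand side is propositionally implied by $\Diamond A\wedge\Box C$, so ${\sf J1}$ and ${\sf J2}$ give $\Diamond A\wedge\Box C\rhd(B\vee\Diamond A)\wedge\Box C\wedge\Box\neg A$; and since $\Diamond A\wedge\Box\neg A$ is inconsistent, the right-hand side implies $B\wedge\Box C$, so one further ${\sf J1}$/${\sf J2}$ step yields $\Diamond A\wedge\Box C\rhd B\wedge\Box C$, i.e., ${\sf M_0}$. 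With this single trick in place of your restoration step, your cycle of inclusions closes and the rest of your argument stands as written.
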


\begin{proof}
The proof we give consists of four natural parts.

First we see $\ilwstar \vdash {\sf M_0}$. We reason in \ilwstar 
and assume $A \rhd B$. Thus,
also $A \rhd (B \vee \Diamond A)$. Applying the $\sf W^*$ axiom to the latter
yields 
$(B \vee \Diamond A) \wedge \Box C \rhd (B \vee \Diamond A) 
\wedge \Box  C \wedge \Box \neg A$. From this we may conclude
\[
\begin{array}{lll}
\Diamond A \wedge \Box C & \rhd & (B \vee \Diamond A) \wedge \Box C \\
\ & \rhd & (B \vee \Diamond A) \wedge \Box C \wedge \Box \neg A\\
\ & \rhd & B \wedge \Box C
\end{array}
\]

Secondly, we see that $\ilwstar \vdash {\sf W}$. Again, we reason in 
\ilwstar. We assume $A \rhd B$ and take the $C$ in the $\sf W^*$ axiom
to be $\top$. Then we immediately see that 
$A\rhd B \rhd B \wedge \Box \top \rhd B \wedge \Box \top \wedge \Box \neg A 
\rhd B \wedge \Box \neg A$.

We now easily see that $\extil{M_0W}\vdash {\sf M_0^*}$. For, reason in \extil{M_0W} as
follows. By $\sf W^*$, $A\rhd B \rhd B \wedge \Box \neg A$. Now an application of
$M_0$ on $A \rhd B \wedge \Box \neg A$ yields 
$\Diamond A \wedge \Box C \rhd B \wedge \Box C \wedge \Box \neg A$.

Finally we see that $\extil{M_0^*}\vdash {\sf W^*}$. So, we reason in 
\extil{M_0^*} and assume $A\rhd B$. Thus, we have also
$\Diamond A \wedge \Box C \rhd B \wedge \Box C\wedge\Box\neg A$. We now conclude
$B \wedge \Box C \rhd B \wedge \Box C \wedge \Box \neg A$ easily as follows.
$B \wedge \Box C \rhd (B\wedge \Box C \wedge \Box \neg A) \vee 
(\Box C \wedge \Diamond A) \rhd B \wedge \Box C \wedge \Box \neg A$.
\end{proof}

\newcommand{\wstar}{\principel{W^*}\xspace}

\begin{corollary}\label{coro:framecondwstar}
For any \il-frame we have that $F\models\wstar$ iff. both
(for each $w$, $(S_w\comp R)$ is conversely well-founded) and
($\forall w, x, y, y', z \ (wRxRyS_wy'Rz\rightarrow xRz)$).
\end{corollary}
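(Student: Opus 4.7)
The plan is to observe that this corollary is essentially a bookkeeping consequence of three results already established in the excerpt, so it requires no new combinatorial work on frames. First I would invoke Lemma \ref{lemm:emnulsteriswester}, which shows $\ilwstar = \extil{M_0W}$. Hence for any \il-frame $F$, we have $F\models\wstar$ iff $F$ validates every instance of both $\sf W$ and $\sf M_0$, i.e.\ $F\models{\sf W}$ and $F\models{\sf M_0}$.

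Next I would apply the two known frame-condition theorems separately. Theorem \ref{homo} gives us that $F\models{\sf W}$ is equivalent to the converse well-foundedness of $S_w\comp R$ for every $w$. Theorem \ref{theo:ilmoframe} gives us that $F\models{\sf M_0}$ is equivalent to $\forall wxyy'z\;(wRxRyS_wy'Rz\rightarrow xRz)$. Combining these two equivalences with the first reduction yields the stated biconditional.

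The only point that deserves an explicit sentence in the write-up is the step $F\models\extil{M_0W}\Leftrightarrow (F\models{\sf W}\text{ and }F\models{\sf M_0})$. Using the definition of $F\models{\sf X}$ from Section~2.2, the $\Rightarrow$ direction is immediate because both $\sf W$ and $\sf M_0$ are among the schemata of \extil{M_0W}. For $\Leftarrow$, every instantiation of an axiom schema of \extil{M_0W} is either an \il-axiom (valid on every frame by Lemma \ref{lemm:ilsound}), an instance of $\sf W$, or an instance of $\sf M_0$, and validity on $F$ is closed under Modus Ponens and Necessitation; so if both $\sf W$ and $\sf M_0$ are valid on $F$, then all of \extil{M_0W} is valid on $F$.

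There is really no obstacle to overcome here; the only thing to keep straight is that we are using Lemma \ref{lemm:emnulsteriswester} in the easy direction ($\ilwstar\supseteq\extil{M_0W}$ combined with $\ilwstar\subseteq\extil{M_0W}$ at the syntactic level) to switch from a single axiom schema $\sf W^*$ to the pair $\{{\sf W},{\sf M_0}\}$, whose frame conditions have already been isolated. Once this identification is in place, the corollary is immediate.
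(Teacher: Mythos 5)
Your proposal is correct and is essentially the paper's own (implicit) argument: the corollary is stated without proof precisely because it follows by combining Lemma \ref{lemm:emnulsteriswester} with the frame conditions of Theorem \ref{homo} for ${\sf W}$ and Theorem \ref{theo:ilmoframe} for ${\sf M_0}$, exactly as you do. Your explicit remark that frame validity is closed under Modus Ponens and Necessitation, so that $F\models{\sf W}$ together with $F\models{\sf M_0}$ is equivalent to $F\models{\sf W^*}$ via the syntactic identity $\extil{M_0W}=\ilwstar$, correctly fills in the one step the paper leaves tacit.
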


The frame condition of \wstar tells us how to correctly define the 
notions of adequate \ilwstar-frames and quasi-\ilwstar-frames.

\newcommand{\boxpin}[1]{\subsetneq^{#1}_\Box}
\begin{definition}[$\boxpin{\adset{D}}$]
Let $\adset{D}$ be a finite set of formulas. Let $\boxpin{\adset{D}}$ be a binary relation
on \mcs's defined as follows.
$\Delta\boxpin{\adset{D}}\Delta'$ iff.\
\begin{enumerate}
\item $\Delta\boxin\Delta'$,
\item For some $\Box A\in\adset{D}$ we have $\Box A\in\Delta'-\Delta$.
\end{enumerate}
\end{definition}

\begin{lemma}\label{lemm:zuigendlemma}
Let $F$ be a quasi-frame and $\adset{D}$ be a finite set of formulas.
If $wRxRyS_wy'\rightarrow\nu(x)\boxpin{\adset{D}}\nu(y')$ then $(R\comp S_w)$ is
conversely well-founded.
\end{lemma}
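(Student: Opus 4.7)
The plan is to argue by contradiction, exploiting the finiteness of $\adset{D}$. Suppose toward a contradiction that, for some fixed $w$, the relation $(R\comp S_w)$ is not conversely well-founded. Then there is an infinite chain
\[
x_0 \ (R\comp S_w)\ x_1 \ (R\comp S_w)\ x_2\ (R\comp S_w)\ \ldots
\]
Unpacking the composition, for each $i\ge 0$ we can pick a witness $z_i$ with $x_i R z_i$ and $z_i S_w x_{i+1}$.

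From the basic quasi-frame requirement $yS_xz\to xRy\wedge xRz$ (Definition \ref{defi:quasiframes}) we obtain $wRz_i$ and $wRx_{i+1}$ for every $i\ge 0$. In particular, for every $i\ge 1$ we have the full configuration
\[
wRx_i R z_i S_w x_{i+1}
\]
to which the hypothesis of the lemma applies, yielding $\nu(x_i)\boxpin{\adset{D}}\nu(x_{i+1})$.

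Unfolding the definition of $\boxpin{\adset{D}}$ means both $\nu(x_i)\boxin\nu(x_{i+1})$ and that some $\Box A\in\adset{D}$ belongs to $\nu(x_{i+1})\setminus\nu(x_i)$. Define
\[
U_i \ :=\ \{\Box A\in\adset{D}\mid \Box A\in\nu(x_i)\}
\]
for $i\ge 1$. The $\boxin$-clause gives $U_i\subseteq U_{i+1}$, while the second clause of $\boxpin{\adset{D}}$ gives strict inclusion $U_i\subsetneq U_{i+1}$. So we produce an infinite strictly increasing chain of subsets of the finite set $\{\Box A\mid\Box A\in\adset{D}\}$, which is impossible. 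This contradicts the assumption that $(R\comp S_w)$ fails to be conversely well-founded.

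There is essentially no obstacle: the proof is a pigeonhole on boxed formulas from $\adset{D}$. The one minor point that needs care is to start the application of the hypothesis from $i\ge 1$, so that the required predecessor $w$ with $wRx_i$ is actually supplied by the earlier $S_w$-step in the chain; for $x_0$ we have no such guarantee, but discarding $x_0$ leaves an infinite tail, which suffices.
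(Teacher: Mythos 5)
Your proof is correct and is exactly the argument the paper leaves implicit: its entire proof is ``By the finiteness of $\adset{D}$,'' and your pigeonhole on the strictly growing sets $U_i$ of boxed $\adset{D}$-formulas, together with the observation that the $S_w$-step in each link supplies the needed $wRx_i$ for $i\geq 1$, is the intended elaboration. No gaps; the care you take in discarding $x_0$ is precisely the right detail.
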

\begin{proof}
%Take $w\in F$ and let 
%$x_0(R\comp S_w)x_1(R\comp S_w)x_2\cdots$ be an $R\comp S_w$ chain in $F$.
%For all $i\geq 1$ we have $wRx_i$. Thus, for all $i\geq 1$, $x_i\boxpin{\adset{D}} x_{i+1}$.
%Since $\adset{D}$ is finite the chain must be finite as well.
By the finiteness of \adset{D}.
\end{proof}

\begin{lemma}
Let $F$ be a quasi-\ilmo-frame. If $wRxRyS_wy'\rightarrow\nu(x)\boxpin{\adset{D}}\nu(y')$
then $wRxRy(S_w\cup R)^*y'\rightarrow\nu(x)\boxpin{\adset{D}}\nu(y')$
\end{lemma}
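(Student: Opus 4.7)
The plan is to induct on the length $n$ of the $(S_w\cup R)^*$-path from $y$ to $y'$. First I would record a preliminary observation: $\boxpin{\adset{D}}$ is transitive and composes with $\boxin$ on either side (i.e., $\Delta\boxpin{\adset{D}}\Delta'\boxin\Delta''$ and $\Delta\boxin\Delta'\boxpin{\adset{D}}\Delta''$ each imply $\Delta\boxpin{\adset{D}}\Delta''$), since the strictness witness $\Box A\in\adset{D}$ persists under any $\boxin$-extension. This is a routine check of the definitions.

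For the base case $y=y'$, the \il-frame axiom $xRy\Rightarrow yS_xy$ gives $yS_wy$ from $wRy$; applying the hypothesis to $wRxRyS_wy$ then yields the conclusion. In a strict quasi-\ilmo-frame this reflexivity of $S$ need not be present, so I would first pass to the \ilmo-closure of $F$ (Lemma \ref{lemm:ilmoclosure}), whose extra $R$- and $S$-edges leave $\nu$ untouched and hence preserve both the hypothesis and the target statement.

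For the inductive step $n\geq 1$, write the path as $y\,\alpha\,t\,(S_w\cup R)^{n-1}\,y'$ with $\alpha\in\{S_w,R\}$. If $\alpha=S_w$, the hypothesis applied to $wRxRyS_wt$ gives $\nu(x)\boxpin{\adset{D}}\nu(t)$; then property \ref{ilmoquasi:6} of Definition \ref{defi:ilmoquasi} applied to $wKxKt(S_w\cup K)^{n-1}y'$ yields $\nu(t)\boxin\nu(y')$; composing these via the preliminary observation gives $\nu(x)\boxpin{\adset{D}}\nu(y')$. If $\alpha=R$, the inductive hypothesis applied to the shortened path starting from $t$ does the job, provided we have $wRxRt$; this follows from the transitivity of $R$ in the \ilmo-closure.

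The main obstacle I would expect is precisely the handling of $R$-edges interleaved with $S_w$-edges in the strict quasi-\ilmo-frame setting, where neither $R$-transitivity nor the $\il$-frame inclusion $xRy\Rightarrow yS_xy$ is available. The uniform workaround is Lemma \ref{lemm:ilmoclosure}: close $F$ to an adequate \ilmo-frame, import these structural properties without changing $\nu$, run the induction inside the closure, and observe that the final $\nu(x)\boxpin{\adset{D}}\nu(y')$-statement is purely about labels and so transfers back to $F$ unchanged.
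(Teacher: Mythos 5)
Your preliminary observation on composing $\boxpin{\adset{D}}$ with $\boxin$ is correct and is indeed needed, and inducting along the path is the right overall shape: the paper's own proof is an induction on the minimal number of $R$-steps of the path, carried out \emph{directly in the given quasi-\ilmo-frame}. The first genuine gap is your closure maneuver. The hypothesis $wRxRyS_wy'\rightarrow\nu(x)\boxpin{\adset{D}}\nu(y')$ is not a fact about labels that transfers merely because $\nu$ is unchanged; it is universally quantified over frame configurations, and the \ilmo-closure of Lemma \ref{lemm:ilmoclosure} creates new configurations falling under it: it adds $yS_wy$ whenever $wRy$, adds $xS_wy$ whenever $wRxRy$, and closes $S_w$ under composition. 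Hence asserting the hypothesis \emph{in the closure} already asserts, through the new instance $wRxRyS_wy$, exactly the degenerate conclusion $\nu(x)\boxpin{\adset{D}}\nu(y)$ that your base case was supposed to produce, and through the composed $S_w$-edges it asserts essentially the whole lemma. So the transfer is not an observation; it is the statement to be proved, and both your base case and your $R$-case (which needs transitivity of $R$, again a closure artifact) rest on it. This instability of $R$/$S_w$-formulated conditions under closure is precisely why the paper formulates the corresponding condition on quasi-frames in terms of $K$ and $\trans{(\spure{w}{S})}$ (Definition \ref{defi:ilwstarquasi}): those relations are constant during closure, whereas $R$ and $S_w$ grow.

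The second gap, independent of the first, is the $S_w$-case of your inductive step. Property \ref{ilmoquasi:6} of Definition \ref{defi:ilmoquasi}, instantiated on a pattern $wKxKt(S_w\cup K)^*y'$, concludes $\nu(x)\boxin\nu(y')$: its conclusion is always about the world in the slot directly after $w$, never about $t$. Moreover the premise $xKt$ is not available at all (no \ilmo-condition relates $x$ to the $S_w$-successors of its $R$-successors), and the fact you want, $\nu(t)\boxin\nu(y')$, is false in general: in \ilmo-frames an $S_w$-transition preserves $\Box$-content only relative to worlds strictly $R$-between $w$ and the \emph{source} of the transition, not relative to the source itself --- that is an \ilm-feature, not an \ilmo-one. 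The composition must be taken in the opposite order, which is the order the paper's induction on $R$-steps exploits: writing the path as $y(S_w)^*u\,R\,v\,(S_w\cup R)^*y'$, apply the induction hypothesis (one $R$-step fewer) to the configuration $wRuRv$ to obtain $\nu(u)\boxpin{\adset{D}}\nu(y')$, and prepend $\nu(x)\boxin\nu(u)$, which \emph{is} a legitimate instance of \ref{ilmoquasi:6}, with $x$ in the middle slot (when $u=y$ it follows from $\nu(x)\sucs\nu(y)$); then $\boxin$ followed by $\boxpin{\adset{D}}$ yields $\boxpin{\adset{D}}$. Re-basing the distinguished world at each $R$-step, rather than keeping $x$ fixed and peeling off single steps, is what lets the argument go through without appealing to closure, to transitivity of $R$, or to reflexivity or transitivity of $S_w$.
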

\begin{proof}
Suppose $wRxRy(S_w\cup R)^*y'$.
$\nu(x)\boxpin{\adset{D}}\nu(y')$ follows with induction on the minimal number
of $R$-steps in the path from $y$ to $y'$.
\end{proof}

\begin{definition}[Adequate \extil{\sf{W}^*}-frame]
Let $\adset{D}$ be a set of formulas.
We say that an adequate \ilmo-frame is an adequate \extil{\sf{W}^*}-frame (w.r.t. $\adset{D}$) iff.\
the following additional property holds.
\begin{enumerate}
\addtocounter{enumi}{7}
%\item $wKxKy\trans{(\spure{w}{S})}y'\rightarrow x\boxpin{\adset{D}}y'$
\item $wRxRy\trans{(\spure{w}{S})}y'\rightarrow x\boxpin{\adset{D}}y'$
%\item For each $w$ we have that $R\comp S_w$ is conversely well-founded
\end{enumerate}
\end{definition}

\begin{definition}[Quasi-\extil{\sf{W}^*}-frame]\label{defi:ilwstarquasi}
Let $\adset{D}$ be a set of formulas.
We say that a quasi-\ilmo-frame is a quasi-\extil{\sf{W}^*}-frame (w.r.t. $\adset{D}$) iff.\
the following additional property holds.
\begin{enumerate}
\addtocounter{enumi}{12}
\item $wKxKy\trans{(\spure{w}{S})}y'\rightarrow x\boxpin{\adset{D}}y'$
\end{enumerate}
\end{definition}

In what follows we might simply talk of adequate \extil{\sf{W}^*}-frames and quasi-\extil{\sf{W}^*}
In these cases $\adset{D}$ is clear from context.

%\begin{corollary}
%For any adequate \extil{\sf{W}^*}-frame $F$ and 
%for each $w\in F$ we have that $(R\comp S_w)$ is conversely well-founded
%\end{corollary}

\begin{lemma}\label{lemm:ilwstarclosure}
Any quasi-\extil{\sf{W}^*}-frame can be extended to an
adequate \extil{\sf{W}^*}-frame. (Both w.r.t. the same set of formulas $\adset{D}$.)
\end{lemma}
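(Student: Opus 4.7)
The approach is to run the step-by-step \ilmo-closure construction from Lemma \ref{lemm:ilmoclosure}, applied to our quasi-\extil{W^*}-frame $F$, and verify inductively that the additional \wstar invariant $wKxKy\trans{(\spure{w}{S})}y'\rightarrow x\boxpin{\adset{D}}y'$ is preserved throughout. Since the final frame $\hat F$ is an adequate \ilmo-frame, we have $K(\hat F) = R^{\hat F}$, so the preserved invariant, read in $\hat F$, is precisely the adequate \wstar condition $wR^{\hat F}xR^{\hat F}y\trans{(\spure{w}{\hat S})}y' \to x\boxpin{\adset{D}}y'$, as required.

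For the induction, I would rely on two monotonicity observations along the chain $F = F_0 \subseteq F_1 \subseteq \cdots$. First, $K(F_n)$ is constant: by Claim \ref{subclaim2:moclos} we have $K^{n+1} \subseteq K^n$, and the reverse inclusion follows because every $R$-edge added in any imperfection already lies in $K^n$, so Lemma \ref{lemm:kstabiel} gives $K^n = K^{n+1}$. Second, the relation $\trans{(\spure{w}{S})}$ does not genuinely grow in the sense relevant to the invariant. Indeed, enlarging $R$ in cases (i) and (v) can only shrink $\spure{w}{S}$, while in cases (iii) and (iv) a fresh edge $b \spure{w}{S^{n+1}} d$ arising from $b S^n_w c S^n_w d$ is either itself already witnessed by $b \trans{(\spure{w}{S^n})} d$ in $F_n$ (when both intermediate $S$-steps were already pure), or else one of them was reducible through an $R$-path, in which case the same $R$-path shows that the new direct edge is itself reducible and hence not pure. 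So any $\trans{(\spure{w}{S^{n+1}})}$-path in $F_{n+1}$ can be replayed as a $\trans{(\spure{w}{S^n})}$-path in $F_n$. The inductive step is then immediate in each of the five cases, and preservation at the union follows because any witness in $\hat F$ can be pulled back to some $F_n$: adding more $R$-edges only makes non-reducibility strictly harder to enforce, so any pure $S$-transition in $\hat F$ is pure in every sufficiently large $F_n$.

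The main obstacle is precisely this careful bookkeeping of $\spure{w}{S}$: one must verify that the $S$-edges added in cases (ii)--(iv) of the \ilmo-closure never spawn genuinely new pure transitions that could invalidate the invariant, and dually that the union step does not silently lose information. Once this is settled, the remainder of the argument is routine and parallels the proof of Lemma \ref{lemm:ilmoclosure}; combining the \ilmo-properties of $\hat F$ with the preserved \wstar-invariant is exactly the definition of an adequate \extil{W^*}-frame, and the connection to the \wstar frame-condition identified in Corollary \ref{coro:framecondwstar} is then immediate via Lemma \ref{lemm:zuigendlemma}.
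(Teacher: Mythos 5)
Your proposal is correct and takes essentially the same route as the paper: both reuse the step-by-step \ilmo-closure of Lemma \ref{lemm:ilmoclosure} and argue that the extra quasi-\extil{\sf{W}^*} property survives each step and the bounded union because $K$ stays constant and $\trans{(\spure{w}{S})}$ cannot grow, so that in the resulting adequate \ilmo-frame $\hat F$ (where $K=R$) the preserved invariant is exactly the adequate \extil{\sf{W}^*} condition. The only difference is one of detail: the paper dismisses the stability of $K$ and $\trans{(\spure{w}{S})}$ as ``rather trivial'', whereas your case analysis of the imperfection steps (and the pull-back of pure $S$-transitions at the union stage) supplies precisely the bookkeeping that the paper leaves implicit.
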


\begin{proof}
Let $F$ be a quasi-\extil{\sf{W}^*}-frame. Then in particular
$F$ is a quasi-\ilmo-frame. So consider the proof of Lemma \ref{lemm:ilmoclosure}.
There we constructed a sequence of quasi-\ilmo-frames
$F=F_0\subseteq F_1\subseteq \bigcup_{i<\omega}F_i=\hat F$.
What we have to do, is to 
show that if $F_0(=F)$ is a quasi-\extil{\sf{W}^*}-frame,
then each $F_i$ is as well. Additionally we have to show
that $\hat F$ is an adequate \ilwstar-frame.

But this is rather trivial. As noted in the proof of Lemma \ref{lemm:ilmoclosure},
The relation $K$ and the relations $\trans{(\spure{w}{S})}$ are constant throughout
the whole process. So clearly each $F_i$ is a quasi-\extil{\sf{W}^*}-frame.

Also the extra property of quasi-\extil{\sf{W}^*}-frames is preserved under
unions of bounded chains. So, $\hat F$ is an adequate \extil{\sf{W}^*}-frame.
\end{proof}

\begin{lemma}\label{lemm:ilwstarexistence}
Let $\Gamma$ and $\Delta$ be \mcs's with $\Gamma\crit{C}\Delta$,
\[
P\rhd Q,S_1\rhd T_1,\ldots,S_n\rhd T_n\in\Gamma \ \  \mbox{ and }
\ \ \ \Diamond P\in\Delta.
\]
There exist $k\leq n$. \mcs's $\Delta_0,\Delta_1,\ldots,\Delta_k$ such that
\begin{itemize}
\item
Each $\Delta_i$ lies $C$-critical above $\Gamma$,
\item
Each $\Delta_i$ lies $\boxin$ above $\Delta$,
\item
$Q\in\Delta_0$,
\item
For each $i\geq 0$, $\Box\neg P\in\Delta_i$,
\item
For all 
$1\leq j\leq n$, $S_j\in\Delta_h\Rightarrow \textrm{for some $i\leq k$, }T_j\in\Delta_i$.
\end{itemize}
\end{lemma}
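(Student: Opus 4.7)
The plan is to mirror the proof of Lemma \ref{lemm:ilmoexistence} step by step, adding $\Box \neg P$ to the list of formulas each $\Delta_i$ must contain, and closing the argument with the $\sf M_0^*$-axiom (available in \ilwstar by Lemma \ref{lemm:emnulsteriswester}) in place of $\sf M_0$. First I would reformulate the desired conclusion as the existence of some $I \subseteq \{1, \ldots, n\}$ for which the set
\[
\{Q, \Box \neg P\} \cup \{\neg S_i \mid i \in I\} \cup \{\neg B', \Box \neg B' \mid B' \rhd C \in \Gamma\} \cup \{\Box A' \mid \Box A' \in \Delta\}
\]
is consistent and, for every $i \notin I$, likewise with $T_i$ in place of $Q$. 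Assuming for a contradiction that no such $I$ exists, finite witnesses $\mathcal{A}, \mathcal{B}$ are extracted just as in the \ilmo proof, and $A := \bigwedge \mathcal{A}$ and $B := \bigvee \mathcal{B}$ are formed; this yields the same finitary inconsistencies as in the \ilmo proof but each now carrying the extra conjunct $\Box \neg P$.

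I would then build the permutation $i_1, \ldots, i_n$ by exactly the same induction as in the \ilmo proof, verifying at each stage that the candidate set with $Q$ is still consistent (otherwise the final contradiction is reached already here by the same argument as below). The $\rhd$-chain derivation inside $\Gamma$ is essentially identical to the \ilmo one, except that each step where a conjunction is pushed to $\bot$ through one of the new inconsistencies introduces $\Diamond P$ (the negation of $\Box \neg P$) as an additional disjunct on the right. Collecting all these $\Diamond P$ disjuncts into a single one, the chain terminates at
\[
\Gamma \vdash P \rhd \Diamond \neg A \vee B \vee \Diamond B \vee \Diamond P.
\]

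At this point the \ilmo proof invoked $\sf M_0$; here I would apply $\sf M_0^*$ with $P$ in its $A$-slot, $\Diamond \neg A \vee B \vee \Diamond B \vee \Diamond P$ in its $B$-slot, and $A$ in its $C$-slot, obtaining
\[
\Gamma \vdash \Diamond P \wedge \Box A \rhd (\Diamond \neg A \vee B \vee \Diamond B \vee \Diamond P) \wedge \Box A \wedge \Box \neg P.
\]
Since $\Diamond P \wedge \Box \neg P \vdash \bot$, the $\Diamond P$ disjunct in the consequent is absorbed, leaving the consequent provably equivalent to $(\Diamond \neg A \vee B \vee \Diamond B) \wedge \Box A \wedge \Box \neg P$. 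Because $\Diamond P \wedge \Box A \in \Delta$ and $\Gamma \crit{C} \Delta$, Lemma \ref{lemm:deficiencies} then produces a $\Delta'$ with $\Gamma \crit{C} \Delta'$ containing this consequent; $\Box A \in \Delta'$ kills the $\Diamond \neg A$ disjunct, while $B$ or $\Diamond B$ clashes with the $C$-criticality of $\Delta'$ via the appropriate disjunct $B' \rhd C \in \Gamma$, exactly as in the \ilmo proof. The main obstacle throughout is the accumulation of the $\Diamond P$ disjunct along the chain: with $\sf M_0$ alone there is no way to remove it, and it is precisely the extra $\Box \neg A$ in the consequent of $\sf M_0^*$ that does the cancelling. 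Once this observation is in hand, the remainder is a faithful transcription of the \ilmo bookkeeping.
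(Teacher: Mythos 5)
Your proposal is correct and takes essentially the same route as the paper: the paper's proof likewise transcribes the \ilmo argument verbatim and removes the accumulated $\Diamond P$ disjunct by letting the final application of ${\sf M_0}$ on $P \rhd \Diamond P \vee \psi$ be preceded by an application of ${\sf W}$. Your single application of ${\sf M_0^*}$ is exactly this composite step, since Lemma \ref{lemm:emnulsteriswester} derives ${\sf M_0^*}$ precisely as ${\sf W}$ followed by ${\sf M_0}$.
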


\begin{proof}
The proof is a straightforward adaptation of the proof of Lemma \ref{lemm:ilmoexistence}.
In that proof, a trick was to postpone an application of 
\principel{M_0} as long as possible. We do the same here but let
an application of \principel{M_0} on $P\rhd \Diamond P \vee \psi$ be 
preceded by an application of \ww to obtain $P\rhd \psi$.
\end{proof}

\subsection{Completeness}
Again, we specify the four ingredients from Remark \ref{rema:application}.
The {\bf Frame condition} is contained in Corollary \ref{coro:framecondwstar}.

\newcommand{\invwstar}{\ensuremath{\mathcal{I}_{w^*}}}
%{{wKxKy\trans{(\spure{w}{S})}y'\rightarrow x\boxpin{\adset{D}}y'}}
The {\bf Invariants} are all those of \ilmo and additionally
\begin{enumerate}
\item[$\invwstar$] $wKxKy\trans{(\spure{w}{S})}y'\rightarrow x\boxpin{\adset{D}}y'$
%The conditions for an adequate \extil{\sf{W}^*} (w.r.t. $\adset{D}$) hold
\end{enumerate}
Here, 
$\adset{D}$ is some finite set of formulas closed under subformulas and single negation.

{\bf Problems}.
We have to show that we can solve problems in an adequate \extil{\sf{W}^*}-frame
in such a way that we end up with a quasi-\extil{\sf{W}^*}-frame.
If we have such a frame then in particular it is an \ilmo-frame.
So, as we have seen we can extend this frame to a quasi-\ilmo-frame.
It is easy to see that whenever we started with an adequate \extil{\sf{W}^*}-frame
we end up with a quasi \extil{\sf{W}^*}-frame.
(This is basically Lemma \ref{lemm:ilwstarclosure}.)

{\bf Deficiencies}.
We have to show that we can solve any deficiency in an
adequate \extil{\sf{W}^*}-frame such that we end up with an
quasi-\extil{\sf{W}^*}-frame.
It is easily seen that the process as described in the case
of \ilmo works if we use Lemma \ref{lemm:ilwstarexistence} instead of 
Lemma \ref{lemm:ilmoexistence}.

{\bf Rounding up}.
We have to show that the union of a bounded chain of 
quasi-\ilwstar-frames that satisfy all the invariants
is an \extil{\sf{W}^*}-frame.
%All the \ilmo-frame conditions (which are part of the \extil{\sf{W}^*} conditions)
%are part of the invariants.
%It is clear that the union of a bounded chain of \ilmo-frames is itself an \ilmo-frame.
%So we only 
The only novelty is that we
have to show that in this union for each $w$ we have that  $(R\comp S_w)$
is conversely well-founded. But this is ensured by $\invwstar$ 
and Lemma \ref{lemm:zuigendlemma}.
%to be precise
%by the property $wRxRyS_wy'\rightarrow\nu(x)\boxpin{\adset{D}}\nu(y)$, in exactly the same
%manner as we can ensure the conversely well-foundedness of $R$ via the boundedness
%condition. Namely, in any $(R\comp S_w)$ chain the number of $\Box$-formulas
%from $\adset{D}$ increases. But $\adset{D}$ is finite. So, such a chain must be finite as well.

\section*{Acknowledgement}
We dedicate this series of three papers to Dick de Jongh. Dick supervised the Masters theses of both authors and suggested to study a step-by-step construction method to obtain modal completeness results. 

Furthermore, we thank Lev Beklemishev, Marta Bilkova, Rosalie Iemhoff, Pavel Pudl\'ak, Volodya Shavrukov and Albert Visser for questions, discussions and suggestions.

%\section{Modal incompleteness and arithmetic}
%\input{goris/pnulincomplete}
%\subsection{Generalized semantics}\label{subs:generalizedinmain}
%\input{goris/generalized}
%\subsection{Arithmetical soundness of \principel{R}}\label{subs:risvetsound}
%\input{goris/alleredelijke_sukkel}

\bibliographystyle{plain}
%\begin{thebibliography}{100}

%\end{thebibliography}
%\bibliography{goris/levref}

\end{document}